\documentclass[12pt,reqno]{amsart}

\usepackage{graphicx}
\graphicspath{ {./images/} }
\usepackage{bm}

\usepackage{color}

\usepackage{amssymb}

\usepackage{amsfonts}

\usepackage{amsmath}

\usepackage[all]{xy}

\usepackage[colorlinks=true, allcolors=blue]{hyperref}

\newtheorem{theorem}{Theorem}[section]

\newtheorem{corollary}[theorem]{Corollary}

\newtheorem{lemma}[theorem]{Lemma}

\newtheorem{proposition}[theorem]{Proposition}

\newtheorem{Definition}[theorem]{Definition}

\newtheorem{Example}[theorem]{Example}

\newtheorem{Remark}[theorem]{Remark}

\newenvironment{remark}{\begin{Remark}\begin{em}}{\end{em}\end{Remark}}

\newenvironment{definition}{\begin{Definition}\begin{em}}{\end{em}\end{Definition}}

\DeclareMathOperator{\tr}{\mathrm{tr}}
\DeclareMathOperator{\supp}{\mathrm{supp}}

\address{Sejong Kim \\ Department of Mathematics, Chungbuk National University, Cheongju 28644, Korea}
\email{skim@chungbuk.ac.kr}

\address{Vatsalkumar N. Mer \\ Institute for Industrial and Applied Mathematics, Chungbuk National University, Cheongju 28644, Korea}
\email{vnm232657@gmail.com}

\address{Mikl\'os P\'alfia \\ Department of Mathematics, Corvinus University of Budapest, 1093 Budapest Fővám tér 8, Hungary}
\address{Bolyai Institute, Interdisciplinary Excellence Centre, University of Szeged, H-6720 Szeged, Hungary}
\email{miklos.palfia@uni-corvinus.hu}

\begin{document}

\title[On the Wasserstein barycenter of positive definite operators]{On the Wasserstein barycenter of positive definite operators}

\author{Sejong Kim, Vatsalkumar N. Mer and Mikl\'os P\'alfia}

\date{\today}

\begin{abstract}
We extend the Bures-Wasserstein mean of positive definite matrices to the case of positive definite operators on a Hilbert space. This is done through its defining stationary point operator equation, coming from the gradient of the sum of squared Bures-Wasserstein distances of centered Gaussians represented by positive definite matrices. This gradient is shown to have a Fr\'echet derivative which induces a bounded linear operator on the space of Hilbert-Schmidt operators with strictly positive real spectrum. This allows us to conclude the existence and uniqueness of this mean by exhibiting the spectral permanence of this operator when extended to general bounded linear operators and also enables the study of its generated ODE semigroups, which enjoy exponential contraction in a Banach-Finsler metric obtained through the construction of equivalent renormings. Using this exponential contractivity of the flow, we prove a `Nodice'-type of theorem and its stochastic variant, a Sturm-type of strong law of large numbers for probability measures with bounded support. We also verify fundamental properties and establish various operator inequalities satisfied by the Wasserstein mean.

\vspace{5mm}

\noindent {\bf Mathematics Subject Classification} (2020): 47B65, 60F15, 58B20.

\noindent {\bf Keywords}: Positive definite operator, Wasserstein mean, No-dice theorem, strong law of large numbers
\end{abstract}

\maketitle

\section{Introduction}
The theory of geometric means of positive definite matrices and operators has substantially studied over the last two decades, motivated by applications in matrix analysis, operator theory, information geometry, statistics, signal processing, and quantum information.
While the theory of two-variable operator means was established through the theoretic framework of Kubo and Ando \cite{KA}, the extension to several variables has been considered more challenging due to the lack of a canonical  multivariable analogue satisfying the fundamental properties expected by a geometric mean.
The first attempt of positive definite matrices is a symmetrization procedure introduced by Ando, Li, and Mathias \cite{ALM}, who suggested axiomatic properties for  multivariable geometric mean including joint homogeneity, monotonicity, congruence invariance, self-duality, joint concavity, and the arithmetic-geometric-harmonic mean inequalities.

A decisive breakthrough came from the Riemannian geometric viewpoint. Building on ideas originating in the work of Cartan and Karcher on Riemannian centers of mass, Moakher \cite{Mo} and subsequently Bhatia and Holbrook \cite{BH} proposed defining the  multivariable geometric mean as the unique minimizer of the weighted sum of squared Riemannian trace distances.
This least squares mean of positive definite matrices $A_{1}, \dots, A_{m}$ is characterized as the unique positive definite solution $X$ of the nonlinear matrix equation
\begin{equation} \label{E:Karcher}
\sum_{i=1}^{m} w_i \log \bigl( X^{-1/2} A_i X^{-1/2} \bigr) = 0,
\end{equation}
where $\omega = (w_{1}, \dots, w_{m})$ is a positive probability vector.
Equation \eqref{E:Karcher} is known as the Karcher equation so we commonly call the barycenter the \emph{Karcher mean} and write it as $\Lambda(\omega; A_1, \ldots, A_m)$.
The Karcher mean quickly emerged as a distinguished  multivariable geometric mean because of its close connection with the intrinsic geometry of the positive cone.
A central question was whether it satisfies the collection of Ando-Li-Mathias properties.
The monotonicity problem remained open for several years until Lawson and Lim \cite{LL11}, and independently Bhatia and Karandikar \cite{BK}, established it using fundamentally different approaches.
These developments confirmed the Karcher mean as a natural multivariable extension of the geometric mean and stimulated extensive further investigation.

One of the remarkable approaches to the Karcher mean was achieved by Lim and Pálfia \cite{LP12} through the introduction of  multivariable power means.
For $t \in (0,1]$, the power mean $P_t(\omega; A_1, \ldots, A_m)$ is defined as the unique positive definite solution $X$ of the equation
\begin{displaymath}
X = \sum_{i=1}^{m} w_i (X \#_t A_i),
\end{displaymath}
where $A \#_t B = A^{1/2} (A^{-1/2} B A^{-1/2})^{t} A^{1/2}$ denotes the weighted geometric mean of $A$ and $B$.
They proved that
\begin{displaymath}
P_t(\omega; A_1, \ldots, A_m) \longrightarrow \Lambda(\omega; A_1, \ldots, A_m) \quad \textrm{as} \quad t \to 0.
\end{displaymath}
This approximation viewpoint not only supplied new proofs of fundamental properties but also became the key mechanism for extending the theory beyond finite dimensions.

The infinite-dimensional theory was developed by Lawson and Lim \cite{LL14}, who showed that the Karcher equation admits a unique positive definite solution for bounded positive invertible operators on a Hilbert space. Since the affine-invariant Riemannian structure is no longer available in this setting, the extension relied on the approximation by power means together with the geometry induced by the Thompson metric. Their work established that the resulting operator mean retains essentially all of the characteristic properties known in finite dimensions, including joint homogeneity, monotonicity, congruence invariance, self-duality, and continuity. In parallel, they clarified in \cite{LL} the relationship between the Karcher mean and the least squares mean on the manifold of positive Hilbert–Schmidt operators, showing that the former can be viewed as a unique monotone strongly continuous extension of the latter.

Subsequent research shifted from finite tuples of operators to probability measures on the positive cone. This measure-theoretic perspective is motivated both by applications and by analogies with barycenters in metric geometry. P\'{a}lfia \cite{palfia2} developed a general framework in which operator means are characterized as unique solutions of generalized Karcher equations obtained by replacing the logarithm with an arbitrary normalized operator monotone function. This theory simultaneously extends Kubo–Ando means, matrix power means, and the Karcher mean, and reveals a unifying fixed-point and contraction-theoretic structure behind multivariable operator means.

New Riemannian distance, different from the affine-invariant trace metric on the cone of positive definite matrices, has been introduced:
\begin{displaymath}
d_{W}(A, B) = \left[ \mathrm{tr} (A + B) - 2 \mathrm{tr} (A^{1/2} B A^{1/2})^{1/2} \right]^{1/2}.
\end{displaymath}
In optimal transport theory  \cite{V} and statistics \cite{PZ}, it is exactly equivalent to the $L_{2}$-Wasserstein metric of two zero-mean Gaussian distributions whose covariance matrices are $A, B$.
Under the special case that $A, B$ commute, this metric reduces to the classical Hellinger distance between their eigenvalue distributions.
Furthermore, this metric of density matrices in quantum information coincides with the Bures distance so we call $d_{W}$ the Bures-Wasserstein metric.

Given positive definite matrices $A_{1}, \dots, A_{m}$, the least squares mean with respect to Bures-Wasserstein distance is given by
\begin{displaymath}
\underset{X > 0}{\arg \min} \sum_{i=1}^{m} w_{i} d_{W}^{2}(X, A_{i}).
\end{displaymath}
We call the unique minimizer the \emph{Wasserstein mean} and write as $\Omega(\omega; A_{1}, \dots, A_{m})$.
Although the geometry of the cone $\mathbb{P}_{n}$ of positive definite matrices equipped with the Bures-Wasserstein metric exhibits non-negative curvature implying that it is not a Hadamard space, this minimization problem is well-known to possess a unique positive definite solution.
For existence and uniqueness, we refer \cite{AC} about a method of non-smooth analysis, convex duality and optimal transport theory and \cite{BJL} about a method of matrix analysis.
A fundamental algebraic characterization of this unique barycenter is
coincides with a unique positive definite solution to the following non-linear matrix equation
\begin{equation} \label{E:gradF}
X = \sum_{i=1}^{m} w_{i} (X^{1/2} A_{i} X^{1/2})^{1/2},
\end{equation}
obtained by vanishing the gradient of objective function. This mean with two variables $A, B$ is the geodesic line connecting two matrices under the Bures-Wasserstein metric and has an explicit algebraic formula
\begin{displaymath}
\gamma(t) = (1-t)^{2} A + t^{2} B + t(1-t) \left[ A (A^{-1} \#_{1/2} B) + (A^{-1} \#_{1/2} B) A \right].
\end{displaymath}
Furthermore, many properties of the Wasserstein mean of positive definite matrices have been studied: a fixed point algorithm to the Wasserstein mean \cite{ABCM}, a log-majorization relationship with the Karcher mean and log-Euclidean mean \cite{BJL-1} and an extended Lie-Trotter-Kato formula for the Wasserstein mean \cite{HK19}. More recently, related developments include dimension-free convergence results for gradient descent on the Bures--Wasserstein manifold \cite{ACGS}, entropic regularization of Wasserstein distances in infinite-dimensional Gaussian settings \cite{M-2}, and large-sample theory for Bures--Wasserstein barycenters \cite{SP}.

The main purpose of this paper is to establish the Wasserstein mean of positive definite operators over a Hilbert space.
We first show the existence and uniqueness of a minimizer for the least squares problem in the space of extended trace-class positive operators, then move over to the Hilbert-Schmidt and finally to the general case, the cone $\mathbb{P}$ of positive definite operators. The first-order optimality condition manifests as a non-linear fixed point equation, traditionally known as the Wasserstein equation \eqref{E:gradF}.
While the local unique solvability of this Wasserstein equation \eqref{E:gradF} can be established near the identity operator by using the implicit function theorem, extending this uniqueness globally across the entire Loewner interval $[\alpha I, \beta I]$ requires a sophisticated topological and dynamical machinery.
To this end, we introduce a continuation (homotopy) method via the deformation path $\displaystyle \phi(t,X) = \sum_{i=1}^{m} w_{i} (A_{i}^{t} \#_{1/2} X^{-1}) - I$, by showing that the Fr\'{e}chet derivative remains uniformly bounded below and invertible along the trajectory.
Furthermore, by considering ODEs of the the underlying vector field $\displaystyle \Phi(X) = \sum_{i=1}^{m} w_{i} (I - (A_{i} \#_{1/2} X^{-1}))$ and leveraging the theory of contraction semigroups on weighted Banach-Finsler manifolds, we construct a convergent forward Euler step method.
This dynamic approach guarantees that the recursive iteration
\begin{displaymath}
X_{k+1} = X_{k} - \frac{1}{k+1} \Phi(X_{k}) + O \left( \frac{1}{(k+1)^2} \right)
\end{displaymath}
which is similar to Euclidean gradient descent in the case of matrices, converges to the unique multi-variable Wasserstein mean in a Banach-Finsler metric $d_{\epsilon}$ that we construct by introducing a continuous section of Banach renormings equivalent to the original Banach space norm given by the operator norm. The key point here is that the spectrum $\sigma(-D\Phi(X)(\cdot))$ is contained in the negative real half-line, thus the flows of the ODE
\begin{align*}
    u_i'(t) &= - \Phi(u_i(t)), \\
    u_i(0) &\in\mathbb{P},
\end{align*}
exhibit exponential contraction property
\begin{equation*}
d_\epsilon(u_1(t), u_2(t)) \leq e^{(\sup\{\Re\sigma(-D\Phi(X)):X\in S\}+\epsilon) t} d_\epsilon(u_1(0), u_2(0))
\end{equation*}
where $S$ is a large enough order interval in $\mathbb{P}$ containing each $A_i$.
Further we establish convergence of deterministic and stochastic versions of the Euler step method with splitting. This can be thought of as a discrete-time Trotter-Kato formula for this nonlinear semigroup in the ambient space of bounded linear operators of which the deterministic case is known as the 'Nodice theorem', while the stochastic version can be thought of as a version of a nonlinear strong law of large numbers in the sense of Sturm \cite{sturm}, with further existing variants for Karcher means of positive operators \cite{lekapalfia, LP,  PalfiaStrong}.

Finally, we analyze the essential algebraic properties of this multi-variable operator mean including homogeneity, permutation invariance, and unitary congruence invariance and establish bounds of the Wasserstein mean with respect to the Loewner order and operator norm.
Moreover, we investigate the relationship between the Wasserstein mean and quasi-arithmetic mean, and Lim-P\'{a}lfia's power mean, and also study the connection between the Wasserstein mean and its dual.



\section{Bures-Wasserstein distance of trace-class positive definite operators}

Let $B(\mathcal{H})$ be the Banach space of all bounded linear operators on a Hilbert space $\mathcal{H}$ with inner product $\langle \cdot, \cdot \rangle$, and let $S(\mathcal{H}) \subset B(\mathcal{H})$ be the closed subspace of all self-adjoint linear operators. We say that $A \in S(\mathcal{H})$ is positive semi-definite (positive definite) if $\langle x, Ax \rangle \geq (>) 0$ for all (non-zero, respectively) vector $x \in \mathcal{H}$. For $X, Y \in S(\mathcal{H})$ we define as $X \leq (<) Y$ if $Y - X$ is positive semi-definite (positive definite, respectively). It is a partial order, known as the Loewner order.
We denote as $\mathbb{P} \subset S(\mathcal{H})$ the convex cone of all invertible positive definite operators. Let $U(\mathcal{H})$ denote the set of unitary operators on $\mathcal{H}$.

%
%
%

Let $L^2(\mathcal{H})$ denote the space of Hilbert--Schmidt operators on a Hilbert space $\mathcal{H}$, and let $S^2(\mathcal{H})$ denote the Hilbert--Schmidt class of self-adjoint operators on $\mathcal{H}$.
 We define the set of extended (unitized) Hilbert-Schmidt operators, as previously introduced in \cite{L, LL}, as follows:
\begin{displaymath}
L^{2}_{ex} (\mathcal{H}) := \{ A + \gamma I: A \in L^{2} (\mathcal{H}), \gamma \in \mathbb{R} \},
\end{displaymath}
where $I$ is the identity operator. The space $L^2_{ex}(\mathcal{H})$ forms a Hilbert space with the following inner product:
\begin{displaymath}
\langle A + \lambda I, B +\nu I \rangle_{ex2} = \langle A, B \rangle_2 + \lambda \nu = \tr(AB^*) + \lambda \nu
\end{displaymath}
where $A, B \in L^2(\mathcal{H})$ and $\lambda, \nu \in \mathbb{R}$. So the induced norm is given by
\begin{displaymath}
\Vert A + \lambda I \Vert_{ex2} = \langle A + \lambda I, A + \lambda I \rangle_{ex2}^{1/2}.
\end{displaymath}
We denote the symmetric part of $L^2_{ex}(\mathcal{H})$ by
\begin{displaymath}
SL^{2}_{ex} (\mathcal{H}) = \{ A + \gamma I:  A = A^{*}, A \in L^{2} (\mathcal{H}), \gamma \in \mathbb{R} \} = L^{2}_{ex} (\mathcal{H}) \cap S(\mathcal{H}),
\end{displaymath}
which forms a real Hilbert space with the same inner product. The positive part of this space is denoted by $\Sigma_{2} = \mathbb{P} \cap SL^{2}_{ex} (\mathcal{H})$.

Denote by $L^1(\mathcal{H})$ the space of trace-class operators on a Hilbert space $\mathcal{H}$, and by $S^1(\mathcal{H})$ the corresponding class of self-adjoint trace-class operators. We define the set of extended (unitized) trace-class operators, as previously introduced in \cite{M}, as follows:
\begin{displaymath}
L^{1}_{ex} (\mathcal{H}) := \{ A + \gamma I: A \in L^{1} (\mathcal{H}), \gamma \in \mathbb{R} \}.
\end{displaymath}
We can define the extended trace norm and trace on $L^1_{ex}(\mathcal{H})$. For any $A + \gamma I \in L^1_{ex}(\mathcal{H})$, the extended trace norm is defined by
\begin{displaymath}
\| A + \gamma I \|_{ex1} :=  \| A \|_1 + | \gamma |,
\end{displaymath}
and the extended trace, denoted by $\mathrm{tr}_{ex}$, is defined by
\begin{displaymath}
\text{tr}_{ex}(A + \gamma I) := \text{tr}(A) + \gamma.
\end{displaymath}
Finally, we define the positive part of $SL^1_{ex}(\mathcal{H}) = L^1_{ex}(\mathcal{H}) \cap S(\mathcal{H})$ as
\begin{displaymath}
\Sigma_1 := \mathbb{P} \cap SL^{1}_{ex} (\mathcal{H}) = \{ A + \gamma I > 0 :  A = A^{*}, A \in L^{1} (\mathcal{H}), \gamma \in \mathbb{R} \}.
\end{displaymath}

The Wasserstein distance (or Bures-Wasserstein metric) of positive trace-class operators is defined as
\begin{equation*} \label{metric}
d_W(A,B) : =  \left[ \mathrm{tr} ( A + B ) - 2 \mathrm{tr} (A^{1/2} B A^{1/2})^{1/2} \right]^{1/2}.
\end{equation*}
The following provides the extremal characterization of Wasserstein distance of positive trace-class operators.
\begin{theorem}[\cite{G}, \cite{MPZ}]
If $d_W(A,B)$ is defined as in above, then
\begin{align*}
d_W(A,B) = \min_{U \in U(\mathcal{H})} \| A^{1/2} - B^{1/2}U \|_2.
\end{align*}
\end{theorem}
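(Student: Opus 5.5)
The plan is to expand the Hilbert--Schmidt norm and reduce the minimization to a trace-norm duality. Since $A,B$ are positive trace-class operators, $A^{1/2},B^{1/2}\in L^2(\mathcal{H})$, so $B^{1/2}U\in L^2(\mathcal{H})$ for every unitary $U$. Expanding,
\begin{displaymath}
\| A^{1/2} - B^{1/2}U \|_2^2 = \tr A + \tr (U^*BU) - 2\,\Re \tr (A^{1/2}B^{1/2}U) = \tr(A+B) - 2\,\Re\tr(A^{1/2}B^{1/2}U),
\end{displaymath}
using $\tr(U^*BU)=\tr B$ and $\tr(X^*)=\overline{\tr X}$ for $X=A^{1/2}B^{1/2}U\in L^1(\mathcal{H})$ (a product of two Hilbert--Schmidt operators times a bounded one). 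The statement therefore reduces to
\begin{displaymath}
\max_{U\in U(\mathcal{H})} \Re\tr(A^{1/2}B^{1/2}U) = \tr (A^{1/2}BA^{1/2})^{1/2}.
\end{displaymath}

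To prove this, set $T=A^{1/2}B^{1/2}\in L^1(\mathcal{H})$. Then $|T^*| = (TT^*)^{1/2} = (A^{1/2}BA^{1/2})^{1/2}$, and $\|T\|_1=\tr|T|=\tr|T^*|$. For the upper bound, for any bounded $U$ with $\|U\|\le1$ we have $|\tr(TU)|\le \|TU\|_1\le\|T\|_1\|U\|$, so $\Re\tr(TU)\le \tr(A^{1/2}BA^{1/2})^{1/2}$. This bound holds for every unitary $U$, and it already gives the inequality $\ge$ in the theorem, with an infimum in place of the minimum.

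For attainment, take the polar decomposition $T=V|T|$ with $V$ a partial isometry from $\overline{\mathrm{ran}}\,|T|$ onto $\overline{\mathrm{ran}}\,T$. Then $\tr(TV^*)=\tr(V|T|V^*)=\tr|T|$. The main obstacle is that $V^*$ need not be unitary in infinite dimensions, because the kernels $\ker T$ and $\ker T^*$ may have different dimensions. I will handle this by exploiting that only the values of $U$ on $\overline{\mathrm{ran}}\,T$ matter. Write $\tr(TU)=\tr(V|T|U)=\tr(|T|UV)$, which depends only on $UV$ restricted to $\overline{\mathrm{ran}}\,|T|$. So it suffices to find a unitary $U$ that agrees with $V^*$ on $\overline{\mathrm{ran}}\,T$, i.e.\ $U|_{\overline{\mathrm{ran}}\,T}=V^*$, which maps $\overline{\mathrm{ran}}\,T$ isometrically onto $\overline{\mathrm{ran}}\,|T| = (\ker T)^\perp$. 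Such a unitary extension exists if and only if $\dim\ker T^* = \dim \ker T$.

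If the dimensions differ, I would avoid the issue by approximating instead of attaining. One option is to pass to the infimum and show the minimum is still achieved through a compactness argument. A cleaner route is to work in $\mathcal{H}\oplus\mathcal{H}$ or to use that $\ker T^*\supseteq \ker A^{1/2}$ and $\ker T\supseteq\ker B^{1/2}$. In the intended setting for this paper, namely operators in $\Sigma_1$ or positive definite operators with trivial kernels, both $A^{1/2}$ and $B^{1/2}$ are injective with dense range. Then $T$ and $T^*$ are injective, both kernels are $\{0\}$, and $V$ itself is unitary. I would state the proof under this injectivity, and remark that in general the equality holds with $\min$ replaced by $\inf$, which is attained whenever $\dim\ker T=\dim\ker T^*$. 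Substituting the maximal value gives $\min_U\|A^{1/2}-B^{1/2}U\|_2^2 = \tr(A+B)-2\tr(A^{1/2}BA^{1/2})^{1/2}=d_W(A,B)^2$.
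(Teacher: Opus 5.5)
The paper quotes this result from the literature without proof, so there is no argument of its own to compare against. Your core argument is the standard proof:
\begin{itemize}
\item the expansion $\|A^{1/2}-B^{1/2}U\|_2^2=\tr(A+B)-2\Re\tr(TU)$ with $T=A^{1/2}B^{1/2}$;
\item the trace-duality bound $\Re\tr(TU)\le\|T\|_1=\tr(A^{1/2}BA^{1/2})^{1/2}$;
\item attainment at $U=V^*$ from the polar decomposition $T=V|T|$.
\end{itemize}
It is complete under the hypothesis you finally adopt. If $A,B$ are injective (positive definite in the paper's sense, $\langle x,Ax\rangle>0$ for $x\neq0$), then $T$ and $T^*=B^{1/2}A^{1/2}$ are injective, so $V$ is unitary. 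It is also complete when $\dim\mathcal H<\infty$, since then $\dim\ker T=\dim\ker T^*$ by rank--nullity.

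One side remark is off. $\Sigma_1$ is not a setting for this formula: for $A+\gamma I$ with $\gamma\neq0$ in infinite dimensions, $\tr(A+B)$ is undefined. That is exactly why the paper passes to $d_{\mathrm{pro}}$, $\tr_{ex}$ and unitaries in $L^1_{ex}(\mathcal H)$.

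Your treatment of the general semidefinite case has two problems. First, the idea of producing a minimizer ``through a compactness argument'' cannot work, because your extension criterion is also necessary for attainment. Suppose $U$ is unitary with $\Re\tr(TU)=\tr|T|$. Write $\Re\tr(TU)=\tr\bigl(|T|^{1/2}\tfrac12(UV+V^*U^*)|T|^{1/2}\bigr)$ and set $C:=I-\tfrac12(UV+V^*U^*)\ge0$. Then $\tr(|T|^{1/2}C|T|^{1/2})=0$, so $C=0$ on $\overline{\mathrm{ran}}\,|T|=(\ker T)^\perp$. Hence $\|UVx-x\|^2\le 2\|x\|^2-2\Re\langle UVx,x\rangle=0$ there. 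Thus $U=V^*$ on $\overline{\mathrm{ran}}\,T$, and $U$ must carry $\ker T^*$ onto $\ker T$.

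For a concrete failure, on $\ell^2$ let $A\ge0$ be injective and trace class, and pick $v\notin\mathrm{ran}\,A^{1/2}$ (a compact operator is never onto). Let $B\ge0$ be trace class with $\ker B=\mathbb{C}v$. Then $\ker T=\mathbb{C}v$ and $\ker T^*=\{0\}$, so no unitary attains the infimum. Injectivity (or finite dimension) is therefore genuinely needed for the $\min$ in the statement, not merely convenient.

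Second, your claim that equality always holds with $\inf$ is asserted but not proved. The upper bound only gives $\inf_U\|A^{1/2}-B^{1/2}U\|_2\ge d_W(A,B)$. For the reverse you need unitaries that nearly attain $\tr|T|$ when no unitary extension of $V^*|_{\overline{\mathrm{ran}}\,T}$ exists, and the $\mathcal H\oplus\mathcal H$ idea does not address this.

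A short fix uses the Schmidt decomposition $T=\sum_k s_k\langle\cdot,e_k\rangle f_k$ with $f_k=Ve_k$, which gives $\tr(TU)=\sum_k s_k\langle Uf_k,e_k\rangle$. The subspaces $\{f_1,\dots,f_N\}^\perp$ and $\{e_1,\dots,e_N\}^\perp$ both have codimension $N$, hence the same Hilbert dimension. So there is a unitary $U_N$ with $U_Nf_k=e_k$ for $k\le N$. Then $\Re\tr(TU_N)\ge\sum_{k\le N}s_k-\sum_{k>N}s_k\to\tr|T|$.

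With this, the correct general formulation is: for positive trace-class $A,B$ the equality holds with $\inf$, and the $\inf$ is a $\min$ if and only if $\dim\ker T=\dim\ker T^*$.
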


\begin{definition}
The Procrustes distance between $A + \gamma I, B + \gamma I \in \Sigma_1$ is defined as
\begin{displaymath}
d_{\mathrm{pro}}(A + \gamma I, B + \gamma I) : = \min_{(I+U) \in U(\mathcal{H}) \cap L^{1}_{ex} (\mathcal{H})} \| (A + \gamma I)^{1/2} - (B + \gamma I)^{1/2} (I + U) \|_{ex2}.
\end{displaymath}
\end{definition}

The following shows the relationship between the procrustes distance and Wasserstein distance on $\Sigma_{1}$.
\begin{theorem}[\cite{M-1}] \label{T:Wass-form}
Let $A + \gamma I, B + \gamma I \in \Sigma_{1}$. Then
\begin{displaymath}
\begin{split}
& d^2_{\mathrm{pro}}(A + \gamma I, B + \gamma I) \\
&= 4 \mathrm{tr}_{ex} \left[ (A + \gamma I) + (B + \gamma I) - 2 ((A + \gamma I)^{1/2} (B + \gamma I) (A + \gamma I)^{1/2})^{1/2} \right],
\end{split}
\end{displaymath}
and
\begin{align*}
\lim_{\gamma \rightarrow 0} d_{\mathrm{pro}}(A + \gamma I, B + \gamma I) = 2 d_W(A, B).
\end{align*}
\end{theorem}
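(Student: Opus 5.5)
The plan is to reduce the extended Hilbert--Schmidt norm in the definition of $d_{\mathrm{pro}}$ to an extended-trace expression. Then the minimization over extended unitaries becomes a trace-duality (von Neumann / polar decomposition) problem, exactly as in the classical proof of the cited theorem $d_W(A,B)=\min_U\|A^{1/2}-B^{1/2}U\|_2$.

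\textbf{Step 1 (splitting off the scalar parts).} Put $P=(A+\gamma I)^{1/2}$ and $Q=(B+\gamma I)^{1/2}$. By the holomorphic functional calculus,
\begin{displaymath}
P=\gamma^{1/2}I+P_0, \qquad Q=\gamma^{1/2}I+Q_0 .
\end{displaymath}
Here $P_0,Q_0$ are self-adjoint and trace class, since $t\mapsto (t+\gamma)^{1/2}-\gamma^{1/2}$ is Lipschitz near the spectrum and vanishes at $0$. Hence $P,Q\in SL^1_{ex}(\mathcal H)$. Similarly, an admissible unitary has the form $I+U$ with $U\in L^1(\mathcal H)$. The difference $X=P-Q(I+U)$ then has scalar part $\lambda=\gamma^{1/2}-\gamma^{1/2}=0$ and a trace-class remainder $X_0$. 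Therefore
\begin{displaymath}
\|X\|_{ex2}^2=\|X_0\|_2^2=\mathrm{tr}(X_0X_0^*),
\end{displaymath}
and this can be rewritten as an extended trace of $XX^*$.

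\textbf{Step 2 (expanding).} Expanding gives
\begin{displaymath}
\mathrm{tr}_{ex}\bigl(P^2+Q^2\bigr)-2\,\Re\,\mathrm{tr}_{ex}\bigl(PQ(I+U)\bigr)^{*}
\end{displaymath}
plus bookkeeping terms. These terms come from the scalar parts, because $\mathrm{tr}_{ex}$ of a product is not multiplicative in the scalar component. I would track them carefully, since this is where the normalizing constant in the statement comes from. I would check the constant against the commuting, finite-rank case before trusting it.

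\textbf{Step 3 (minimizing over unitaries).} Next I would maximize $\Re\,\mathrm{tr}(QP\,V)$ over unitaries $V=I+U$. Take the polar decomposition $QP=W|QP|$. Then
\begin{displaymath}
|\mathrm{tr}(QPV)|\le \mathrm{tr}|QP|=\mathrm{tr}\,(PQ^2P)^{1/2},
\end{displaymath}
with equality at $V=W^*$. I also need $W^*$ to be admissible. Since $QP=\gamma I+(\text{trace class})$ is invertible, $W$ is unitary, and $W-I=QP|QP|^{-1}-I$ is trace class. Note that $|QP|=(PQ^2P)^{1/2}=\gamma I+(\text{trace class})$ by the same functional-calculus argument. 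Substituting the maximizer gives the first formula.

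\textbf{Step 4 (the limit $\gamma\to0$).} Let $\gamma\to0$. Then $A+\gamma I\to A$ and $(A+\gamma I)^{1/2}-\gamma^{1/2}I\to A^{1/2}$ in trace norm. Continuity of $X\mapsto X^{1/2}$ in trace norm on positive operators (via the Powers--Størmer inequality $\|X^{1/2}-Y^{1/2}\|_2^2\le\|X-Y\|_1$) gives
\begin{displaymath}
\mathrm{tr}_{ex}\bigl(P(B+\gamma I)P\bigr)^{1/2}-\gamma^{1/2}\cdot(\text{scalar})\;\to\;\mathrm{tr}\,(A^{1/2}BA^{1/2})^{1/2}.
\end{displaymath}
The scalar contributions cancel in the combination $\mathrm{tr}_{ex}(A+\gamma I)+\mathrm{tr}_{ex}(B+\gamma I)-2\,\mathrm{tr}_{ex}(\cdots)^{1/2}$. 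Hence the bracket tends to $d_W(A,B)^2$, and taking square roots gives the limit $2d_W(A,B)$.

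\textbf{Main obstacle.} I expect the main difficulty to be the bookkeeping in Step 2. It requires reconciling $\|\cdot\|_{ex2}$ with $\mathrm{tr}_{ex}(XX^*)$, which differ by scalar cross terms. It also requires justifying that the supremum in Step 3 is attained within the extended class $U(\mathcal H)\cap L^1_{ex}(\mathcal H)$, not merely among all unitaries.
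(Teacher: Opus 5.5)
\textbf{Main gap: the constants $4$ and $2$.} Your skeleton is the natural one: split off the scalar parts, expand, optimize over admissible unitaries through a polar decomposition, then let $\gamma\to0$. (The paper gives no proof of its own; it quotes \cite{M-1}.) But the proposal cannot produce the stated constants, because the ``bookkeeping terms'' you expect in Step~2 do not exist. Step~1 shows that $X=P-Q(I+U)$ has scalar part $0$, so $\|X\|_{ex2}^2=\mathrm{tr}_{ex}(XX^*)$ exactly. Moreover, $\mathrm{tr}_{ex}$ is linear and cyclic on $L^1_{ex}(\mathcal H)$, and the scalar parts of $P^2+Q^2$ and of $P(I+U)^*Q+Q(I+U)P$ are both $2\gamma$. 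Hence $\|X\|_{ex2}^2=\mathrm{tr}_{ex}(A+\gamma I)+\mathrm{tr}_{ex}(B+\gamma I)-2\Re\,\mathrm{tr}_{ex}\bigl(PQ(I+U)\bigr)$, with nothing left over. With Step~3 this gives $d_{\mathrm{pro}}^2=\mathrm{tr}_{ex}[\,\cdots]$ with constant $1$, and Step~4 then gives $\lim_{\gamma\to0}d_{\mathrm{pro}}=d_W(A,B)$, not $2d_W(A,B)$. The sanity check you propose confirms this. Take $A=a\langle\cdot,e\rangle e$ with $a>0$, $\|e\|=1$, and $B=0$. The choice $U=0$ already gives $d_{\mathrm{pro}}\le\sqrt{a+\gamma}-\sqrt{\gamma}$, whereas the displayed formula gives $d_{\mathrm{pro}}=2(\sqrt{a+\gamma}-\sqrt{\gamma})$. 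So with $d_{\mathrm{pro}}$ as defined in this paper, the factor $4$ and the limit $2d_W$ are false. They hold only if $d_{\mathrm{pro}}$ carries a prefactor $2$, presumably the $1/\alpha$ normalization of the $\alpha$-Procrustes distance at $\alpha=1/2$ used in the cited source. Your Step~4 silently imports the $4$ to reach $2d_W$. You must either add that prefactor to the definition or prove the statement with constant $1$ and limit $d_W$.

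\textbf{Step 3 needs a different argument.} Here $QPV$ is not trace class, so the bound must be stated for $\mathrm{tr}_{ex}$. Von Neumann's inequality does not transfer, because $\mathrm{tr}_{ex}$ is not a positive functional. In fact the absolute-value bound is false: for $T=\gamma I$ and $V=I-2E$ with $E$ a rank-two projection, $\mathrm{tr}_{ex}(TV)=-3\gamma$ while $\mathrm{tr}_{ex}|T|=\gamma$. Only the real-part bound holds, and it needs its own proof. Write $T=PQ=W|T|$ and $VW=I+K$ with $K\in L^1(\mathcal H)$. Unitarity gives $K+K^*=-K^*K$, hence $\Re\,\mathrm{tr}_{ex}(TV)=\mathrm{tr}_{ex}|T|-\frac12\mathrm{tr}\bigl(|T|^{1/2}K^*K|T|^{1/2}\bigr)\le\mathrm{tr}_{ex}|T|$, with equality iff $V=W^*$. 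Then use $\mathrm{tr}_{ex}|PQ|=\mathrm{tr}_{ex}|QP|$.

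\textbf{Step 4 is justified incorrectly.} The claim $(A+\gamma I)^{1/2}-\gamma^{1/2}I\to A^{1/2}$ in trace norm fails whenever $A^{1/2}\notin L^1(\mathcal H)$. Powers--St\o rmer gives only Hilbert--Schmidt continuity of the square root, which does not control traces. The limit $\mathrm{tr}(|QP|-\gamma I)\to\mathrm{tr}(A^{1/2}BA^{1/2})^{1/2}$ is nevertheless true. One way to prove it: write $|QP|-\gamma I=g_\gamma(PBP+\gamma A)$ with $g_\gamma(t)=\sqrt{t+\gamma^2}-\gamma$. Note that $PBP$ has the same nonzero eigenvalues as $B^{1/2}AB^{1/2}+\gamma B$. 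Operator monotonicity of $g_\gamma$ and monotone convergence give the lower bound. For the upper bound, use Rotfel'd's subadditivity $\mathrm{tr}\,g(X+Y)\le\mathrm{tr}\,g(X)+\mathrm{tr}\,g(Y)$, together with $\mathrm{tr}\,g_\gamma(\gamma A)\to0$ and $\mathrm{tr}\,g_\gamma(\gamma B)\to0$ by dominated convergence.
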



\section{The Wasserstein Mean of  positive definite operators}

We begin by defining the Wasserstein mean for extended (unitized) Hilbert–Schmidt positive definite operators, and then extend the definition to positive definite operators. We consider the minimization problem as finding the least squares mean of $A_1, A_2 , \ldots, A_m \in \Sigma_1 $ as follows:
\begin{equation} \label{Wass-mean-1}
\underset{X \in \Sigma_1}{\arg \min} \sum_{i=1}^{m} w_i d_{\mathrm{pro}}^2(X, A_i),
\end{equation}
where $\omega = (w_1, \ldots, w_m)$ represents a probability vector: $w_i > 0$ for all $i$ and $\sum_{i=1}^{m} w_i = 1$. We show the existence and uniqueness of a minimizer for the objective function $F(X) = \sum_{i=1}^{m} w_i d_{\mathrm{pro}}^2(X, A_i)$. This unique solution is called the Wasserstein barycenter of $A_1, A_2, \ldots, A_m \in \Sigma_1$ and denoted as $\Omega(\omega; A_1, A_2, \ldots, A_m)$.

We denote as $\Delta_{m}$ the set of all probability vectors in $\mathbb{R}^{m}$.
\begin{theorem}
Let $(A_1, A_2, \ldots, A_m) \in \Sigma_1^m$ and $\omega = (w_{1}, w_{2}, \dots, w_{m}) \in \Delta_{m}$. Then the minimization problem \eqref{Wass-mean-1} has a unique solution, which satisfies the operator equation \eqref{gradF}.
\end{theorem}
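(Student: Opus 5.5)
The approach is the classical matrix argument, transported to $\Sigma_1$ via the Procrustes formula of Theorem~\ref{T:Wass-form}. By that theorem, for $X\in\Sigma_1$,
\[
F(X)=4\sum_{i=1}^m w_i\,\mathrm{tr}_{ex}\bigl[X+A_i-2(X^{1/2}A_iX^{1/2})^{1/2}\bigr]
=4\,\mathrm{tr}_{ex}X+4\sum_i w_i\mathrm{tr}_{ex}A_i-8\sum_i w_i\,\mathrm{tr}_{ex}(X^{1/2}A_iX^{1/2})^{1/2}.
\]
We read this with the convention that the scalar parts of $X$ and $A_i$ match, as in the definition of $d_{\mathrm{pro}}$. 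The proof then proceeds in three steps: existence of a minimizer by coercivity and compactness, strict convexity for uniqueness, and the first-order condition for \eqref{gradF}.

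\textbf{Existence.} The map $X\mapsto \mathrm{tr}_{ex}(X^{1/2}A_iX^{1/2})^{1/2}=\mathrm{tr}_{ex}|A_i^{1/2}X^{1/2}|$ grows like $\|X\|^{1/2}$ in the extended trace norm, while $\mathrm{tr}_{ex}X$ grows linearly. Hence $F$ is coercive on $\Sigma_1$, and sublevel sets are bounded in $\|\cdot\|_{ex1}$.

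To extract a minimizer, we work with $Y=X^{1/2}$ in $SL^2_{ex}(\mathcal H)$, where $F$ becomes a quadratic term $\|Y\|_{ex2}^2$ minus a concave-in-$X$ term. A minimizing sequence is bounded in the Hilbert space $SL^2_{ex}$, so it has a weakly convergent subsequence. Weak lower semicontinuity of $\|Y\|^2$ holds. For the cross term we use the variational formula
\[
\mathrm{tr}_{ex}|A_i^{1/2}Y|=\max_U \mathrm{Re}\,\mathrm{tr}_{ex}(U A_i^{1/2}Y)
\]
from the Procrustes minimization, which exhibits $-\mathrm{tr}_{ex}|A_i^{1/2}Y|$ as a supremum of weakly continuous functionals, hence weakly lower semicontinuous.

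We must still rule out that the limit degenerates to the boundary, i.e.\ loses invertibility. For this we use a descent argument: if $X$ had a small eigenvalue direction, perturbing toward $I$ decreases $F$, because the derivative of $-\mathrm{tr}(X^{1/2}A_iX^{1/2})^{1/2}$ blows up near singular $X$ (a $t^{1/2}$-type behaviour). In practice I would first prove the a priori Loewner bounds
\[
\Bigl(\sum_i w_iA_i^{-1/2}\Bigr)^{-2}\le X\le \sum_i w_iA_i
\]
for any stationary point, and then restrict the minimization to the order interval $[\alpha I,\beta I]\cap\Sigma_1$, which is convex and weakly closed.

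\textbf{Uniqueness.} The map $X\mapsto \mathrm{tr}_{ex}(X^{1/2}A X^{1/2})^{1/2}$ is concave in $X$. It equals a fidelity-type expression
\[
\mathrm{tr}_{ex}(A^{1/2}XA^{1/2})^{1/2}
\]
(the unitized trace is invariant here because the spectra of $X^{1/2}AX^{1/2}$ and $A^{1/2}XA^{1/2}$ agree away from zero), and $X\mapsto \mathrm{tr}\,(A^{1/2}XA^{1/2})^{1/2}$ is concave since $t\mapsto t^{1/2}$ is operator concave. Strict concavity follows when $A$ is invertible, because $A^{1/2}\cdot A^{1/2}$ is injective and $t^{1/2}$ is strictly concave. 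Thus $F$ is strictly convex on $\Sigma_1$ and its minimizer is unique.

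\textbf{Stationarity.} The Fréchet derivative of $\mathrm{tr}_{ex}(A^{1/2}XA^{1/2})^{1/2}$ in a direction $H$ is
\[
\tfrac12\,\mathrm{tr}_{ex}\bigl[(A^{1/2}XA^{1/2})^{-1/2}A^{1/2}HA^{1/2}\bigr]=\tfrac12\,\mathrm{tr}_{ex}\bigl[(A\#_{1/2}X^{-1})H\bigr].
\]
Setting $DF(X)=0$ at the interior minimizer gives
\[
\sum_i w_i\,(A_i\#_{1/2}X^{-1})=I,
\]
which is equivalent to $X=\sum_i w_i(X^{1/2}A_iX^{1/2})^{1/2}$, i.e.\ \eqref{gradF}.

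\textbf{Main obstacle.} The hardest part is justifying these trace computations in the unitized trace-class setting. One must check that the $\gamma I$ parts cancel, so that each expression is genuinely in $L^1_{ex}$, and that the derivative identity holds there; at the same time, interiority of the minimizer must be established.
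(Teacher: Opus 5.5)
\textbf{There is a genuine gap: the existence step fails as written.} Your uniqueness and stationarity steps match the paper's argument. That argument is strict concavity of the square-root trace followed by $DF(X)=0\Leftrightarrow\sum_i w_i(A_i\#X^{-1})=I$. Your version, composing with the injective congruence $X\mapsto A_i^{1/2}XA_i^{1/2}$, is the precise form of the paper's reduction to $H(X)=\mathrm{tr}_{ex}(X^{1/2})$. The paper's argument does not prove existence of a minimizer at all; the existence step you add is where the proposal breaks.

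\textbf{Coercivity.} $F$ is not coercive in $\|\cdot\|_{ex1}$. With the scalar part $\gamma$ fixed, $\mathrm{tr}_{ex}X^{1/2}=\gamma^{1/2}+\tfrac12\gamma^{-1/2}\mathrm{tr}(X-\gamma I)+\cdots$ is to first order linear in the trace-class part, not $\|X\|^{1/2}$-like. Concretely, take all $A_i=\gamma I$, so that $F(X)=4\|X^{1/2}-\gamma^{1/2}I\|_2^2$. Let $X_N=(\gamma^{1/2}I+N^{-1/2}P_N)^2$ with $P_N$ a rank-$N$ projection. Then $F(X_N)=4$, but $\|X_N-\gamma I\|_1=2\gamma^{1/2}N^{1/2}+1\to\infty$. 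Through the Procrustes form $F(X)=4\sum_i w_i\min_U\|X^{1/2}-A_i^{1/2}U\|_{ex2}^2$, $F$ controls only $\|X^{1/2}-\gamma^{1/2}I\|_2$.

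\textbf{The switch to $Y=X^{1/2}$.} This does not repair the problem, for three reasons. First, a weak limit in $SL^2_{ex}(\mathcal H)$ only makes $Y^2-\gamma I$ Hilbert--Schmidt, and $Y$ may be non-invertible. So the limit need not lie in $\Sigma_1$, which is where \eqref{Wass-mean-1} is posed. Second, $\mathrm{tr}_{ex}(Y^2)\neq\|Y\|_{ex2}^2$: they differ by $2\gamma^{1/2}\mathrm{tr}(Y-\gamma^{1/2}I)$, which is not even defined for a Hilbert--Schmidt perturbation. Third, your semicontinuity step has the sign reversed. The expression $-\max_U\mathrm{Re}\,\mathrm{tr}_{ex}(UA_i^{1/2}Y)$ is an infimum of weakly continuous functionals, hence weakly \emph{upper} semicontinuous. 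A ``quadratic minus convex'' functional of $Y$ need not be weakly lower semicontinuous; lower semicontinuity should come from convexity in $X$, not in $Y$.

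\textbf{The order-interval restriction.} The Loewner bounds you invoke are properties of solutions of \eqref{gradF}, not of minimizing sequences. Minimizing over $[\alpha I,\beta I]\cap\Sigma_1$ yields only a variational inequality. Showing the constrained minimizer is not on the boundary of the interval is exactly the interiority you have not established, so the argument is circular; the descent remark is a heuristic.

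\textbf{A route that works here runs the other way.}
\begin{enumerate}
\item Produce a solution $X\in\mathbb P$ of \eqref{gradF}, as the paper does later by continuation in Theorems \ref{T:Wass_P} and \ref{wass_P}.
\item Verify separately that $X-\gamma I$ is trace class, which needs its own argument.
\item Use that $F$ is convex on the trace-norm-open convex set $\{X\in\Sigma_1: X-\gamma I\in S^1(\mathcal H)\}$. Then $DF(X)=0$ makes $X$ a global minimizer, and strict convexity makes it unique.
\end{enumerate}
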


From Theorem \ref{T:Wass-form} we have
\begin{equation*}\label{F}
F(X) = \sum_{i=1}^{m} 4 w_i \mathrm{tr}_{ex}(A_i) + \sum_{i=1}^{m} 4 w_i \mathrm{tr}_{ex}(X -2 (A_i^{1/2} XA_i^{1/2})^{1/2}).
\end{equation*}
We show that $F(X)$ is strictly convex. It is enough to show that $H(X) = \mathrm{tr}_{ex}(X^{1/2})$ is strictly concave.

\begin{theorem}
The function $H(X) = \mathrm{tr}_{ex}(X^{1/2})$ is strictly concave on $\Sigma_1$, i.e. for $\alpha, \beta > 0$ with $\alpha + \beta = 1$ and $A, B \in \Sigma_1$
\begin{displaymath}
H(\alpha A + \beta B) \geq \alpha H(A) + \beta H(B),
\end{displaymath}
where the equality holds if and only if $A = B$.
\end{theorem}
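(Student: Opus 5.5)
Write $A = A_0 + a I$ and $B = B_0 + b I$ with $A_0, B_0 \in S^1(\mathcal{H})$. We need to understand $H$ on $\Sigma_1$. The square root of an element of $\Sigma_1$ lies in $SL^1_{ex}(\mathcal{H})$: we have $(A_0 + aI)^{1/2} = a^{1/2} I + K$ with
\[
K = (A_0 + aI)^{1/2} - a^{1/2} I = A_0\bigl((A_0 + aI)^{1/2} + a^{1/2} I\bigr)^{-1},
\]
which is trace class. Here $a > 0$, since $A$ is invertible and $A_0$ is compact. Hence $H(A) = a^{1/2} + \tr(K)$ is well defined. Diagonalize $A_0$ with eigenvalues $\lambda_j$; then
\[
H(A) = a^{1/2} + \sum_j \bigl[(\lambda_j + a)^{1/2} - a^{1/2}\bigr].
\]

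To get concavity, I reduce to finite dimensions. Let $P_n$ be an increasing sequence of finite-rank orthogonal projections converging strongly to $I$, with ranges of dimension $n$. For $X = X_0 + xI \in \Sigma_1$, define
\[
H_n(X) = x^{1/2} + \tr_{P_n\mathcal{H}}\Bigl[(P_n X P_n)^{1/2} - x^{1/2} P_n\Bigr],
\]
where the square root is taken on $P_n \mathcal{H}$. Since $P_nXP_n = P_nX_0P_n + xP_n$ on $P_n\mathcal{H}$, we get
\[
H_n(X) = \tr\bigl((P_nXP_n)^{1/2}\bigr) - (n-1)x^{1/2}.
\]
The scalar part $x \mapsto x^{1/2}$ is only concave, not affine, so I must not handle the scalar terms separately. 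Instead, observe that $X \mapsto \tr\bigl((P_nXP_n)^{1/2}\bigr)$ is concave on matrices, because $t \mapsto t^{1/2}$ is operator concave (Löwner–Heinz) and the trace is monotone. The leftover term $-(n-1)x^{1/2}$ is convex in $x$, so this naive truncation fails.

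The fix is to work with the renormalized function $f(t) = (t + x)^{1/2} - x^{1/2}$, which is awkward when $x$ varies. So I switch approaches: prove concavity along segments by the second derivative. Set $X(s) = A + s(B - A)$ for $s \in [0,1]$; this path stays in $\Sigma_1$ with spectrum in $[\alpha, \beta]$, $\alpha > 0$. Let $D = B - A = D_0 + dI$. By the Daleckii–Krein formula, the Fréchet derivative of $X \mapsto X^{1/2} - x^{1/2}I$ along the path is trace class, and the second derivative of $h(s) = H(X(s))$ is
\[
h''(s) = \sum_{j,k} f^{[2]}(\mu_j, \mu_k, \mu_j)\,|D_{jk}|^2.
\]
Here $f(t) = t^{1/2}$, $\mu_j$ are the eigenvalues of $X(s)$ (including the continuous part coming from $I$), and $f^{[2]}$ is the second divided difference. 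Since $f$ is strictly operator concave, $f^{[2]} < 0$ on $[\alpha, \beta]^3$. More robustly, I will use the known formula
\[
\frac{d^2}{ds^2}\tr\bigl(X^{1/2}\bigr) = -\tr\Bigl(\bigl(DX^{1/2}\bigr)\text{-terms}\Bigr).
\]
Concretely, put $Y(s) = X(s)^{1/2}$. Differentiating $Y^2 = X$ gives $Y'Y + YY' = D$, and differentiating again gives $Y''Y + YY'' = -2Y'^2$. Taking the extended trace and using cyclicity (valid because $Y'$ and $Y''$ are in $L^1_{ex}$ with $Y$ bounded), we obtain
\[
\tr_{ex}(Y'') \le 0, \qquad \text{more precisely} \qquad \tr_{ex}(Y'') = -\tr_{ex}\bigl(Y^{-1}Y'^2\bigr) \le -\beta^{-1/2}\,\|Y'\|_{ex2}^2 .
\]
This makes sense since $Y'$ is self-adjoint in $SL^2_{ex}$; for the scalar parts, the extended trace of $cI$ is $c$, consistent with the inner product. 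The identity comes from $Y'' = -\int_0^\infty e^{-tY}\,2Y'^2\,e^{-tY}\,dt$, whose extended trace is $-\tr_{ex}(Y^{-1}Y'^2)$.

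Hence $h'' < 0$ unless $Y' = 0$. If $Y' = 0$ at some $s$, then $D = Y'Y + YY' = 0$, that is, $A = B$. So $h$ is strictly concave on $[0,1]$, which gives the inequality with equality only when $A = B$.

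The main obstacle is justifying the calculus in $L^1_{ex}$. One must show that $s \mapsto X(s)^{1/2}$ is twice differentiable as a map into $SL^1_{ex}(\mathcal{H})$, so that $\tr_{ex}$ commutes with differentiation. One must also show that the extended trace behaves correctly on the scalar parts, in particular the bookkeeping of the identity components of $Y'$ and $Y'^2$. I would handle this with the integral representation
\[
X^{1/2} = \frac{1}{\pi}\int_0^\infty \lambda^{-1/2}\, X(\lambda + X)^{-1}\, d\lambda,
\]
and resolvent identities. These show that the trace-class parts depend smoothly on $s$, with $\|\cdot\|_{ex1}$ bounds uniform for spectrum in $[\alpha, \beta]$.
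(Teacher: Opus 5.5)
The step that fails is $-\mathrm{tr}_{ex}(Y^{-1}Y'^2)\le-\beta^{-1/2}\|Y'\|_{ex2}^2$. Your identity $\mathrm{tr}_{ex}(Y'')=-\mathrm{tr}_{ex}(Y^{-1}Y'^2)$ is fine, but $\mathrm{tr}_{ex}$ is not a positive functional, and it does not induce $\|\cdot\|_{ex2}$. For $Z=Z_0+\gamma I$ one has $\mathrm{tr}_{ex}(Z^2)=\|Z_0\|_2^2+2\gamma\,\mathrm{tr}(Z_0)+\gamma^2$. Also, if $P$ is a projection of rank $n\ge 2$, then $I-P\ge 0$ but $\mathrm{tr}_{ex}(I-P)=1-n<0$. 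The scalar part of $Y'$ is $d/(2x^{1/2})$, where $x$ is the scalar part of $X(s)$. So the problem arises exactly when $A$ and $B$ have different scalar parts, and then nothing controls the sign.

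This cannot be repaired, because the statement itself is false in that case. Take $c>0$ and, for $x>0$, set $X_x:=cP+x(I-P)=(c-x)P+xI\in\Sigma_1$. Then $X_x^{1/2}=(c^{1/2}-x^{1/2})P+x^{1/2}I$, so
\[
H(X_x)=n\,c^{1/2}-(n-1)\,x^{1/2},
\]
which is strictly \emph{convex} along the affine segment $x\mapsto X_x$. Concretely, take $n=2$, $c=1$, $A=X_1=I$ and $B=X_9$. Then $H(A)=1$, $H(B)=-1$, and $H(\tfrac{A+B}{2})=H(X_5)=2-\sqrt{5}<0$. In your notation, $Y'=\frac{x'}{2\sqrt{x}}(I-P)$ and $h''=\frac{(n-1)x'^2}{4x^{3/2}}>0$. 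Your truncation remark had already detected this. For $X_0$ supported in $P_n\mathcal H$ one has $H_n(X)=H(X)$ exactly, so the convex term $-(n-1)x^{1/2}$ is a genuine feature of $H$, not an artifact of truncation.

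The paper's proof breaks for the same reason. It applies operator concavity of $t^{1/2}$ and then takes $\mathrm{tr}_{ex}$ of both sides, which tacitly assumes $\mathrm{tr}_{ex}$ is monotone. In the example, $(\tfrac{A+B}{2})^{1/2}-\tfrac{A^{1/2}+B^{1/2}}{2}=(\sqrt{5}-2)(I-P)\ge 0$, yet its extended trace is $2-\sqrt{5}<0$.

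What is true is the statement restricted to a slice $\{X_0+\gamma I\}\cap\Sigma_1$ with $\gamma$ fixed, which is the setting where $d_{\mathrm{pro}}$ is defined. There both arguments go through.
\begin{itemize}
\item In the paper's argument, the scalar parts of the two sides cancel. The difference is then a positive trace-class operator, whose trace is nonnegative and vanishes only if the operator does.
\item In yours, $d=0$ makes $Y'$ trace class. Then $\mathrm{tr}_{ex}(Y^{-1}Y'^2)=\mathrm{tr}(Y'Y^{-1}Y')\ge\|Y\|^{-1}\|Y'\|_2^2$ is an honest trace of a positive operator, and your strictness argument via $Y'Y+YY'=D$ works.
\end{itemize}
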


\begin{proof}
We know that the square-root function is operator concave, i.e.,
\begin{displaymath}
(\alpha A + \beta B)^{1/2} \geq \alpha A^{1/2} + \beta B^{1/2}.
\end{displaymath}
So taking the extended trace on both sides yields $H(\alpha A + \beta B) \geq \alpha H(A) + \beta H(B)$.

For $A \neq B$, assume that $H(\alpha A + \beta B) = \alpha H(A) + \beta H(B).$
Since $(\alpha A + \beta B)^{1/2} \geq \alpha A^{1/2} + \beta B^{1/2}$, it follows that $(\alpha A + \beta B)^{1/2} = \alpha A^{1/2} + \beta B^{1/2}$.
Squaring both sides and utilizing the identity $\alpha - \alpha^2 = \beta - \beta^2 = \alpha \beta$, we obtain
\begin{displaymath}
\alpha \beta (A^{1/2} - B^{1/2})^{2} = 0.
\end{displaymath}
This implies $A = B$, which contradicts our assumption. Thus, $H(X)$ is a strictly concave function on $\Sigma_1$.
\end{proof}

As mentioned, we want to show that $F(X)$ has a unique minimizer. We have already established that $F(X)$ is strictly convex. The derivative $DF(X)$, as calculated  in \cite{BJL}, is given by:
\begin{displaymath}
D F(X)(Y) = \sum_{i=1}^{m} 4 w_i \mathrm{tr}_{ex} (Y - (A_i \# X^{-1}) Y)
\end{displaymath}
From $D F(X) = 0$, we get
\begin{equation*}
\sum_{i=1}^{m} w_i A_i \# X^{-1} = I,
\end{equation*}
equivalently by congruence transformation,
\begin{equation} \label{gradF}
\sum_{i=1}^{m} w_{i} (X^{1/2} A_{i} X^{1/2})^{1/2} = X.
\end{equation}

Let $\mathbb{A} = (A_{1}, \dots, A_{m}) \in \mathbb{P}^{m}$, and let $\omega = (w_{1}, \dots, w_{m}) \in \Delta_{m}$. We denote as $\mathcal{W}(\omega; \mathbb{A})$ the set of all fixed points in $\mathbb{P}$ of \eqref{gradF}.
Note that
\begin{displaymath}
\begin{split}
\mathcal{W}(\omega; \mathbb{A}) & = \left\{ S \in \mathbb{P} : S = \sum_{i=1}^{m} w_{i} ( S^{1/2} A_{i} S^{1/2} )^{1/2} \right\} \\
& = \left\{ S \in \mathbb{P} : \sum_{i=1}^{m} w_{i} (A_{i} \# S^{-1}) = I \right\}
\end{split}
\end{displaymath}

\begin{lemma} \label{ineq}
Let $\mathbb{A} = (A_1, A_2, \ldots, A_m) \in \mathbb{P}^{m}$ and $\omega = (w_{1}, w_{2}, \dots, w_{m}) \in \Delta_{m}$ with $\alpha I \leq A_i \leq \beta I $ for all $i$ and some $\alpha, \beta > 0$. Then $X \in \mathcal{W}(\omega; \mathbb{A})$ satisfies
\begin{displaymath}
\alpha I \leq X \leq \beta I.
\end{displaymath}
\end{lemma}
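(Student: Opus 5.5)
We work with the second description of $\mathcal{W}(\omega;\mathbb{A})$: $X$ lies in $\mathcal{W}(\omega;\mathbb{A})$ exactly when $\sum_{i} w_i (A_i \# X^{-1}) = I$. The argument uses only two standard properties of the two-variable geometric mean $\#$ on $\mathbb{P}$: monotonicity in each argument, and joint homogeneity in the form $(sA)\#(tB) = \sqrt{st}\,(A\#B)$ for scalars $s,t>0$. Both hold for bounded positive invertible operators, since $A\#B = A^{1/2}(A^{-1/2}BA^{-1/2})^{1/2}A^{1/2}$ and $t\mapsto t^{1/2}$ is operator monotone. We also use that $A\#B = B\#A$, so monotonicity in the second slot is available as well.

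For the upper bound $X \le \beta I$, suppose instead that $X \not\le \beta I$. Set $\lambda := \|X\| = \max \sigma(X) > \beta$. Since $X \le \lambda I$, we have $X^{-1} \ge \lambda^{-1} I$. Operator monotonicity of $t^{-1}$ is not needed here; this is just the spectral theorem for a single operator.

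The key step is the opposite direction. From $A_i \ge \alpha I$ and $X^{-1} \ge \lambda^{-1} I$, monotonicity gives only the lower bound $A_i \# X^{-1} \ge \sqrt{\alpha/\lambda}\, I$, which does not contradict anything. We therefore use a spectral approximation instead. From $A_i \le \beta I$ we get
\[
A_i \# X^{-1} \le (\beta I)\# X^{-1} = \beta^{1/2} X^{-1/2}.
\]
Summing with the weights gives
\[
I = \sum_i w_i (A_i \# X^{-1}) \le \beta^{1/2} X^{-1/2}.
\]
Because $X^{-1/2}$ commutes with $X$, this is equivalent to $X^{1/2} \le \beta^{1/2} I$. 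By the spectral theorem (a function of a single operator), this is $X \le \beta I$. This settles the upper bound directly, so no contradiction argument is needed.

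The lower bound is symmetric. From $A_i \ge \alpha I$ we get
\[
A_i \# X^{-1} \ge (\alpha I)\# X^{-1} = \alpha^{1/2} X^{-1/2},
\]
hence $I \ge \alpha^{1/2} X^{-1/2}$. This gives $X^{1/2} \ge \alpha^{1/2} I$, so $X \ge \alpha I$.

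The only point needing care is the identity $(cI)\#B = c^{1/2}B^{1/2}$ for $c>0$. It follows from the defining formula of $\#$ with $A = cI$, or equivalently from the symmetric form $B\#(cI) = B^{1/2}(cB^{-1})^{1/2}B^{1/2}$.

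Since $\mathcal{H}$ is arbitrary, we must check that every step uses only functional calculus of single operators and the monotonicity of $\#$. The monotonicity of $\#$ holds in $B(\mathcal{H})$ by Kubo–Ando theory, so no finite-dimensional arguments such as eigenvectors are required. We expect no real obstacle beyond this check.
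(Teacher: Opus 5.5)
Your argument is correct and is the paper's proof up to congruence by $X^{1/2}$: the paper applies operator monotonicity of $t^{1/2}$ to $\alpha X \le X^{1/2}A_iX^{1/2} \le \beta X$, sums to get $\sqrt{\alpha}\,X^{1/2} \le X \le \sqrt{\beta}\,X^{1/2}$, and solves by functional calculus. That is exactly your sandwich $\alpha^{1/2}X^{-1/2} \le I \le \beta^{1/2}X^{-1/2}$ conjugated by $X^{1/2}$; the abandoned contradiction setup and the unused ``spectral approximation'' remark in your write-up are vestigial and can simply be deleted.
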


\begin{proof}
Let $X \in \mathcal{W}(\omega; \mathbb{A})$ . Since $\alpha I \leq A_{i} \leq \beta I$ for all $i$,
\begin{displaymath}
\sqrt{\alpha} X^{1/2} \leq (X^{1/2} A_{i} X^{1/2})^{1/2} \leq \sqrt{\beta} X^{1/2}.
\end{displaymath}
We get
\begin{displaymath}
\sqrt{\alpha} X^{1/2} \leq \sum_{i=1}^{m} w_{i} (X^{1/2} A_{i} X^{1/2})^{1/2} = X \leq \sqrt{\beta} X^{1/2}.
\end{displaymath}
Solving the above for $X$, we obtain the desired inequalities.
\end{proof}

\begin{lemma} \label{homo_set} $($Homogeneity$)$
$\mathcal{W}(\omega; \alpha\mathbb{A})= \alpha \mathcal{W}(\omega; \mathbb{A})$ for any $\alpha > 0$.
\end{lemma}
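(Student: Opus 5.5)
The plan is to prove the two inclusions directly from the fixed-point description
\begin{displaymath}
\mathcal{W}(\omega; \mathbb{A}) = \Bigl\{ S \in \mathbb{P} : S = \sum_{i=1}^{m} w_{i} ( S^{1/2} A_{i} S^{1/2} )^{1/2} \Bigr\},
\end{displaymath}
using only that the square root is positively homogeneous of degree $1/2$. Here $\alpha\mathbb{A} = (\alpha A_1, \dots, \alpha A_m)$. The natural candidate map is $S \mapsto \alpha S$, since the right-hand side of \eqref{gradF} is homogeneous of degree one jointly in $(S, \mathbb{A})$.

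For the inclusion $\alpha \mathcal{W}(\omega; \mathbb{A}) \subseteq \mathcal{W}(\omega; \alpha\mathbb{A})$, take $S \in \mathcal{W}(\omega; \mathbb{A})$ and set $T = \alpha S$. Then $T \in \mathbb{P}$ and $T^{1/2} = \alpha^{1/2} S^{1/2}$, since $\alpha^{1/2}S^{1/2}$ is positive and squares to $\alpha S$. It follows that
\begin{displaymath}
(T^{1/2} (\alpha A_i) T^{1/2})^{1/2} = (\alpha^{2} S^{1/2} A_i S^{1/2})^{1/2} = \alpha (S^{1/2} A_i S^{1/2})^{1/2},
\end{displaymath}
again by uniqueness of the positive square root. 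Summing with weights $w_i$ gives $\sum_i w_i (T^{1/2}(\alpha A_i)T^{1/2})^{1/2} = \alpha S = T$, so $T \in \mathcal{W}(\omega; \alpha\mathbb{A})$.

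For the reverse inclusion, apply the same computation with $\alpha\mathbb{A}$ in place of $\mathbb{A}$ and $\alpha^{-1}$ in place of $\alpha$. This gives $\alpha^{-1}\mathcal{W}(\omega; \alpha\mathbb{A}) \subseteq \mathcal{W}(\omega; \mathbb{A})$, and multiplying by $\alpha$ yields the claim.

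As an alternative check, one can use the second description $\sum_i w_i (A_i \# S^{-1}) = I$. Here $(\alpha A_i)\#(\alpha S)^{-1} = A_i \# S^{-1}$ by joint homogeneity of the geometric mean, $(sA)\#(tB) = \sqrt{st}\,(A\#B)$.

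There is no real obstacle. The only point needing care is the uniqueness of positive square roots in $\mathbb{P}$, which justifies $(c^2 Y)^{1/2} = c\,Y^{1/2}$ for $c>0$ and $Y \geq 0$.
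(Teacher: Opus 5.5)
Your proof is correct. The paper states this lemma without proof and treats it as immediate; your two-inclusion check, based on $T^{1/2}=\alpha^{1/2}S^{1/2}$ and $(\alpha^{2}S^{1/2}A_iS^{1/2})^{1/2}=\alpha(S^{1/2}A_iS^{1/2})^{1/2}$ (or equivalently $(\alpha A_i)\#(\alpha S)^{-1}=A_i\#S^{-1}$), supplies exactly that omitted verification, and the bijection $S\mapsto\alpha S$ is what Corollary~\ref{corol_P} uses to transfer uniqueness from the rescaled tuple.
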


In the following, the Loewner order interval is defined by
$ [\alpha I, \beta I] : = \{ X \in \mathbb{P}: \alpha I \leq X \leq \beta I \} $
for $0< \alpha< \beta $.

%

\begin{theorem} [Implicit Function Theorem \cite{L-b}, Theorem 5.9]
Let $X$ and $Y$ be Banach spaces, $U$ an open subset of $X \times Y$, and $(x_0, y_0)$ a point in $U$ such that $f(x_0, y_0) = 0$, where $f : U \rightarrow Y$ is continuously Fréchet differentiable with respect to $y$ at $(x_0, y_0)$, and $\frac{\partial f}{\partial y}$ is invertible at $(x_0, y_0)$.
Then there exist open sets $V \subseteq X$ containing $x_0$ and $W \subseteq Y$ containing $y_0$, and a unique continuously differentiable function $g : V \rightarrow W$ such that $f(x, g(x)) = 0$ for all $x \in V$ and $g(x_0) = y_0$.
\end{theorem}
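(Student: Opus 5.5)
Since this is the classical Implicit Function Theorem quoted from \cite{L-b}, I would prove it by the standard contraction-mapping argument. I read the hypotheses in the usual way: $f$ is continuous on $U$, the partial derivative $\frac{\partial f}{\partial y}$ exists and is continuous on a neighbourhood of $(x_0,y_0)$, and, for the differentiability of $g$, $f$ is continuously Fr\'echet differentiable jointly. The last assumption is needed: without differentiability of $f$ in $x$, one only obtains a continuous $g$.

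\emph{Step 1 (reduction to a fixed-point problem).} Put $L = \bigl(\frac{\partial f}{\partial y}(x_0,y_0)\bigr)^{-1}$, a bounded operator by the bounded inverse theorem. Define
\[
T_x(y) = y - L f(x,y).
\]
Then $f(x,y)=0$ if and only if $T_x(y)=y$. Moreover
\[
D_y T_x(y) = I - L\,\tfrac{\partial f}{\partial y}(x,y),
\]
which vanishes at $(x_0,y_0)$. By continuity of $\frac{\partial f}{\partial y}$, choose $r>0$ and $\delta_1>0$ such that $\|D_yT_x(y)\|\le 1/2$ whenever $\|x-x_0\|<\delta_1$ and $\|y-y_0\|\le r$. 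The mean value inequality on the convex closed ball $\bar B(y_0,r)$ then shows that each such $T_x$ is $\tfrac12$-Lipschitz there.

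\emph{Step 2 (self-map and fixed point).} Using continuity of $x\mapsto f(x,y_0)$ and $f(x_0,y_0)=0$, shrink to $\delta\le\delta_1$ so that $\|L f(x,y_0)\|\le r/2$ for $\|x-x_0\|<\delta$. For $y\in\bar B(y_0,r)$ we get
\[
\|T_x(y)-y_0\|\le \|T_x(y)-T_x(y_0)\| + \|Lf(x,y_0)\|\le r/2+r/2 = r,
\]
so $T_x$ maps the ball into itself. Banach's fixed point theorem gives a unique $g(x)\in \bar B(y_0,r)$ with $f(x,g(x))=0$. Set $V=B(x_0,\delta)$ and $W=B(y_0,r)$, shrinking $V$ if necessary so that $g(V)\subseteq W$. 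Uniqueness of $g$ follows from uniqueness of the fixed point in the ball, and $g(x_0)=y_0$.

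\emph{Step 3 (continuity of $g$).} Write $g(x)-g(x')=T_x(g(x))-T_{x'}(g(x'))$ and insert $\pm T_x(g(x'))$. This gives
\[
\|g(x)-g(x')\|\le 2\|L\|\,\|f(x,g(x'))-f(x',g(x'))\|,
\]
which tends to $0$ as $x\to x'$.

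\emph{Step 4 (differentiability).} On $V$ the operator $\frac{\partial f}{\partial y}(x,g(x))$ is invertible: indeed $\|I-L\frac{\partial f}{\partial y}\|\le 1/2$, so a Neumann series inverts $L\frac{\partial f}{\partial y}$. Expand
\[
0 = f(x+h,g(x+h))-f(x,g(x))
\]
by joint Fr\'echet differentiability. From this, first derive the Lipschitz-type bound $\|g(x+h)-g(x)\| = O(\|h\|)$, then conclude
\[
g'(x) = -\Bigl(\tfrac{\partial f}{\partial y}(x,g(x))\Bigr)^{-1}\tfrac{\partial f}{\partial x}(x,g(x)).
\]
This expression is continuous in $x$ because inversion is continuous on invertible operators.

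I expect Step 4 to be the main obstacle. Controlling the remainder term requires first obtaining the $O(\|h\|)$ bound on the increment of $g$ from the uniform $\tfrac12$-contraction, and only afterwards absorbing the $o(\|h\|+\|g(x+h)-g(x)\|)$ error.
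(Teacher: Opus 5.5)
The paper gives no proof of this statement: it is quoted from the cited source and only applied (in Theorems \ref{T:Wass-epsilon}, \ref{T:Wass_P} and \ref{wass_P}). There is therefore no internal argument to compare against.

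Your fibrewise contraction-mapping proof is the standard one, and it is correct. Your strengthening of the hypotheses is necessary, not cosmetic. As printed, with $f$ only continuously differentiable in $y$, the conclusion fails. For example, $f(x,y)=y-|x|$ on $\mathbb{R}\times\mathbb{R}$ has $\partial f/\partial y\equiv 1$, yet the only solution $g(x)=|x|$ is not differentiable at $0$. If $f$ is discontinuous in $x$, $g$ need not even be continuous. The maps to which the paper applies the theorem are $(\mathbb{A},X)\mapsto X-\sum_i w_i(X^{1/2}A_iX^{1/2})^{1/2}$ and $(t,X)\mapsto I-\sum_i w_i(A_i^t\# X^{-1})$. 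Both are jointly $\mathcal{C}^1$, so your reading is exactly the one the paper needs.

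A common alternative textbook route applies the inverse function theorem to $F(x,y)=(x,f(x,y))$, whose derivative at $(x_0,y_0)$ is block-triangular with invertible diagonal blocks. Your direct argument is more economical. Steps 1--3 (existence, local uniqueness, continuity of $g$) use only continuity of $f$ and of $\partial f/\partial y$ near $(x_0,y_0)$, and joint differentiability enters only in Step 4. The inverse-function route needs $f$ jointly $\mathcal{C}^1$ from the start, but it delivers the derivative formula and higher regularity for free.

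Two bookkeeping points:
\begin{itemize}
\item Choose $\delta, r$ so that $B(x_0,\delta)\times\bar B(y_0,r)\subseteq U$.
\item To get $g(V)\subseteq W$ without the forward reference to Step 3, use the same splitting: $\|g(x)-y_0\|\le\frac12\|g(x)-y_0\|+\|Lf(x,y_0)\|$, i.e.\ $\|g(x)-y_0\|\le 2\|Lf(x,y_0)\|$. Demanding $\|Lf(x,y_0)\|<r/2$ strictly then already places $g(x)$ in the open ball.
\end{itemize}
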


\begin{theorem} \label{T:Wass-epsilon}
Let $\omega = (w_{1}, w_{2}, \dots, w_{m}) \in \Delta_{m}$. Then there exists $\epsilon_{\omega} > 1$ such that for any $\mathbb{A} \in [\epsilon_{\omega}^{-1} I, \epsilon_{\omega} I]^m$, the equation \eqref{gradF} has a unique solution in $[\epsilon_{\omega}^{-1} I, \epsilon_{\omega} I]$.
\end{theorem}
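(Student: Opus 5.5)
We apply the Implicit Function Theorem (stated above) at the base point where every $A_i = I$ and $X = I$. Consider the map
\begin{displaymath}
f : S(\mathcal{H})^{m} \times S(\mathcal{H}) \supset \mathbb{P}^m \times \mathbb{P} \to S(\mathcal{H}), \qquad f(\mathbb{A}, X) = \sum_{i=1}^{m} w_i (A_i \#_{1/2} X^{-1}) - I,
\end{displaymath}
whose zero set in $\mathbb{P}$ is exactly $\mathcal{W}(\omega;\mathbb{A})$. Clearly $f(I,\dots,I; I) = 0$. The map $(A,Y) \mapsto A \#_{1/2} Y = A^{1/2}(A^{-1/2} Y A^{-1/2})^{1/2}A^{1/2}$ is real-analytic on $\mathbb{P}\times\mathbb{P}$ in the operator norm, because it is built from holomorphic functional calculus and inversion. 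Hence $f$ is continuously Fréchet differentiable on the open set $\mathbb{P}^m\times\mathbb{P}$.

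The partial derivative in $X$ at the base point is easy to compute. Since $I \#_{1/2} X^{-1} = X^{-1/2}$, we get $\partial_X f(I,\dots,I;I)(Y) = \sum_i w_i \cdot(-\tfrac12 Y) = -\tfrac12 Y$, which is invertible on $S(\mathcal{H})$. The Implicit Function Theorem therefore provides an open neighbourhood $V$ of $(I,\dots,I)$, an open neighbourhood $W$ of $I$, and a $C^1$ map $g : V \to W$ with the following property: for $\mathbb{A}\in V$, $X=g(\mathbb{A})$ is the \emph{only} solution of $f(\mathbb{A},X)=0$ lying in $W$. Here uniqueness in $W$ is part of the standard conclusion; if one wants it from first principles, it follows from the contraction mapping argument in the proof, because $\partial_X f$ stays close to $-\tfrac12\,\mathrm{id}$ on a small ball. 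Shrinking the neighbourhoods, we may take $V \supseteq \{\mathbb{A} : \|A_i - I\| < r\}$ and $W \supseteq \{X : \|X - I\| < \rho\}$ for some $r,\rho>0$.

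Now choose $\epsilon_\omega > 1$ so small that $\epsilon_\omega - 1 < \min(r,\rho)$ and $1 - \epsilon_\omega^{-1} < \min(r,\rho)$. Then $[\epsilon_\omega^{-1}I,\epsilon_\omega I]$ lies inside the operator norm ball of radius $\max(\epsilon_\omega-1,1-\epsilon_\omega^{-1})$ about $I$. Any $\mathbb{A}\in[\epsilon_\omega^{-1}I,\epsilon_\omega I]^m$ therefore lies in $V$, and $g(\mathbb{A})$ exists. It remains to place the solution in the interval and rule out others there:
\begin{itemize}
\item By Lemma~\ref{ineq}, with $\alpha = \epsilon_\omega^{-1}$ and $\beta = \epsilon_\omega$, every element of $\mathcal{W}(\omega;\mathbb{A})$ lies in $[\epsilon_\omega^{-1}I,\epsilon_\omega I] \subset W$. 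In particular $g(\mathbb{A})$ does.
\item Any other solution in the interval also lies in $W$, so it coincides with $g(\mathbb{A})$ by the local uniqueness.
\end{itemize}

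The main obstacle is that local uniqueness must hold on a full neighbourhood $W$ and not merely along the graph of $g$. This is where care with the statement of the Implicit Function Theorem is needed. If necessary, I would prove it directly: on $\{\|X-I\|\le\rho\}$ and for $\|A_i - I\|\le r$, the operator $\partial_X f(\mathbb{A},X)$ differs from $-\tfrac12\,\mathrm{id}$ by less than $\tfrac14$ in norm, by continuity of the derivative. For two zeros $X_1,X_2$, the mean value inequality then gives $\|f(\mathbb{A},X_1)-f(\mathbb{A},X_2) + \tfrac12(X_1-X_2)\| \le \tfrac14\|X_1-X_2\|$, so $\tfrac12\|X_1-X_2\|\le\tfrac14\|X_1-X_2\|$, which forces $X_1=X_2$.
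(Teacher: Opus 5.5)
Your proof is correct and follows the paper's argument: the implicit function theorem at $(I,\dots,I;I)$ with partial derivative $\pm\tfrac12\,\mathrm{id}$ (you use the equivalent form $\sum_i w_i(A_i\# X^{-1})-I$ rather than the paper's $X-\sum_i w_i(X^{1/2}A_iX^{1/2})^{1/2}$), followed by Lemma~\ref{ineq} to trap every solution in $[\epsilon_\omega^{-1}I,\epsilon_\omega I]\subset W$. Your mean-value-inequality argument for uniqueness of zeros in $W$ fills in a point the paper leaves implicit, since its stated version of the implicit function theorem only asserts uniqueness of the function $g$.
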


\begin{proof}
We define the function $H^{\omega} : \mathbb{P}^m \times \mathbb{P} \rightarrow S(\mathcal{H})$ by
\begin{displaymath}
H^{\omega} (\mathbb{A}, X) := X - \sum_{i=1}^{m} w_{i} (X^{1/2} A_{i} X^{1/2})^{1/2},
\end{displaymath}
for $\mathbb{A} = (A_1, A_2, \ldots, A_m) \in \mathbb{P}^m$ and $\omega = (w_{1}, w_{2}, \dots, w_{m}) \in \Delta_{m}$. We simply write $H^{\omega}_{\mathbb{A}}(X) := H^{\omega}(\mathbb{A}, X)$. The function $H^{\omega}: \mathbb{P}^m \times \mathbb{P} \rightarrow S(\mathcal{H})$ is continuously Fréchet differentiable.
The derivative $D_2 H^{\omega}_{\mathbb{A}}(X)$ maps from $S(\mathcal{H})$ to $S(\mathcal{H})$. Considering $\mathbb{I} = (I, \ldots, I) \in \mathbb{P}^m$, we find $H^{\omega}_{\mathbb{I}}(X) = X - X^{1/2}$ and $H^{\omega}_{\mathbb{I}}(I) = 0$.

The derivative $D_2 H^{\omega}_{\mathbb{I}}(X)(Y)$ is given by
\begin{displaymath}
D_2 H^{\omega}_{\mathbb{I}}(X)(Y) = Y - \int_0^\infty e^{-t X^{1/2}} Y e^{-t X^{1/2}} \, dt
\end{displaymath}
At $X = I$,
\begin{displaymath}
D_2 H^{\omega}_{\mathbb{I}}(I)(Y) = Y - \int_0^\infty e^{-2t} \, dt \, Y = Y - \frac{1}{2} Y = \frac{1}{2} Y.
\end{displaymath}
Thus, $D_2 H^{\omega}_{\mathbb{I}} (I) : S (\mathcal{H}) \rightarrow S (\mathcal{H})$ is an invertible map.
By Implicit Function Theorem, there exist open sets $V \subseteq \mathbb{P}^m$ containing $\mathbb{I}$ and $W \subseteq \mathbb{P}$ containing $I$, and a unique continuously differentiable function $G : V \rightarrow W$ such that $H^{\omega}_{\mathbb{B}} ( G(\mathbb{B})) = 0$ for all $\mathbb{B} \in V$ and $G(\mathbb{I}) = I$.

Choose $\epsilon >1$ such that $[\epsilon^{-1} I , \epsilon I]^m \subset V$ and $[\epsilon^{-1} I, \epsilon I] \subset W$. Then $\mathbb{A} \in [\epsilon^{-1} I, \epsilon I]^m$.
For $X \in \Gamma(\omega, \mathbb{A} )$,
\begin{displaymath}
\epsilon^{-1} I \leq X \leq \epsilon I
\end{displaymath}
by Lemma \ref{ineq}.
Therefore, $X \in [\epsilon^{-1} I, \epsilon I] \subset W$. From the previous discussion, there exists a unique continuously differentiable function $G: [\epsilon^{-1} I , \epsilon I]^m \to [\epsilon^{-1} I , \epsilon I]$ such that $H^{\omega}_{\mathbb{A}} (X) = 0$ and $G(\mathbb{A}) = X$, which means $\mathcal{W}(\omega; \mathbb{A}) = \{ X \}$.
Hence, the equation \eqref{gradF} has a unique solution in $[\epsilon_{\omega}^{-1} I, \epsilon_{\omega} I]$.
\end{proof}

\begin{corollary}\label{corol_P}
Let $\mathbb{A } =(A_1, A_2, \ldots, A_m) \in \mathbb{P}^{m}$ and $\omega = (w_{1}, w_{2}, \dots, w_{m}) \in \Delta_{m}$ with $\alpha I \leq A_j \leq \beta I$ for all $j$ and some $\alpha, \beta > 0$. If $\epsilon_{\omega}^2 \geq \beta / \alpha$, then the equation \eqref{gradF} has a unique solution.
\end{corollary}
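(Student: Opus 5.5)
The idea is to reduce to Theorem \ref{T:Wass-epsilon} by rescaling, using the homogeneity of Lemma \ref{homo_set}. Set $c = (\alpha\beta)^{-1/2}$ and consider the scaled tuple $c\mathbb{A} = (cA_1, \ldots, cA_m)$. Multiplying $\alpha I \leq A_j \leq \beta I$ by $c$ gives
\begin{displaymath}
\sqrt{\alpha/\beta}\, I \leq cA_j \leq \sqrt{\beta/\alpha}\, I .
\end{displaymath}
The hypothesis $\epsilon_\omega^2 \geq \beta/\alpha$ means $\sqrt{\beta/\alpha} \leq \epsilon_\omega$. It then follows that $c\mathbb{A} \in [\epsilon_\omega^{-1} I, \epsilon_\omega I]^m$.

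Next, apply Theorem \ref{T:Wass-epsilon} to $c\mathbb{A}$. It says \eqref{gradF} with data $c\mathbb{A}$ has a unique solution in $[\epsilon_\omega^{-1} I, \epsilon_\omega I]$. We need uniqueness in all of $\mathbb{P}$, i.e.\ that $\mathcal{W}(\omega; c\mathbb{A})$ is a singleton. Lemma \ref{ineq} applied to $c\mathbb{A}$ with bounds $\sqrt{\alpha/\beta}$ and $\sqrt{\beta/\alpha}$ shows that every element of $\mathcal{W}(\omega; c\mathbb{A})$ lies in $[\sqrt{\alpha/\beta}\, I, \sqrt{\beta/\alpha}\, I] \subset [\epsilon_\omega^{-1} I, \epsilon_\omega I]$. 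Hence every fixed point lies in the interval where uniqueness holds, and existence is given by the theorem. Thus $\mathcal{W}(\omega; c\mathbb{A}) = \{Y\}$ for a single $Y$.

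Finally, Lemma \ref{homo_set} gives $\mathcal{W}(\omega; \mathbb{A}) = c^{-1}\mathcal{W}(\omega; c\mathbb{A}) = \{c^{-1}Y\}$. This can also be checked directly: if $S$ solves \eqref{gradF} for $\mathbb{A}$, then $cS$ satisfies $\sum_i w_i ((cS)^{1/2} cA_i (cS)^{1/2})^{1/2} = c\sum_i w_i (S^{1/2}A_iS^{1/2})^{1/2} = cS$, and conversely. So \eqref{gradF} has exactly one solution in $\mathbb{P}$.

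The only delicate point is making sure Theorem \ref{T:Wass-epsilon} delivers uniqueness among all of $\mathcal{W}$ rather than merely inside the interval. This is handled by the a priori bound of Lemma \ref{ineq}, which confines every solution to the interval; this is why the symmetric scaling $c = (\alpha\beta)^{-1/2}$ is chosen.
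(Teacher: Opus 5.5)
Your proof is correct and follows the paper's route. You rescale the data into $[\epsilon_\omega^{-1}I,\epsilon_\omega I]^m$, invoke Theorem~\ref{T:Wass-epsilon}, and transfer back via Lemma~\ref{homo_set}; the paper does the same with the factor $1/(\alpha\epsilon_\omega)$ in place of your $(\alpha\beta)^{-1/2}$. Your explicit appeal to Lemma~\ref{ineq} for uniqueness in all of $\mathbb{P}$ is left implicit in the paper (though it is used inside the proof of Theorem~\ref{T:Wass-epsilon}), and it works just as well for the paper's scaling, so the symmetric choice is a convenience rather than a necessity.
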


\begin{proof}
One can easily see that
\[
I \leq \frac{1}{\alpha} A_j \leq \frac{\beta}{\alpha} I
\]
for all \( j \) and for some constants \( \alpha, \beta > 0 \).
Since \( \epsilon_{\omega}^2 \geq \frac{\beta}{\alpha} \), it follows that
\[
\frac{1}{\alpha \epsilon_{\omega}} A_j \in
\left[ \, \epsilon_{\omega}^{-1} I,\, \epsilon_{\omega} I \, \right].
\]
By Theorem~\ref{T:Wass-epsilon} and Lemma~\ref{homo_set},
this implies that \eqref{gradF} admits a unique solution.
\end{proof}


In particular, we have


\begin{theorem}
Let  $\omega = (w_{1}, w_{2}, \dots, w_{m}) \in \Delta_{m}$. Then there exists $\epsilon_{\omega} > 1$ such that for any  $(A_1, A_2, \ldots, A_m) \in (\Sigma_2 \cap [\epsilon_{\omega}^{-1} I, \epsilon_{\omega} I])^m$, the equation \eqref{gradF} has a unique solution $X$ in $\Sigma_2$.
\end{theorem}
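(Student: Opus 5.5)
The plan is to rerun the implicit-function argument of Theorem~\ref{T:Wass-epsilon} inside the real Hilbert space $SL^2_{ex}(\mathcal{H})$ instead of $S(\mathcal{H})$. Uniqueness is then automatic: any solution in $\Sigma_2$ lies in $\mathbb{P}$, and Theorem~\ref{T:Wass-epsilon} already gives at most one solution in $[\epsilon_\omega^{-1}I,\epsilon_\omega I]$. By Lemma~\ref{ineq}, every solution lies in that interval. So the real content is \emph{existence in $\Sigma_2$}: we must show that the unique solution $X=G(\mathbb{A})$ has the form $X_0+\gamma I$ with $X_0$ Hilbert--Schmidt.

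\textbf{Step 1: the map is $C^1$ on $SL^2_{ex}(\mathcal{H})$.} Consider
\[
H^{\omega}:\Sigma_2^m\times\Sigma_2\to SL^2_{ex}(\mathcal{H}),\qquad H^{\omega}(\mathbb{A},X)=X-\sum_i w_i (X^{1/2}A_iX^{1/2})^{1/2}.
\]
First we check that it maps into $SL^2_{ex}(\mathcal{H})$. If $X=X_0+\gamma I$ with $X_0\in S^2(\mathcal{H})$ and $X\geq \delta I$, then
\[
X^{1/2}-\gamma^{1/2}I=(X-\gamma I)\bigl(X^{1/2}+\gamma^{1/2}I\bigr)^{-1}\in L^2(\mathcal{H}),
\]
so $X^{1/2}\in SL^2_{ex}(\mathcal{H})$. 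Since $SL^2_{ex}(\mathcal{H})$ is closed under products, $X^{1/2}A_iX^{1/2}\in\Sigma_2$, and the same argument applies to its square root. Next we need Fréchet differentiability in the $\|\cdot\|_{ex2}$ norm. For this I would use the integral representation
\[
A^{1/2}=\frac{1}{\pi}\int_0^\infty \lambda^{-1/2}A(\lambda+A)^{-1}\,d\lambda ,
\]
together with the resolvent identity, which preserves the ideal $L^2$. The derivative is then a Bochner integral of maps $Y\mapsto (\lambda+A)^{-1}Y(\lambda+A)^{-1}$, each bounded on $L^2_{ex}$ with norm $\lesssim(\lambda+\alpha)^{-2}$. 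This gives continuous differentiability, and the same formulas give $D_2H^{\omega}_{\mathbb{I}}(I)=\tfrac12\,\mathrm{id}$ on $SL^2_{ex}(\mathcal{H})$, which is invertible.

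\textbf{Step 2: implicit function theorem and identification with the bounded solution.} Applying the Implicit Function Theorem at $(\mathbb{I},I)$ in $SL^2_{ex}(\mathcal{H})^m\times SL^2_{ex}(\mathcal{H})$ produces neighborhoods $V_2\ni\mathbb{I}$ and $W_2\ni I$ in the $\|\cdot\|_{ex2}$ topology, and a $C^1$ solution map $G_2$ with values in $\Sigma_2$. By uniqueness in $\mathbb{P}$, $G_2$ coincides with $G$ wherever both are defined.

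\textbf{Main obstacle: passing from $ex2$-neighborhoods to an order interval.} The $ex2$-neighborhood $V_2$ does not contain an order interval $[\epsilon^{-1}I,\epsilon I]^m\cap\Sigma_2$, because elements of that interval can have arbitrarily large Hilbert--Schmidt part. To get around this, I would not use the implicit function theorem globally, but show directly that $G(\mathbb{A})-\gamma I\in L^2$. The idea is to use the Hilbert--Schmidt structure of the $A_i$. Write $A_i=a_iI+K_i$ and look for a solution $X=\gamma I+K$ with the scalar $\gamma$ fixed by the scalar problem $\gamma=\sum_i w_i(\gamma a_i)^{1/2}$. Then rewrite \eqref{gradF} as a fixed-point equation for $K$ in $S^2(\mathcal{H})$. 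Its linearization is the operator $D_2H^{\omega}_{\mathbb{A}}(X)$ restricted to $L^2$. I would show this operator is invertible on $L^2$ with the same bound as on $B(\mathcal{H})$, by using the explicit Sylvester-type integral form of the derivative, which acts contractively on every Schatten ideal. Given that, a continuation along $t\mapsto \mathbb{A}^t$ from $\mathbb{I}$, staying inside $[\epsilon_\omega^{-1}I,\epsilon_\omega I]$ by Lemma~\ref{ineq}, keeps the solution in $\Sigma_2$. The reason is that the $ex2$-valued solution branch and the $B(\mathcal{H})$-valued branch agree, and by closedness plus the uniform inverse bound the $ex2$ branch cannot blow up. This uniform $L^2$-invertibility estimate is where I expect the real difficulty. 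If it fails, I would shrink $\epsilon_\omega$ so that all derivatives stay within $\tfrac14$ of $\tfrac12\,\mathrm{id}$ in both norms.
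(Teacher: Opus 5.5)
Your proposal is correct in outline, but it does not follow the paper. The paper gives no argument for this statement; it records it with ``In particular'' after Theorem~\ref{T:Wass-epsilon}, i.e.\ as a consequence of the implicit function theorem at $(\mathbb{I},I)$ plus Lemma~\ref{ineq}. That gives uniqueness exactly as you say, but it only produces a solution in $\mathbb{P}$. Rerunning the implicit function theorem in $SL^2_{ex}(\mathcal{H})$ fails for the reason you isolate: $\Sigma_2\cap[\epsilon^{-1}I,\epsilon I]$ is unbounded in $\|\cdot\|_{ex2}$.

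Your repair is sound. Freezing the scalar part at $\gamma=(\sum_i w_i a_i^{1/2})^2$ makes $\mathcal{F}(K):=H^{\omega}_{\mathbb{A}}(\gamma I+K)$ an $S^2(\mathcal{H})$-valued map. Its linearization is built from maps $Z\mapsto B_1ZB_2$ and $\mathcal{S}_C(Z):=\int_0^\infty e^{-tC}Ze^{-tC}\,dt$, whose norms on $S^2(\mathcal{H})$ obey the same operator-norm bounds as on $S(\mathcal{H})$. Your fallback is the right way to settle the ``real difficulty''. The identity $\mathcal{S}_C(Z)-\tfrac12 Z=-\tfrac12\mathcal{S}_C\bigl((C-I)Z+Z(C-I)\bigr)$ shows that, after shrinking $\epsilon_\omega$, $\|D_2H^{\omega}_{\mathbb{A}}(X)-\tfrac12\mathrm{id}\|\le\tfrac14$ holds simultaneously on $S(\mathcal{H})$ and on $S^2(\mathcal{H})$ for all $A_i,X$ in the interval. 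Your route therefore gives an actual proof that the solution lies in $\Sigma_2$, which the paper leaves unjustified.

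Two points need tightening.

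First, the bound in your Step~1 is false on $L^2_{ex}(\mathcal{H})$ in the direction of $I$. Let $P_n$ be a rank-$n$ projection. The map $Y\mapsto (I+P_n)Y(I+P_n)$ sends $I$ (of $ex2$-norm $1$) to $I+3P_n$ (of $ex2$-norm $\sqrt{9n+1}$), although $\|I+P_n\|=2$. So no $ex2$-estimate uniform over an order interval is available; the same example also defeats the upper bound of Lemma~\ref{l-1} for $n\ge 2$. Your argument must therefore stay in $S^2(\mathcal{H})$ with the scalar part frozen.

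Second, ``the branch cannot blow up'' needs an a priori $\|\cdot\|_2$-bound. With the two-norm estimate you can drop the continuation altogether:
\begin{itemize}
\item Assume $\dim\mathcal{H}=\infty$ (otherwise $\Sigma_2=\mathbb{P}$). Let $X^*\in\mathbb{P}$ be the solution and set $K^*:=X^*-\gamma I$.
\item Since $a_i\in\sigma_{\mathrm{ess}}(A_i)\subset[\epsilon^{-1},\epsilon]$, also $\gamma\in[\epsilon^{-1},\epsilon]$. By Lemma~\ref{ineq} and convexity, the segment $\gamma I+sK^*$, $0\le s\le 1$, stays in the interval.
\item Hence $0=\mathcal{F}(K^*)=\mathcal{F}(0)+MK^*$ with $M:=\int_0^1 D\mathcal{F}(sK^*)\,ds$.
\item $M$ is within $\tfrac14$ of $\tfrac12\mathrm{id}$ on both spaces, so it is invertible on both, with the same Neumann-series inverse.
\item By the choice of $\gamma$, $\mathcal{F}(0)=\gamma I-\gamma^{1/2}\sum_i w_iA_i^{1/2}\in S^2(\mathcal{H})$.
\item Therefore $K^*=-M^{-1}\mathcal{F}(0)\in S^2(\mathcal{H})$ with $\|K^*\|_2\le 4\|\mathcal{F}(0)\|_2$, i.e.\ $X^*\in\Sigma_2$.
\end{itemize}
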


\begin{corollary}\label{corl}
Let $\mathbb{A } =(A_1, A_2, \ldots, A_m) \in \Sigma_2^{m}$ and $\omega = (w_{1}, w_{2}, \dots, w_{m}) \in \Delta_{m}$ with $\alpha I \leq A_j \leq \beta I$ for all $j$ and some $\alpha, \beta > 0$. If $\epsilon_{\omega}^2 \geq \beta / \alpha$, then the equation \eqref{gradF} has a unique solution in $[\alpha I, \beta I ]$.
\end{corollary}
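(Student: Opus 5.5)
The plan is to argue exactly as in Corollary~\ref{corol_P}, now carrying out the rescaling inside $\Sigma_2$ and invoking the preceding theorem (the $\Sigma_2$-version of Theorem~\ref{T:Wass-epsilon}) in place of Theorem~\ref{T:Wass-epsilon}.

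\textbf{Step 1: rescale into the good interval.} Set $c = (\alpha\epsilon_\omega)^{-1}$ and $B_j = cA_j$. Since $\Sigma_2$ is a cone, $B_j \in \Sigma_2$. From $I \le \alpha^{-1}A_j \le (\beta/\alpha) I$ and $\beta/\alpha \le \epsilon_\omega^2$ we get
\begin{displaymath}
\epsilon_\omega^{-1} I \le B_j \le \epsilon_\omega^{-1}(\beta/\alpha) I \le \epsilon_\omega I .
\end{displaymath}
Thus $\mathbb{B} = (B_1,\dots,B_m) \in (\Sigma_2 \cap [\epsilon_\omega^{-1}I,\epsilon_\omega I])^m$.

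\textbf{Step 2: unique solution for $\mathbb{B}$.} By the preceding theorem, \eqref{gradF} for $\mathbb{B}$ has a unique solution $Y \in \Sigma_2$. To rule out other solutions anywhere in $\mathbb{P}$, I use Lemma~\ref{ineq}: every element of $\mathcal{W}(\omega;\mathbb{B})$ lies in $[\epsilon_\omega^{-1}I,\epsilon_\omega I]$. Theorem~\ref{T:Wass-epsilon} gives uniqueness in that interval, so $\mathcal{W}(\omega;\mathbb{B}) = \{Y\}$.

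\textbf{Step 3: transfer back by homogeneity.} By Lemma~\ref{homo_set},
\begin{displaymath}
\mathcal{W}(\omega;\mathbb{A}) = \mathcal{W}(\omega;c^{-1}\mathbb{B}) = c^{-1}\mathcal{W}(\omega;\mathbb{B}) = \{c^{-1}Y\}.
\end{displaymath}
So \eqref{gradF} has exactly one solution $X = c^{-1}Y$, and $X \in \Sigma_2$. Applying Lemma~\ref{ineq} to the original tuple $\mathbb{A}$ gives $\alpha I \le X \le \beta I$. Hence the unique solution lies in $[\alpha I,\beta I]$, and no other solution exists there or elsewhere in $\mathbb{P}$.

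\textbf{Main obstacle.} The only delicate point is the membership of the solution in $\Sigma_2$, i.e., that the implicit-function argument can be run in $SL^2_{ex}(\mathcal{H})$ rather than $S(\mathcal{H})$. This is supplied by the preceding theorem, which I take as given.

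If one wants to check that membership directly, write $X = K + \gamma I$. Then $(X^{1/2}A_iX^{1/2})^{1/2}$ differs from a scalar multiple of $I$ by a Hilbert--Schmidt operator, because the square root is smooth on $[\alpha I,\beta I]$. Consequently \eqref{gradF} is consistent on $\Sigma_2$.
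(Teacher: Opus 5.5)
Your proof is correct and follows the route the paper intends, namely the argument of Corollary~\ref{corol_P}: rescale by $(\alpha\epsilon_\omega)^{-1}$ into $[\epsilon_\omega^{-1}I,\epsilon_\omega I]$, apply the preceding $\Sigma_2$-theorem, and transfer back via Lemma~\ref{homo_set}, with the useful explicit additions of Lemma~\ref{ineq} (to place the solution in $[\alpha I,\beta I]$) and Theorem~\ref{T:Wass-epsilon} (for uniqueness in all of $\mathbb{P}$). Your closing ``direct check'' only shows that the right-hand side of \eqref{gradF} maps $\Sigma_2$ into $SL^2_{ex}(\mathcal{H})$, not that the solution lies in $\Sigma_2$, but your main argument does not depend on it.
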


\begin{lemma}\label{l-1}
Let $A,B\in L^2_{ex}(\mathcal{H})$. Then, for any  $Y \in SL^{2}_{ex} (\mathcal{H})$,
\[
 \lambda_{\min}(A^*A)\,\lambda_{\min}(BB^*)\;
 \langle Y, Y \rangle_{ex2}\le\;
 \| A Y B\|_{ex2}^2
 \;\le\;
 \lambda_{\max}(A^*A)\,\lambda_{\max}(BB^*)\;\langle Y, Y \rangle_{ex2}.
\]
\end{lemma}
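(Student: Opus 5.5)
The plan is to factor the map $Y \mapsto AYB$ into a left multiplication followed by a right multiplication, and to bound each factor separately. It then suffices to prove two one-sided estimates on $L^2_{ex}(\mathcal{H})$:
\begin{displaymath}
\lambda_{\min}(A^*A)\,\|W\|_{ex2}^2 \le \|AW\|_{ex2}^2 \le \lambda_{\max}(A^*A)\,\|W\|_{ex2}^2 ,
\end{displaymath}
\begin{displaymath}
\lambda_{\min}(BB^*)\,\|W\|_{ex2}^2 \le \|WB\|_{ex2}^2 \le \lambda_{\max}(BB^*)\,\|W\|_{ex2}^2 .
\end{displaymath}
Applying the first with $W = YB$ and the second with $W = Y$ then gives the lemma. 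Since $Y$ is self-adjoint and $\|Z^*\|_{ex2} = \|Z\|_{ex2}$, the right-multiplication estimate follows from the left one applied to $B^*Y$ with $A$ replaced by $B^*$. So only the left estimate needs real work.

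For the left estimate I would first treat the purely Hilbert--Schmidt situation, i.e.\ $W \in L^2(\mathcal{H})$. There
\begin{displaymath}
\|AW\|_2^2 = \mathrm{tr}(W^* A^*A\, W),
\end{displaymath}
and the operator inequalities $\lambda_{\min}(A^*A) I \le A^*A \le \lambda_{\max}(A^*A) I$ give
\begin{displaymath}
\lambda_{\min}(A^*A)\, W^*W \le W^*A^*AW \le \lambda_{\max}(A^*A)\, W^*W .
\end{displaymath}
Monotonicity of the trace on positive trace-class operators then yields the bound. To pass to the extended setting, I would express $\|Z\|_{ex2}^2$ as a monotone positive functional $\varphi$ evaluated at $Z^*Z$, extending $\mathrm{tr}_{ex}$. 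Writing $Z = Z_0 + zI$, one has
\begin{displaymath}
Z^*Z = Z_0^*Z_0 + \bar z Z_0 + z Z_0^* + |z|^2 I .
\end{displaymath}
The cross term $\bar z Z_0 + z Z_0^*$ is only Hilbert--Schmidt, so $\mathrm{tr}_{ex}$ is not directly defined on it. I would handle this by compressing with finite-rank projections $P_n \uparrow I$ and defining
\begin{displaymath}
\varphi_n(T_0 + tI) = \mathrm{tr}(P_n T_0 P_n) + t .
\end{displaymath}
The plan is to check that $\varphi_n(Z^*Z) \to \|Z\|_{ex2}^2$, possibly after a symmetrisation that kills the cross term, and that each $\varphi_n$ respects the Loewner order on operators of the form $W^*TW$ with $T$ bounded between the scalar bounds above. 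Once this is in place, the sandwich
\begin{displaymath}
\lambda_{\min}(A^*A)\, W^*W \le W^*A^*AW \le \lambda_{\max}(A^*A)\, W^*W
\end{displaymath}
passes through $\varphi_n$ and then to the limit.

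The main obstacle is exactly this compatibility between the Loewner order and the extended Hilbert--Schmidt norm. The $ex2$ inner product separates the scalar part from the Hilbert--Schmidt part. Multiplying by $A = A_0 + aI$ mixes the two: the scalar part of $AW$ is $aw$, while the Hilbert--Schmidt part picks up $A_0 w + aW_0 + A_0W_0$. Because of this mixing, a naive positivity argument can fail. If the compression argument does not go through, my fallback is to compute the Hermitian square $L_A^*L_A$ of the left multiplication operator $L_A\colon W \mapsto AW$ with respect to $\langle\cdot,\cdot\rangle_{ex2}$ explicitly. I would write it as a $2\times 2$ block operator acting on $L^2(\mathcal{H})\oplus\mathbb{R}$, and then bound its spectrum by $\sigma(A^*A)$ via a Schur-complement estimate. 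This is the step where the hypotheses, in particular self-adjointness of $Y$, would have to be used carefully.
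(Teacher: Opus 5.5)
There is a genuine gap, and it is exactly the obstacle you flagged. The one-sided estimate $\lambda_{\min}(A^*A)\|W\|_{ex2}^2 \le \|AW\|_{ex2}^2 \le \lambda_{\max}(A^*A)\|W\|_{ex2}^2$ is false on $L^2_{ex}(\mathcal{H})$. So neither the compression argument nor the block-operator fallback can be completed, and the lemma itself fails. Let $P$ be a rank-one projection and $B = I$. For $A = I - \tfrac12 P$ and $Y = I$ you get $\|AYB\|_{ex2}^2 = \tfrac14 + 1 = \tfrac54$, while $\lambda_{\max}(A^*A)\,\lambda_{\max}(BB^*)\,\langle Y,Y\rangle_{ex2} = 1$. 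For $A = I + P$ and the self-adjoint $Y = I - \tfrac12 P$ you get $AY = I$, so $\|AYB\|_{ex2}^2 = 1$, while $\lambda_{\min}(A^*A)\,\lambda_{\min}(BB^*)\,\langle Y,Y\rangle_{ex2} = \tfrac54$. Both $A$'s are positive definite and both $Y$'s are self-adjoint, so no hypothesis rescues the argument.

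The mechanism is the mixing you described. Write $A = A_0 + aI$; then the scalar part $w$ of $W$ feeds $wA_0$ into the Hilbert--Schmidt component. That contribution is measured by $\|A_0\|_2$, which no spectral quantity of $A^*A$ controls. In your fallback this shows up at once: the $(2,2)$ entry of $L_A^*L_A$ on $L^2(\mathcal{H})\oplus\mathbb{R}$ is $\|L_A(I)\|_{ex2}^2 = a^2 + \|A_0\|_2^2$. This can exceed $\lambda_{\max}(A^*A) = \|A\|^2$, so $\sigma(L_A^*L_A)$ is not contained in $[\lambda_{\min}(A^*A), \lambda_{\max}(A^*A)]$. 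Likewise, no order-preserving $\varphi$ with $\|Z\|_{ex2}^2 = \varphi(Z^*Z)$ can exist, since it would force the false upper bound. Concretely, your $\varphi_n$ is not positive: $\varphi_n(I - P_n) = 1 - \mathrm{rank}\,P_n$. Also, $\varphi_n(Z^*Z)$ retains the cross term $2\Re(\bar z\,\mathrm{tr}(P_n Z_0 P_n))$, which need not converge, let alone to $0$.

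The paper's proof does not offer a way around this. It starts from $\|AYB\|_{ex2}^2 = \mathrm{tr}_{ex}\big((AYB)^*(AYB)\big)$ and then uses cyclicity and monotonicity of $\mathrm{tr}_{ex}$. But that identity is precisely the one you rightly distrusted. For $Z = Z_0 + zI$ the cross term $\bar z Z_0 + z Z_0^*$ is in general not trace class, and when it is, $\mathrm{tr}_{ex}(Z^*Z) = \|Z\|_{ex2}^2 + 2\Re(\bar z\,\mathrm{tr}\,Z_0)$. So the statement cannot be proved as written.

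What your argument does establish correctly is the purely Hilbert--Schmidt case $Y \in S^2(\mathcal{H})$, via your trace sandwich, with $A, B$ merely bounded. On $L^2_{ex}(\mathcal{H})$ one only gets two-sided bounds with different constants. For instance, for invertible $A$, $\|L_{A^{-1}}\|^{-1}\|W\|_{ex2} \le \|AW\|_{ex2} \le \|L_A\|\,\|W\|_{ex2}$. Here the norms of $L_A$ and $L_{A^{-1}}$ on $L^2_{ex}(\mathcal{H})$ depend on $\|A_0\|_2$ and not only on $\sigma(A^*A)$.
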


\begin{proof}
First note that
\[
\| AYB \|_{ex2}^{2}
= \tr_{ex} \big( (AYB)^* (AYB) \big)
= \tr_{ex} \big(B^* Y A^* A Y B \big)
= \tr_{ex} \big( (A^*A) \, Y \, (BB^*) \, Y \big).
\]
Let
$
\lambda_{\min}(BB^*)=\min\sigma(BB^*)$ and
$\lambda_{\max}(BB^*)=\max\sigma(BB^*)$. Then $0<\lambda_{\min}(BB^*)I\le BB^*\le \lambda_{\max}(BB^*)I$,
and
$$
\lambda_{\min}(BB^*)\,Y^{2}\ \le\ Y(BB^*)Y\ \le\ \lambda_{\max}(BB^*)\,Y^{2}.$$
We get
\[
\lambda_{\min}(BB^*) \, \tr_{ex} \big( (A^*A) Y^{2} \big)
\le \tr_{ex} \big( A \, Y \, (BB^*) \, Y A^* \big)
\le \lambda_{\max}(BB^*) \, \tr_{ex} \big( (A^*A) Y^{2} \big).
\]
Next we have
\[
\lambda_{\min}(A^*A) \, \tr_{ex}(Y^{2})
\le \tr_{ex} \big( (A^*A) Y^{2} \big) = \tr_{ex}(Y(A^*A)Y)
\le \lambda_{\max}(A^*A) \, \tr_{ex}(Y^{2}).
\]
Combining the last two inequalities, we obtain
\[
\lambda_{\min}(A^*A)\,\lambda_{\min}(BB^*)\,\tr_{ex}(Y^{2})
\le \tr_{ex}\!\big((A^*A)\,Y\,(BB^*)\,Y\big)
\le \lambda_{\max}(A^*A)\,\lambda_{\max}(BB^*)\,\tr_{ex}(Y^{2}).
\]
\end{proof}

\begin{lemma} \label{L:Phi-bdd}
Let $\mathbb{A} = (A_1, \ldots, A_m) \in ([\alpha I, \beta I] \cap \Sigma_2)^m$ and $\omega = (w_{1}, w_{2}, \dots, w_{m}) \in \Delta_{m}$. Define $\displaystyle \Phi(X) =  \sum_{i=1}^{m} w_i  \big(I - (A_i \# X^{-1}) \big)$ for $X \in [\alpha I, \beta I]$. Then for all $Y \in SL^{2}_{ex} (\mathcal{H})$, we have
\[
\langle D\Phi(X)(Y), Y \rangle_{ex2} \geq \frac{\alpha^3}{2\beta^4} \langle Y, Y \rangle_{ex2}.
\]
\end{lemma}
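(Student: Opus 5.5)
The plan is to compute $D\Phi(X)(Y)$ explicitly, rewrite the quadratic form $\langle D\Phi(X)(Y),Y\rangle_{ex2}$ in the spectral basis of a single positive operator, and obtain positivity and the lower bound entrywise. Fix $X\in[\alpha I,\beta I]$ and set, for each $i$,
\[
P_i := A_i^{-1/2}X^{-1}A_i^{-1/2}, \qquad Q_i := P_i^{1/2}, \qquad M_i := A_i^{1/2} Y A_i^{1/2}.
\]
Then $A_i\# X^{-1} = A_i^{1/2} Q_i A_i^{1/2}$.

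\textbf{Step 1: the derivative.} Differentiate this formula using $D(X^{-1})(Y) = -X^{-1}YX^{-1}$. For the square root, use the integral representation already used in the proof of Theorem~\ref{T:Wass-epsilon}: $D(P^{1/2})(Z) = \int_0^\infty e^{-tP^{1/2}} Z e^{-tP^{1/2}}\,dt$. The key algebraic observation is $X^{-1} = A_i^{1/2}P_iA_i^{1/2}$, which gives
\[
A_i^{-1/2}X^{-1}YX^{-1}A_i^{-1/2} = P_i M_i P_i = Q_i^2 M_i Q_i^2 .
\]
Hence
\[
D\Phi(X)(Y) = \sum_{i} w_i\, A_i^{1/2}\Big(\int_0^\infty e^{-tQ_i} Q_i^2 M_i Q_i^2 e^{-tQ_i}\,dt\Big) A_i^{1/2},
\]
and by cyclicity of $\tr_{ex}$,
\[
\langle D\Phi(X)(Y),Y\rangle_{ex2} = \sum_i w_i \int_0^\infty \tr_{ex}\big(e^{-tQ_i}Q_i^2M_iQ_i^2e^{-tQ_i}\,M_i\big)\,dt .
\]
All the operators involved lie in the unitized Hilbert--Schmidt class, so the traces are meaningful; the identity component is carried along by $\tr_{ex}$.

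\textbf{Step 2: the spectral computation, which is the main obstacle.} Formally, in an eigenbasis of $Q_i$ with eigenvalues $q_j$, each summand equals
\[
\sum_{j,k}\frac{q_j^2q_k^2}{q_j+q_k}\,|(M_i)_{jk}|^2 .
\]
The difficulty is making this rigorous in infinite dimensions. My first approach is to use a symmetrized form instead of a basis. Write the integrand as $\|Q_i e^{-tQ_i} M_i Q_i e^{-tQ_i}\|^2$-type expressions, and use $Q_i\ge q_{\min}I$ together with Lemma~\ref{l-1} applied at each $t$ with $A = B = Q_i e^{-tQ_i}$. This yields the lower bound
\[
\int_0^\infty q_{\min}^4 e^{-2tq_{\min}}\,dt\;\|M_i\|_{ex2}^2 = \frac{q_{\min}^3}{2}\,\|M_i\|_{ex2}^2 .
\]
If that symmetrization does not go through directly, I would instead use a double operator integral with the Schur multiplier $q_j^2q_k^2/(q_j+q_k)\ge q_{\min}^3/2$, or truncate to finite-rank compressions and pass to the limit.

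\textbf{Step 3: the constants.} Since $A_i\le\beta I$ and $X\le\beta I$, we have $P_i\ge \beta^{-1}A_i^{-1}\ge\beta^{-2}I$, so $q_{\min}\ge \beta^{-1}$. Lemma~\ref{l-1} applied with $A = B = A_i^{1/2}$ gives $\|M_i\|_{ex2}^2\ge\alpha^2\langle Y,Y\rangle_{ex2}$. Summing over $i$ with $\sum_i w_i = 1$ then gives
\[
\langle D\Phi(X)(Y),Y\rangle_{ex2}\ \ge\ \frac{\alpha^2}{2\beta^3}\langle Y,Y\rangle_{ex2}\ \ge\ \frac{\alpha^3}{2\beta^4}\langle Y,Y\rangle_{ex2},
\]
where the last inequality uses $\alpha\le\beta$. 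This is the claimed bound, in fact with some room to spare, which absorbs any cruder estimate that Step 2 may require.
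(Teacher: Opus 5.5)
The decisive problem is the identity component, and it is not specific to your argument. The inner product $\langle Y_0+\nu I, Z_0+\lambda I\rangle_{ex2}=\tr(Y_0Z_0)+\nu\lambda$ is not an extended trace of the product. The cross terms $\lambda Y_0+\nu Z_0$ are only Hilbert--Schmidt and have no trace.

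\begin{itemize}
\item The ``cyclicity of $\tr_{ex}$'' in your Step~1 is therefore not available.
\item Lemma~\ref{l-1}, which you invoke in Step~3, is false for $\|\cdot\|_{ex2}$. Take $P$ a rank-one projection, $C=I+(\sqrt2-1)P$ and $Y=I-\tfrac12P$. Then $CYC=I$, so $\|CYC\|_{ex2}^2=1<\tfrac54=\lambda_{\min}(C^2)^2\langle Y,Y\rangle_{ex2}$.
\item The stated inequality itself fails on $SL^2_{ex}(\mathcal H)$. Take $m=1$, $A_1=I+99P$ (so $\alpha=1$, $\beta=100$), $X=I$ and the same $Y$. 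Everything commutes, so $D\Phi(I)(Y)=\tfrac12A_1^{1/2}Y=\tfrac12(I+9P)(I-\tfrac12P)=\tfrac12I+2P$. Hence $\langle D\Phi(I)(Y),Y\rangle_{ex2}=\tr\big(2P\cdot(-\tfrac12P)\big)+\tfrac12=-\tfrac12<0$.
\end{itemize}

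The paper's proof breaks at the same two places: its use of Lemma~\ref{l-1} and its identity $\langle D\Phi(X)(Y),Y\rangle_{ex2}=\int_0^\infty\|B_i(t)^{1/2}YB_i(t)^{1/2}\|_{ex2}^2\,dt$, where $B_i(t)=A_i^{1/2}e^{-tQ_i}P_iA_i^{1/2}$ in your notation. In this example that identity returns a positive number. What is actually provable, by either route, is the bound for $Y\in S^2(\mathcal H)$ with the Hilbert--Schmidt inner product. That is the form used in \eqref{eq:wass_P}, and you should restrict the claim to it.

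On $S^2(\mathcal H)$ your primary Step~2 argument still has a hole. The integrand is $\tr(K_tM_iK_tM_i)=\|K_t^{1/2}M_iK_t^{1/2}\|_2^2$ with $K_t=Q_i^2e^{-tQ_i}$. So the symmetrizing factor is $Q_ie^{-tQ_i/2}$, and applying Lemma~\ref{l-1} pointwise in $t$ needs $\lambda_{\min}(K_t)$. Since $q\mapsto q^2e^{-tq}$ is decreasing for $q>2/t$, the bound $Q_i\ge q_{\min}I$ does not give $\lambda_{\min}(K_t)\ge q_{\min}^2e^{-tq_{\min}}$; for large $t$ the minimum sits at the top of $\sigma(Q_i)$. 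This is exactly why the paper, which does estimate pointwise via $B_i(t)\ge\frac{\alpha}{\beta^2}e^{-t/\alpha}$, also needs the upper bound $Q_i\le\alpha^{-1}I$ and ends up with $\frac{\alpha^3}{2\beta^4}$.

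Your fallback is correct and is a genuinely better route than the paper's. Let $E$ be the spectral measure of $Q_i$ and $\mu_{M_i}(\Delta_1\times\Delta_2)=\|E(\Delta_1)M_iE(\Delta_2)\|_2^2$, a positive measure of total mass $\|M_i\|_2^2$. Tonelli gives $\int_0^\infty\tr(K_tM_iK_tM_i)\,dt=\iint\frac{x^2y^2}{x+y}\,d\mu_{M_i}(x,y)$. The function $\frac{x^2y^2}{x+y}$ is increasing in each variable, so it is at least $q_{\min}^3/2\ge\frac{1}{2\beta^3}$ on $\sigma(Q_i)^2$. Integrating in $t$ before estimating removes the need for an upper spectral bound. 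Combined with $\|M_i\|_2\ge\alpha\|Y\|_2$, it gives $\frac{\alpha^2}{2\beta^3}$, which dominates the paper's constant. Present this as the proof itself, not as a fallback to a pointwise argument that does not work.
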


\begin{proof}
Note that
\begin{align*}
& D \Phi(X)(Y) \\
&= \sum_{i=1}^{m} w_i   A_i^{1/2} \int_0^\infty e^{-t (A_i^{-1/2} X^{-1} A_i^{-1/2})^{1/2}} A_i^{-1/2} X^{-1} Y X^{-1} A_i^{-1/2} e^{-t (A_i^{-1/2} X^{-1} A_i^{-1/2})^{1/2}} \, dt \, A_i^{1/2} \\
& = \sum_{i=1}^{m} w_i \int_0^\infty B_{i}(t) Y B_{i}(t) \, dt,
\end{align*}
where $B_{i}(t) := A_i^{1/2} e^{-t (A_i^{-1/2} X^{-1} A_i^{-1/2})^{1/2}} (A_i^{-1/2} X^{-1} A_i^{-1/2} ) A_i^{1/2}$.
Since $\alpha I \leq A_{i}, X \leq \beta I$ for all $i$, we have $B_{i}(t) \geq \frac{\alpha}{\beta^{2}} e^{-\frac{t}{\alpha}}$
By Lemma \ref{l-1}, we obtain for all $Y \in SL^{2}_{ex} (\mathcal{H})$,
\begin{displaymath}
\begin{split}
\langle D\Phi(X)(Y), Y \rangle_{ex2} & = \int_0^\infty \| B_{i}(t)^{1/2} Y B_{i}(t)^{1/2} \|_{ex 2}^{2} \\
& \geq \frac{\alpha^2}{\beta^{4}} \int_0^\infty e^{-\frac{2t}{\alpha}} dt \langle Y, Y \rangle_{ex2} = \frac{\alpha^3}{2\beta^4} \langle Y, Y \rangle_{ex2}.
\end{split}
\end{displaymath}
\end{proof}


\begin{theorem} \label{T:Wass_P}
Let $\mathbb{A } =(A_1, A_2, \ldots, A_m) \in \Sigma_2^{m}$ and $\omega = (w_{1}, w_{2}, \dots, w_{m}) \in \Delta_{m}$ with $\alpha I \leq A_j \leq \beta I$ for all $j$ and some $\alpha, \beta > 0$. Then the equation \eqref{gradF} has a unique solution $X$ in $\Sigma_2 $.
\end{theorem}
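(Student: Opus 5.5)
Following the strategy announced in the introduction, I would argue by a continuation (homotopy) method that connects the known solution near the identity to the general tuple $\mathbb{A}$. For $t\in[0,1]$ set $\mathbb{A}^t=(A_1^t,\dots,A_m^t)$. Each $A_i^t$ lies in $\Sigma_2$ and satisfies $\min(1,\alpha)I\le A_i^t\le\max(1,\beta)I$; write $\alpha'=\min(1,\alpha)$ and $\beta'=\max(1,\beta)$. Then define
\[
\phi(t,X)=\sum_{i=1}^m w_i\,(A_i^t\#X^{-1})-I
\]
on $[0,1]\times(\Sigma_2\cap[\alpha' I,\beta' I])$. At $t=0$ the unique zero is $X=I$. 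By Lemma \ref{ineq}, every zero of $\phi(t,\cdot)$ in $\mathbb{P}$ lies automatically in $[\alpha' I,\beta' I]$. Hence the zero set is a priori confined to a fixed order interval, which is where the uniform estimates live.

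\textbf{Uniform invertibility.} The Fr\'echet derivative $D_X\phi(t,X)=-D\Phi_t(X)$, where $\Phi_t$ is the field of Lemma \ref{L:Phi-bdd} with $A_i$ replaced by $A_i^t$. Lemma \ref{L:Phi-bdd}, applied with $\alpha',\beta'$, gives
\[
\langle D\Phi_t(X)Y,Y\rangle_{ex2}\ge c\,\|Y\|_{ex2}^2,\qquad c=\alpha'^3/(2\beta'^4),
\]
uniformly in $t$ and in $X\in[\alpha' I,\beta' I]$. I would also note that $D\Phi_t(X)$ is a bounded self-adjoint operator on the real Hilbert space $SL^2_{ex}(\mathcal{H})$: it is a positive combination of maps of the form $Y\mapsto\int B(t)YB(t)\,dt$. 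It also maps $SL^2_{ex}$ into itself, because the Hilbert--Schmidt parts are preserved and the scalar part maps to a scalar plus a Hilbert--Schmidt part. By Lax--Milgram it is therefore invertible, with $\|D\Phi_t(X)^{-1}\|\le 1/c$. Joint continuity of $(t,X)\mapsto \phi, D_X\phi$ in the $\|\cdot\|_{ex2}$ topology follows from the integral representations and the continuity of $t\mapsto A_i^t$ in $SL^2_{ex}$.

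\textbf{Continuation argument.} Let $T$ be the set of $s\in[0,1]$ such that, for every $t\in[0,s]$, $\phi(t,\cdot)$ has exactly one zero $X(t)\in\Sigma_2$, and $t\mapsto X(t)$ is continuous. Theorem \ref{T:Wass-epsilon} shows that $T$ contains a neighbourhood of $0$. The Implicit Function Theorem with the uniform bound $1/c$ shows that $T$ is open and that the local branch has a uniform radius. For closedness, take $t_n\uparrow s$. The ODE $X'(t)=-D_X\phi^{-1}\partial_t\phi$ has uniformly bounded right-hand side, since $\|\partial_t A_i^t\|_{ex2}=\|A_i^t\log A_i\|_{ex2}$ is bounded and $\log A_i\in SL^2_{ex}$. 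So $X(t_n)$ is Cauchy, its limit lies in $[\alpha' I,\beta' I]\cap\Sigma_2$, and it is a zero by continuity. Hence $T=[0,1]$, which gives existence of $X(1)$.

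\textbf{Uniqueness.} This does not need the path. Suppose $X_0,X_1\in\Sigma_2$ are two solutions; both lie in $[\alpha I,\beta I]$ by Lemma \ref{ineq}. The segment $X_s=(1-s)X_0+sX_1$ stays in this convex set. Then
\[
0=\langle\Phi(X_1)-\Phi(X_0),X_1-X_0\rangle_{ex2}=\int_0^1\langle D\Phi(X_s)(X_1-X_0),X_1-X_0\rangle\,ds\ge c\|X_1-X_0\|_{ex2}^2,
\]
so $X_0=X_1$. This strong monotonicity is in fact also the cleanest route to existence: the continuation above could be replaced by a Browder--Minty type argument on the closed convex set $[\alpha I,\beta I]\cap\Sigma_2$.

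\textbf{Main obstacle.} The delicate step is the closedness part of the continuation: we must be sure that limits stay inside $\Sigma_2$ and not merely inside $\mathbb{P}$. This is handled by working throughout in the $\|\cdot\|_{ex2}$ norm and relying on the a priori order bounds from Lemma \ref{ineq}.
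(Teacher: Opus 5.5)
Your route is the paper's: deform $A_i\mapsto A_i^t$, make $D_X\phi$ invertible through the coercivity of Lemma~\ref{L:Phi-bdd}, and continue with the implicit function theorem up to $t=1$. Your additions improve on the paper's proof:
\begin{itemize}
\item $t$-independent bounds $\alpha',\beta'$ with the path started at $(0,I)$;
\item a genuine open--closed argument in place of the paper's step-size remark;
\item a monotonicity proof of uniqueness, which the paper's proof never gives (it stops at ``has a solution'').
\end{itemize}

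The gap is in the estimate everything rests on, which the paper's proof uses in the same way. The coercivity $\langle D\Phi(X)Y,Y\rangle_{ex2}\ge c\|Y\|_{ex2}^2$ is false on $SL^2_{ex}(\mathcal H)$ once $Y$ has a nonzero scalar part. Take $m=1$, $A_1=I$ and $X=I+P$ with $P$ a projection of rank $N$, so $I\le A_1,X\le 2I$. Then $\Phi(X)=I-X^{-1/2}$ and $D\Phi(X)(X)=\tfrac12X^{-1/2}=\tfrac12\bigl(I-(1-2^{-1/2})P\bigr)$, so
\[
\langle D\Phi(X)X,X\rangle_{ex2}=\tfrac12\bigl(1-(1-2^{-1/2})N\bigr)<0\qquad (N\ge 4).
\]
The source is the proof of Lemma~\ref{l-1}. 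It identifies $\|Z\|_{ex2}^2$ with $\mathrm{tr}_{ex}(Z^*Z)$, which for $Z=K+\gamma I$ carries an extra cross term $2\gamma\,\mathrm{tr}\,K$. For the same reason $D\Phi(X)$ is not self-adjoint for $\langle\cdot,\cdot\rangle_{ex2}$: it maps $S^2(\mathcal H)$ into itself, yet $D\Phi(X)(I)$ has a nonzero Hilbert--Schmidt part. So, as written, neither your Lax--Milgram bound $\|D\Phi_t(X)^{-1}\|\le 1/c$ nor your uniqueness inequality is justified.

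The fix is to pin the scalar part. The map $K+\gamma I\mapsto\gamma$ is multiplicative and commutes with the continuous functional calculus, since it extends to the character of the $C^*$-algebra of compact perturbations of scalars. Applying it to $\sum_i w_i(A_i\#X^{-1})=I$ shows that every solution in $\Sigma_2$ has scalar part $x=(\sum_i w_i\sqrt{a_i})^2$, where $a_i$ is the scalar part of $A_i$. Consequences:
\begin{itemize}
\item \emph{Uniqueness.} Two solutions differ by some $H\in S^2(\mathcal H)$. On $S^2(\mathcal H)$ the pairing $\langle\cdot,\cdot\rangle_{ex2}$ is the trace pairing, $D\Phi(X)$ preserves $S^2(\mathcal H)$, and there the computation of Lemma~\ref{L:Phi-bdd} is valid. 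Your segment argument then proves uniqueness verbatim.
\item \emph{Existence.} Run your continuation on the affine slice $x(t)I+S^2(\mathcal H)$ with $x(t)=(\sum_i w_i a_i^{t/2})^2$. There $\phi(t,\cdot)$ is $S^2$-valued and its derivative is coercive for the trace pairing, uniformly on $[\alpha' I,\beta' I]$. So Lax--Milgram and the bound $1/c$ become legitimate.
\item \emph{Closedness.} Comparing a solution $X(t)$ with $x(t)I$ by the same monotonicity gives $\|X(t)-x(t)I\|_2\le c^{-1}\|\Phi_t(x(t)I)\|_2$. This a priori bound is what your closedness step actually needs, because $\|\partial_t\phi\|_{ex2}$ is controlled by the Hilbert--Schmidt size of $X$, not by order bounds alone.
\end{itemize}
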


\begin{proof}
Define
\[
\phi(t, X) := I - \sum_{i=1}^{m} w_{i} \big( A_{i}^{t} \# X^{-1} \big).
\]
Since \( \alpha^{t} I \leq A_j^{t} \leq \beta^{t} I \) for all \( j \) and \( t \in (0, \infty) \),
there exists \( t_{0} < 1 \) such that \( (\beta / \alpha)^{t_{0}} \leq \epsilon_{\omega}^{2} \).
By Corollary~\ref{corl}, there exists a unique
\( X_{0} \in [\, \alpha^{t_{0}} I,\, \beta^{t_{0}} I \,] \)
such that \( \phi(t_{0}, X_{0}) = 0 \).
Let \( D_{2} \phi(t, X) \) denote the Fréchet derivative of \( \phi \) with respect to its second variable.
Then, for all \( Y \in SL^{2}_{\mathrm{ex}}(\mathcal{H}) \) and any \( t \in (0, \infty) \), we have
\[
\langle D_{2} \phi(t, X)(Y), Y \rangle_{ex2}
\geq \frac{\alpha^{3t}}{2\beta^{4t}} \langle Y, Y \rangle_{ex2}.
\]
In particular, \( D_{2} \phi(t, X) \) is an invertible linear operator.
By the Implicit Function Theorem, there exist \( \theta > 0 \)
and an open neighborhood \( U \subseteq \Sigma_{2} \) containing \( X_{0} \),
together with a unique continuously differentiable function
$
g : (t_{0} - \theta,\, t_{0} + \theta) \to U
$
such that \( \phi(t, g(t)) = 0 \) for all \( t \in (t_{0} - \theta,\, t_{0} + \theta) \)
and \( g(t_{0}) = X_{0} \).
If \( 1 \in (t_{0} - \theta,\, t_{0} + \theta) \), then we are done.
Otherwise, we repeat the above process starting at \( t_{0} + \theta \).
By iterating this procedure, we eventually obtain an interval containing \( t = 1 \).
Hence, \eqref{gradF} has a solution \( X \in \Sigma_{2} \).
\end{proof}

\begin{remark}
The iterative continuation process in the above proof cannot produce shrinking intervals whose total length remains bounded. On any compact interval $t \in [t_0,1]$, the coefficient
\[
c(t) := \frac{\alpha^{3t}}{2\beta^{4t}}
\]
is continuous and strictly positive, so it attains a positive minimum value
\[
c_{\min} := \min_{t \in [t_0,1]} c(t) > 0.
\]
Hence, for all such $t$, and  $X \in [\, \alpha^{t} I,\, \beta^{t} I \,]$,
\[
\langle D_2\phi(t,X)(Y), Y\rangle_{ex2} \ge c_{\min}\|Y\|^2_{ex2},
\qquad\text{which implies}\qquad
\|D_2\phi(t,X)^{-1}\|_{ex2} \le \frac{1}{c_{\min}}.
\]
Therefore, $D_2\phi(t,X)$ remains uniformly bounded below.
Consequently, there exists a uniform step size $\theta_{\min} > 0$, independent of $t$.
After finitely many continuation steps, the interval of existence necessarily reaches $t=1$,
so the shrinking-interval scenario cannot occur.
\end{remark}

The dual space of $S^{1}(\mathcal{H})$ can be identified with $S(\mathcal{H})$ via the duality pairing
\[
\langle X, Y \rangle = \operatorname{Tr}(XY), \qquad X \in S^{1}(\mathcal{H}), \; Y \in S(\mathcal{H}).
\]
For a bounded linear operator $L : S^{1}(\mathcal{H}) \to S^{1}(\mathcal{H})$,
the (Banach) adjoint $L^{*} : S(\mathcal{H}) \to S(\mathcal{H})$ is defined by
\[
\operatorname{Tr}\big(X\,L^{*}(Y)\big)
= \operatorname{Tr}\big(L(X)\,Y\big),
\quad \forall\, X \in S^{1}(\mathcal{H}),\; Y \in S(\mathcal{H}).
\]

\begin{lemma} \label{L:iso}
Let $L : S^{1}(\mathcal{H}) \to S^{1}(\mathcal{H})$ be a bounded linear operator.
Then $L$ is an isomorphism on $S^{1}(\mathcal{H})$ if and only if its adjoint
$L^{*}$ is an isomorphism on $S(\mathcal{H})$.
\end{lemma}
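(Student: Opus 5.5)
This is the Banach-space fact that a bounded operator is invertible if and only if its adjoint is, specialised to the duality $S^{1}(\mathcal{H})^{*}\cong S(\mathcal{H})$ given by the trace pairing. I would first record that this pairing is an isometric identification. For $Y\in S(\mathcal{H})$ we have $\|Y\|=\sup\{|\operatorname{Tr}(XY)|: X\in S^1(\mathcal{H}),\ \|X\|_1\le 1\}$; to see this, take rank-one projections $X=xx^*$ and use $\|Y\|=\sup_{\|x\|=1}|\langle x,Yx\rangle|$ for self-adjoint $Y$. Every bounded real functional on $S^1(\mathcal{H})$ is of this form. Under this identification $L^{*}$ is exactly the Banach adjoint $L'$, so $\|L^*\|=\|L\|$ and $L^{*}$ is bounded.

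Forward direction. If $L$ has a bounded inverse $M$, then $LM=ML=\mathrm{id}$. Taking adjoints in the defining identity $\operatorname{Tr}(X\,L^*(Y))=\operatorname{Tr}(L(X)\,Y)$ gives $(LM)^*=M^*L^*$, and likewise $(ML)^*=L^*M^*$. Both equal $\mathrm{id}_{S(\mathcal{H})}$, so $L^*$ is an isomorphism with inverse $M^*$.

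Converse. Suppose $L^*$ is an isomorphism. Then $L$ is bounded below: for $X\in S^1(\mathcal{H})$, duality gives $\|X\|_1=\sup_{\|Y\|\le1}|\operatorname{Tr}(XY)|$, which is attained or approximated using $Y=\operatorname{sign}(X)$, the polar unitary part, a self-adjoint contraction. Writing $Y=L^*(Z)$ with $\|Z\|\le \|(L^*)^{-1}\|$, we get
\[
|\operatorname{Tr}(XY)|=|\operatorname{Tr}(L(X)Z)|\le \|L(X)\|_1\,\|(L^*)^{-1}\|.
\]
Hence $\|X\|_1\le \|(L^*)^{-1}\|\,\|L(X)\|_1$, so $L$ is injective with closed range. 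The range is dense: if it were not, Hahn–Banach would give a nonzero $Y\in S(\mathcal{H})$ annihilating it, that is, $\operatorname{Tr}(L(X)Y)=0$ for all $X$. Then $\operatorname{Tr}(X\,L^*(Y))=0$ for all $X$, so $L^*(Y)=0$, contradicting injectivity of $L^*$. Closed and dense range means $L$ is onto, and the open mapping theorem then makes $L$ an isomorphism.

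The main point needing care is the duality step. We need $S^1(\mathcal{H})^*=S(\mathcal{H})$ isometrically in the real self-adjoint setting. We also need the norming property $\|X\|_1=\sup_{\|Y\|\le 1,\,Y=Y^*}|\operatorname{Tr}(XY)|$, which again follows from the polar/sign decomposition of self-adjoint trace-class $X$. Everything else is the standard closed-range argument; alternatively one can simply cite the general theorem that $T$ is invertible if and only if $T'$ is invertible (e.g. via the closed range theorem).
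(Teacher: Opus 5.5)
The paper states Lemma~\ref{L:iso} without proof. It implicitly relies on the textbook fact that a bounded operator on a Banach space is invertible if and only if its Banach adjoint is, combined with the identification $S^{1}(\mathcal{H})^{*}\cong S(\mathcal{H})$ via the trace pairing recorded just before the lemma.

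Your proposal is a correct, self-contained proof of exactly that fact.
\begin{itemize}
\item The forward direction via $(LM)^{*}=M^{*}L^{*}$ is sound.
\item In the converse, the bounded-below estimate is sound. It uses that $\operatorname{sign}(X)$ is a self-adjoint contraction with $\operatorname{Tr}(X\operatorname{sign}(X))=\|X\|_1$, so the norm is attained.
\item The Hahn--Banach density argument, which uses injectivity of $L^{*}$, is also sound.
\end{itemize}

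What you add beyond the paper is making explicit the two $S^{1}$-specific inputs.
\begin{itemize}
\item Every real bounded functional on $S^{1}(\mathcal{H})$ is $X\mapsto\operatorname{Tr}(XY)$ with $Y=Y^{*}$. You assert this rather than prove it. It follows by complexifying to $L^{1}(\mathcal{H})^{*}\cong B(\mathcal{H})$ and noting that real values on all self-adjoint $X$ force $\operatorname{Tr}(X(Y-Y^{*}))=0$, hence $Y=Y^{*}$.
\item The trace norm can be normed by self-adjoint contractions. This is what lets you pull $Y$ back through $L^{*}$ on $S(\mathcal{H})$ rather than on all of $B(\mathcal{H})$.
\end{itemize}

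A minor wording point: for self-adjoint $X$ the polar factor is the partial isometry $\operatorname{sign}(X)$, which is not a unitary in general. You only use that it is a self-adjoint contraction, so nothing breaks.
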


\begin{theorem}\label{inv}
Let
\[
L(Y) = AYA + BYB + CYC,
\]
where $A, B, C \in \mathbb{P}$. Then $L$ is an isomorphism on $S(\mathcal{H})$.
\end{theorem}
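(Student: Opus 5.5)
The plan is to obtain invertibility on the Hilbert--Schmidt level, where a coercivity estimate is available, and then transfer it to $S(\mathcal{H})$ by duality with $S^{1}(\mathcal{H})$ via Lemma~\ref{L:iso}. The transfer step is the real difficulty, and the argument below does not yet settle it.

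\emph{Coercivity on $S^{2}(\mathcal{H})$.} Choose $a,b,c>0$ with $A\ge aI$, $B\ge bI$, $C\ge cI$. For $Y\in S^{2}(\mathcal{H})$, cyclicity of the trace gives
\[
\langle AYA,Y\rangle_2=\operatorname{tr}(AYAY)=\|A^{1/2}YA^{1/2}\|_2^2\ \ge\ a^{2}\|Y\|_2^2 ,
\]
where the inequality is Lemma~\ref{l-1} applied with $A^{1/2}$ on both sides. Adding the analogous estimates for $B$ and $C$ gives
\[
\langle L(Y),Y\rangle_2\ \ge\ (a^2+b^2+c^2)\|Y\|_2^2 .
\]
So $L$ is a bounded, self-adjoint, coercive operator on the real Hilbert space $S^{2}(\mathcal{H})$. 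By Lax--Milgram it is invertible there, with $\|L^{-1}\|_{2\to2}\le (a^2+b^2+c^2)^{-1}$ and $\sigma(L|_{S^2})\subset[a^2+b^2+c^2,\|A\|^2+\|B\|^2+\|C\|^2]$.

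\emph{Reduction to $S^{1}(\mathcal{H})$.} The restriction of $L$ to $S^{1}(\mathcal{H})$ is bounded, since $\|AYA\|_1\le\|A\|^2\|Y\|_1$. Its Banach adjoint with respect to $\operatorname{Tr}(XY)$ is again $L$ acting on $S(\mathcal{H})$, because
\[
\operatorname{Tr}(AXA\,Y)=\operatorname{Tr}(X\,AYA).
\]
By Lemma~\ref{L:iso} it therefore suffices to show that $L$ is an isomorphism of $S^{1}(\mathcal{H})$. Injectivity on $S^{1}(\mathcal{H})$ is immediate from $S^{1}\subset S^{2}$. What remains is surjectivity: for $Z\in S^{1}$, the Hilbert--Schmidt solution $Y=L^{-1}Z$ must lie in $S^{1}$, with $\|Y\|_1\le K\|Z\|_1$ for some constant $K$.

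\emph{The transfer step (main obstacle).} My first attempt is finite-dimensional compression. Fix an increasing net of finite-rank projections $P_n\to I$ strongly and compress, $L_n(Y)=P_nL(Y)P_n$ on $P_nS(\mathcal{H})P_n$. The coercivity bound holds with the same constant for each $L_n$, since $P_nAP_n\ge aP_n$ on $P_n\mathcal{H}$, so each $L_n$ is invertible. The goal is then a bound on $\|L_n^{-1}\|_{1\to1}$, equivalently by duality on $\|L_n^{-1}\|_{\infty\to\infty}$, that does not depend on $n$ or on the dimension; weak$^*$ compactness would then pass the solutions to the limit.

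To get such a bound I would try complex interpolation. The $2\to2$ bound holds with a dimension-free constant, and duality gives $\|L_n^{-1}\|_{1\to1}=\|L_n^{-1}\|_{\infty\to\infty}$. An interpolation statement on its own, however, only moves bounds between exponents and does not create a $1\to1$ bound from a $2\to2$ bound. The missing input is a genuine $\infty\to\infty$ estimate.

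For that estimate I would exploit positivity. $L$ maps positive operators to positive operators and $L(I)=A^2+B^2+C^2\ge(a^2+b^2+c^2)I$. I would try to prove that $L_n^{-1}$ is bounded on $S(\mathcal{H})$ by representing it through the Neumann series of a suitably normalized positive map, for instance
\[
Y\mapsto A^{-1}\bigl(BYB+CYC\bigr)A^{-1}
\]
after an appropriate scaling. I would then control that series in the order-unit norm using $\|\Psi\|_{\infty\to\infty}=\|\Psi(I)\|$, which is valid for positive maps $\Psi$.

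Making this Neumann-series estimate work uniformly, when none of $A,B,C$ dominates the others, is the step I expect to be hardest. If a direct order argument fails, the fallback is to write $L=A\otimes\bar A+B\otimes\bar B+C\otimes\bar C$ as an elementary operator and invoke a spectral-permanence result for such operators between $\mathcal{C}_2$ and $B(\mathcal{H})$. I am not sure that result applies to non-commuting $A,B,C$, so this route is also unverified.
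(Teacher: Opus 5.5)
\textbf{There is a genuine gap: the proposal never proves the one step that carries the content.} Your first two steps match the paper's proof. $L$ is self-adjoint and invertible on $S^{2}(\mathcal{H})$; it is its own Banach adjoint for the pairing $\operatorname{Tr}(XY)$; and Lemma~\ref{L:iso} reduces the claim to invertibility on $S^{1}(\mathcal{H})$. Your coercivity bound $\langle L(Y),Y\rangle_2\ge(a^2+b^2+c^2)\|Y\|_2^2$ is actually the correct justification for invertibility on $S^2$. The paper's bare ``$\langle L(X),X\rangle_2>0$ for $X\neq 0$'' does not by itself give this in infinite dimensions.

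However, as you acknowledge, you never show that $L^{-1}$ maps $S^{1}(\mathcal{H})$ into itself. So as written there is no proof of the statement. The paper closes this step by quoting a spectral-permanence corollary from \cite{DN}, namely $\sigma(L|_{S^{1}(\mathcal{H})})\subseteq\sigma(L|_{S^{2}(\mathcal{H})})$. Together with $0\notin\sigma(L|_{S^2})$, this gives invertibility on $S^1$, and then on $S(\mathcal{H})$ by Lemma~\ref{L:iso}. Your ``fallback'' is essentially this route. To use it, you would have to identify such a result and check that its hypotheses cover three non-commuting coefficients $A,B,C$.

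\textbf{Your other two routes cannot replace this input.}
\begin{itemize}
\item The compression scheme only relocates the problem. A bound on $\|L_n^{-1}\|_{\infty\to\infty}$ uniform in $n$ is the whole difficulty, and nothing in the sketch produces it.
\item The Neumann series for $\Phi(Y)=A^{-1}(BYB+CYC)A^{-1}$ converges in norm only when $\lim_n\|\Phi^n(I)\|^{1/n}<1$. This already fails for $A=B=C=I$: there $\Phi=2\,\mathrm{id}$, while $L=3\,\mathrm{id}$ is trivially invertible. Rescaling $L$ leaves $\Phi$ unchanged. Other splittings, such as $L=\lambda\bigl(\mathrm{id}-(\mathrm{id}-L/\lambda)\bigr)$, lose positivity, so the identity $\|\Psi\|_{\infty\to\infty}=\|\Psi(I)\|$ is no longer available.
\end{itemize}

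\textbf{Your caution about the transfer is justified.} Spectral permanence from $S^2$ to $S^1$ is not a general fact, even for maps that are self-adjoint and coercive on $S^2$. Let $\lambda$ be the left regular representation of the free group $\mathbb{F}_k$ ($k\ge 2$) with free generators $g_1,\dots,g_k$, and set $\Psi(Z)=\frac{1}{2k}\sum_{i=1}^{k}\bigl(\lambda(g_i)Z\lambda(g_i)^*+\lambda(g_i)^*Z\lambda(g_i)\bigr)$ on $\ell^2(\mathbb{F}_k)$. Kesten's theorem gives $\|\Psi\|_{2\to2}=\sqrt{2k-1}/k<1$, so $\mathrm{id}-\Psi$ is coercive on $S^2$. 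Yet $(\mathrm{id}-\Psi)(I)=0$, so it is not invertible on $S(\mathcal{H})$, nor on $S^1$ by Lemma~\ref{L:iso}. Hence any valid transfer must use the specific structure of $L$, namely its positive invertible Kraus operators. That is exactly the input your proposal lacks and that the paper delegates to \cite{DN}.
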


\begin{proof}
We have $L = L^{*}$. First, observe that $L$ is a positive definite operator on the Hilbert space $S^{2}(\mathcal{H})$ with inner product
$\langle X, Y \rangle_{2} = \operatorname{Tr}(XY)$. Indeed,
\[
\langle L(X), X \rangle_{2}
= \operatorname{Tr}(A X A X) + \operatorname{Tr}(B X B X) + \operatorname{Tr}(C X C X)
> 0
\quad \text{for all } X \neq 0.
\]
Hence, $L$ is invertible on $S^{2}(\mathcal{H})$. By the spectral permanence-type Corollary of \cite{DN}, we have
\begin{equation}\label{eq:inv}
\sigma(L|_{S^{1}(\mathcal{H})}) \subseteq \sigma(L|_{S^{2}(\mathcal{H})})
\end{equation}
holds, we conclude that $0 \notin \sigma(L|_{S^{1}(\mathcal{H})})$.
Therefore, $L$ is an isomorphism on $S^{1}(\mathcal{H})$.

By Lemma \ref{L:iso}, it follows that $L$ is an isomorphism on $S(\mathcal{H})$.
\end{proof}

We can give a similar proof as Theorem \ref{T:Wass_P}  using Theorem \ref{inv} of  the following.
\begin{theorem}\label{wass_P}
Let $\mathbb{A } =(A_1, A_2, \ldots, A_m) \in \mathbb{P}^{m}$ and $\omega = (w_{1}, w_{2}, \dots, w_{m}) \in \Delta_{m}$ with $\alpha I \leq A_j \leq \beta I$ for all $j$ and some $\alpha, \beta > 0$. Then then the equation \eqref{gradF} has a unique solution $X$ in $\mathbb{P}$.
\end{theorem}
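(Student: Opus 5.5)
We will run the same continuation (homotopy) argument as in Theorem \ref{T:Wass_P}, now in the Banach space $S(\mathcal{H})$ with the operator norm. The Hilbert-space coercivity of Lemma \ref{L:Phi-bdd} is no longer available there, so we will replace it by invertibility obtained through spectral permanence, in the spirit of Theorem \ref{inv}.

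\textbf{Setup.} Consider
\[
\phi(t,X) = I - \sum_{i=1}^m w_i\,(A_i^t \# X^{-1}), \qquad t \in [t_0,1],
\]
where $t_0$ is chosen so that $(\beta/\alpha)^{t_0} \le \epsilon_\omega^2$.

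\textbf{Starting point.} Corollary \ref{corol_P} together with Lemma \ref{homo_set} gives a unique zero $X_0 \in [\alpha^{t_0} I, \beta^{t_0} I]$ of $\phi(t_0,\cdot)$ in $\mathbb{P}$.

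\textbf{Invertibility of the derivative.} The main task is to show that for every $t \in [t_0,1]$ and every $X \in [\alpha^t I, \beta^t I]$, the derivative $D_2\phi(t,X)$ is an isomorphism of $S(\mathcal{H})$ with a uniform bound on its inverse. As in the proof of Lemma \ref{L:Phi-bdd}, write
\[
D_2\phi(t,X)(Y) = \sum_{i=1}^m w_i \int_0^\infty B_i(s)\, Y\, B_i(s)\, ds ,
\]
where the $B_i(s)$ are built from $A_i^t$ and $X$, are positive definite, and satisfy
\[
\frac{\alpha^t}{\beta^{2t}}\, e^{-s/\alpha^t} \le B_i(s) \le C\, e^{-cs}.
\]
This map $L$ is self-adjoint with respect to the trace pairing, so $L = L^*$. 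It also preserves $S^1(\mathcal{H})$ and $S^2(\mathcal{H})$, because the integral converges in operator norm for the corresponding multiplication operators.

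\begin{enumerate}
\item On $S^2(\mathcal{H})$, Lemma \ref{l-1} gives $\langle L Y, Y\rangle_2 \ge c\|Y\|_2^2$, exactly as in Lemma \ref{L:Phi-bdd}.
\item Hence $\sigma(L|_{S^2})$ lies in $[c, \|L\|]$.
\item The spectral permanence corollary of \cite{DN} used in Theorem \ref{inv} should apply equally to this integral of elementary operators (a norm limit of finite sums $\sum B Y B$). It gives $\sigma(L|_{S^1}) \subseteq \sigma(L|_{S^2})$, so $0 \notin \sigma(L|_{S^1})$.
\item Lemma \ref{L:iso} then transfers invertibility to $L^* = L$ on $S(\mathcal{H})$.
\end{enumerate}

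\textbf{Uniform inverse bound.} For the continuation step size to be uniform, we need $\|L^{-1}\|_{S(\mathcal{H})}$ bounded uniformly on the compact parameter range. Our first approach is to note that $(t,X) \mapsto L$ is norm-continuous. Inversion is continuous on the open set of invertible operators, and a uniform spectral gap together with a bound on $\|L\|$ keeps us away from the boundary. This is the expected main obstacle: the set $\{X : \alpha^t I \le X \le \beta^t I\}$ is not norm-compact in infinite dimensions.

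If the continuity argument does not give the uniform bound, the fallback is a more explicit construction. Note that $L = \int (\text{positive multiplication operators})$ and that $L - cI$ is positive with respect to the cone order on $S(\mathcal{H})$, in the sense that it maps positives to positives after suitable scaling. A Neumann-series argument of the form
\[
L^{-1} = \gamma^{-1}\sum_{k \ge 0} (I - L/\gamma)^k ,
\]
applied on the order-unit space $S(\mathcal{H})$, would give an explicit bound $\|L^{-1}\| \le K(\alpha,\beta,t)$. Here one uses $\|I - L/\gamma\| = \|(I - L/\gamma)(I)\| < 1$ whenever $I - L/\gamma$ is a positive map. Whether $I - L/\gamma$ is actually a positive map for a suitable $\gamma$ still has to be checked.

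\textbf{Continuation to $t = 1$.} With the uniform bound in hand, the Implicit Function Theorem in $S(\mathcal{H})$ extends the zero curve $g(t)$ with a uniform step $\theta_{\min}$, as in the Remark following Theorem \ref{T:Wass_P}. The curve stays in $\mathbb{P}$ and, by Lemma \ref{ineq} applied at each $t$, remains in $[\alpha^t I, \beta^t I]$. Finitely many steps therefore reach $t = 1$, which gives existence of a solution of \eqref{gradF}.

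\textbf{Uniqueness.} Our first attempt is a reverse continuation: any second solution $X_1$ at $t = 1$ lies in $[\alpha I, \beta I]$ by Lemma \ref{ineq}, and it generates its own implicit-function curve, which we follow backward to $t_0$. At $t_0$ this curve must meet the unique solution $X_0$. Local uniqueness in the Implicit Function Theorem then forces the two curves to coincide, so $X_1$ equals the solution already found.

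The backward extension to $t_0$ needs the same uniform invertibility bound, so this step carries the same risk as the main obstacle above. If it fails, we will instead have to find a separate argument for global uniqueness on $\mathbb{P}$.
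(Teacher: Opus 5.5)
Your route is the paper's: the homotopy $\phi(t,X)$, the starting zero at $t_0$ from Corollary~\ref{corol_P}, invertibility of $D_2\phi(t,X)$ on $S(\mathcal{H})$ via $S^2$-coercivity, the spectral inclusion used in Theorem~\ref{inv} and Lemma~\ref{L:iso}, and then implicit-function continuation. As written, though, the proposal does not prove the theorem, because the two steps you leave open are exactly the non-routine ones.

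\textbf{Existence.} Pointwise invertibility is not enough. The admissible implicit-function step at $(t,X)$ is governed by $\|D_2\phi(t,X)^{-1}\|$ in operator norm. Without a bound along the solution curve, the steps can shrink and accumulate at some $T\le 1$. To pass through $T$ you need $g(t)$ to converge as $t\uparrow T$, i.e.\ a bound on $\|g'(t)\|=\|D_2\phi(t,g(t))^{-1}\partial_t\phi(t,g(t))\|$. Neither of your suggestions supplies this.
\begin{itemize}
\item On the Banach spaces $S^1(\mathcal{H})$ and $S(\mathcal{H})$, knowing $\sigma(L)\subseteq[c,\|L\|]$ with $\|L\|$ bounded does not keep $L$ uniformly away from the non-invertible operators. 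Non-normal operators with spectrum in $[c,C]$ can have arbitrarily large inverses, and, as you note, there is no compactness.
\item The Neumann-series fallback fails outright. For a unit vector $x$, $P=xx^*$ and $Q=I-P$,
\[
Q\bigl(P-\gamma^{-1}L(P)\bigr)Q=-\gamma^{-1}\sum_{i=1}^m w_i\int_0^\infty \bigl(QB_i(s)x\bigr)\bigl(QB_i(s)x\bigr)^*\,ds ,
\]
which is nonzero and negative as soon as some $B_i(s)x$ is not parallel to $x$. So $\mathcal{I}-L/\gamma$ is a positive map for no $\gamma>0$ unless every $B_i(s)$ is scalar.
\end{itemize}
The paper's written proof does not supply this bound in $S(\mathcal{H})$ either. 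It simply iterates, and the uniform step in the Remark following Theorem~\ref{T:Wass_P} rests on $SL^2_{ex}$-coercivity, which bounds the inverse directly only in that Hilbert setting. You have located a real weak point, but you have not closed it.

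\textbf{Uniqueness.} Your backward continuation inherits the same missing bound, but you do not need it. Grant, as the paper does, that the spectral inclusion of Theorem~\ref{inv} extends to these integrals of elementary operators. By Lemma~\ref{ineq}, any two solutions $X_1,X_2$ lie in the convex set $[\alpha I,\beta I]$. So, with $\Phi=\phi(1,\cdot)$,
\[
0=\Phi(X_2)-\Phi(X_1)=\bar L\,(X_2-X_1),\qquad \bar L:=\int_0^1 D\Phi\bigl(X_1+s(X_2-X_1)\bigr)\,ds .
\]
The averaged operator $\bar L$ has the properties you already use for $D_2\phi$:
\begin{itemize}
\item it is trace-self-adjoint;
\item it leaves $S^1(\mathcal{H})$ and $S^2(\mathcal{H})$ invariant;
\item it is coercive on $S^2(\mathcal{H})$ with the constant of Lemma~\ref{L:Phi-bdd}, since that bound holds at every point of the segment.
\end{itemize}
The same permanence-plus-duality argument therefore makes $\bar L$ invertible, in particular injective, on $S(\mathcal{H})$. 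Hence $X_1=X_2$, with no uniform estimate needed.

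The paper obtains uniqueness separately, in Corollary~\ref{uniq}. It uses the exponential contraction of the flow $u'=-\Phi(u)$ in the Banach--Finsler metric $d_\epsilon$ (Theorem~\ref{T:contractionBanachFinsler} with Lemma~\ref{L:flow-invariance-S1}). Two zeros of $\Phi$ are equilibria of this flow, so their $d_\epsilon$-distance must vanish.
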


\begin{proof}
Define
\[
\phi(t, X) := I - \sum_{i=1}^{m} w_{i} \big( A_{i}^{t} \# X^{-1} \big).
\]
Since \( \alpha^{t} I \leq A_j^{t} \leq \beta^{t} I \) for all \( j \) and \( t \in (0, \infty) \),
there exists \( t_{0} < 1 \) such that \( (\beta / \alpha)^{t_{0}} \leq \epsilon_{\omega}^{2} \).
By Corollary~\ref{corol_P}, there exists a unique
\( X_{0} \in [\, \alpha^{t_{0}} I,\, \beta^{t_{0}} I \,] \)
such that \( \phi(t_{0}, X_{0}) = 0 \).
Let \( D_{2}\phi(t, X) \) denote the Fréchet derivative of \( \phi \) with respect to its second variable.
Then, for all \( Y \in S^{2}(\mathcal{H}) \) and any \( t \in (0, \infty) \), we have
\begin{equation}\label{eq:wass_P}
\langle D_{2} \phi(t, X)(Y), Y \rangle
\geq \frac{\alpha^{3t}}{2\beta^{4t}} \langle Y, Y \rangle.
\end{equation}
In particular, \( D_{2}\phi(t, X) \) is an invertible linear operator on $S^{2}(\mathcal{H})$. By Theorem \ref{inv}, \( D_{2}\phi(t, X) \) is an invertible linear operator on $S(\mathcal{H})$.
By the Implicit Function Theorem, there exist \( \theta > 0 \)
and an open neighborhood \( U \subseteq \mathbb{P} \) containing \( X_{0} \), together with a unique continuously differentiable function
$
g : (t_{0} - \theta,\, t_{0} + \theta) \to U
$
such that \( \phi(t, g(t)) = 0 \) for all \( t \in (t_{0} - \theta,\, t_{0} + \theta) \)
and \( g(t_{0}) = X_{0} \).
If \( 1 \in (t_{0} - \theta,\, t_{0} + \theta) \), then we are done.
Otherwise, we repeat the above process starting at \( t_{0} + \theta \).
By iterating this procedure, we eventually obtain an interval containing \( t = 1 \).
Hence, \eqref{gradF} has a solution \( X \in \mathbb{P} \).
\end{proof}

\begin{definition}
Let $\mathbb{A} = (A_1, A_2, \ldots, A_m) \in \mathbb{P}^m$ and $\omega = (w_{1}, w_{2}, \dots, w_{m}) \in \Delta_{m}$. We define the Wasserstein mean as a unique solution $X \in \mathbb{P}$ to the operator equation \eqref{gradF} and denote as $\Omega (\omega, \mathbb{A})$.
\end{definition}

\begin{remark}
Let $\mathbb{A} = (A_1, A_2, \ldots, A_m) \in \mathbb{P}^m$ and $\omega = (w_1, w_2, \ldots, w_m) \in \Delta_m$, where $\alpha I \leq A_j \leq \beta I$ for all $j$ and some $\alpha, \beta > 0$. The Karcher mean for positive definite operators is defined in \cite{LL} as a unique solution to the Karcher equation:\begin{equation*} \label{Karcher-eq}\sum_{i=1}^{m} w_{i} X^{1/2} \log(X^{-1/2} A_{i} X^{-1/2}) X^{1/2} = 0.\end{equation*} By utilizing the fact that the Riemannian Hessian of the potential function for the Karcher mean is positive definite on $S^2(\mathcal{H})$ (as established in Theorem 2.4 of \cite{palfia2}), one can provide an existence and uniqueness proof for the Karcher mean analogous to that of Theorem \ref{wass_P} for the Wasserstein mean.
\end{remark}

\section{No-dice theorem}

\begin{remark}\label{f}
For the map $\Phi: \Sigma_2 \to  SL^{2}_{ex} (\mathcal{H})$, we consider the 1-form given by $\omega_X(Z) = \langle \Phi(X), Z \rangle_{ex 2}$ for $X \in \Sigma_2$ and $Z \in SL^{2}_{ex} (\mathcal{H})$.
Since $\Sigma_2$ is an open flat submanifold of the vector space $SL^{2}_{ex} (\mathcal{H})$, the exterior derivative $d\omega$ (a 2-form on $\Sigma_2$) is given by

\begin{displaymath}
(d\omega)_X(Y, Z) = \langle D\Phi(X)(Y), Z \rangle_{ex 2} - \langle D\Phi(X)(Z), Y \rangle_{ex 2}
\end{displaymath}
for $Y, Z \in SL^{2}_{ex} (\mathcal{H})$.
We have
\begin{displaymath}
(d\omega)_X(Y, Z) = \langle \big( D\Phi(X) - D\Phi(X)^* \big) (Y), Z \rangle_{ex 2}.
\end{displaymath}
Since $D\Phi(X)$ is self-adjoint, therefore $d\omega = 0$.
Now, applying the Poincaré lemma, so $\omega$ is a closed 1-form on $\Sigma_2$.
Thus, there exists a smooth function $f: \Sigma_2 \to \mathbb{R}$ such that $\omega = df$.
In other words, $df_X(Z) = \langle \Phi(X), Z \rangle_{ex 2}$ for all $X \in \Sigma_2$ and $Z \in  SL^{2}_{ex} (\mathcal{H})$, which means $\Phi(X) = \nabla f(X)$.
\end{remark}

\begin{definition}[Resolvent map]
For each $X \in SL^{2}_{ex} (\mathcal{H})$, we define
\begin{equation*}\label{eq:resolventabove}
J_{\lambda }^f(X):=\underset{Y \in \Sigma_2}{\arg \min} \left\{ f(Y)+\frac{1}{2\lambda}\|X-Y\|^2_{ex2} \right\}.
\end{equation*}
\end{definition}

By Theorem 5.1 of \cite{OP}, we obtain the following.

\begin{theorem}\label{T:3}
Consider a function $f$ as in Remark \ref{f},
$f:X\to (-\infty,\infty]$ is  a convex, continuous function
and let $ \Sigma_2 \subset SL^{2}_{ex} (\mathcal{H})$ be a convex set.
Take a positive sequence $\{\lambda_k\}_{k \ge 1}$ with $\sum_{k=1}^\infty\lambda_k=+\infty$.
Fix an arbitrary starting point $X_0\in \Sigma_2  $ and put
\[ X_{k}:=J_{\lambda_k}^f(X_{k-1}), \qquad k \ge 1. \]
Then we have $\lim_{k \to \infty} X_k= \Omega (\omega, \mathbb{A})$.
\end{theorem}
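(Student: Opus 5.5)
This is a direct application of the proximal point convergence theorem (Theorem 5.1 of \cite{OP}) in the Hilbert space $SL^{2}_{ex}(\mathcal{H})$. So the plan is to check that theorem's hypotheses for the potential $f$ of Remark \ref{f}, and then to identify its unique minimizer with $\Omega(\omega,\mathbb{A})$.

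\emph{Step 1: convexity and continuity of $f$ on a convex set.}
Remark \ref{f} gives $\nabla f = \Phi$ on $\Sigma_2$, so the Hessian of $f$ at $X$ is $D\Phi(X)$. By Lemma \ref{L:Phi-bdd}, on the order interval $[\alpha I,\beta I]\cap\Sigma_2$ we have
\[
\langle D\Phi(X)(Y),Y\rangle_{ex2}\ \geq\ \frac{\alpha^3}{2\beta^4}\langle Y,Y\rangle_{ex2}.
\]
Hence $f$ is strongly convex along every segment in that set. The set is convex, since Loewner intervals are convex and $\Sigma_2$ is a cone. Continuity of $f$ follows from smoothness.

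To be safe, I would work on $K=[\alpha' I,\beta' I]\cap\Sigma_2$ with $\alpha'\le\alpha$ and $\beta'\ge\beta$, chosen so that $X_0\in K$. I would set $f=+\infty$ off $K$, which gives a proper, lower semicontinuous, convex function on $SL^{2}_{ex}(\mathcal{H})$. This is the setting of \cite{OP}.

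\emph{Step 2: minimizer of $f$.}
A minimizer $X^*$ of this function satisfies $0\in\partial f(X^*)$. If $X^*$ lies in the interior of $K$ relative to $\Sigma_2$, this reads $\Phi(X^*)=0$, i.e. $\sum_i w_i (A_i\#X^{*-1})=I$. That is exactly equation \eqref{gradF}, so by Theorem \ref{T:Wass_P} we get $X^*=\Omega(\omega,\mathbb{A})$. Lemma \ref{ineq} places $\Omega$ in $[\alpha I,\beta I]$, which lies inside $K$.

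Strong convexity gives uniqueness of the minimizer. For existence one can argue in either of two ways:
\begin{itemize}
\item a strongly convex, lower semicontinuous, proper function on a Hilbert space is coercive and weakly lower semicontinuous, hence attains its minimum;
\item or simply note that $\Omega$ is a critical point of a convex function, hence a global minimizer.
\end{itemize}
A boundary minimizer of the restricted problem is excluded because the interior critical point $\Omega$ already exists and the minimizer is unique.

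\emph{Step 3: convergence.}
The resolvents $J^f_{\lambda_k}$ are well defined, single valued and firmly nonexpansive. With $\sum_k\lambda_k=\infty$, Theorem 5.1 of \cite{OP} gives $X_k\to X^*$: in general weakly, but strongly here thanks to strong convexity. The strong convergence can be seen directly from
\[
\|X_k-X^*\|^2\le (1+\lambda_k c)^{-1}\|X_{k-1}-X^*\|^2,
\qquad c=\frac{\alpha'^3}{2\beta'^4},
\]
together with $\prod_k(1+\lambda_k c)^{-1}\to0$, which holds because $\sum_k\lambda_k=\infty$.

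\emph{Main obstacle.}
The delicate point is Step 1. The convexity of $f$ is known only locally, through the Hessian bound on order intervals. The space $\Sigma_2$ is convex, but Lemma \ref{L:Phi-bdd} only applies on bounded order intervals and with $X$ in the range covered by that lemma (its constants depend on $X$ being in $[\alpha I,\beta I]$).

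I would first recompute the bound with general endpoints, giving $c=\alpha'^3/(2\beta'^4)$ on $[\alpha' I,\beta' I]$. I would then restrict to the interval containing both $X_0$ and the $A_i$. Finally, I would verify that the resolvent iterates stay in it; if they do not, the $+\infty$ extension outside $K$ handles this.
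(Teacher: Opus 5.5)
\textbf{Gap: the restriction to $K$ changes the iteration.} Your overall route is the paper's. Its proof is only the citation of Theorem 5.1 of \cite{OP}, with the identification of the unique minimizer as $\Omega(\omega,\mathbb{A})$ (via $\Phi(\Omega)=0$ and convexity) left implicit, and your Step 2 supplies that identification correctly.

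The gap is in the layer you add to get norm convergence. Replacing $f$ by $f+\iota_K$ changes the algorithm. $J^{f+\iota_K}_\lambda(X)$ minimizes over $K$, whereas the statement's $X_k=J^f_{\lambda_k}(X_{k-1})$ minimizes over all of $\Sigma_2$. So your contraction estimate, with the constant $c=\alpha'^3/(2\beta'^4)$, applies to the iterates of the statement only if $J^f_\lambda(K)\subseteq K$. The sentence ``if they do not, the $+\infty$ extension outside $K$ handles this'' is not a valid argument: it only proves convergence of a different sequence. The same problem hits your alternative of applying \cite{OP} to $f$ on $\Sigma_2$ and upgrading to norm convergence by strong convexity. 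Lemma \ref{L:Phi-bdd} gives a uniform constant only on a fixed order interval, so you still need the iterates to stay in one.

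\textbf{How to close it.} The missing invariance follows from the same order estimates the paper uses in Lemmas \ref{L:flow-invariance-S1} and \ref{L:uniform-trapping-normalized}. Let $X\in K$ and $Y=J^f_\lambda(X)$, whose existence the statement presupposes. Since $\Sigma_2$ is open and $\nabla f=\Phi$, we have $X=Y+\lambda\Phi(Y)$. Combining this with $\sqrt{\alpha}\,Y^{-1/2}\le\sum_i w_i(A_i\#Y^{-1})\le\sqrt{\beta}\,Y^{-1/2}$ gives
\[
h_\beta(Y)\le X\le h_\alpha(Y),\qquad h_\gamma(t):=t+\lambda\bigl(1-\sqrt{\gamma/t}\bigr).
\]
Each $h_\gamma$ is increasing, with $h_\beta(t)>t$ for $t>\beta$ and $h_\alpha(t)<t$ for $t<\alpha$. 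By spectral mapping:
\begin{itemize}
\item if $\max\sigma(Y)>\beta'$, then $\max\sigma(X)\ge h_\beta(\max\sigma(Y))>\beta'$;
\item if $\min\sigma(Y)<\alpha'$, then $\min\sigma(X)\le h_\alpha(\min\sigma(Y))<\alpha'$.
\end{itemize}
Both contradict $X\in K$. Hence $Y\in K$ for every $\lambda>0$, and all $X_k$ lie in $K$.

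With this, the detour through $\iota_K$ is unnecessary. The relations $X_{k-1}-X_k=\lambda_k\Phi(X_k)$ and $\Phi(\Omega)=0$, together with strong monotonicity of $\Phi$ on the convex set $K$, give directly
\[
\|X_k-\Omega\|_{ex2}\le(1+c\lambda_k)^{-1}\|X_{k-1}-\Omega\|_{ex2}.
\]
Since $\sum_k\lambda_k=\infty$, this proves norm convergence without invoking \cite{OP} at all. That is a genuine improvement over the paper's bare citation.
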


Since the linear operator $D\Phi(X)(\cdot):S(\mathcal{H})\mapsto S(\mathcal{H})$ satisfies \eqref{eq:inv}, by Lemma~\ref{L:iso} it has spectrum $\sigma(D\Phi(X)|_{S(\mathcal{H})})=\sigma(D\Phi(X)|_{S^{1}(\mathcal{H})})\subseteq \sigma(D\Phi(X)|_{S^{2}(\mathcal{H})})$. Then \eqref{eq:wass_P} ensures that $\sigma(D\Phi(X)|_{S(\mathcal{H})})\subseteq (c,\infty)$, for example, for $c=\frac{\alpha^{3}}{2\beta^{4}}$ when $0<\alpha I \leq X,A_j \leq \beta I$ is satisfied.

The \emph{logarithmic norm} of a linear operator $A$ with respect to an operator norm $\|\cdot\|$ on a Banach space is defined and satisfying
\begin{equation*}
\mu_{\|\cdot\|}(A):=\sup_{t>0}\frac{\log\left\|e^{tA}\right\|}{t}=\lim_{t\to 0+}\frac{\log\left\|e^{tA}\right\|}{t}=\lim_{t\to 0+}\frac{\|I+tA\|-1}{t}
\end{equation*}
which describes the largest growth rate of the generated semigroup, see for example \cite{DaleckiiKrein,PerovKostrub}. The other essential quantity describing the asymptotic growth of semigroups is the \emph{spectral abscissa} defined as $\Re\sigma(A):=\max\{\Re (\sigma(A))\}$, the largest real part of the spectrum. It crucially satisfies
\begin{equation*}
\Re\sigma(A)=\inf_{t>0}\frac{\log\left\|e^{tA}\right\|}{t}=\lim_{t\to \infty}\frac{\log\left\|e^{tA}\right\|}{t}
\end{equation*}
and describes the smallest upper-bound exponential growth rate \cite{DaleckiiKrein,PerovKostrub}. In particular, for any $\epsilon>0$ there exists an $M(\epsilon)>0$ such that
\begin{equation*}
\left\|e^{tA}\right\|\leq M(\epsilon)e^{t(\Re\sigma(A)+\epsilon)}
\end{equation*}
for all $t\geq 0$, see for instance \cite{DaleckiiKrein}. The following renorming procedure is known \cite{PerovKostrub}, but we provide a proof for completeness.

\begin{lemma}\label{L:equiNorm}
Given $\epsilon>0$ and $0<\alpha I \leq X,A_j \leq \beta I$ with $X,A_j\in S(\mathcal{H})$, there exists an equivalent norm $\|\cdot\|_{\Re\sigma(D\Phi(X))+\epsilon}$ on $S(\mathcal{H})$ such that the logarithmic norm satisfies
\begin{equation}\label{eq:L:equiNorm}
\mu_{\|\cdot\|_{\Re\sigma(D\Phi(X))+\epsilon}}(D\Phi(X))\leq \Re\sigma(D\Phi(X))+\epsilon.
\end{equation}
\end{lemma}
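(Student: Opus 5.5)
The plan is the standard renorming of a bounded operator by an averaged semigroup norm. Write $A := D\Phi(X)$, viewed as a bounded linear operator on the Banach space $(S(\mathcal{H}),\|\cdot\|)$ with the operator norm. Put $s := \Re\sigma(A)$, which is finite because $A$ is bounded; by the discussion preceding the lemma, $\sigma(A)\subseteq (c,\infty)$ with $c=\frac{\alpha^3}{2\beta^4}$. Fix $\epsilon>0$. The spectral-abscissa growth bound quoted above gives $M(\epsilon)\ge 1$ with
\[
\|e^{tA}\|\le M(\epsilon)e^{t(s+\epsilon)}\qquad\text{for all } t\ge 0 .
\]
I then define
\[
\|Y\|_{s+\epsilon}:=\sup_{t\ge 0} e^{-t(s+\epsilon)}\,\|e^{tA}Y\| ,\qquad Y\in S(\mathcal{H}).
\]

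First, I check that this is an equivalent norm. Subadditivity and homogeneity hold because a supremum of seminorms is a seminorm. Taking $t=0$ gives $\|Y\|\le \|Y\|_{s+\epsilon}$, and the growth bound gives $\|Y\|_{s+\epsilon}\le M(\epsilon)\|Y\|$. So the two norms are equivalent, and in particular $\|\cdot\|_{s+\epsilon}$ is a genuine norm.

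Second, I use the semigroup property to show that $e^{-\tau(s+\epsilon)}e^{\tau A}$ is a contraction for $\|\cdot\|_{s+\epsilon}$, for every $\tau\ge 0$:
\[
\|e^{\tau A}Y\|_{s+\epsilon}=\sup_{t\ge0}e^{-t(s+\epsilon)}\|e^{(t+\tau)A}Y\|
= e^{\tau(s+\epsilon)}\sup_{t'\ge\tau}e^{-t'(s+\epsilon)}\|e^{t'A}Y\|\le e^{\tau(s+\epsilon)}\|Y\|_{s+\epsilon}.
\]
Hence the induced operator norm satisfies $\|e^{\tau A}\|_{s+\epsilon}\le e^{\tau(s+\epsilon)}$, so $\log\|e^{\tau A}\|_{s+\epsilon}/\tau\le s+\epsilon$ for every $\tau>0$. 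The first expression for $\mu$ (the supremum over $t>0$) then gives \eqref{eq:L:equiNorm} directly.

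The main obstacle I expect is not the estimate above but the characterizations of $\mu$ that are quoted in the paper. In particular, the equality of $\sup_{t>0}$ with $\lim_{t\to0+}$ depends on $t\mapsto\log\|e^{tA}\|$ being subadditive, which holds for any operator norm. To be safe, I would also verify the bound directly through the difference-quotient form $\lim_{t\to0+}(\|I+tA\|_{s+\epsilon}-1)/t$. Writing $e^{tA}=I+tA+O(t^2)$ in operator norm, and noting that the $O(t^2)$ term remains $O(t^2)$ in the equivalent norm, gives $\|I+tA\|_{s+\epsilon}\le e^{t(s+\epsilon)}+O(t^2)$, and the limit is then at most $s+\epsilon$. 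The remaining points are routine: measurability is not an issue since $t\mapsto e^{tA}Y$ is norm-continuous, and $M(\epsilon)$ exists because $A$ is bounded, as the cited result \cite{DaleckiiKrein} guarantees.
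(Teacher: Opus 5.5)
Your proof is correct. It follows the same renorming strategy as the paper: weight the orbit $t\mapsto e^{tA}Y$ by $e^{-t(s+\epsilon)}$ (with $A=D\Phi(X)$ and $s=\Re\sigma(A)$ as in your notation), obtain $\|e^{\tau A}\|_{s+\epsilon}\le e^{\tau(s+\epsilon)}$ by shifting the time variable, and read off the bound from the $\sup_{t>0}$ formula for $\mu$. The difference is that you take the supremum over $t\ge 0$, whereas the paper uses the Laplace-type integral norm $\|v\|_\lambda=\int_0^\infty\|e^{tA}v\|e^{-t\lambda}\,dt$ with $\lambda=s+\epsilon$.

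This choice matters only in the equivalence-of-norms step, and there it works in your favour. Your lower bound $\|Y\|\le\|Y\|_{s+\epsilon}$ is just the $t=0$ term, and your upper constant is exactly $M(\epsilon)$. The paper instead has to derive its lower bound from a short-time estimate ($\|A\|\le\theta$, and $\|e^{tA}-I\|\le\frac12$ on $[0,\delta]$). For its upper bound it must apply the growth estimate with a strictly smaller exponent: the displayed constant $M(\epsilon)/(\lambda-(s+\epsilon))$ degenerates at $\lambda=s+\epsilon$, so one tacitly needs, e.g., $M(\epsilon/2)$.

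For this lemma the integral norm buys nothing extra. Your supplementary check via $\lim_{t\to0+}(\|I+tA\|_{s+\epsilon}-1)/t$ is correct but unnecessary once $\mu$ is taken as the supremum formula.
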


\begin{proof}
We introduce a new (candidate) norm as
\begin{equation*}
\|v\|_{\lambda}:=\int_0^\infty\|e^{tD\Phi(X)}v\|e^{-t\lambda}dt
\end{equation*}
for any $\lambda>\Re\sigma(D\Phi(X))$. By the properties of strongly continuous semigroups \cite{DaleckiiKrein} $\exists M(\epsilon)>0$ such that
\begin{equation*}
\|e^{tD\Phi(X)}\|\leq M(\epsilon)e^{t(\Re\sigma(D\Phi(X))+\epsilon)}
\end{equation*}
for all $0\leq t\leq \infty$. We claim that $\|\cdot\|_{\lambda}$ is well defined and as a norm is equivalent to $\|\cdot\|$. First we have that
\begin{equation*}
\|v\|_{\lambda}\leq \int_0^\infty M(\epsilon)e^{t(\Re\sigma(D\Phi(X))+\epsilon)}\|v\|e^{-t\lambda}dt=\frac{M(\epsilon)}{\lambda-(\Re\sigma(D\Phi(X))+\epsilon)}\|v\|
\end{equation*}
which establishes that $\|\cdot\|_{\lambda}$ is finite and it is bounded above by $\|\cdot\|$.
Next, using the uniform estimate \(\|D\Phi(X))\|\le \theta\),
\[
\|e^{tD\Phi(X))}-I\|
\le
\sum_{n=1}^\infty \frac{t^n\|D\Phi(X))\|^n}{n!}
\le
\sum_{n=1}^\infty \frac{(t\theta)^n}{n!}
=
e^{\theta t}-1.
\]
Now choose \(\delta>0\) so that \(e^{\theta\delta}-1\le \frac12\). Then for every \(t\in[0,\delta]\),
\[
\|e^{tD\Phi(X))}-I\|\le \frac12,
\qquad X \in [\alpha I, \beta I].
\]
Consequently, for every \(v \),
\[
\|e^{tD\Phi(X))}v\|
\ge
\|v\|-\|(e^{tD\Phi(X))}-I)v\|
\ge
\Bigl(1-\|e^{tD\Phi(X))}-I\|\Bigr)\|v\|
\ge
\frac12\|v\|,
\qquad 0\le t\le \delta.
\]
Therefore, the lower bound on $\|\cdot\|_{\lambda}$ follows from
\[
\|v\|_{\lambda}
\ge
\int_0^\delta  \|e^{tD\Phi(X)}v\|\,e^{-\lambda t}\,dt
\ge
\frac12\int_0^\delta e^{-\lambda t}\,dt\,\|v\|.
\]
Finally, by change of variables we have
\begin{equation}\label{eq1:L:equiNorm}
\begin{split}
\|e^{tD\Phi(X)}v\|_{\Re\sigma(D\Phi(X))+\epsilon}&=e^{t(\Re\sigma(D\Phi(X))+\epsilon)}\int_t^\infty\|e^{sD\Phi(X)}v\|e^{-s(\Re\sigma(D\Phi(X))+\epsilon)}ds\\
&\leq e^{t(\Re\sigma(D\Phi(X))+\epsilon)}\int_0^\infty\|e^{sD\Phi(X)}v\|e^{-s(\Re\sigma(D\Phi(X))+\epsilon)}ds\\
&=e^{t(\Re\sigma(D\Phi(X))+\epsilon)}\|v\|_{\Re\sigma(D\Phi(X))+\epsilon}
\end{split}
\end{equation}
which proves that \eqref{eq:L:equiNorm} holds.
\end{proof}

\begin{proposition}\label{P:contiNorm}
The family of equivalent norms $\|\cdot\|_{\Re\sigma(D\Phi(X))+\epsilon}$ constructed in Lemma~\ref{L:equiNorm} vary continuously with $X$.
\end{proposition}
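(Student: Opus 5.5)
Write $L_X := D\Phi(X)$, a bounded operator on $S(\mathcal{H})$, and for $\lambda$ bigger than the spectral abscissa put
\[
\|v\|_{X,\lambda}=\int_0^\infty \|e^{tL_X}v\|e^{-t\lambda}\,dt ,
\]
with $\lambda=\lambda(X)=\Re\sigma(L_X)+\epsilon$. The aim is to show that for fixed $v$ the quantity $\|v\|_{X,\lambda(X)}$ depends continuously on $X\in[\alpha I,\beta I]$, and in fact locally uniformly for $\|v\|\le 1$. That gives continuity of the section of norms in the sense needed to build the Finsler metric $d_\epsilon$.

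\textbf{Step 1: $X\mapsto L_X$ is norm continuous.} From the integral formula for $D\Phi(X)$ in Lemma \ref{L:Phi-bdd}, the map $X\mapsto L_X$ is continuous, indeed locally Lipschitz, in operator norm. This is because $X\mapsto X^{-1}$ and $X\mapsto (A_i^{-1/2}X^{-1}A_i^{-1/2})^{1/2}$ are smooth on $[\alpha I,\beta I]$. The integrand is bounded by $Ce^{-t/\sqrt{\beta}}$ uniformly, so dominated convergence applies. In particular $\sup_X\|L_X\|\le\theta$.

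\textbf{Step 2: continuity of the spectral abscissa.} This is the step I expect to be the main obstacle. The spectral abscissa is generally only upper semicontinuous in norm. Here, though, it can be controlled because $\sigma(L_X)$ is real. By the spectral-permanence discussion preceding Lemma \ref{L:equiNorm}, $\sigma(L_X|_{S(\mathcal{H})})\subseteq\sigma(L_X|_{S^2(\mathcal{H})})$, and $L_X$ is self-adjoint on $S^2$. Upper semicontinuity of the spectrum gives upper semicontinuity of $X\mapsto\Re\sigma(L_X)$. For lower semicontinuity, I would use the self-adjoint structure on $S^2$, where the top of the spectrum is $\sup_{\|Y\|_2=1}\langle L_XY,Y\rangle$ and hence continuous. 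I would then argue that the $S(\mathcal{H})$ and $S^2$ abscissae agree, via the growth-rate formula $\Re\sigma=\lim_t t^{-1}\log\|e^{tL}\|$ and interpolation between $S^1$ and $S(\mathcal{H})$.

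If this fails, the fallback is to replace $\lambda(X)$ by a locally constant choice. Fix $X_0$ and $\lambda_0=\Re\sigma(L_{X_0})+\epsilon$. By upper semicontinuity, $\lambda_0>\Re\sigma(L_X)+\epsilon/2$ for $X$ near $X_0$. Then show continuity in $X$ of the norms $\|\cdot\|_{X,\lambda_0}$, whose logarithmic norm is still at most $\lambda_0$. This suffices for the later contraction estimate.

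\textbf{Step 3: uniform tail bound.} The key quantitative input is a bound $\|e^{tL_X}\|\le Me^{t(\lambda_0-\epsilon/4)}$, uniform for $X$ in a neighborhood of $X_0$. To get it, choose $T$ with $\|e^{TL_{X_0}}\|\le e^{T(\lambda_0-\epsilon/2)}$, which is possible by the limit formula for the abscissa. Norm continuity of $X\mapsto e^{TL_X}$, which follows from Step 1, then gives $\|e^{TL_X}\|\le e^{T(\lambda_0-\epsilon/4)}$ nearby. Semigroup splitting $t=nT+r$ together with $\|e^{rL_X}\|\le e^{\theta T}$ yields the uniform bound.

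\textbf{Step 4: conclusion.} By Duhamel's formula,
\[
\|e^{tL_X}-e^{tL_{X'}}\|\le t\,\|L_X-L_{X'}\|\,\sup_{s\le t}\|e^{sL_X}\|\,\|e^{(t-s)L_{X'}}\|.
\]
Split the integral defining the norm at a large $T'$. The tail beyond $T'$ is uniformly small by Step 3. On $[0,T']$ the difference is $O(\|L_X-L_{X'}\|)$. This gives
\[
\bigl|\|v\|_{X,\lambda}-\|v\|_{X',\lambda}\bigr|\to0
\]
uniformly on bounded sets. Combined with the uniform equivalence constants from Lemma \ref{L:equiNorm}, this is the claimed continuity.
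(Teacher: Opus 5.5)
Your argument is correct, and it is more complete than the paper's. The paper's proof is a single appeal to the bounded perturbation theorem, $\|e^{tA}\|\le Me^{\omega t}\Rightarrow\|e^{t(A+B)}\|\le Me^{(\omega+M\|B\|)t}$, with $A=D\Phi(X)$ and $B=D\Phi(Y)-D\Phi(X)$. This gives a growth bound that is uniform for $Y$ near $X$, and the paper then declares continuity of the norms immediate.

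Your Step 3 obtains the same local uniform bound in a self-contained way rather than by citation: a time $T$ where $\|e^{TL_{X_0}}\|$ is already below the target rate, norm continuity of $X\mapsto e^{TL_X}$, and semigroup splitting. Either works.

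Beyond that, you supply two things the paper leaves implicit:
\begin{itemize}
\item The Duhamel estimate on $[0,T']$ plus the uniform tail. This is what actually converts a uniform growth bound into $\sup_{\|v\|\le1}\bigl|\|v\|_{X,\lambda}-\|v\|_{X',\lambda}\bigr|\to0$.
\item The point that the parameter $\Re\sigma(D\Phi(X))+\epsilon$ itself moves with $X$, while the spectral abscissa is in general only upper semicontinuous. The paper does not address this.
\end{itemize}

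Your Step 2 can be made rigorous as follows.
\begin{itemize}
\item $D\Phi(X)$ is symmetric for the trace pairing, being the Hessian of the Bures--Wasserstein objective. Hence $e^{tL}$ on $S(\mathcal{H})$ is the Banach adjoint of $e^{tL}$ on $S^1$, and the two operator norms agree.
\item Riesz--Thorin between $S^1$ and $B(\mathcal{H})$ then gives $\|e^{tL}\|_{S^2\to S^2}\le\|e^{tL}\|_{S(\mathcal{H})\to S(\mathcal{H})}$, up to constants from complexification that do not affect growth rates. So $\max\sigma(L|_{S^2})\le\Re\sigma(L|_{S(\mathcal{H})})$.
\item Combined with the inclusion $\sigma(L|_{S(\mathcal{H})})\subseteq\sigma(L|_{S^2})$ asserted in the paper, the two abscissae coincide.
\item Continuity then follows from the variational formula for the top of the spectrum on $S^2$. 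You need Step 1 redone in the $S^2$ operator norm, which the same integral bound gives.
\end{itemize}
Be aware that this route leans on the paper's spectral-permanence claim.

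Your fallback is also sound, with one clarification. On the connected set $[\alpha I,\beta I]$, a locally constant $\lambda$ is just a global constant, such as $\sup_X\Re\sigma(L_X)+\epsilon$. That proves continuity of a slightly different family of norms than the one stated. This is harmless, since Theorem \ref{T:contractionBanachFinsler} only uses this supremum.
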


\begin{proof}
This follows from the perturbation estimate of semigroups such as Theorem VII.1.3 \cite{DaleckiiKrein}. That is, $\left\|e^{tA}\right\|\leq Me^{\omega t}$ implies that $\left\|e^{t(A+B)}\right\|\leq Me^{(\omega+M\|B\|)t}$ for small enough $\|B\|$ which follows by taking the norm of the integral version of the variation of constants formula and then applying Gr\"onwall's lemma, or bounding the terms in the resulting Dyson-Phillips series of the dynamical system. Indeed choosing $A:=D\Phi(X)$ and $B:=D\Phi(Y)-D\Phi(X)$ for $Y$ near $X$ the previous estimate concludes the assertion.
\end{proof}

\begin{definition}[The induced length metric $d_\epsilon$]
The continuity property of the family of norms in Proposition~\ref{P:contiNorm} ensures that $(S(\mathcal{H}),\|\cdot\|_{\Re\sigma(D\Phi(X))+\epsilon})$ is a reversible Banach-Finsler manifold with induced length metric
\begin{equation*}
d_\epsilon(A,B):=\inf_{\gamma\in\mathcal{C}^1:\gamma(0)=A, \gamma(1)=B}\int_0^1\|\dot{\gamma}(t)\|_{\Re\sigma(D\Phi(\gamma(t)))+\epsilon}dt.
\end{equation*}
\end{definition}

\begin{theorem}\label{T:contractionBanachFinsler}
Let
\begin{align*}
    u_i'(t) &= - \Phi(u_i(t)), \\
    u_i(0) &\in\mathbb{P}.
\end{align*}
Then, for all $t \geq 0$, we have
\begin{equation}\label{eq1:T:contractionBanachFinsler}
d_\epsilon(u_1(t), u_2(t)) \leq e^{(\sup\{\Re\sigma(-D\Phi(X)):X\in S\}+\epsilon) t} d_\epsilon(u_1(0), u_2(0))
\end{equation}
where $u_1(t), u_2(t)\in S$ for some large enough $S\subseteq\mathbb{P}$.
\end{theorem}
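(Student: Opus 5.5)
The approach is the standard variational argument for contraction in a Finsler length metric. Let $\kappa := \sup\{\Re\sigma(-D\Phi(X)) : X \in S\} + \epsilon$. We will show that the flow map $\varphi_t$ of $u' = -\Phi(u)$ stretches $C^1$ curves in $S$ by at most the factor $e^{\kappa t}$, measured in the position-dependent norms. Then we pass to the infimum over curves.

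Fix $t>0$ and a $C^1$ curve $\gamma:[0,1]\to S$ joining $u_1(0)$ to $u_2(0)$, with $S$ chosen large enough (an order interval containing all $A_i$, invariant under the flow) that the transported curve stays in $S$. Put $\Gamma(t,s) := \varphi_t(\gamma(s))$ and $V(t,s) := \partial_s\Gamma(t,s)$. Differentiating the ODE in $s$ gives the variational equation
\[
\partial_t V(t,s) = -D\Phi(\Gamma(t,s))\,V(t,s).
\]
The key pointwise estimate is
\[
\frac{d}{dt}\,\|V(t,s)\|_{\Gamma(t,s)} \le \kappa\,\|V(t,s)\|_{\Gamma(t,s)},
\]
where $\|\cdot\|_X$ denotes the renorming of Lemma~\ref{L:equiNorm} built from the generator $-D\Phi(X)$ (this is the sign relevant here). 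The estimate splits into two pieces.

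\emph{Frozen-base piece.} For the base point frozen at $X=\Gamma(t,s)$, the definition of the logarithmic norm together with \eqref{eq:L:equiNorm} gives
\[
\|(I - hD\Phi(X))V\|_X \le (1 + h\kappa)\|V\|_X + o(h).
\]

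\emph{Moving-base piece.} Changing the base point from $\Gamma(t,s)$ to $\Gamma(t+h,s)$ costs a relative change of size $o(1)\cdot h$. This follows from Proposition~\ref{P:contiNorm}, upgraded to a Lipschitz/differentiable dependence of $X \mapsto \|\cdot\|_X$. That upgrade comes from the variation-of-constants perturbation bound, since $X\mapsto D\Phi(X)$ is $C^1$ on $S$ and $\|\Gamma(t+h,s)-\Gamma(t,s)\| = O(h)$ uniformly on $S$.

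Combining the two pieces via a Dini-derivative argument, Gr\"onwall gives
\[
\|V(t,s)\|_{\Gamma(t,s)} \le e^{\kappa t}\|\dot\gamma(s)\|_{\gamma(s)}.
\]
Integrating in $s$, the length of $\varphi_t\circ\gamma$ is at most $e^{\kappa t}$ times the length of $\gamma$. Since $\varphi_t\circ\gamma$ is an admissible $C^1$ curve from $u_1(t)$ to $u_2(t)$, taking the infimum over $\gamma$ yields \eqref{eq1:T:contractionBanachFinsler}.

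The main obstacle is the moving-base piece. Mere continuity of the norm family does not control the derivative along the flow. Two cases need handling.

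\emph{Uniform base-point dependence.} To make the base-point dependence uniform, I would first try to quantify Proposition~\ref{P:contiNorm} as
\[
\bigl|\,\|v\|_Y - \|v\|_X\,\bigr| \le C\,\|Y-X\|\,\|v\|_X \quad \text{on } S.
\]
This uses the Dyson--Phillips bound $\|e^{t(A+B)}-e^{tA}\| \le M t\, e^{(\omega+M\|B\|)t}\|B\|$, integrated against $e^{-\lambda t}$ with $\lambda$ strictly above the growth rate. To ensure a strict gap, I would build the norms with an extra $\epsilon/2$ of room. This yields an extra term $C\|\Phi\|$ in the differential inequality, which is not obviously zero.

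\emph{Absorbing the extra term.} If this term cannot be absorbed, the fallback is to exploit that $\epsilon$ is arbitrary. Using the uniform bound $\|D\Phi\|\le\theta$ on $S$, I would choose the norms for a slightly smaller $\epsilon'$, so that the residual base-point drift (which is $o(1)$ as $\epsilon'$-dependent constants are tuned) fits inside $\epsilon-\epsilon'$. Alternatively, I would subdivide $[0,t]$ into small steps on which the base point is essentially frozen, and chain the frozen estimates.
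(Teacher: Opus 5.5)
Your skeleton is the paper's own. Following Srinivasan--Slotine, the paper writes the variational equation $\frac{d}{dt}\delta u=-D\Phi(u)\delta u$, bounds $\frac{d^+}{dt}\|\delta u(t)\|_{u(t)}$ by the logarithmic-norm estimate of Lemma~\ref{L:equiNorm} at the current base point, applies Gr\"onwall, integrates along curves and takes an infimum. So the paper's argument is your ``frozen-base piece'' and nothing else: it never accounts for the norm changing with $u(t)$. Your choices of building the norms from $-D\Phi(X)$ and of pushing curves forward from time $0$ before taking the infimum are the correct readings of what the paper writes.

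You have found the real crux, but you do not resolve it, so there is a genuine gap at the moving-base step. Your own estimate shows why. With $\bigl|\|v\|_Y-\|v\|_X\bigr|\le C\|Y-X\|\,\|v\|_X$ and $\partial_t\Gamma=-\Phi(\Gamma)$, moving the base point over a time $h$ costs a relative factor $C\|\Phi(\Gamma(t,s))\|\,h+o(h)$. That is $O(h)$ with a constant that is not zero in general, not $o(h)$. Gr\"onwall then gives the rate $\kappa+C\sup_S\|\Phi\|$, not $\kappa$.

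Neither fallback removes the extra term. First, the constant $C=C(\epsilon')$ is built from $M(\epsilon')$, the equivalence constants of Lemma~\ref{L:equiNorm}, negative powers of the gap you reserve, and $\sup_S\|D^2\Phi\|$. None of these tends to $0$ under any tuning of $\epsilon'$; if anything they deteriorate as $\epsilon'\downarrow 0$. Meanwhile $\sup_S\|\Phi\|$ is a fixed positive number for the large invariant set $S$ you need. So $C(\epsilon')\sup_S\|\Phi\|$ cannot be fitted into $\epsilon-\epsilon'$, and the claim that the drift is $o(1)$ has no justification. Second, subdividing $[0,t]$ does not help. Each node forces a switch from $\|\cdot\|_{\Gamma(t_k)}$ to $\|\cdot\|_{\Gamma(t_{k+1})}$ at cost $1+C\sup_S\|\Phi\|\,(t_{k+1}-t_k)$. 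The product over the partition is again $e^{C\sup_S\|\Phi\|\,t}$, whatever the mesh.

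This is not cosmetic: frozen-time spectral bounds do not control a nonautonomous linear equation such as $\partial_t V=-D\Phi(\Gamma(t,s))V$. In the classical Markus--Yamabe example, a periodic $2\times 2$ matrix $A(t)$ has eigenvalues $(-1\pm i\sqrt{7})/4$ for every $t$, yet $\dot x=A(t)x$ has a solution growing like $e^{t/2}$. Nothing in the frozen-base estimate excludes such behaviour in the operator norm on $S(\mathcal H)$.

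To reach the stated rate you need an additional ingredient. Two possibilities:
\begin{itemize}
\item A single, base-point-independent norm on $S(\mathcal H)$ with $\mu(-D\Phi(X))\le\kappa$ for all $X\in S$ simultaneously. The moving-base term then disappears, and $d_\epsilon$ is just that norm's distance on the convex set $S$.
\item A genuine use of the gradient structure of $\Phi$. Its Hessian $D\Phi(X)$ is self-adjoint and bounded below by some $c>0$ on $SL^{2}_{ex}(\mathcal H)$ for $X$ in the order interval $S$. This gives contraction at once in $\|\cdot\|_{ex2}$ via $\langle\Phi(X)-\Phi(Y),X-Y\rangle_{ex2}\ge c\|X-Y\|_{ex2}^2$. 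However, it covers only Hilbert--Schmidt differences, not the operator-norm Finsler metric $d_\epsilon$ of the statement.
\end{itemize}
Neither the paper's proof nor your proposal supplies such an ingredient.
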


\begin{proof}
We follow the ideas of the proofs of Theorem 1 \& 2 in \cite{SrinivasanSlotine}. Given a $\mathcal{C}^1$-solution $u(t)$ of the initial value problem of the assertion, any perturbation $\delta u(t)\in T_{u(t)}\mathbb{P}$ at $u(t)$ evolves according to the linear dynamics
\begin{equation*}
\frac{d}{dt}\delta u= -D\Phi(u)(\delta u),
\end{equation*}
see for instance Sections VII.1-3. and within Equation (1.11) in \cite{DaleckiiKrein}. Then we have by \eqref{eq1:L:equiNorm} and \eqref{eq:L:equiNorm} that
\begin{equation*}
\begin{split}
\frac{d^+}{dt}\|\delta u(t)\|_{\Re\sigma(-D\Phi(u))+\epsilon}&\leq (\Re\sigma(D\Phi(u))+\epsilon)\|\delta u(t)\|_{\Re\sigma(D\Phi(u))+\epsilon}\\
&\leq (\sup\{\Re\sigma(-D\Phi(X)):X\in S\}+\epsilon)\|\delta u(t)\|_{\Re\sigma(D\Phi(u))+\epsilon}
\end{split}
\end{equation*}
and Gr\" onwall's lemma gives
\begin{equation*}
\|\delta u(t)\|_{\Re\sigma(-D\Phi(u(t)))+\epsilon}\leq e^{(\sup\{\Re\sigma(-D\Phi(X)):X\in S\}+\epsilon)(t-t_0)}\|\delta u(t_0)\|_{\Re\sigma(D\Phi(u(t_0)))+\epsilon}.
\end{equation*}
Then first integrating the above along a $\mathcal{C}^1$-curve $\gamma_t(s)$ with $\gamma_t(0)=u_1(t), \gamma_t(1)=u_2(t)$ and then taking infimum in all such $\gamma_t$ yields \eqref{eq1:T:contractionBanachFinsler}.
\end{proof}

Let $\mathbb{T} = [\underline{\alpha} I, \overline{\beta} I]$ for some constants $\underline{\alpha}, \overline{\beta}$ such that $0 < \underline{\alpha} < \alpha < \beta < \overline{\beta}$. The following is immediate.

\begin{lemma} \label{L:Phi-specific-bounds}
The map \(\Phi: \mathbb{T} \to S(\mathcal H)\) appeared in Lemma \ref{L:Phi-bdd} is \( \mathcal{C}^1 \), bounded, and globally Lipschitz in the operator norm.
More precisely, for all \( X,Y \in \mathbb{T} \), there exist $B_{\Phi}, L_{\Phi}$ such that
\[
\| \Phi(X) \| \le B_{\Phi},
\]
and
\[
\| \Phi(X) - \Phi(Y) \| \le L_{\Phi} \| X - Y \|.
\]
\end{lemma}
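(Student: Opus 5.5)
The plan is to exploit the fact that on the order interval $\mathbb{T}=[\underline{\alpha} I,\overline{\beta} I]$ every operator involved is uniformly bounded and uniformly invertible. That lets us read off explicit constants from the integral representations of $A\#X^{-1}$ and of its derivative.

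\textbf{Boundedness.} For $X\in\mathbb{T}$ we have $\overline{\beta}^{-1}I\le X^{-1}\le \underline{\alpha}^{-1}I$ and $\alpha I\le A_i\le\beta I$. Monotonicity of the geometric mean then gives
\[
\sqrt{\alpha/\overline{\beta}}\,I\le A_i\# X^{-1}\le \sqrt{\beta/\underline{\alpha}}\,I .
\]
Hence $\|\Phi(X)\|\le \sum_i w_i\bigl(1+\|A_i\#X^{-1}\|\bigr)\le 1+\sqrt{\beta/\underline{\alpha}}=:B_\Phi$.

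\textbf{$\mathcal{C}^1$ regularity.} We write
\[
A_i\# X^{-1}=A_i^{1/2}\bigl(A_i^{-1/2}X^{-1}A_i^{-1/2}\bigr)^{1/2}A_i^{1/2}.
\]
This is a composition of three maps. The first, $X\mapsto X^{-1}$, is analytic on $\mathbb{P}$. The second is the congruence by $A_i^{\pm1/2}$, which is bounded linear. The third is the square root, which is real analytic on the open cone $\mathbb{P}$ via the holomorphic functional calculus, since the spectrum stays in a compact subset of $(0,\infty)$. So $\Phi$ is $\mathcal{C}^1$ (indeed $\mathcal{C}^\infty$) on a neighborhood of $\mathbb{T}$ in $S(\mathcal{H})$. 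Its derivative is the formula already recorded in the proof of Lemma \ref{L:Phi-bdd}:
\[
D\Phi(X)(Y)=\sum_i w_i\int_0^\infty B_i(t)\,Y\,B_i(t)\,dt .
\]

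\textbf{Lipschitz bound.} We first bound $\|D\Phi(X)\|$ in operator norm uniformly on $\mathbb{T}$. With $C_i=(A_i^{-1/2}X^{-1}A_i^{-1/2})^{1/2}$, we have $C_i\ge (\overline{\beta}\beta)^{-1/2}I$, so $\|e^{-tC_i}\|\le e^{-t/\sqrt{\overline{\beta}\beta}}$. The factors $A_i^{\pm1/2}$ and $X^{-1}$ are bounded by $\beta^{1/2}$, $\alpha^{-1/2}$ and $\underline{\alpha}^{-1}$ respectively. Therefore
\[
\|B_i(t)\|\le K e^{-t/\sqrt{\overline{\beta}\beta}},\qquad K=\frac{\beta}{\alpha\,\underline{\alpha}},
\]
and consequently
\[
\|D\Phi(X)(Y)\|\le K^2\,\tfrac{\sqrt{\overline{\beta}\beta}}{2}\,\|Y\|=:L_\Phi\|Y\|.
\]
Finally, $\mathbb{T}$ is convex, so for $X,Y\in\mathbb{T}$ the segment $X+s(Y-X)$ stays in $\mathbb{T}$. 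The mean value inequality in the Banach space $S(\mathcal{H})$,
\[
\|\Phi(Y)-\Phi(X)\|\le\sup_{s\in[0,1]}\bigl\|D\Phi\bigl(X+s(Y-X)\bigr)\bigr\|\,\|Y-X\|,
\]
then yields the global Lipschitz estimate with constant $L_\Phi$.

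\textbf{Main obstacle.} The only delicate point is justifying the Fréchet differentiability of the square root in operator norm and the validity of the integral formula there. This is standard: differentiate the Lyapunov/Sylvester equation $S^2=Z$, which is uniquely solvable by $\int_0^\infty e^{-tS}(\cdot)e^{-tS}\,dt$ because $S\ge cI$. We invoke this as in Theorem \ref{T:Wass-epsilon}.
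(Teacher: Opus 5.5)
Your argument is correct and is exactly the routine verification the paper omits when it calls this lemma immediate: the order bounds on $\mathbb{T}$ give $B_\Phi$, the integral formula for $D\Phi$ from Lemma~\ref{L:Phi-bdd} together with $C_i\ge(\overline{\beta}\beta)^{-1/2}I$ gives a uniform bound on $\|D\Phi(X)\|$, and the mean value inequality on the convex set $\mathbb{T}$ then gives $L_\Phi$. One small correction, inherited from the paper's formula: the integrand is really $B_i(t)\,Y\,B_i(t)^{*}$, because $B_i(t)=A_i^{1/2}e^{-tC_i}A_i^{-1/2}X^{-1}$ is not self-adjoint; since $\|B_i(t)^{*}\|=\|B_i(t)\|$, your constant $L_\Phi=K^2\sqrt{\overline{\beta}\beta}/2$ is unaffected.
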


\begin{lemma} \label{L:flow-invariance-S1}
The first-order system
\[
u'(t) = -\Phi(u(t)),
\]
leaves \( \mathbb{T} \) invariant.
\end{lemma}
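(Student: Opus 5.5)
We need to show that the flow of $u'=-\Phi(u)$ started in $\mathbb{T}=[\underline{\alpha} I,\overline{\beta} I]$ stays in $\mathbb{T}$. The plan is a Nagumo/Bony–Brezis type tangency argument for the closed convex set $\mathbb{T}\subset S(\mathcal H)$. Lemma~\ref{L:Phi-specific-bounds} makes $-\Phi$ Lipschitz on $\mathbb{T}$, so it suffices to check that $-\Phi$ points inward (weakly) at boundary points. Concretely, I would show that at every $X\in\mathbb{T}$ and every unit vector $x$ realizing $\langle x,Xx\rangle=\underline{\alpha}$ (or approximately realizing it, i.e.\ near the bottom of the spectrum) we have $\langle x,-\Phi(X)x\rangle\ge 0$, and symmetrically at the top. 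Since $\Phi(X)=I-\sum_i w_i (A_i\#X^{-1})$, we have $-\Phi(X)=\sum_i w_i(A_i\#X^{-1})-I$.

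\textbf{Lower face.} Suppose $X\ge \underline{\alpha} I$. Since $A_i\le\beta I$, and $\#$ is monotone and homogeneous,
\[
A_i\# X^{-1}\le (\beta I)\#(\underline{\alpha}^{-1} I)=\sqrt{\beta/\underline{\alpha}}\,I .
\]
This bound runs the wrong way for the lower face. So instead I would work with the comparison function $m(t)=\min\sigma(u(t))$, or more cleanly with the scalar quantity $\langle x,u(t)x\rangle$ along approximate minimizers. The inward condition needs $\langle x,\sum_i w_i(A_i\#X^{-1})x\rangle\ge 1$ whenever $\langle x,Xx\rangle\approx\underline{\alpha}$. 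Using $A_i\ge\alpha I$ gives $A_i\#X^{-1}\ge \sqrt{\alpha}\,X^{-1/2}$, so
\[
\langle x,-\Phi(X)x\rangle\ge\sqrt{\alpha}\,\langle x,X^{-1/2}x\rangle-1 .
\]
With $\langle x,Xx\rangle=\underline{\alpha}$ minimal, $x$ is (approximately) an eigenvector, so $\langle x,X^{-1/2}x\rangle\approx\underline{\alpha}^{-1/2}>\alpha^{-1/2}$. Hence the drift is strictly positive, with margin $\sqrt{\alpha/\underline{\alpha}}-1>0$. In infinite dimensions the minimum may not be attained, and this is where I expect the main obstacle. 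To handle it, I would use Jensen/Cauchy–Schwarz in spectral form: $\langle x,X^{-1/2}x\rangle\ge\langle x,Xx\rangle^{-1/2}$ for unit $x$, which holds for every unit vector. Therefore at any unit $x$ with $\langle x,Xx\rangle\le\underline{\alpha}+\eta$ the drift is at least $\sqrt{\alpha/(\underline{\alpha}+\eta)}-1>0$ for small $\eta$.

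\textbf{Upper face.} Symmetrically, $A_i\le\beta I$ gives $A_i\#X^{-1}\le\sqrt{\beta}X^{-1/2}$. Operator concavity of $t\mapsto t^{1/2}$ then gives, for unit $x$, $\langle x,X^{-1/2}x\rangle\le$ (via $X^{-1/2}\le$ appropriate spectral bound) $\langle x,Xx\rangle^{-1/2}$. I am less sure of this inequality direction, since Jensen for the convex function $t^{-1/2}$ goes the other way. Failing that, I would use the trick that for $x$ with $\langle x,Xx\rangle\ge\overline{\beta}-\eta$ and $X\le\overline{\beta}I$, the vector $x$ is nearly in the spectral subspace of $X$ near $\overline{\beta}$, quantitatively $\|(X-\overline{\beta})x\|^2\le\overline{\beta}\,\eta$. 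That yields $\langle x,X^{-1/2}x\rangle\le\overline{\beta}^{-1/2}+o(1)$, and hence drift $\le\sqrt{\beta/\overline{\beta}}-1+o(1)<0$.

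\textbf{Conclusion.} With these strict margins, I would conclude by the following continuation argument. Suppose the solution (which exists locally by Lipschitzness) leaves $\mathbb{T}$. Take the first exit time and the scalar functions $m(t)=\inf_{\|x\|=1}\langle x,u(t)x\rangle$ and $M(t)=\sup_{\|x\|=1}\langle x,u(t)x\rangle$. These are Lipschitz, and by the above their Dini derivatives satisfy $D^+m>0$ whenever $m$ is near $\underline{\alpha}$, and $D^+M<0$ whenever $M$ is near $\overline{\beta}$. A standard comparison argument then shows $m(t)\ge\underline{\alpha}$ and $M(t)\le\overline{\beta}$ for all $t$, which gives invariance and global existence on $\mathbb{T}$.
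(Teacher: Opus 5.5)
Your argument is correct, but it takes a genuinely different route from the paper's. The paper never tracks extreme spectral values. It quotes a Nagumo/Brezis-type viability theorem for Lipschitz fields on closed convex sets (\cite[Remark~2.4, Proposition~2.5]{GQ}). That theorem reduces invariance to a tangent-cone condition: for each $X\in\mathbb{T}$, some step $X+\lambda F(X)$ with $F=-\Phi$ lies in $\mathbb{T}$. The paper checks this in one stroke. It uses the operator sandwich $\sqrt{\alpha}X^{-1/2}-I\le F(X)\le\sqrt{\beta}X^{-1/2}-I$ and applies monotone functional calculus to $t\mapsto t+\lambda(\sqrt{\underline{\alpha}}\,t^{-1/2}-1)$ and $t\mapsto t+\lambda(\sqrt{\overline{\beta}}\,t^{-1/2}-1)$ on $[\underline{\alpha},\overline{\beta}]$, with $0<\lambda\le 2\underline{\alpha}$. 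Because functional calculus treats the whole spectrum at once, the non-attainment issue you identify as the main obstacle never arises. The strict gaps $\underline{\alpha}<\alpha$, $\beta<\overline{\beta}$ are not even used there, and the same one-step Euler bound is reused in Lemma~\ref{L:uniform-trapping-normalized}.

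Your argument instead proves the viability statement by hand. It combines strict inward drift on $\eta$-near-extremal unit vectors, a Danskin-type estimate for the Dini derivatives of $m(t)$ and $M(t)$, and a first-exit-time contradiction. This is self-contained and makes the margins $\sqrt{\alpha/\underline{\alpha}}-1$ and $1-\sqrt{\beta/\overline{\beta}}$ explicit. The price is more bookkeeping:
\begin{itemize}
\item uniformity in $x$ of the remainder in $u(t+h)=u(t)+hu'(t)+o(h)$;
\item the observation that for $h\lesssim\eta/\|u'(t)\|$ only near-extremal vectors compete in the inf/sup;
\item local existence slightly beyond $\mathbb{T}$, which is fine because $\Phi$ is smooth on all of $\mathbb{P}$.
\end{itemize}

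Two small corrections. First, on the upper face, discard the first inequality you tried. Jensen for the convex function $t^{-1/2}$ gives $\langle x,X^{-1/2}x\rangle\ge\langle x,Xx\rangle^{-1/2}$, which is the wrong direction. Your fallback is correct. Alternatively, since $t^{-1/2}$ lies below its chord on $[\underline{\alpha},\overline{\beta}]$, you get $\langle x,X^{-1/2}x\rangle\le\overline{\beta}^{-1/2}+O(\eta)$ directly, whenever $X\in\mathbb{T}$ and $\langle x,Xx\rangle\ge\overline{\beta}-\eta$.

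Second, what the Danskin estimate actually gives for $m$ is a lower right Dini bound, $\liminf_{h\to 0^+}(m(s+h)-m(s))/h\ge c>0$. This is stronger than the $D^+m>0$ you wrote, and the analogous $\limsup$ bound $\le -c$ holds for $M$. This is exactly what a single-point first-exit contradiction needs. Used only at the exit time $s$, it requires your estimates only at $u(s)\in\mathbb{T}$, where $\underline{\alpha}I\le u(s)\le\overline{\beta}I$ holds. So the upper-face fallback, stated with $X\le\overline{\beta}I$, applies as written, and you never need estimates at points where $M(t)>\overline{\beta}$.
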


\begin{proof}
Set
\[
F(X) := -\Phi(X) = \sum_{i=1}^m w_i \bigl( A_i \# X^{-1} - I \bigr).
\]
Then $F$ is Lipschitz on $\mathbb{T}$ by Lemma \ref{L:Phi-specific-bounds}.
Since $\mathbb{T}$ is closed and
convex, by \cite[Remark 2.4]{GQ} and \cite[Proposition 2.5]{GQ},
it is enough to prove that
$F(X) \in T_{\mathbb{T}}(X)$ for any $X \in \mathbb{T}$,
where
$T_{\mathbb{T}}(X) = \{ V: \exists \lambda > 0, \ X + \lambda V \in \mathbb{T} \}$.
Equivalently, it is enough to show that for every $X \in \mathbb{T}$ there exists $\lambda > 0$ such that
\[
\underline{\alpha} I \leq X + \lambda F(X) \leq \overline{\beta} I.
\]
We obtain
$\sqrt{\alpha} \, X^{-1/2} \leq A_i \# X^{-1} \leq \sqrt{\beta} \, X^{-1/2}.$
Taking the weighted sum gives
\begin{equation*}\label{eq:F-order-bound-tangent}
\sqrt{\alpha} \, X^{-1/2} - I \leq F(X) \leq \sqrt{\beta} \, X^{-1/2} - I.
\end{equation*}

Let $X \in \mathbb{T}$. Then $\sigma(X) \subseteq [\underline{\alpha}, \overline{\beta}]$. Choose $0 < \lambda \leq 2\underline{\alpha}$.
For $t \in [\underline{\alpha}, \overline{\beta}]$, we have
$t + \lambda (\sqrt{\underline{\alpha}} \, t^{-1/2} - 1)
= \underline{\alpha} + (t - \underline{\alpha}) \left( 1 - \frac{\lambda}{\sqrt{t}(\sqrt{t}+\sqrt{\underline{\alpha}})} \right)$.
Since
$\sqrt{t}(\sqrt{t}+\sqrt{\underline{\alpha}}) \geq 2\underline{\alpha} \geq \lambda$,
it follows that
$t + \lambda (\sqrt{\underline{\alpha}} \, t^{-1/2} - 1) \geq \underline{\alpha}$.
Therefore,
$X + \lambda F(X) \geq X + \lambda (\sqrt{\underline{\alpha}} \, X^{-1/2} - I) \geq \underline{\alpha} I$.

Similarly, for $t \in [\underline{\alpha},\overline{\beta}]$,
$t + \lambda (\sqrt{\overline{\beta}} \, t^{-1/2} - 1)
= \overline{\beta} - (\overline{\beta}-t) \left(
1 - \frac{\lambda}{\sqrt{t}(\sqrt{\overline{\beta}}+\sqrt{t})}
\right)$.
Since $2 \underline{\alpha} \leq \sqrt{\underline{\alpha}} (\sqrt{\overline{\beta}} + \sqrt{\underline{\alpha}})$, we have
$\sqrt{t}(\sqrt{\overline{\beta}}+\sqrt{t}) \geq
\sqrt{\underline{\alpha}}(\sqrt{\overline{\beta}}+\sqrt{\underline{\alpha}}) \geq \lambda$ and
$t + \lambda (\sqrt{\overline{\beta}} \, t^{-1/2} - 1) \leq \overline{\beta}$.
Thus,
$X + \lambda F(X) \leq X + \lambda (\sqrt{\overline{\beta}} \, X^{-1/2} - I)
\leq \overline{\beta} I$.
We obtain
$\underline{\alpha} I \leq X + \lambda F(X) \leq \overline{\beta} I$.

Consequently, if $X \in \mathbb{T}$ and
$u'(t) = -\Phi(u(t))$ with $u(0) = X$,
then $u(t) \in \mathbb{T}$ for all $t \geq 0$.
\end{proof}

Theorem \ref{T:contractionBanachFinsler}, together with Lemma
\ref{L:flow-invariance-S1}, gives another proof of Theorem~\ref{wass_P}, i.e.

\begin{corollary}\label{uniq}
The existence and uniqueness of the point \(\Omega(\omega,\mathbb A)\) holds, it is the unique solution of the equation $\Phi(X) = 0$.
\end{corollary}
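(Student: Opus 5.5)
Let $\mathbb{T}=[\underline{\alpha} I,\overline{\beta} I]$ be as above. The plan is to obtain the point $\Omega(\omega,\mathbb A)$ as the limit of the flow $u'=-\Phi(u)$. Lemma~\ref{L:flow-invariance-S1} keeps the flow inside $\mathbb T$, and Theorem~\ref{T:contractionBanachFinsler} makes the flow an exponential contraction there.

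\textbf{Step 1 (contraction rate).} Take $S=\mathbb T$. For $X\in\mathbb T$ the spectral discussion before Lemma~\ref{L:equiNorm} gives
\[
\sigma(D\Phi(X)|_{S(\mathcal H)})\subseteq (c,\infty),\qquad c=\frac{\underline{\alpha}^3}{2\overline{\beta}^4}.
\]
This uses \eqref{eq:wass_P} with $\alpha,\beta$ replaced by $\underline{\alpha},\overline{\beta}$. Hence $\sup\{\Re\sigma(-D\Phi(X)):X\in\mathbb T\}\le -c$. Fix $\epsilon=c/2$, so the exponent in \eqref{eq1:T:contractionBanachFinsler} is at most $-c/2<0$.

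\textbf{Step 2 (uniqueness).} Let $X,Y\in\mathbb T$ both solve $\Phi=0$. The constant curves $u_1\equiv X$ and $u_2\equiv Y$ solve the ODE. Theorem~\ref{T:contractionBanachFinsler} then gives
\[
d_\epsilon(X,Y)\le e^{-ct/2}d_\epsilon(X,Y)\quad\text{for all } t>0,
\]
so $d_\epsilon(X,Y)=0$. Since the norms $\|\cdot\|_{\Re\sigma(D\Phi(Z))+\epsilon}$ are equivalent to the operator norm uniformly on $\mathbb T$, $d_\epsilon$ is a genuine metric there and $X=Y$. By Lemma~\ref{ineq}, every solution in $\mathbb P$ already lies in $[\alpha I,\beta I]\subset\mathbb T$, so uniqueness holds in all of $\mathbb P$.

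\textbf{Step 3 (existence).} By Lemma~\ref{L:Phi-specific-bounds}, $\Phi$ is Lipschitz on $\mathbb T$. Together with Lemma~\ref{L:flow-invariance-S1}, this gives a global flow $T_t:\mathbb T\to\mathbb T$ that is $e^{-ct/2}$-Lipschitz in $d_\epsilon$. Fix $h>0$; the map $T_h$ is a strict contraction of $\mathbb T$. Completeness of $(\mathbb T,d_\epsilon)$ follows from closedness of $\mathbb T$ in $S(\mathcal H)$ and the equivalence of $d_\epsilon$ with the norm metric, so Banach's fixed point theorem gives a unique fixed point $X^*$ of $T_h$. The semigroup property gives $T_h(T_sX^*)=T_s(T_hX^*)=T_sX^*$ for every $s\ge0$. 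Uniqueness of the fixed point then forces $T_sX^*=X^*$ for all $s$, so $0=\frac{d}{ds}T_sX^*|_{s=0}=-\Phi(X^*)$.

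\textbf{Main obstacle.} The delicate point is comparing $d_\epsilon$ with the operator-norm metric uniformly on $\mathbb T$, with constants independent of the base point. Lemma~\ref{L:equiNorm} supplies equivalence constants $M(\epsilon)/(\lambda-\Re\sigma-\epsilon)$ and $\frac12\int_0^\delta e^{-\lambda t}dt$, but these depend on $X$. To make them uniform, I would combine compactness-free continuity (Proposition~\ref{P:contiNorm}) with the uniform bounds $\|D\Phi(X)\|\le\theta$ and the uniform spectral gap from Step~1. The upper bound on $\|e^{tD\Phi(X)}\|$ would be controlled uniformly via the perturbation estimate from Proposition~\ref{P:contiNorm}, applied on a finite chain of norm-balls along segments in the convex set $\mathbb T$. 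A further point: the length metric is defined with curves in $S(\mathcal H)$, so I would restrict the infimum to curves inside the convex set $\mathbb T$. Straight segments already give the upper comparison there, and any curve trivially gives the lower comparison.
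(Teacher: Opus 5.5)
Your proposal is correct and is essentially the paper's argument. The paper justifies Corollary~\ref{uniq} in one line by citing the invariance of $\mathbb{T}$ (Lemma~\ref{L:flow-invariance-S1}) and the exponential contraction of Theorem~\ref{T:contractionBanachFinsler}; your Steps 1--3 (a negative rate $-c/2$ on $\mathbb{T}$, uniqueness via stationary solutions plus Lemma~\ref{ineq}, and existence via the fixed point of the time-$h$ map together with the semigroup property) just make that line explicit.

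On your main obstacle, you need less than you think. The lower comparison $\|v\|_\lambda\ge\frac12\int_0^\delta e^{-\lambda t}\,dt\,\|v\|$ from the proof of Lemma~\ref{L:equiNorm} is already uniform on $\mathbb{T}$, since $\delta$ depends only on $\theta$ and $\lambda\le\theta+\epsilon$. Add finiteness of $d_\epsilon$ along segments, and replace Banach's theorem by taking the norm limit of the $d_\epsilon$-Cauchy orbit $T_{nh}X_0$ and using norm continuity of $T_h$; then no uniform upper bound is needed at all. That matters, because your chain-of-balls idea would not deliver one: each perturbation step inflates the exponent.
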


\begin{lemma}\label{L:G0-Taylor}
Let $G_0(Y,X) := I - (X \# Y^{-1})$ for $X, Y \in [\alpha I,\beta I]$.
Then there exists a constant \( C_{\alpha,\beta} > 0 \), depending only on $\alpha$ and $\beta$,
such that
\[
G_0(Y,X) = D_1G_0(X,X)(Y-X) + R_0(Y,X),
\qquad
\| R_0(Y,X) \| \le C_{\alpha,\beta} \| Y-X \|^2
\]
for all \( X,Y \in [\alpha I,\beta I] \).
\end{lemma}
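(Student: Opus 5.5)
Since $G_0(X,X) = I - X\# X^{-1} = I - I = 0$, the statement is a second-order Taylor estimate for the map $Y \mapsto G_0(Y,X)$ around $Y=X$, with a remainder constant that is uniform over the order interval. I would prove it with the integral form of Taylor's theorem along the segment $Y_s := X + s(Y-X)$, $s\in[0,1]$. The interval $[\alpha I,\beta I]$ is convex, so $Y_s\in[\alpha I,\beta I]$ for every $s$. Then
\[
R_0(Y,X) = \int_0^1 (1-s)\, D_1^2 G_0(Y_s,X)(Y-X,Y-X)\, ds,
\]
and hence $\|R_0(Y,X)\| \le \tfrac12 \sup_{Z\in[\alpha I,\beta I]} \|D_1^2G_0(Z,X)\|\,\|Y-X\|^2$. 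So it suffices to show that the second Fr\'echet derivative of $Z\mapsto X\# Z^{-1}$ (in operator norm, as a bilinear map on $S(\mathcal H)$) is bounded uniformly for $X,Z\in[\alpha I,\beta I]$ by a constant depending only on $\alpha,\beta$.

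For this bound, write $X\# Z^{-1} = X^{1/2}\,(X^{-1/2}Z^{-1}X^{-1/2})^{1/2}\,X^{1/2}$ and treat it as a composition of three maps. The first is $Z\mapsto Z^{-1}$, whose derivatives are $-Z^{-1}HZ^{-1}$ and $2Z^{-1}HZ^{-1}KZ^{-1}$ (symmetrized); these are bounded by $\alpha^{-2}$ and $2\alpha^{-3}$. The second is the congruence $W\mapsto X^{-1/2}WX^{-1/2}$, a bounded linear map of norm at most $\alpha^{-1}$. The third is the square root on the interval $[\beta^{-2}I,\alpha^{-2}]$, which contains the spectrum of $X^{-1/2}Z^{-1}X^{-1/2}$.

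The square root is the step needing care. Use the integral representation
\[
T^{1/2} = \frac{1}{\pi}\int_0^\infty \lambda^{-1/2}\, T(\lambda+T)^{-1}\,d\lambda .
\]
Differentiate twice under the integral using resolvent identities: the second derivative is a sum of terms $\lambda^{1/2}(\lambda+T)^{-1}H(\lambda+T)^{-1}K(\lambda+T)^{-1}$. With $T\ge \beta^{-2}I$ we have $\|(\lambda+T)^{-1}\|\le (\lambda+\beta^{-2})^{-1}$, so the integrand is bounded by $\lambda^{1/2}(\lambda+\beta^{-2})^{-3}$, which is integrable on $(0,\infty)$. This gives a bound $c(\beta)$. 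The first derivative is bounded similarly, or via the representation $\int_0^\infty e^{-tT^{1/2}}\,H\,e^{-tT^{1/2}}\,dt$ already used in Lemma~\ref{L:Phi-bdd}, which gives the bound $\tfrac12\|T^{-1/2}\|\le \beta/2$.

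The chain rule for second derivatives then expresses $D_1^2G_0$ through first and second derivatives of these pieces, sandwiched by the outer factor $X^{1/2}$, which has norm at most $\beta^{1/2}$. Every factor is bounded in terms of $\alpha$ and $\beta$ alone, which yields $C_{\alpha,\beta}$. The linear term is $D_1G_0(X,X)(Y-X)$ by construction. Continuity of $D_1^2G_0$ in $Z$, needed to justify the integral Taylor formula in the Banach space $S(\mathcal H)$, follows from the same resolvent representation.

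The main obstacle is making the estimate for the second derivative of the square root genuinely uniform in operator norm, independently of dimension. The resolvent integral handles this cleanly, because the only input it needs is the spectral lower bound $T\ge\beta^{-2}I$.
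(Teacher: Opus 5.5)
Your proposal is correct and follows the same route as the paper: Taylor's theorem with integral remainder along the segment $X+s(Y-X)$ in the convex interval $[\alpha I,\beta I]$, using $G_0(X,X)=0$, which reduces the claim to a uniform bound on $D_1^2G_0$. The paper simply asserts that bound by citing Lemma~\ref{L:Phi-specific-bounds}, which only gives first-order information about $\Phi$; your chain-rule decomposition, together with the resolvent representation of the square root, actually proves it with constants depending only on $\alpha$ and $\beta$.
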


\begin{proof}
We obtain
\begin{equation*}
\begin{split}
&D_1G_0(Y,X)(Z)\\
&=
X^{1/2} \int_0^\infty
e^{-t(X^{-1/2}Y^{-1}X^{-1/2})^{1/2}} X^{-1/2}Y^{-1}ZY^{-1}X^{-1/2} e^{-t(X^{-1/2}Y^{-1}X^{-1/2})^{1/2}} \, dt \, X^{1/2}.
\end{split}
\end{equation*}
Evaluating at \(Y=X\) gives
$D_1G_0(X,X)(Z)
= \int_0^\infty e^{-tX^{-1}}X^{-1}ZX^{-1}e^{-tX^{-1}}\,dt$.
By Taylor's formula with integral remainder yields
\[
G_0(Y,X) = G_0(X,X) + D_1G_0(X,X)(Y-X)
+ \int_0^1 (1-t) \, D_1^2G_0(X+t(Y-X), X)(Y-X)^2 \, dt,
\]
that is,
$G_0(Y,X) = D_1G_0(X,X)(Y-X) + R_0(Y,X)$ since $G_{0}(X,X) = 0$,
where
$R_0(Y,X)
:= \int_0^1 (1-t) \, D_1^2G_0(X+t(Y-X), X)(Y-X)^2 \, dt$.
By Lemma \ref{L:Phi-specific-bounds} there exists $C_{\alpha,\beta} > 0$ such that
\[
\| D_1^2G_0(X+t(Y-X), X) \|
\le
C_{\alpha,\beta}
\]
for all \( X,Y \in [\alpha I,\beta I] \). Therefore
\[
\| R_0(Y,X) \|
\le
\frac{1}{2}
\sup_{0 \le t \le 1} \| D^2G_0(X+t(Y-X), X) \|
\, \| Y-X \|^2
\le
C_{\alpha,\beta} \| Y-X \|^2.
\]
This proves the claim.
\end{proof}

\begin{lemma}\label{L:G-Taylor}
Let $X, Y \in [\alpha I, \beta I]$. Define
\begin{equation}\label{eq:def-G-normalized}
G(Y,X) :=
X \bigl( I - (X \# Y^{-1}) \bigr) + \bigl( I - (X \# Y^{-1}) \bigr) X.
\end{equation}
Then
\begin{equation*}\label{eq:G-normalized-expansion}
G(Y,X) = Y-X + R(Y,X),
\qquad
\| R(Y,X) \| \le \widetilde C_{\alpha,\beta} \| Y-X \|^2.
\end{equation*}
\end{lemma}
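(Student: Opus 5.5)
The plan is to reduce everything to Lemma~\ref{L:G0-Taylor} and then identify the linear term explicitly through a Lyapunov (Sylvester) equation. Since $G(Y,X) = X\,G_0(Y,X) + G_0(Y,X)\,X$ with $G_0(Y,X) = I - (X\# Y^{-1})$, Lemma~\ref{L:G0-Taylor} gives
\begin{displaymath}
G(Y,X) = X\,L_X(Y-X) + L_X(Y-X)\,X + \bigl( X R_0(Y,X) + R_0(Y,X) X \bigr),
\end{displaymath}
where $L_X := D_1G_0(X,X)$. From the proof of Lemma~\ref{L:G0-Taylor} we have the explicit formula
\begin{displaymath}
L_X(Z) = \int_0^\infty e^{-tX^{-1}} X^{-1} Z X^{-1} e^{-tX^{-1}}\,dt .
\end{displaymath}
We therefore set $R(Y,X) := X R_0(Y,X) + R_0(Y,X) X$. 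It remains to show that the linear part equals exactly $Y - X$.

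\textbf{Key step: identifying the linear part.} For a positive invertible $A$ and bounded $C$, the integral $W = \int_0^\infty e^{-tA} C e^{-tA}\,dt$ converges in operator norm, because $\|e^{-tA}\| \le e^{-t\lambda_{\min}(A)}$. It solves $AW + WA = C$. To see this, differentiate $e^{-tA} C e^{-tA}$ in $t$, obtaining $-(A e^{-tA} C e^{-tA} + e^{-tA} C e^{-tA} A)$, and integrate from $0$ to $\infty$; the boundary term at $\infty$ vanishes. Apply this with $A = X^{-1}$ and $C = X^{-1} Z X^{-1}$ to get
\begin{displaymath}
X^{-1} L_X(Z) + L_X(Z) X^{-1} = X^{-1} Z X^{-1}.
\end{displaymath}
Multiplying on the left and on the right by $X$ yields $L_X(Z)\,X + X\,L_X(Z) = Z$. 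With $Z = Y - X$, the linear part of $G(Y,X)$ is exactly $Y - X$. This is precisely why the symmetrized normalization in \eqref{eq:def-G-normalized} was chosen.

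\textbf{Remainder estimate.} Since $\|X\| \le \beta$ for $X \in [\alpha I, \beta I]$, Lemma~\ref{L:G0-Taylor} gives
\begin{displaymath}
\|R(Y,X)\| \le 2\|X\|\,\|R_0(Y,X)\| \le 2\beta\, C_{\alpha,\beta}\,\|Y-X\|^2 .
\end{displaymath}
So we may take $\widetilde C_{\alpha,\beta} = 2\beta C_{\alpha,\beta}$.

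The only step with real content is the Lyapunov identity. I expect no genuine obstacle there beyond justifying the differentiation under the integral and the vanishing of the boundary term, both of which follow from the exponential decay $\|e^{-tX^{-1}}\| \le e^{-t/\beta}$.
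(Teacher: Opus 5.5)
The paper states this lemma without proof, and your argument is correct and almost certainly the intended one. You write $G(Y,X)=X\,G_0(Y,X)+G_0(Y,X)\,X$ and use the explicit formula for $D_1G_0(X,X)$ from the proof of Lemma~\ref{L:G0-Taylor}. That formula solves $X^{-1}W+WX^{-1}=X^{-1}ZX^{-1}$, hence $XL_X(Z)+L_X(Z)X=Z$, so the linear term is exactly $Y-X$. Then $R=XR_0+R_0X$ gives $\widetilde C_{\alpha,\beta}=2\beta C_{\alpha,\beta}$.

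The one thing you take on trust is the uniform second-derivative bound inside Lemma~\ref{L:G0-Taylor}. The paper justifies that bound only briefly, but citing it here is legitimate.
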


\begin{lemma}
\label{L:uniform-local-consistency}
Under the assumptions of Lemma \ref{L:flow-invariance-S1},
there exist constants \( h_{\mathrm{loc}} > 0 \) and \( C_{\mathrm{loc}} > 0 \) such that, for every \( X \in \mathbb{T} \) and every \( 0 < h \le h_{\mathrm{loc}} \), the normalized resolvent equation
\[
0 = \Phi(Y) + \frac{1}{h} \, G(Y,X)
\]
has a solution \( Y = J_h(X) \) satisfying
\[
\| J_h(X) - (X-h\Phi(X)) \| \le C_{\mathrm{loc}}h^2.
\]
\end{lemma}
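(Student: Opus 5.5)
We plan to solve the normalized resolvent equation by a fixed point argument around the explicit Euler point $E_h(X) := X - h\Phi(X)$. Using Lemma~\ref{L:G-Taylor} we rewrite the equation $0 = \Phi(Y) + \frac1h G(Y,X)$ as
\[
Y = X - h\Phi(Y) - R(Y,X) =: T_{h,X}(Y).
\]
We then show that $T_{h,X}$ is a contraction on a small closed operator-norm ball $\mathcal{B}_h(X) := \{Y : \|Y - E_h(X)\| \le C h^2\}$, with $C$ to be fixed. We work in a slightly larger Loewner interval $\mathbb{T}' = [\underline{\alpha}' I, \overline{\beta}' I] \supset \mathbb{T}$, with $\underline{\alpha}' < \underline{\alpha}$ and $\overline{\beta}' > \overline{\beta}$. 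The constants $B_\Phi$ and $L_\Phi$ of Lemma~\ref{L:Phi-specific-bounds}, and $\widetilde C$ of Lemma~\ref{L:G-Taylor}, are taken on $\mathbb{T}'$. By $\|E_h(X)-X\|\le hB_\Phi$, every element of $\mathcal{B}_h(X)$ lies within distance $hB_\Phi + Ch^2$ of $X\in\mathbb{T}$, hence stays in $\mathbb{T}'$ once $h$ is small, uniformly in $X$.

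The first step is self-mapping. For $Y \in \mathcal{B}_h(X)$,
\[
T_{h,X}(Y) - E_h(X) = -h(\Phi(Y)-\Phi(X)) - R(Y,X),
\]
so
\[
\|T_{h,X}(Y)-E_h(X)\| \le hL_\Phi\|Y-X\| + \widetilde C\|Y-X\|^2 .
\]
Since $\|Y-X\|\le hB_\Phi + Ch^2 \le 2hB_\Phi$ for small $h$, this is at most $(2L_\Phi B_\Phi + 4\widetilde C B_\Phi^2)h^2$. Choosing $C := 2L_\Phi B_\Phi + 4\widetilde C B_\Phi^2$ gives the self-map property for $h\le h_{\mathrm{loc}}$.

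The second step is the contraction estimate, which requires a Lipschitz bound on $R(\cdot,X)$ near $X$. Since $R(Y,X) = G(Y,X) - (Y-X)$ with $G$ smooth, $D_1R(Y,X) = D_1G(Y,X) - I$. The identity $G(Y,X) = Y - X + O(\|Y-X\|^2)$ from Lemma~\ref{L:G-Taylor} forces $D_1G(X,X) = I$, hence $D_1R(X,X) = 0$. A uniform bound on $D_1^2G$ over $\mathbb{T}'$ then yields
\[
\|D_1R(Y,X)\| \le C'\|Y-X\|,
\]
obtained as in Lemma~\ref{L:G0-Taylor} from the integral representation of the derivatives of $X\#Y^{-1}$ and the bounds on the interval. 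Consequently, for $Y_1, Y_2 \in \mathcal{B}_h(X)$,
\[
\|T_{h,X}(Y_1)-T_{h,X}(Y_2)\| \le \bigl(hL_\Phi + 2C'B_\Phi h\bigr)\|Y_1-Y_2\|,
\]
which is at most $\tfrac12\|Y_1-Y_2\|$ for $h$ small. The ball is closed in the Banach space $S(\mathcal H)$, so Banach's fixed point theorem produces $J_h(X)\in\mathcal{B}_h(X)$, which is exactly the claimed estimate with $C_{\mathrm{loc}} = C$.

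The main obstacle is the uniformity of all these constants in $X\in\mathbb{T}$, together with the need to verify $D_1R(Y,X) = O(\|Y-X\|)$ rather than merely the quadratic size bound stated in Lemma~\ref{L:G-Taylor}. We would handle this by deriving explicit bounds for the second Fréchet derivatives of $Y\mapsto X\#Y^{-1}$ on $[\underline{\alpha}' I,\overline{\beta}' I]$. These bounds would come from differentiating the integral formula $\int_0^\infty e^{-tM}(\cdot)e^{-tM}\,dt$ for the derivative of the square root, where $M$ is bounded below by a constant depending only on the interval endpoints.
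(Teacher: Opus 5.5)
Your proposal is correct and follows the paper's proof. It uses the same fixed-point map $T_{h,X}(Y)=X-h\Phi(Y)-R(Y,X)$, the same Lipschitz bound $\|D_1R(Y,X)\|\le C\|Y-X\|$ (which you justify via $D_1G(X,X)=I$, whereas the paper simply asserts it), uniform constants on an enlarged Loewner interval, and Banach's fixed point theorem. The only difference is cosmetic: you center the ball at the Euler point $X-h\Phi(X)$ with radius $O(h^2)$, so the estimate follows from where the fixed point lies, while the paper uses the ball $B(X,2B_\Phi h)$ and gets the $h^2$ bound afterwards from $J_h(X)-(X-h\Phi(X))=-h(\Phi(J_h(X))-\Phi(X))-R(J_h(X),X)$.
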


\begin{proof}
Let $U := \left[ \frac{\underline{\alpha}}{2}I, \, 2\overline{\beta}I \right]$.
%
We have
\[
G(Y,X) = Y-X + R(Y,X),
\]
where
$\| R(Y,X) \| \le \frac{C}{2} \| Y-X \|^2$ and
$\| D_1R(Y,X) \| \le C \| Y-X \|$ for
$Y \in U, X \in \mathbb{T}$.
Fix \( X \in \mathbb{T} \) and \( h > 0 \). Define
\[
T_{h,X}(Y) := X - h\Phi(Y) - R(Y,X).
\]
Then \(Y\) solves
$0 = \Phi(Y) + \frac1h \, G(Y,X)$
if and only if \(Y\) is a fixed point of \( T_{h,X} \).
Set
$R := 2 B_\Phi$.
Let \( B(X,Rh) \) be a closed ball centered at \(X\) of radius \(Rh\).
If \( Y \in B(X,Rh) \), then
\[
\| T_{h,X}(Y)-X \|
\le h \| \Phi(Y) \| + \| R(Y,X) \|
\le h B_\Phi + \frac{C}{2} R^2h^2.
\]
Hence, provided
$0 < h \le \frac{1}{CR}$,
we have
\[
\| T_{h,X}(Y) - X \|
\le hB_\Phi + \frac{R}{2}h
= Rh.
\]
Thus, \(T_{h,X}\) maps \(B(X,Rh)\) into itself.
Next, for \( Y_1,Y_2 \in B(X,Rh) \),
$$
\| T_{h,X}(Y_1) - T_{h,X}(Y_2) \|
\le h L_\Phi \| Y_1-Y_2 \| + \sup_{\| Y-X \| \le Rh} \| D_1R(Y,X) \| \, \| Y_1-Y_2 \|.
$$
Since \( \| D_1R(Y,X) \| \le CRh \) on this ball, we obtain
\[
\| T_{h,X}(Y_1) - T_{h,X}(Y_2) \|
\le h (L_\Phi + CR) \| Y_1-Y_2 \|.
\]
Therefore, if
$0 < h \le \frac{1}{2(L_\Phi + CR)}$,
then \(T_{h,X}\) is a contraction with Lipschitz constant at most \(1/2\).

Finally, to ensure \( B(X,Rh) \subset U \) uniformly for \( X \in \mathbb{T} \), it requires $Rh \le \frac{\underline{\alpha}}{2}$.
By the preceding argument with $CR \leq 2(L_\Phi + CR)$, we choose
$h_{\mathrm{loc}}
:=
\min \left\{ \frac{1}{2(L_\Phi + CR)}, \, \frac{\underline{\alpha}}{2R} \right\}$.
Thus, with such $h_{\mathrm{loc}}$,
the map \(T_{h,X}\) is a contraction from \(B(X,Rh)\) to itself for every \( X \in \mathbb{T} \) and \( 0 < h \le h_{\mathrm{loc}} \).
By the Banach fixed point theorem, there exists a unique
fixed point \( J_h(X) \in B(X,Rh) \subset U \), and hence
$\| J_h(X) - X \| \le Rh$.

Now,
$J_h(X) - (X - h \Phi(X))
= -h \bigl( \Phi(J_h(X)) - \Phi(X) \bigr) - R(J_h(X),X)$.
Therefore,
\[
\| J_h(X) - (X - h \Phi(X)) \|
\le
h L_\Phi \| J_h(X) - X \| + \frac{C}{2} \| J_h(X) - X \|^2
\le
\left( L_\Phi R + \frac{C}{2} R^2 \right) h^2.
\]
Thus the claim holds with
$C_{\mathrm{loc}} := L_\Phi R + \frac{C}{2} R^2$.
\end{proof}

\begin{lemma}
\label{L:uniform-trapping-normalized}
Under the assumptions of Lemma \ref{L:flow-invariance-S1},
there exists \( h_0 > 0 \) such that
\[
J_h(\mathbb{T}) \subseteq \mathbb{T}
\qquad\text{for all } 0 < h \le h_0.
\]
\end{lemma}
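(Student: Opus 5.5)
We need to show that $J_h(X)\in\mathbb{T}=[\underline{\alpha}I,\overline{\beta}I]$ for every $X\in\mathbb{T}$ once $h$ is small. The natural strategy is to compare $J_h(X)$ with the explicit Euler step $X-h\Phi(X)$, which Lemma \ref{L:uniform-local-consistency} says is within $C_{\mathrm{loc}}h^2$ of $J_h(X)$ in operator norm. So it suffices to show that the Euler step lands in $\mathbb{T}$ with a margin of order $h$, i.e. $X-h\Phi(X)\in[(\underline{\alpha}+ch)I,(\overline{\beta}-ch)I]$ for some $c>0$ uniform in $X\in\mathbb{T}$. Then, since $\|J_h(X)-(X-h\Phi(X))\|\le C_{\mathrm{loc}}h^2$ gives $-C_{\mathrm{loc}}h^2 I\le J_h(X)-(X-h\Phi(X))\le C_{\mathrm{loc}}h^2 I$, we get $\underline{\alpha}I\le J_h(X)\le\overline{\beta}I$ as soon as $C_{\mathrm{loc}}h\le c$. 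Thus $h_0:=\min\{h_{\mathrm{loc}},c/C_{\mathrm{loc}},2\underline{\alpha}\}$ is the candidate.

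The margin is obtained as in the proof of Lemma \ref{L:flow-invariance-S1}, but using the \emph{strict} inequalities $\underline{\alpha}<\alpha$ and $\beta<\overline{\beta}$. With $F=-\Phi$ we already have
\[
\sqrt{\alpha}\,X^{-1/2}-I\le F(X)\le\sqrt{\beta}\,X^{-1/2}-I ,
\]
so by functional calculus it is enough to bound the scalar functions $g_\pm(t)=t+h(\sqrt{\alpha}\,t^{-1/2}-1)$ and $t+h(\sqrt{\beta}\,t^{-1/2}-1)$ on $[\underline{\alpha},\overline{\beta}]$.

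For the lower bound I would split $[\underline{\alpha},\overline{\beta}]$ into two regions. Near the bottom, for $t\in[\underline{\alpha},\alpha']$ with $\alpha'$ between $\underline{\alpha}$ and $\alpha$ (say $\alpha'=(\underline\alpha+\alpha)/2$), we have $\sqrt{\alpha/t}-1\ge\sqrt{\alpha/\alpha'}-1=:c_1>0$, hence $g(t)\ge t+c_1h\ge\underline{\alpha}+c_1h$. For $t\ge\alpha'$, monotonicity of $t+h(\sqrt{\alpha}t^{-1/2}-1)$ (for $h\le 2\underline\alpha$, as in Lemma \ref{L:flow-invariance-S1}) gives $g(t)\ge \alpha'-h$, which is at least $\underline{\alpha}+h$ once $h\le(\alpha'-\underline\alpha)/2$. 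The upper bound is symmetric: near $\overline{\beta}$ one has $\sqrt{\beta/t}-1\le-c_2<0$, and away from $\overline{\beta}$ the bound $t+h(\sqrt{\beta/\underline{\alpha}}-1)\le\beta'+Ch$ suffices for small $h$. This yields a uniform $c=\min\{c_1,c_2,1\}$, with all constants depending only on $\alpha,\beta,\underline\alpha,\overline\beta$.

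The main obstacle is exactly this uniform margin of order $h$. Lemma \ref{L:flow-invariance-S1} only gives the non-strict inclusion, which cannot absorb the $O(h^2)$ defect. Producing the margin is what requires the strict gap between $[\alpha I,\beta I]$, which contains the $A_i$, and the enlarged interval $\mathbb{T}$. A secondary point is that $J_h(X)$ is a priori only in $U=[\frac{\underline{\alpha}}2I,2\overline{\beta}I]$, where Lemma \ref{L:uniform-local-consistency} applies. This is harmless, since the argument above only uses the norm estimate from that lemma, and the conversion from a norm bound to a Loewner-order bound holds for self-adjoint operators.
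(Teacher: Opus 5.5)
Your proposal is correct and follows essentially the same route as the paper. You show that the explicit Euler step $X-h\Phi(X)$ lies in $[(\underline{\alpha}+ch)I,(\overline{\beta}-ch)I]$ using the strict gaps $\underline{\alpha}<\alpha$ and $\beta<\overline{\beta}$, and then absorb the $C_{\mathrm{loc}}h^2$ defect from Lemma \ref{L:uniform-local-consistency}. The only difference is cosmetic: the paper obtains the margin in one stroke by noting that $x\mapsto x+h(\sqrt{\alpha/x}-1)$ and $x\mapsto x+h(\sqrt{\beta/x}-1)$ are increasing on all of $[\underline{\alpha},\overline{\beta}]$ for $h\le 2\underline{\alpha}^{3/2}/\sqrt{\beta}$, and evaluates them at the endpoints to get $\kappa=\min\{\sqrt{\alpha/\underline{\alpha}}-1,\,1-\sqrt{\beta/\overline{\beta}}\}$. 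Your interval splitting also works, but its extra smallness conditions on $h$ must be added to your stated $h_0$.
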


\begin{proof}
Define the map
$E_h(X) := X - h \Phi(X)
= X - hI + h \sum_{i=1}^m w_i (A_i \# X^{-1})$.
Since \( \alpha I \le A_i \le \beta I \), we have
$\sqrt{\alpha} \, X^{-1/2}
\le
\sum_{i=1}^m w_i (A_i \# X^{-1})
\le
\sqrt{\beta} \, X^{-1/2}$.
Therefore,
$$
X - hI + h \sqrt{\alpha} \, X^{-1/2} \leq E_h(X) \leq X - hI + h \sqrt{\beta} \, X^{-1/2}.
$$

We first derive a uniform inward estimate for \(E_h\) on \( \mathbb{T} \).
Consider the scalar functions
\[
f_h(x) := x - \underline{\alpha} - h + h \sqrt{\frac{\alpha}{x}},
\qquad
g_h(x) := \overline{\beta} - x + h - h \sqrt{\frac{\beta}{x}},
\qquad
\textrm{for} \ x \in [\underline{\alpha},\overline{\beta}].
\]
Their derivatives are
$
f_h'(x) = 1 - \frac{h\sqrt{\alpha}}{2x^{3/2}},
\
g_h'(x) = -1 + \frac{h\sqrt{\beta}}{2x^{3/2}}.
$
If
$0 < h \le h_E := \frac{2\underline{\alpha}^{3/2}}{\sqrt{\beta}}$,
then
$f_h'(x) \ge 0, \ g_h'(x) \le 0$
for all $x \in[\underline{\alpha},\overline{\beta}]$.
Hence \(f_h\) is increasing and \(g_h\) is decreasing on
\([\underline{\alpha},\overline{\beta}]\), and therefore
\[
f_h(x) \ge f_h(\underline{\alpha})
=
h \left( \sqrt{\frac{\alpha}{\underline{\alpha}}} - 1 \right),
\]
\[
g_h(x) \ge g_h(\overline{\beta})
=
h \left( 1 - \sqrt{\frac{\beta}{\overline{\beta}}} \right).
\]
Setting
$\kappa_- := \sqrt{\frac{\alpha}{\underline{\alpha}}} - 1 > 0,
\
\kappa_+ := 1 - \sqrt{\frac{\beta}{\overline{\beta}}} > 0$, and
$\kappa := \min \{ \kappa_-, \kappa_+ \} > 0$,
we obtain
\begin{equation} \label{E:bounds-E_h}
\underline{\alpha} I + \kappa h \, I \leq E_h(X) \leq \overline{\beta} I - \kappa h \, I,
\qquad
X \in \mathbb{T}, \ \ 0 < h \le h_E.
\end{equation}
Thus, the explicit Euler map sends \(\mathbb{T}\) strictly into its interior, with a uniform margin of size \( \kappa h \).

Now let \( 0 < h \le \min\{h_E, h_{\mathrm{loc}}\} \), where $h_{\mathrm{loc}}$ is appeared in the proof of Lemma \ref{L:uniform-local-consistency}.
By Lemma \ref{L:uniform-local-consistency},
$\| J_h(X) - E_h(X) \| \le C_{\mathrm{loc}} h^2$ for $X \in \mathbb{T}$.
We have
\[
-C_{\mathrm{loc}} h^2 I \le J_h(X)-E_h(X) \le C_{\mathrm{loc}} h^2 I.
\]
Hence, by \eqref{E:bounds-E_h}
\[
J_h(X) \ge E_h(X) - C_{\mathrm{loc}} h^2 I \ge
\underline{\alpha} I + \kappa h \, I - C_{\mathrm{loc}} h^2 I,
\]
and similarly
\[
J_h(X) \le E_h(X) + C_{\mathrm{loc}} h^2 I \le
\overline{\beta} I - \kappa h \, I + C_{\mathrm{loc}} h^2 I.
\]
If
$0 < h \le \frac{\kappa}{2C_{\mathrm{loc}}}$,
then
$\kappa h - C_{\mathrm{loc}} h^2 \ge \frac{\kappa}{2} h > 0$.
Therefore,
\[
J_h(X) \ge \underline{\alpha} I + \frac{\kappa}{2} h \, I > \underline{\alpha}I,
\qquad \textrm{and} \qquad
J_h(X) \le \overline{\beta} I - \frac{\kappa}{2} h \, I < \overline{\beta}I.
\]
Thus, $J_h(X) \in \mathbb{T}$
for all $X \in \mathbb{T}$ and $0 < h \le h_0$,
where
$h_0 := \min \left\{
\frac{2\underline{\alpha}^{3/2}}{\sqrt{\beta}},
\ h_{\mathrm{loc}},
\ \frac{\kappa}{2C_{\mathrm{loc}}}
\right\}$.
\end{proof}

The above proof shows that we may as well \emph{define} the resolvent $J_h(X)=Y$ by requiring only
\begin{equation*}
0 = \Psi(X) + \frac1h \, (Y-X) + O(h^2)
\end{equation*}
for small enough $\lambda>0$ where the error term $O(h^2)$ is assumed to be uniform for all $X\in\mathbb{T}$ and then still obtain the invariance of $\mathbb{T}$, that is:
\begin{corollary}[Freedom of resolvents]
There exists \( h_0 > 0 \) such that
\[
J_h(\mathbb{T}) \subseteq \mathbb{T}
\qquad\text{for all } 0 < h \le h_0,
\]
if $0 = \Psi(X) + \frac1h \, (Y-X) + O(h^2),$ with error term $O(h^2)$ assumed to be uniform for any $X\in\mathbb{T}$.
\end{corollary}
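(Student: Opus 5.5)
The plan is to rerun the second half of the proof of Lemma~\ref{L:uniform-trapping-normalized}, with the specific normalized resolvent equation replaced by the abstract relation $0 = \Psi(X) + \frac1h (Y-X) + O(h^2)$. Here $\Psi$ denotes the vector field $\Phi$ of Lemma~\ref{L:Phi-bdd}. Rearranging the defining relation gives
\[
Y = X - h\Psi(X) + h\,O(h^2) = E_h(X) + O(h^3).
\]
So there is a constant $C$, independent of $X \in \mathbb{T}$ by the uniformity assumption, such that for all small $h$
\[
\| J_h(X) - E_h(X) \| \le C h^3 \le C h^2 .
\]
This is exactly the consistency estimate that Lemma~\ref{L:uniform-local-consistency} supplied, with $C$ in place of $C_{\mathrm{loc}}$. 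In fact it is stronger: we only need an $O(h^2)$ deviation from the explicit Euler step.

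The inward estimate \eqref{E:bounds-E_h} depends only on the explicit Euler map $E_h$ and on the bounds $\alpha I \le A_i \le \beta I$. It therefore holds verbatim:
\[
\underline{\alpha} I + \kappa h I \le E_h(X) \le \overline{\beta} I - \kappa h I
\qquad \text{for } X \in \mathbb{T},\ 0 < h \le h_E .
\]
Next I convert the norm bound into a Loewner bound, using that for self-adjoint $Z$ one has $-\|Z\| I \le Z \le \|Z\| I$:
\[
-C h^2 I \le J_h(X) - E_h(X) \le C h^2 I .
\]
Combining the two displays gives
\[
\underline{\alpha} I + (\kappa h - C h^2) I \le J_h(X) \le \overline{\beta} I - (\kappa h - C h^2) I .
\]
Choosing $h_0 := \min\{ h_E,\ \kappa/(2C),\ h_1 \}$ makes the margin $\kappa h - C h^2$ at least $\frac{\kappa}{2} h > 0$. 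Here $h_1$ is the threshold below which the uniform $O(h^2)$ bound is valid. This yields $J_h(X) \in \mathbb{T}$ for all $X \in \mathbb{T}$ and $0 < h \le h_0$.

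The only delicate point is interpretive rather than computational. One must read the hypothesis as saying that a point $Y = J_h(X)$ exists and is self-adjoint, and that the remainder term is bounded by $C h^2$ in operator norm, uniformly in $X \in \mathbb{T}$ and for $h$ below a fixed threshold. Under that reading no existence argument such as the Banach fixed-point step of Lemma~\ref{L:uniform-local-consistency} is needed. Self-adjointness is also what makes the passage from the norm estimate to the order estimate legitimate. I would state these assumptions explicitly at the start, noting that $\lambda$ in the preceding sentence should read $h$, and then the argument is a direct transcription.
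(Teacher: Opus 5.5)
Your proposal is correct and follows the paper's own argument. The paper justifies the corollary by noting that the second half of the proof of Lemma~\ref{L:uniform-trapping-normalized} goes through unchanged: the explicit Euler map $E_h$ pushes $\mathbb{T}$ inward by a uniform margin $\kappa h$, and $J_h(X)$ lies within a uniform $O(h^2)$ of $E_h(X)$, so the margin $\kappa h - Ch^2$ stays positive for small $h$. One cosmetic fix: the step $Ch^3\le Ch^2$ needs $h\le 1$, so either include $1$ in the minimum defining $h_0$ or use the $h^3$ bound directly.
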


\begin{proposition}[Resolvent steps]
\label{P:concrete-resolvent-consistency}
Fix \( X \in \mathbb{T} \), let \(0 < h \leq h_0 \), and let \( Y_h \in \mathbb{T} \) be a
solution of the following equation:
\begin{equation}\label{eq:normalized-BW-resolvent}
0 = \Phi(Y) + \frac{1}{h} \, G(Y,X),
\end{equation}
where \(G\) is given by \eqref{eq:def-G-normalized}. Then
\begin{equation}\label{eq:normalized-BW-expansion}
Y_h = X - h \, \Phi(X) + O(h^2).
\end{equation}
\end{proposition}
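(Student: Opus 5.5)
The plan is to rewrite the resolvent equation \eqref{eq:normalized-BW-resolvent} with Lemma~\ref{L:G-Taylor}, reduce it to a perturbed fixed point equation, and compare its solution $Y_h$ with the explicit Euler step $E_h(X)=X-h\Phi(X)$. The one point needing care is that $Y_h$ is an \emph{arbitrary} solution in $\mathbb{T}$, not necessarily the fixed point $J_h(X)$ built in Lemma~\ref{L:uniform-local-consistency}. So I would first establish the a priori bound $\|Y_h-X\|=O(h)$ and only then compare with $X-h\Phi(X)$.

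\textbf{Step 1 (reformulation).} By Lemma~\ref{L:G-Taylor}, with constants uniform on $[\underline{\alpha}I,\overline{\beta}I]$, we have $G(Y,X)=Y-X+R(Y,X)$ and $\|R(Y,X)\|\le \widetilde C\|Y-X\|^2$. Multiplying \eqref{eq:normalized-BW-resolvent} by $h$ gives
\begin{displaymath}
Y_h-X=-h\Phi(Y_h)-R(Y_h,X).
\end{displaymath}

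\textbf{Step 2 (a priori bound, the main obstacle).} Taking norms and using Lemma~\ref{L:Phi-specific-bounds} gives
\begin{displaymath}
\|Y_h-X\|\le hB_\Phi+\widetilde C\|Y_h-X\|^2 .
\end{displaymath}
This quadratic inequality alone does not rule out a spurious large solution, since $\|Y_h-X\|$ could lie near $1/\widetilde C$. Two routes are available to exclude this.
\begin{itemize}
\item[(a)] Use uniqueness: if $\|Y_h-X\|\le Rh$, then $Y_h$ coincides with $J_h(X)$, the unique fixed point of $T_{h,X}$ in $B(X,Rh)$.
\item[(b)] Restrict $h$ so that $Y_h$ is forced into the small branch. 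With $\delta=\|Y_h-X\|$, the inequality $\widetilde C\delta^2-\delta+hB_\Phi\ge 0$ splits $\delta$ into $\delta\le 2hB_\Phi$ or $\delta\ge 1/(2\widetilde C)$. The large branch is excluded when $\mathrm{diam}(\mathbb{T})=\overline{\beta}-\underline{\alpha}<1/(2\widetilde C)$, or by a continuity-in-$h$ argument.
\end{itemize}
I would try (b) first, shrinking $h_0$ if necessary. If the constants do not cooperate, I would fall back to interpreting $Y_h$ as the branch continuous in $h$ with $Y_0=X$; uniqueness near $X$ then identifies it with $J_h(X)$. In either case the conclusion is $\|Y_h-X\|\le 2B_\Phi h$.

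\textbf{Step 3 (second-order comparison).} Subtracting $X-h\Phi(X)$ gives
\begin{displaymath}
Y_h-(X-h\Phi(X))=-h\bigl(\Phi(Y_h)-\Phi(X)\bigr)-R(Y_h,X).
\end{displaymath}
By the Lipschitz bound of Lemma~\ref{L:Phi-specific-bounds} and Step~2, the norm is at most
\begin{displaymath}
hL_\Phi\cdot 2B_\Phi h+\widetilde C(2B_\Phi h)^2=(2L_\Phi B_\Phi+4\widetilde CB_\Phi^2)h^2 .
\end{displaymath}
This constant is uniform in $X\in\mathbb{T}$, which proves \eqref{eq:normalized-BW-expansion}. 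This is exactly the computation closing Lemma~\ref{L:uniform-local-consistency}, now applied to $Y_h$.
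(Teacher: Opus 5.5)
Your Steps 1 and 3 match the paper's argument exactly: the identity $Y_h-X=-h\Phi(Y_h)-R(Y_h,X)$, the quadratic inequality, and the closing Lipschitz/remainder estimate. You are also right that the a priori bound $\|Y_h-X\|=O(h)$ is the crux. The paper settles it only by asserting ``because $Y_h\to X$'', which is the same unproved hypothesis as your fallback. For an arbitrary solution $Y_h\in\mathbb{T}$, as in the statement, neither that assertion nor your two routes closes the gap.

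Route (a) is circular. Uniqueness in $B(X,Rh)$ says nothing about a solution outside that ball, and such a solution is exactly what must be ruled out.

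Route (b) cannot be arranged. The larger root of $\widetilde C\delta^2-\delta+hB_\Phi$ never exceeds $1/\widetilde C$, so shrinking $h_0$ excludes the large branch only if $\widetilde C(\overline{\beta}-\underline{\alpha})<1$. This product is scale invariant, since $G(cY,cX)=cG(Y,X)$. Already for scalars $R(y,x)=-\frac{3}{4x}(y-x)^2+O(|y-x|^3)$, so the product is at least $\frac34(\overline{\beta}/\underline{\alpha}-1)$. That exceeds $1$ once $\overline{\beta}/\underline{\alpha}\ge 7/3$, and you cannot prevent this because $\mathbb{T}\supseteq[\alpha I,\beta I]$ with $\beta/\alpha$ arbitrary. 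A continuity-in-$h$ argument needs a continuous branch $h\mapsto Y_h$ with $Y_h\to X$, which the statement does not provide.

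The missing idea is that the quadratic bound throws away the structure of $G$, which in fact controls $\|Y-X\|$ linearly on all of $\mathbb{T}$.

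Write $G(Y,X)=XZ+ZX$ with $Z:=I-(X\#Y^{-1})$. Since $X\ge\underline{\alpha}I$, the Lyapunov map $Z\mapsto XZ+ZX$ is invertible on $S(\mathcal{H})$. Its inverse is $W\mapsto\int_0^\infty e^{-tX}We^{-tX}\,dt$, with norm at most $1/(2\underline{\alpha})$. The resolvent equation $G(Y_h,X)=-h\Phi(Y_h)$ then gives $\|Z\|\le hB_\Phi/(2\underline{\alpha})$.

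By the Riccati equation for $X\#Y_h^{-1}=I-Z$, one has $Y_h^{-1}=(I-Z)X^{-1}(I-Z)$. Hence $\|Y_h^{-1}-X^{-1}\|\le\underline{\alpha}^{-1}(2+\|Z\|)\|Z\|$. From $Y_h-X=Y_h(X^{-1}-Y_h^{-1})X$ you get $\|Y_h-X\|\le\overline{\beta}^{2}\|Y_h^{-1}-X^{-1}\|=O(h)$. The constant depends only on $\underline{\alpha},\overline{\beta},B_\Phi$, and the bound holds for every solution $Y_h\in\mathbb{T}$ and every $h$.

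With this in place of your Step 2 (and of the paper's ``$Y_h\to X$''), your Step 3 goes through verbatim, with $2B_\Phi$ replaced by the new constant.
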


\begin{proof}
Set
$\Delta_h := Y_h - X$.
By Lemma \ref{L:G-Taylor}, there exists a constant
\( \widetilde C_{\underline{\alpha},\overline{\beta}} > 0 \) such that
\[
G(Y_h,X) = \Delta_h + R(Y_h,X),
\qquad
\| R(Y_h,X) \| \le \widetilde C_{\underline{\alpha},\overline{\beta}} \| \Delta_h \|^2.
\]
Substituting this into \eqref{eq:normalized-BW-resolvent}, we obtain
\[
0 = \Phi(Y_h) + \frac{1}{h} \bigl( \Delta_h + R(Y_h,X) \bigr)
\]
so
\begin{equation}\label{eq:Delta-identity}
\Delta_h = -h \, \Phi(Y_h) - R(Y_h,X).
\end{equation}

Since \(\Phi\) is bounded on \( [\underline{\alpha} I, \overline{\beta} I] \),
say \( \| \Phi(Z) \| \le B_{\Phi} \), we get
from \eqref{eq:Delta-identity}
$\| \Delta_h \|
\le B_{\Phi} h + \widetilde C_{\underline{\alpha},\overline{\beta}} \| \Delta_h \|^2$.
Because \( Y_h \to X \), we have \( \| \Delta_h \| \to 0 \).
Therefore, for small \(h\),
$\widetilde C_{\underline{\alpha},\overline{\beta}} \| \Delta_h \| \le \frac12$,
and so
$\widetilde C_{\underline{\alpha},\overline{\beta}} \| \Delta_h \|^2 \le \frac12 \| \Delta_h \|$.
Thus
$\| \Delta_h \| \le B_{\Phi} h + \frac12 \| \Delta_h \|$,
which implies
$\| \Delta_h \| \le 2B_{\Phi} h$.
Hence,
$\Delta_h = O(h)$ gives
$R(Y_h,X) = O(\| \Delta_h \|^2) = O(h^2)$.
Since \(\Phi\) is Lipschitz, we also have
$\Phi(Y_h) = \Phi(X) + O(\| \Delta_h \|) = \Phi(X) + O(h)$.
Substituting these two estimates into \eqref{eq:Delta-identity}, we obtain
\[
\Delta_h = -h \bigl( \Phi(X) + O(h) \bigr) + O(h^2)
= -h \, \Phi(X) + O(h^2).
\]
Since \( \Delta_h = Y_h-X \), this proves
$Y_h = X - h \, \Phi(X) + O(h^2)$.
\end{proof}

Let $\{\lambda_k\}_{k \ge 1}$ be a positive sequence with $\lambda_1 \leq h_0$, $\sum_{k=1}^\infty \lambda_k = +\infty$ and $\sum_{k=1}^\infty \lambda_k^2 < +\infty$.

\begin{proposition}[Forward Euler step method]\label{P:monotoneResolvent}
Let $X_* \in [\alpha I,\beta I]$ be such that $\Phi(X_*)=0$, $a := \sup\{ \Re\sigma(-D\Phi(X)): X \in \mathbb{T} \} + \epsilon$ and
\( (X_k)_{k\ge1} \subset \mathbb{T} \) satisfies
\begin{equation}\label{Resolvent}
X_{k+1} = X_k - \lambda_{k+1} \Phi(X_k) + r_k,
\qquad
\| r_k \| \le C_r \lambda_{k+1}^2.
\end{equation}
Then
\begin{equation*}\label{eq1:P:monotoneResolvent}
d_\epsilon(X_*,X_{k+1}) \leq \left( 1 + a \lambda_{k+1} \right) d_\epsilon(X_*,X_{k}) + O \left( \lambda_{k+1}^2 \right)
\end{equation*}
and $d_\epsilon(X_*,X_{k}) \to 0$ as $k \to \infty$.
\end{proposition}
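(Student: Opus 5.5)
The plan is to compare one step of the recursion \eqref{Resolvent} with the exact flow of $u'=-\Phi(u)$ over a time interval of length $\lambda_{k+1}$. The contraction estimate of Theorem~\ref{T:contractionBanachFinsler} does the main work, with constant $a<0$ (for $\epsilon$ small) since $\sigma(D\Phi(X))\subseteq(c,\infty)$ uniformly on $\mathbb{T}$. Write $S_t$ for the semiflow, which leaves $\mathbb{T}$ invariant by Lemma~\ref{L:flow-invariance-S1}. Since $\Phi(X_*)=0$, the point $X_*$ is stationary, so $S_t X_*=X_*$.

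For the one-step inequality, use the triangle inequality for the length metric $d_\epsilon$:
\[
d_\epsilon(X_*,X_{k+1})\le d_\epsilon\bigl(S_{\lambda_{k+1}}X_*,S_{\lambda_{k+1}}X_k\bigr)+d_\epsilon\bigl(S_{\lambda_{k+1}}X_k,X_{k+1}\bigr).
\]
Theorem~\ref{T:contractionBanachFinsler} bounds the first term by $e^{a\lambda_{k+1}}d_\epsilon(X_*,X_k)$. By convexity of the exponential on a bounded range of $\lambda$ (recall $\lambda_k\le h_0$), this is at most $(1+a\lambda_{k+1}+O(\lambda_{k+1}^2))\,d_\epsilon(X_*,X_k)$. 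Because $d_\epsilon$ is bounded on the bounded set $\mathbb{T}$, the $O(\lambda^2)$ factor can be absorbed into the additive $O(\lambda_{k+1}^2)$ term.

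For the second term, Taylor-expand the flow: $S_hX=X-h\Phi(X)+\int_0^h(\Phi(X)-\Phi(S_sX))\,ds$. The integrand is bounded by $L_\Phi\|S_sX-X\|\le L_\Phi B_\Phi s$ using Lemma~\ref{L:Phi-specific-bounds}, so $\|S_hX_k-(X_k-h\Phi(X_k))\|\le \tfrac12 L_\Phi B_\Phi h^2$. Together with $\|r_k\|\le C_r\lambda_{k+1}^2$ this gives $\|S_{\lambda_{k+1}}X_k-X_{k+1}\|=O(\lambda_{k+1}^2)$. To convert this to $d_\epsilon$, take the straight segment between the two points and use that the norms $\|\cdot\|_{\Re\sigma(D\Phi(X))+\epsilon}$ are uniformly equivalent to the operator norm for $X$ in a slightly enlarged order interval. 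This uniformity is the main technical obstacle. Lemma~\ref{L:equiNorm} gives constants $M(\epsilon)$ and $\delta$ depending on $X$. For $\delta$, the bound $\|D\Phi(X)\|\le\theta$ is uniform, which makes the lower constant uniform. For $M(\epsilon)$, I would use Proposition~\ref{P:contiNorm} together with a covering argument; this requires compactness in the norm topology, which we lack in infinite dimensions. Instead I would fix $\lambda=a'+\epsilon$ with $a'=\sup_{\mathbb{T}}\Re\sigma(D\Phi)$ and bound $\|e^{tD\Phi(X)}\|$ uniformly via the perturbation estimate, using that $D\Phi$ is Lipschitz on $\mathbb{T}$. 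With equivalence constants $m\|\cdot\|\le\|\cdot\|_X\le M\|\cdot\|$ uniform in $X$, we get $d_\epsilon\le M\|\cdot\|$ on segments that stay in the convex set $\mathbb{T}$, or in a slightly enlarged interval (one may shrink $h_0$ to ensure this). This proves the displayed inequality.

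For convergence, set $d_k=d_\epsilon(X_*,X_k)$ and $\eta:=-a>0$. Then $d_{k+1}\le(1-\eta\lambda_{k+1})d_k+C\lambda_{k+1}^2$, with $\eta\lambda_{k+1}<1$ after possibly shrinking $h_0$. Since $\sum\lambda_k=\infty$ and $\sum\lambda_k^2<\infty$, a standard Robbins--Siegmund/Chung-type lemma gives $d_k\to0$. To see this directly, unroll the recursion: $\prod_{j}(1-\eta\lambda_j)\to0$, and the tail contributions $\sum_j C\lambda_j^2\prod_{i>j}(1-\eta\lambda_i)$ vanish. For the latter, split the sum at $N$: the part with $j\le N$ is killed by the vanishing product, and the part with $j>N$ is bounded by the small tail $\sum_{j>N}\lambda_j^2$.
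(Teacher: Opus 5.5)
Your proposal follows the paper's proof essentially step for step. You compare one step with the exact flow over time $\lambda_{k+1}$, apply Theorem~\ref{T:contractionBanachFinsler} using $S_tX_*=X_*$, bound the flow's Taylor defect by $\tfrac12 L_\Phi B_\Phi\lambda_{k+1}^2$, and pass to $d_\epsilon$ via norm equivalence. You then finish with a Chung/Robbins--Siegmund-type argument, making $a<0$ explicit and unrolling the recursion yourself where the paper cites Lemma~7.3 of Hall--Heyde. (A small slip: $e^{a\lambda}\le 1+a\lambda+O(\lambda^2)$ comes from Taylor's remainder; convexity gives the reverse inequality.)

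The one place you try to go beyond the paper is the uniform equivalence of the norms on the non-compact set $\mathbb{T}$, which the paper simply asserts. Your bounded-perturbation argument does not settle it. Based at a single point $X_0$ with $\|e^{tD\Phi(X_0)}\|\le M_0e^{\nu_0 t}$, it only yields $\|e^{tD\Phi(X)}\|\le M_0e^{(\nu_0+M_0L\,\mathrm{diam}(\mathbb{T}))t}$ on $\mathbb{T}$, where $L$ is the Lipschitz constant of $D\Phi$, and this exponent can exceed your fixed $\lambda$. So the uniform constant $M$ remains an assumption resting on Lemma~\ref{L:equiNorm} and Proposition~\ref{P:contiNorm}, exactly as in the paper's proof.
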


\begin{proof}
Let \(u_k\) be the solution of
\[
u_k'(t) = -\Phi(u_k(t)), \qquad u_k(0) = X_k.
\]
Since \( X_k \in \mathbb{T} \),
we have $u_k(t) \in \mathbb{T}$ for $0 \le t \le \lambda_{k+1}$.
Using the integral form of the exact flow,
\[
u_k(t) = X_k - \int_0^t \Phi(u_k(s)) \, ds,
\]
and we can write
\[
u_k(t) - X_k + t \Phi(X_k)
=
-\int_0^t \bigl( \Phi(u_k(s)) - \Phi(X_k) \bigr) \, ds.
\]
Since
$\displaystyle \| u_k(s) - X_k \|
\le \int_0^s \| \Phi(u_k(\tau)) \| \, d\tau
\le B_\Phi s$,
we have by the Lipschitz bound for \(\Phi\),
\[
\| \Phi(u_k(s))-\Phi(X_k) \|
\le
L_\Phi \| u_k(s)-X_k \|
\le
L_\Phi B_\Phi s,
\]
so
\[
\| u_k(t) - X_k + t \Phi(X_k) \|
\le \int_0^t L_\Phi B_\Phi s \, ds
= \frac{L_\Phi B_\Phi}{2} t^2.
\]
Setting \( t = \lambda_{k+1} \), we obtain
\[
u_k(\lambda_{k+1})
= X_k - \lambda_{k+1} \Phi(X_k) + \tau_k,
\qquad
\| \tau_k \| \le \frac{L_\Phi B_\Phi}{2} \lambda_{k+1}^2.
\]

Comparing this with \eqref{Resolvent},
we get $X_{k+1} - u_k(\lambda_{k+1}) = r_k - \tau_k$,
and hence
\[
\| X_{k+1} - u_k(\lambda_{k+1}) \|
\le
\left( C_r + \frac{L_\Phi B_\Phi}{2} \right) \lambda_{k+1}^2.
\]
Again, since \( u_k(\lambda_{k+1}), X_{k+1} \in \mathbb{T} \) and $d_{\epsilon}$ is equivalent to $\| \cdot \|$, there exists $M > 0$ so that
\[
d_\epsilon \bigl( u_k(\lambda_{k+1}), X_{k+1} \bigr)
\le
M \| X_{k+1} - u_k(\lambda_{k+1}) \|
\le
M \left( C_r + \frac{L_\Phi B_\Phi}{2} \right) \lambda_{k+1}^2.
\]
Since \(X_*\) is a solution of $\Phi(X) = 0$, the contraction estimate in Theorem \ref{T:contractionBanachFinsler} gives
\[
d_\epsilon(X_*,u_k(\lambda_{k+1})) \le e^{a \lambda_{k+1}} d_\epsilon(X_*,X_k).
\]
Hence, by the triangle inequality,
\[
d_\epsilon(X_*,X_{k+1})
\le e^{a \lambda_{k+1}} d_\epsilon(X_*,X_k) + O(\lambda_{k+1}^2).
\]
Using $e^{a \lambda_{k+1}} \le 1 + a \lambda_{k+1} + O(\lambda_{k+1}^2)$
and the uniform boundedness of \(d_\varepsilon(X_*,X_k)\) on \( \mathbb{T} \),
we conclude that
\[
d_\epsilon(X_*,X_{k+1})
\le
(1 + a \lambda_{k+1}) d_\epsilon(X_*,X_k) + O(\lambda_{k+1}^2).
\]
Finally, given the specific ${l}^1,{l}^2$ properties of the sequence $\{\lambda_k\}_{k \ge 1}$, from the above one can now obtain from Lemma 7.3 in \cite{HallHeyde} that $d_\epsilon(X^*,X_{k}) \to 0$ as $k \to \infty$.
\end{proof}

Let $\mathbb A = (A_1, \dots, A_m) \in ([\alpha I,\beta I])^m$ and $\omega = (w_1, \dots, w_m) \in \Delta_m$.
Set $\Psi_i(X) := m w_i(I - (A_i \# X^{-1}))$ for $i=1,\dots,m$.
We have
\[
\frac{1}{m} \sum_{i=1}^m \Psi_i(X) = \Phi(X).
\]
There exists \(H_0 > 0\) such that, for each \( i \in \{ 1, \dots, m \} \) and
\( 0 < h \le H_0 \), resolvent maps
\( J_h^{(i)}: \mathbb{T} \to \mathbb{T} \) are defined by
\begin{equation}\label{eq:def-normalized-resolvent-i}
J_h^{(i)}(X) = Y
\quad \Longleftrightarrow \quad
0 = \Psi_i(X) + \frac1h \, (Y-X) + O(h^2),
\end{equation}
in particular \eqref{eq:normalized-BW-expansion} holds.

\begin{theorem}[No-dice theorem]\label{T:Nodice}
For any \(s_{N_0} \in [\alpha I,\beta I]\), where $1/N_0 \leq H_0$, define the cyclic resolvent iteration
\begin{equation}\label{eq:cyclic-iteration-BW}
s_{n+1} = J_{1/(n+1)}^{(i_n)}(s_n),
\qquad
n \ge N_{0},
\end{equation}
where \( i_n \in \{1,\dots,m\} \) is determined by
$i_n \equiv n+1 \pmod m$.
Then
$s_n \to \Omega (\omega, \mathbb{A})$
in the metric $d_\epsilon$.
\end{theorem}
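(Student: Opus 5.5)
The plan is to reduce the cyclic iteration \eqref{eq:cyclic-iteration-BW} to the forward Euler scheme of Proposition~\ref{P:monotoneResolvent}. We group the steps into blocks of length $m$ and show that one full cycle is, up to a summable error, a single Euler step for $\Phi$ with step size comparable to $m/(n+1)$. Fix a block starting at an index $n=km$, large enough that $1/(n+1)\le H_0$ and $\sum_{j} 1/(n+j)\le h_0$. By \eqref{eq:def-normalized-resolvent-i}, each step satisfies
\[
s_{n+j+1}=s_{n+j}-\tfrac{1}{n+j+1}\Psi_{i_{n+j}}(s_{n+j})+O\big((n+j+1)^{-2}\big).
\]
By the Corollary on freedom of resolvents, the iterates stay in $\mathbb{T}$, so the constants $B_\Phi$ and $L_\Phi$ of Lemma~\ref{L:Phi-specific-bounds} apply uniformly. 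They also apply to each $\Psi_i$, with constants multiplied by $m w_i\le m$.

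Next we compare the cycle with a single Euler step. Since $\|s_{n+j}-s_n\|\le C j/n$, the Lipschitz bound gives $\Psi_{i}(s_{n+j})=\Psi_i(s_n)+O(1/n)$. Summing over $j=0,\dots,m-1$ and using $\tfrac{1}{n+j+1}=\tfrac{1}{n}+O(n^{-2})$, we get
\[
s_{n+m}=s_n-\tfrac{1}{n}\sum_{i=1}^m\Psi_i(s_n)+O(n^{-2})=s_n-\tfrac{m}{n}\Phi(s_n)+O(n^{-2}).
\]
The cyclic ordering $i_n\equiv n+1 \pmod m$ ensures that each index appears exactly once per block. Set $X_k:=s_{km}$ and $\lambda_{k+1}:=m/(km)=1/k$. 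Then
\[
X_{k+1}=X_k-\lambda_{k+1}\Phi(X_k)+r_k,\qquad \|r_k\|\le C_r\lambda_{k+1}^2,
\]
with $\sum\lambda_k=\infty$ and $\sum\lambda_k^2<\infty$. After shifting indices so that $\lambda_1\le h_0$, Proposition~\ref{P:monotoneResolvent} applies with $X_*=\Omega(\omega,\mathbb{A})$, using Corollary~\ref{uniq}. It gives $d_\epsilon(X_*,s_{km})\to 0$.

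The intermediate indices are handled directly. For $0\le j<m$ we have $\|s_{km+j}-s_{km}\|\le C m/(km)\to 0$. By the equivalence of $d_\epsilon$ with the operator norm on $\mathbb{T}$ (as used in the proof of Proposition~\ref{P:monotoneResolvent}), this gives $d_\epsilon(X_*,s_n)\to 0$ for the whole sequence.

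The main obstacle is uniformity. Proposition~\ref{P:monotoneResolvent} requires $a<0$, i.e.\ $\sup_{X\in\mathbb{T}}\Re\sigma(-D\Phi(X))+\epsilon<0$, for the recursion to force convergence to zero. We will take this from the spectral inclusion $\sigma(D\Phi(X)|_{S(\mathcal{H})})\subseteq(c,\infty)$ with $c=\underline{\alpha}^3/(2\overline{\beta}^4)$ on $\mathbb{T}$, choosing $\epsilon<c$. We must also check that the $O(h^2)$ constants in \eqref{eq:def-normalized-resolvent-i} are uniform in $X\in\mathbb{T}$ and in $i$, so that $C_r$ does not depend on $k$. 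If the starting point $s_{N_0}$ does not fall at a block boundary, we simply begin the grouping at the first multiple of $m$ after $N_0$.
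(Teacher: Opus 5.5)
Your proposal is correct and follows essentially the same route as the paper. Both arguments group the cyclic iteration into blocks of length $m$ starting at multiples of $m$, use boundedness and Lipschitz continuity of the $\Psi_i$ on $\mathbb{T}$ together with $\frac{1}{nm+j}=\frac{1}{nm}+O(n^{-2})$ to turn one full cycle into the Euler step $x_{n+1}=x_n-\frac1n\Phi(x_n)+O(n^{-2})$ for $x_n=s_{nm}$, apply Proposition~\ref{P:monotoneResolvent}, and control the intermediate iterates via $\|s_{nm+j}-s_{nm}\|=O(1/n)$ and the norm-equivalence of $d_\epsilon$. Your explicit check that $a<0$ (from $\sigma(D\Phi(X))\subseteq(c,\infty)$ on $\mathbb{T}$ and choosing $\epsilon<c$) is something the paper leaves implicit when it cites Lemma 7.3 of Hall--Heyde, and it is genuinely needed for the recursion $d_{n+1}\le(1+\frac{a}{n})d_n+O(n^{-2})$ to force $d_n\to 0$.
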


\begin{proof}
Consider a subsequence
$x_n := s_{nm}$ for $nm \ge N_0$.
For \( j=1,\dots,m \),
applying \eqref{eq:def-normalized-resolvent-i} to $s_{nm+j} = J_{1/(nm+j)}^{(j)}(s_{nm+j-1})$ yields
\begin{equation}\label{eq:block-step}
s_{nm+j}
=
s_{nm+j-1} - \frac1{nm+j} \, \Psi_j(s_{nm+j-1}) + r_{n,j},
\qquad
\| r_{n,j} \| \le \frac{C}{(nm+j)^2}.
\end{equation}
From \eqref{eq:block-step} and the bound \( \| \Psi_j \| \le B_0 \), we obtain
\[
\| s_{nm+j}-s_{nm+j-1} \|
\le \frac{B_0}{nm+j} + \frac{C}{(nm+j)^2}
\le \frac{C_1}{n}
\]
for some constant \( C_1 > 0 \), uniformly in \(n \ge 1\) and \( j=1,\dots,m \).
Therefore,
\begin{equation*}\label{eq:intermediate-close}
\| s_{nm+j}-x_n \|
\le
\sum_{\ell=1}^j \| s_{nm+\ell}-s_{nm+\ell-1} \|
\le
\frac{jC_1}{n}
\le \frac{C_2}{n},
\qquad j=0,1,\dots,m,
\end{equation*}
for some constant \( C_2 > 0 \). In particular,
\begin{equation}\label{eq:metric-intermediate-close}
d_\epsilon(s_{nm+j}, x_n) \le \frac{MC_2}{n},
\qquad j=0,1,\dots,m.
\end{equation}
Summing \eqref{eq:block-step} over \(j=1,\dots,m\) yields
\[
x_{n+1}-x_n
=
- \sum_{j=1}^m \frac1{nm+j} \, \Psi_j(s_{nm+j-1})
+ \sum_{j=1}^m r_{n,j}.
\]
We decompose the main sum as follows:
\begin{align*}
\sum_{j=1}^m \frac1{nm+j} \, \Psi_j(s_{nm+j-1})
& =
\frac1{nm}\sum_{j=1}^m \Psi_j(x_n) + \sum_{j=1}^m
\left( \frac1{nm+j} - \frac1{nm} \right) \Psi_j(s_{nm+j-1}) \\
& \hspace{4cm} + \sum_{j=1}^{m} \frac{1}{nm} \left( \Psi_j(s_{nm+j-1}) - \Psi_j(s_{nm}) \right).
\end{align*}
Note that
\begin{displaymath}
\left\Vert \sum_{j=1}^m \left( \frac1{nm+j} - \frac1{nm} \right) \Psi_j(s_{nm+j-1}) \right\Vert
\leq \sum_{j=1}^m \frac{j}{nm(nm+j)} \Vert \Psi_j(s_{nm+j-1}) \Vert
\leq \frac{\hat{B}}{n^{2}}
\end{displaymath}
for some positive constant $\hat{B}$, as well as
\begin{displaymath}
\left\Vert \sum_{j=1}^{m} \frac{1}{nm} \left( \Psi_j(s_{nm+j-1}) - \Psi_j(s_{nm}) \right) \right\Vert
\leq \sum_{j=1}^m \frac{1}{nm} \Vert \Psi_j(s_{nm+j-1}) - \Psi_j(s_{nm}) \Vert
\leq \frac{\hat{C}}{n^{2}}
\end{displaymath}
for some positive constant $\hat{C}$ since $\Psi_{j}$ is bounded and Lipschitz.
So we can write
\begin{displaymath}
\sum_{j=1}^m \frac1{nm+j} \, \Psi_j(s_{nm+j-1}) = \frac1{nm}\sum_{j=1}^m \Psi_j(x_n) + O\!\left(\frac1{n^2}\right)
\end{displaymath}
and then
\begin{equation}\label{eq:block-forward-Euler}
x_{n+1} = x_{n} - \frac1{nm}\sum_{j=1}^m \Psi_j(x_n) + O\!\left(\frac1{n^2}\right)
= x_n - \frac1n \, \Phi(x_n) + O\!\left(\frac1{n^2}\right),
\end{equation}
since $\sum_{j=1}^m \Psi_j = m \Phi$.
Applying Proposition~\ref{P:monotoneResolvent} to the subsequence
\((x_n)\), we obtain
\[
d_\epsilon(\Omega,x_{n+1})
\le
\left( 1 + \frac{a}{n} \right) d_\epsilon(\Omega,x_n) + \frac{C_3}{n^2}
\]
for some constant \( C_3 > 0\). Therefore by Lemma 7.3 in \cite{HallHeyde},
\begin{equation}\label{eq:block-convergence}
d_\epsilon(\Omega,x_n) \longrightarrow 0
\qquad \text{as } n \to \infty.
\end{equation}
By triangle inequality and applying \eqref{eq:metric-intermediate-close} and \eqref{eq:block-convergence}
\begin{displaymath}
d_{\epsilon}(s_{nm+j}, \Omega) \leq d_{\epsilon}(s_{nm+j}, x_{n}) + d_{\epsilon}(x_{n}, \Omega) \longrightarrow 0 \qquad \text{as } n \to \infty,
\end{displaymath}
which means that every subsequence of $s_{n}$ converges to $\Omega$.
Hence, $s_n \to \Omega$ in $d_\epsilon$, which proves the theorem.
\end{proof}

\section{Strong law of large numbers}

Let $0<\alpha<\beta$ be given. The support $\supp{\mu}$ of a Borel probability measure $\mu$ in a Banach (or even a complete metric) space is always a separable set. Furthermore if $\supp(\mu)=1$, that is $\mu$ is fully supported, then $\mu$ is not only $\sigma$-, but in fact is $\tau$-additive and all weak Pettis integrals with respect to $\mu$ restrict to the separable set $\supp(\mu)$, so we may assume that all integrals are actually Bochner integrals with respect to $\mu$ over a Banach space.

Recall that for each $Z\in[\alpha I,\beta I]$ and \(h>0\), the resolvent
\(J_h^{Z}:\mathbb{T}\to \mathbb{T}\) is defined by requiring
\begin{equation}\label{eq:def-normalized-resolvent-Z}
J_h^{Z}(X)=Y
\quad\Longleftrightarrow\quad
0=\Psi_Z(X)+\frac1h\,(Y-X)+O(h^2),
\end{equation}
where $\Psi_Z(X):=(I-(Z\# X^{-1}))$. For instance we may choose $0=\Psi_Z(Y)+\frac1h\,G(Y,X)$ as the defining equation.

Using the same ideas as we did earlier to prove Theorem~\ref{wass_P} and Corollary \ref{uniq}, we arrive at the following:

\begin{theorem}[Wasserstein mean of probability measures with bounded support]\label{wass_P2}
Let $\mu$ be a Borel probability measure with $\supp(\mu)\subseteq([\alpha I,\beta I],\|\cdot\|)$ and $\mu(\supp(\mu))=1$. Then given a $\mathbb{P}$-valued random variable $Y$ with law $\mu$ the equation
\begin{equation}
0=\mathbb{E}_Y\Psi_Y(X)=\int_{\mathbb{P}}\Psi_A(X)d\mu(A)
\end{equation}
has a unique solution $X$ in $\mathbb{P}$ denoted by $\Omega (\mu)$
\end{theorem}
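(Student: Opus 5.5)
We split the statement into existence and uniqueness. Both follow the strategy already used for finitely many operators. The only change is that the weighted sum $\sum_i w_i(A_i\#X^{-1})$ is replaced by the Bochner integral $\int (A\#X^{-1})\,d\mu(A)$. This integral is well defined because $A\mapsto A\#X^{-1}$ is norm continuous and bounded on $[\alpha I,\beta I]$, and $\supp(\mu)$ is separable. Set
\[
\Phi_\mu(X):=\int_{\mathbb{P}}\Psi_A(X)\,d\mu(A)=I-\int_{\mathbb{P}}(A\#X^{-1})\,d\mu(A).
\]

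\emph{Step 1: a priori bounds.} The operator inequalities $\sqrt{\alpha}X^{-1/2}\le A\#X^{-1}\le\sqrt{\beta}X^{-1/2}$ hold pointwise in $A$, and integration against $\mu$ preserves the Loewner order. Exactly as in Lemma~\ref{ineq}, this shows that every zero of $\Phi_\mu$ lies in $[\alpha I,\beta I]$. By the argument of Lemma~\ref{L:flow-invariance-S1} with the same scalar computations, the flow $u'=-\Phi_\mu(u)$ leaves $\mathbb{T}$ invariant. Differentiating under the integral sign, which is allowed by the uniform bounds on the integrand and its derivatives over $[\alpha I,\beta I]\times\mathbb{T}$, shows that $\Phi_\mu$ is $\mathcal C^1$, bounded and Lipschitz on $\mathbb{T}$, as in Lemma~\ref{L:Phi-specific-bounds}.

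\emph{Step 2: spectral bound for $D\Phi_\mu(X)$.} We have $D\Phi_\mu(X)(Y)=\int\int_0^\infty B_A(t)YB_A(t)\,dt\,d\mu(A)$, where $B_A(t)$ is defined as in Lemma~\ref{L:Phi-bdd}. The lower bound on $B_A(t)$ is uniform in $A\in[\alpha I,\beta I]$, so integrating the estimate of Lemma~\ref{L:Phi-bdd} in $A$ gives $\langle D\Phi_\mu(X)Y,Y\rangle_2\ge \frac{\alpha^3}{2\beta^4}\langle Y,Y\rangle_2$ on $S^2(\mathcal H)$. The operator is self-adjoint there, so its spectrum on $S^2(\mathcal H)$ lies in $[c,\infty)$ with $c=\frac{\alpha^3}{2\beta^4}$.

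\emph{Step 3: transfer to $S(\mathcal H)$.} This is the step I expect to be the main obstacle. Theorem~\ref{inv} was stated for finitely many sandwich terms $AYA$, whereas here $D\Phi_\mu(X)$ is an integral of sandwich operators $Y\mapsto B YB$ with $B>0$. My plan is to note that the spectral permanence result from \cite{DN} used there requires only that $L$ act boundedly and compatibly on $S^1$, $S^2$ and $S$ and be self-adjoint on $S^2$. An integral of such operators has all of these properties: each sandwich map is bounded on every Schatten class with norm at most $\|B\|^2$, and the Bochner integral converges in each of these operator norms. Applying \eqref{eq:inv} together with Lemma~\ref{L:iso} then gives $\sigma(D\Phi_\mu(X)|_{S(\mathcal H)})\subseteq[c,\infty)$. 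If \cite{DN} cannot be applied to such integrals, the fallback is to approximate $\mu$ weakly by finitely supported measures and use upper semicontinuity of the spectrum. That fallback is delicate, which is why I expect this step to need the most care.

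\emph{Step 4: conclusion.} For existence, we run the homotopy $\phi(t,X)=\Phi_{\mu_t}(X)$, where $\mu_t$ is the push-forward of $\mu$ under $A\mapsto A^t$. For small $t_0$ the whole support of $\mu_{t_0}$ lies near $I$, and the implicit function argument of Theorem~\ref{T:Wass-epsilon}, with $\mu$ in place of $\mathbb{A}$, produces a unique zero there. The derivative bound from Steps~2 and~3 is uniform in $t\in[t_0,1]$, so continuation with uniform step size, as in the Remark after Theorem~\ref{T:Wass_P}, reaches $t=1$. Alternatively, existence and uniqueness follow together as in Corollary~\ref{uniq}. Lemma~\ref{L:equiNorm}, Proposition~\ref{P:contiNorm} and Theorem~\ref{T:contractionBanachFinsler} apply verbatim to $\Phi_\mu$ once Step~3 gives $\sup_{X\in\mathbb{T}}\Re\sigma(-D\Phi_\mu(X))\le -c<0$. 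The flow is then an exponential contraction in $d_\epsilon$ on the invariant set $\mathbb{T}$. Hence it has a unique fixed point, which is a unique zero of $\Phi_\mu$ in $\mathbb{T}$. Any other zero in $\mathbb{P}$ lies in $[\alpha I,\beta I]\subset\mathbb{T}$ by Step~1, so it must coincide with this fixed point. We define $\Omega(\mu)$ to be this zero.
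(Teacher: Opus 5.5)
Your plan follows the paper's own route: the paper proves this theorem only by invoking the arguments of Theorem~\ref{wass_P} and Corollary~\ref{uniq}, and your Steps 1--4 (order bounds, $S^2$-coercivity integrated in $A$, transfer to $S(\mathcal H)$ via \eqref{eq:inv} and Lemma~\ref{L:iso}, then continuation or the contracting flow) are exactly that adaptation, with the weighted sum replaced by a Bochner integral. One caution: your fallback for Step 3 runs the wrong way, because upper semicontinuity of the spectrum does not let you pass $\sigma(L_n)\subseteq[c,\infty)$ to a norm limit $L$ without uniform bounds on $\|(L_n-z)^{-1}\|$ in the operator norm of $S(\mathcal H)$; a uniform inverse bound of this kind on $S(\mathcal H)$ (not just the spectral location from Step 3) is also what the uniform step size and the uniform renorming constants in Step 4 tacitly require.
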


Relying on the No-dice result of the earlier section we establish a stochastic version of it as follows. First, let us recall the following weighted extension of the classical real-valued SLLN due to Etemadi \cite[Theorem 1]{Etemadi}. Consider a sequence of positive real weights $\{w_i\}_i$ such that $W_n = \sum_{i=1}^n w_i \rightarrow \infty$. Assume that
  $$ \sup_n {nw_n \over W_n} < \infty \; \mbox{ and } \; \sup_n \sum_{i=1}^n {i|w_{i+1} - w_i| \over W_n} < \infty.$$
  If $X_1, X_2, \hdots$ is a sequence of real random variables, then
   $$ {1 \over n }\sum_{i=1}^n X_i \rightarrow X_0 \mbox{ a.s. } \quad  \mbox{ implies } \quad {1 \over W_n }\sum_{i=1}^n w_iX_i \rightarrow X_0 \mbox{ a.s.} $$
  Additionally, if the sequence $\{w_i\}_i$
 is monotone, the second condition on the averages above can be omitted.

  \begin{lemma}\label{L:Etemadi}
      Given $0 < \alpha < 1,$ $0 < s_0$ and a sequence $\{b_n\}_n$ of real numbers, let us define the sequence
      $$ s_{n} = \left(1 - {\alpha \over n}\right)s_{n-1} + {\alpha \over n} b_{n-1}, \quad n \geq 1.$$
      Then $s_n = {1 \over W_n} \left(s_0 + \sum_{i=0}^{n-1} w_ib_i \right),$ where the sequence of weights $\{w_i\}_i$ is monotone (increasing or decreasing) and $W_n = \sum_{i=0}^n w_i \rightarrow \infty$ as $n \rightarrow \infty.$
  \end{lemma}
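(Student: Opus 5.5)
We will prove the claim by telescoping: rescale $s_n$ by the product of the contraction factors $1-\alpha/k$, which turns the recursion into a plain sum, and then read off the weights. Set
\[
P_0 := 1, \qquad P_n := \prod_{k=1}^{n}\Bigl(1-\frac{\alpha}{k}\Bigr), \quad n\ge 1 .
\]
Since $0<\alpha<1$, every factor lies in $(0,1)$, so $P_n>0$ and $P_n$ is strictly decreasing. Dividing the recursion by $P_n = (1-\alpha/n)P_{n-1}$ gives
\[
\frac{s_n}{P_n} = \frac{s_{n-1}}{P_{n-1}} + \frac{\alpha}{nP_n}\, b_{n-1}.
\]
Summing from $1$ to $n$ yields
\[
s_n = P_n\Bigl(s_0 + \sum_{i=0}^{n-1} w_i b_i\Bigr), \qquad w_i := \frac{\alpha}{(i+1)P_{i+1}} > 0 .
\]
We therefore put $W_n := 1/P_n$.

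The next step is to check that $W_n$ really is the accumulated weight. We have
\[
W_n - W_{n-1} = \frac{1-(1-\alpha/n)}{P_n} = \frac{\alpha}{nP_n} = w_{n-1}.
\]
Hence $W_n = 1 + \sum_{i=0}^{n-1} w_i$, where the leading $1 = W_0$ is exactly the weight carried by $s_0$. This matches the statement's $W_n=\sum w_i$ once the initial term $s_0$ is counted with weight $W_0=1$. The indexing convention in the statement is slightly loose, and we will fix it this way; it does not affect the asymptotics.

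Monotonicity of the weights follows from a direct ratio computation:
\[
\frac{w_{k}}{w_{k-1}} = \frac{kP_k}{(k+1)P_{k+1}} = \frac{k}{(k+1)(1-\frac{\alpha}{k+1})} = \frac{k}{k+1-\alpha} < 1 .
\]
So $\{w_i\}$ is decreasing. Divergence of $W_n$ is equivalent to $P_n\to 0$, which holds because
\[
\log P_n \le -\alpha\sum_{k=1}^n \frac1k \to -\infty .
\]

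The only point requiring care is the application of Etemadi's weighted SLLN afterwards, which needs $\sup_n nw_n/W_n<\infty$. For this we would add the standard estimate $P_n \asymp n^{-\alpha}$, obtained from $\log(1-\alpha/k) = -\alpha/k + O(k^{-2})$. It gives $W_n\asymp n^{\alpha}$ and $w_n\asymp n^{\alpha-1}$, hence $nw_n/W_n$ is bounded. Since the weights are monotone, the second averaged condition in Etemadi's theorem can be dropped. This asymptotic is the main (though mild) obstacle.
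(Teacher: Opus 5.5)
Your proof is correct and follows the paper's argument: the paper records the same weights $w_i=\frac{\alpha}{i+1}\prod_{j=1}^{i+1}(1-\frac{\alpha}{j})^{-1}$ and $W_n=\prod_{j=1}^{n}(1-\frac{\alpha}{j})^{-1}$ as a ``straightforward computation'' and proves monotonicity with the same ratio $\frac{i+1}{i+2-\alpha}$, so your telescoping derivation simply makes that computation explicit, and you are right that these formulas give $W_n=1+\sum_{i=0}^{n-1}w_i$ rather than $\sum_{i=0}^{n}w_i$, so the statement's indexing is loose. A minor simplification of your last paragraph: Etemadi's first condition needs no asymptotics, since $\frac{n w_n}{W_n}=\frac{\alpha n}{n+1-\alpha}<\alpha$ exactly.
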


 \begin{proof}
        A straightforward computation gives that
       $$ w_i = {\alpha \over i+1} \prod_{j=1}^{i+1} \left(1 - {\alpha \over j}\right)^{-1} \quad \mbox{ and } \quad W_n = \prod_{j=1}^n \left(1 - {\alpha \over j}\right)^{-1} = O(n^\alpha).$$
       Hence
       $$ {w_{i+1} \over w_i} = { {\alpha \over i+2} \over {\alpha \over i+1}(1- {\alpha \over i+2})} = {
       i+1 \over i+2 - \alpha},$$ which immediately gives the proof of the rest of the lemma.
 \end{proof}

Now we are in position to prove the following stochastic approximation result.

\begin{theorem}[Strong law of large numbers]\label{T:slln}
Let $\mu$ be a Borel probability measure with $\supp(\mu)\subseteq([\alpha I,\beta I],\|\cdot\|)$ and $\mu(\supp(\mu))=1$. Let $\{Y_n\}_{n\in\mathbb{N}}$ be a sequence of $\mathbb{P}$ valued random variables with law $\mu$. For any \(s_{N_0}\in [\alpha I,\beta I]\), where $1/N_0 \leq H_0$ from Theorem~\ref{T:Nodice}, define the stochastic resolvent iteration
\begin{equation}\label{eq:cyclic-iteration-BW2}
s_{n+1}=J_{1/(n+1)}^{Y_{n+1}}(s_n),
\qquad
n\ge N_0,
\end{equation}
Then
$s_n\longrightarrow \Omega (\mu)$ a.s. in the metric $d_\epsilon$.
\end{theorem}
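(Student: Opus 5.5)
The plan is to reduce the stochastic iteration to a perturbed forward Euler scheme for the averaged vector field $\Phi_\mu(X):=\int\Psi_A(X)\,d\mu(A)$, whose unique zero is $\Omega(\mu)$ by Theorem~\ref{wass_P2}. The noise is then handled by a martingale argument inside the contraction metric $d_\epsilon$.

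\emph{Step 1: decomposition.} By \eqref{eq:def-normalized-resolvent-Z} and the uniform trapping, which follows from the freedom-of-resolvents corollary applied to each $\Psi_Z$ with constants uniform in $Z\in[\alpha I,\beta I]$, the iterates satisfy $s_n\in\mathbb{T}$ and
\[
s_{n+1}=s_n-\tfrac{1}{n+1}\Phi_\mu(s_n)-\tfrac{1}{n+1}\xi_{n+1}+r_n,\qquad \|r_n\|\le C/(n+1)^2 .
\]
Here $\xi_{n+1}:=\Psi_{Y_{n+1}}(s_n)-\Phi_\mu(s_n)$. I will assume the $Y_n$ are i.i.d., which is implicit in the statement. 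Then $\xi_{n+1}$ is a bounded martingale difference with respect to $\mathcal{F}_n=\sigma(Y_1,\dots,Y_n)$, with $\|\xi_{n+1}\|\le 2B$ uniformly. The bound holds because $\Psi_A$ is uniformly bounded and Lipschitz on $\mathbb{T}$ for $A\in[\alpha I,\beta I]$. All integrals are Bochner integrals on the separable support, as remarked before Theorem~\ref{wass_P2}.

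\emph{Step 2: contraction of the averaged flow.} Rerun Lemma~\ref{L:Phi-bdd}, Theorem~\ref{inv} and Lemma~\ref{L:equiNorm} with $\sum_i w_i$ replaced by $\int d\mu$. The lower bound $\alpha^3/(2\beta^4)$ survives integration, so $\sigma(D\Phi_\mu(X))\subseteq(c,\infty)$ on $\mathbb{T}$. Theorem~\ref{T:contractionBanachFinsler} then gives exponential contraction of the flow $u'=-\Phi_\mu(u)$ in the associated $d_\epsilon$, with rate $a<0$ once $\epsilon$ is small.

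\emph{Step 3: absorbing the noise.} The noise cannot be bounded by $O(1/n^2)$, so I will not treat it termwise. Instead, set $M_n:=\sum_{k\le n}\frac{1}{k}\xi_k$. This is a martingale with $\sum_k E\|\xi_k/k\|^2<\infty$. The obstacle is that $S(\mathcal H)$ with the operator norm is not a Hilbert or 2-smooth space, so vector-valued martingale convergence is not automatic. I expect this step to be the main difficulty. My first attempt is to bypass it by working with the real-valued SLLN via Etemadi's weighted version and Lemma~\ref{L:Etemadi}, as the paper sets up.

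Concretely, I will compare $s_n$ with the deterministic Euler iterates along blocks of length $m_n\to\infty$, on which the empirical averages $\frac{1}{W}\sum w_k\Psi_{Y_k}(X)$ approximate $\Phi_\mu(X)$. Freezing the base point $X$ on a block costs $O(\text{block length}/n)$ by the Lipschitz property, mimicking the block argument of Theorem~\ref{T:Nodice}. The uniform-in-$X$ convergence of these averages then reduces to scalar statements: pair with a countable dense set of trace-class test functionals, and use the Lipschitz dependence on $X$ over a separable net of the compact-in-$\mu$ support. Etemadi's theorem applies to each scalar sequence.

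\emph{Step 4: Robbins--Siegmund conclusion.} From Steps 1--3 I obtain
\[
d_\epsilon(\Omega(\mu),x_{n+1})\le(1+a\lambda_{n})\,d_\epsilon(\Omega(\mu),x_n)+\eta_n ,
\]
with $\sum\eta_n<\infty$ almost surely. Lemma~7.3 of \cite{HallHeyde} (or Robbins--Siegmund) then gives $d_\epsilon(\Omega(\mu),x_n)\to0$ almost surely. The intermediate iterates are interpolated exactly as in the proof of Theorem~\ref{T:Nodice}.
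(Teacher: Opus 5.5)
Your Steps 1--2 are sound and match the paper's set-up: the averaged field $\Phi_\mu$, its spectral lower bound, and a contraction rate $a<0$. Step 3, which you rightly flag as the crux, does not work as described. A.s. convergence of the scalar averages for each of countably many pairs (trace-class functional $l$, base point $X$) does not give convergence of $\sup_l\sup_X$, and that supremum is what you need. The metric $d_\epsilon$ is equivalent to the operator norm, and the base point $s_{N_j}$ of each block is random, so the control must be uniform in $X\in\mathbb{T}$. (For a single fixed $X$, norm convergence is not the problem, since Mourier's SLLN in a separable subspace gives it; uniformity is.) Upgrading pointwise to uniform convergence via Lipschitz continuity in $X$ needs a \emph{finite} net, i.e. total boundedness. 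Neither $\mathbb{T}$ nor $\supp(\mu)$ is norm-compact; you only know that $\supp(\mu)$ is separable and bounded. The missing idea is to use Lipschitz dependence on the \emph{parameter} $A$. Let $L$ be the Lipschitz constant of $A\mapsto\Psi_A(Z)$, uniform over $Z\in\mathbb{T}$. Testing with a norming functional and applying Kantorovich--Rubinstein duality gives
\[
\sup_{X\in\mathbb{T}}\Bigl\|\int\Psi_A(X)\,d(\mu-\nu)(A)\Bigr\|\le L\,W_1(\nu,\mu)
\]
for every probability measure $\nu$ on $[\alpha I,\beta I]$. Varadarajan's theorem then gives $W_1(\mu_k,\mu)\to0$ a.s. for the empirical measures, hence $\mathbb{E}W_1(\mu_k,\mu)<\eta$ for a fixed large $k$. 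If ``compact-in-$\mu$'' was meant as Ulam tightness, a finite net of a large compact piece of the support does work. But it works only through Lipschitz dependence on $A$, and it simply reproves $W_1(\mu_k,\mu)\to0$.

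Step 4 is also unjustified. Let $\nu^{(j)}$ be the empirical measure of the block of length $m_j$ starting at $N_j$. The noise of that block is at best bounded by $O(m_j/N_j)\,W_1(\nu^{(j)},\mu)$. Since $\sum_j m_j/N_j=\infty$, the claim $\sum_n\eta_n<\infty$ a.s. would need a quantitative decay rate for $W_1(\mu_m,\mu)$, and in infinite dimensions this rate can be arbitrarily slow. The paper needs no rate. It fixes the block length $k$ and runs the auxiliary sequence $\hat s_{n+1}=J^{\nu_{n+1}}_{1/(n+1)}(\hat s_n)$, driven by the disjoint block empirical measures $\nu_n=\frac1k\sum_{i=1}^k\delta_{Y_{nk+i}}$. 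This yields
\[
d_\epsilon(\Omega(\mu),\hat s_{n+1})\le\Bigl(1+\frac{a}{n+1}\Bigr)d_\epsilon(\Omega(\mu),\hat s_n)+O\Bigl(\frac{1}{n+1}\Bigr)W_1(\nu_{n+1},\mu)+O\Bigl(\frac{1}{(n+1)^2}\Bigr),
\]
in which the noise terms $W_1(\nu_{n+1},\mu)$ are i.i.d. real random variables. Because $a<0$ turns this recursion into a weighted average as in Lemma~\ref{L:Etemadi}, Etemadi's weighted SLLN gives $\limsup_n d_\epsilon(\Omega(\mu),\hat s_n)\le O(\mathbb{E}W_1(\mu_k,\mu))=O(\eta)$ a.s. The paper then compares $s_{nk}$ with $\hat s_n$ by the block argument of Theorem~\ref{T:Nodice} and lets $\eta\downarrow0$ along a countable sequence. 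If you want to keep growing blocks, the $W_1$ bound together with $a<0$ lets you close with a Chung-type lemma. That lemma only needs the block errors to tend to $0$ a.s. (e.g. via bounded-differences concentration of $W_1(\mu_m,\mu)$ around its mean plus Borel--Cantelli), not their summability.
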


\begin{proof}
Let $a:=\sup\{\Re\sigma(-\mathbb{E}_{Y_1}\Psi_{Y_1}(X)):X\in \mathbb{T}\}+\epsilon$. Let $$L:=\sup_{X,Y,Z\in\mathbb{T}}\frac{\|\Psi_X(Z)-\Psi_Y(Z)\|}{\|X-Y\|}$$ denote the global Lipschitz constant of $\Psi$ on $\mathbb{T}$ with respect to the norm which is finite due to the properties of the functional calculus and differentiability of $\Psi_{(\cdot)}(Z)$ for $Z\in\mathbb{T}$.

Let us define the empirical measure $\mu_k:={1 \over k} \sum_{i=1}^k\delta_{Y_i},$ where $\delta_{Y_i}$ is the Dirac measure supported on $Y_i.$ Then by Varadarajan's theorem \cite[Theorem 11.4.1]{Dudley} the sequence $\mu_k$ a.s. converges weakly to $\mu$ on the support of $\mu$, equivalently we also have $W_1(\mu_k,\mu)\to 0$ a.s. for the $L^1$-Wasserstein distance
\begin{equation*}
W_1(\mu,\nu) := \sup\left\{ \left| \int f(x)d(\mu-\nu)(x) \right|: f \text{ real-valued } L\text{-Lipschitz, } L \leq 1 \right\}
\end{equation*}
induced by the norm on the separable Banach space $\left( \overline{\mathrm{span}\{\supp(\mu),s_0\}}, \| \cdot \| \right)$: see \cite[Theorem 11.3.3]{Dudley}. Then from the dominated convergence theorem for any given $\eta>0$ it follows that
\begin{equation}\label{eq:wass_P2}
\mathbb{E}W_1(\mu_k,\mu) < \eta \quad \text{ for all large enough } k.
\end{equation}

Let us introduce a random sequence of empirical measures $\nu_n:= {1 \over k} \sum_{i=1}^k \delta_{Y_{nk+i}}.$
Then, for any $\hat{s}_0 \in \mathbb{P}$, we can define the random resolvent sequence $$\hat{s}_{n+1}=J_{1/(n+1)}^{\nu_{n+1}}(\hat{s}_{n}) \qquad n \geq N_0.$$
Due to the invariance of $\mathbb{T}$ under the resolvent iterates, both ${s}_{n},\hat{s}_{n}\in\mathbb{T}$ a.s. as well.
From the proof of Proposition~\ref{P:monotoneResolvent} using
\[
u_n'(t)=-\mathbb{E}_{Y_1}\Psi_{Y_1}(u_n(t)),\qquad u_n(0)=\hat{s}_n
\]
with \eqref{eq:normalized-BW-expansion} yields
\begin{equation*}
\hat{s}_{n+1}-u_n(h_n)=h_n \int\Psi_{A}(\hat{s}_n)d(\mu-\nu_{n+1})(A)+O(h_n^2).
\end{equation*}
Furthermore, using
\begin{equation*}
\begin{split}
&\left\|\int\Psi_{A}(\hat{s}_n)d(\mu-\nu_{n+1})(A)\right\|=l_*\left(\int\Psi_{A}(\hat{s}_n)d(\mu-\nu_{n+1})(A)\right)\\
&=\int l_*\left(\Psi_{A}(\hat{s}_n)\right)d(\mu-\nu_{n+1})(A)\leq LW_1(\nu_{n+1},\mu)
\end{split}
\end{equation*}
for a norming linear functional $l_*\in S(\mathcal H)^*$, we obtain the following estimate
\begin{equation}\label{eq1:wass_P2}
\hat{s}_{n+1}-u_n(h_n)= LW_1(\nu_{n+1},\mu)O(h_n)+O(h_n^2),
\end{equation}
and thus,
\[
d_\epsilon(\Omega(\mu),\hat{s}_{n+1})
\le
(1+a h_n) d_\epsilon(\Omega(\mu),\hat{s}_{n}) + L W_1(\nu_{n+1},\mu)O(h_n)+O(h_n^2)
\]
where the implied constants of $O(\cdot)$ are uniform on $\mathbb{T}$.
Since \(h_n=\frac1{n+1}\),
\begin{equation}\label{eq2:wass_P2}
\begin{split}
d_\epsilon(\Omega(\mu),\hat{s}_{n+1})
\le
& \left( 1+\frac{a}{n+1} \right) d_\epsilon(\Omega(\mu),\hat{s}_{n}) + W_1(\nu_{n+1},\mu) O\!\left(\frac{1}{n+1}\right)\\
& + O\!\left(\frac1{(n+1)^2}\right).
\end{split}
\end{equation}
Since $\sum_{n=0}^\infty O\!\left(\frac1{(n+1)^2}\right)<\infty$, the last quadratic error term above is insignificant when we apply Etemadi's weighted extension of the strong law with Lemma~\ref{L:Etemadi} to the stochastic recursion
\begin{equation*}
d(n+1):=\left(1+\frac{a}{n+1}\right)d(n)+W_1(\nu_{n+1},\mu)O\!\left(\frac{1}{n+1}\right)+O\!\left(\frac1{(n+1)^2}\right)
\end{equation*}
agreeing with \eqref{eq2:wass_P2} but '$=$' instead of '$\leq $', thus forcing the a.s. bound $d_\epsilon(\Omega(\mu),\hat{s}_{n})\leq d(n)$. Indeed, by waiting for a suitably large enough starting index $n_0\in\mathbb{N}$ making the drift term $\sum_{n=n_0}^\infty O\!\left(\frac1{(n+1)^2}\right)$ arbitrarily small, and then choosing the starting term as $d(n_0):=d_\epsilon(\Omega(\mu),\hat{s}_{n_0})$, Etemadi SLLN yields $$d(n)\leq O\left(\mathbb{E}W_1(\nu_{1},\mu)\right)+\sum_{n=n_0}^\infty O\!\left(\frac1{(n+1)^2}\right)<\eta$$ a.s. as $n\to\infty$ given that in advance, according to \eqref{eq:wass_P2} as well, large enough $n_0, k$ were chosen. This shows that $d_\epsilon(\Omega(\mu),\hat{s}_{n})\leq \eta$ a.s. for all large enough $n\in\mathbb{N}$.

Now, the proof of the No-dice Theorem~\ref{T:Nodice} leading to \eqref{eq:block-forward-Euler} shows that actually
\begin{equation*}
s_{(n+1)k}=s_{nk}-\frac1n\,\int_{\mathbb{P}}\Psi_A(s_{nk})d\nu_{n+1}(A)+O\!\left(\frac1{n^2}\right)
\end{equation*}
and then the culminating part of the proof of Theorem~\ref{T:Nodice} from \eqref{eq:block-forward-Euler} onward proves that $d_\epsilon(s_{nk},\hat{s}_{n})\longrightarrow 0$ and then again also $d_\epsilon(s_{nk},s_{nk+i})\longrightarrow 0$ for any $1\leq i\leq k$ as well. The proof is concluded.
\end{proof}

\section{Properties associated with the Wasserstein mean}

We see some properties of the Wasserstein mean. Let $\mathbb{A} = (A_{1}, \dots, A_{m}) \in \mathbb{P}^{m}$, any permutation $\sigma$ on $\{ 1, \dots, m \}$, and any $M \in \mathrm{GL}$, the general linear group. We denote as
\begin{displaymath}
\begin{split}
\mathbb{A}_{\sigma} & = (A_{\sigma(1)}, \dots, A_{\sigma(m)}) \in \mathbb{P}^{m}, \\
M \mathbb{A} M^{*} & = (M A_{1} M^{*}, \dots, M A_{m} M^{*}) \in \mathbb{P}^{m}, \\
\mathbb{A}^{k} & = (\underline{A_{1}, \dots, A_{m}}, \dots,
\underline{A_{1}, \dots, A_{m}}) \in \mathbb{P}^{mk},
\end{split}
\end{displaymath}
where the number of blocks in the last expression is $k \in \mathbb{N}$. For given $\omega = (w_{1}, \dots, w_{m}) \in \Delta_{m}$, we also denote as
\begin{displaymath}
\begin{split}
\omega_{\sigma} & = (w_{\sigma(1)}, \dots, w_{\sigma(m)}) \in \Delta_{m}, \\
\omega^{k} & = \frac{1}{k} (\underline{w_{1}, \dots, w_{m}}, \dots,
\underline{w_{1}, \dots, w_{m}}) \in \Delta_{mk}.
\end{split}
\end{displaymath}
Using the uniqueness of a positive definite solution to \eqref{gradF} in Theorem \ref{wass_P} we can simply obtain the following fundamental properties.
\begin{proposition}
Let $\mathbb{A} = (A_{1}, \dots, A_{m}) \in \mathbb{P}^{m}$, and let $\omega = (w_{1}, \dots, w_{m}) \in \Delta_{m}$. Then the following are satisfied.
\begin{itemize}
\item[(1)] $($Consistency with scalars$)$ \ $\displaystyle \Omega(\omega; \mathbb{A}) = \left( \sum_{i=1}^{m} w_{i} A_{i}^{1/2} \right)^{2}$ if $A_{i}$'s commute;

\item[(2)] $($Homogeneity$)$ \ $\Omega(\omega; \alpha \mathbb{A}) = \alpha \Omega(\omega; \mathbb{A})$ for any $\alpha > 0$;

\item[(3)] $($Permutation invariancy$)$ \ $\Omega(\omega_{\sigma}; \mathbb{A}_{\sigma}) = \Omega(\omega; \mathbb{A})$ for any permutation $\sigma$ on $\{ 1, \dots, m \}$;

\item[(4)] $($Repetition invariancy$)$ \ $\Omega(\omega^{k}; \mathbb{A}^{k}) = \Omega(\omega; \mathbb{A})$ for any $k \in \mathbb{N}$;

\item[(5)] $($Unitary congruence invariancy$)$ \ $\Omega(\omega; U \mathbb{A} U^{*}) = U \Omega(\omega; \mathbb{A}) U^{*}$ for any $U \in U(\mathcal{H})$.
\end{itemize}
Furthermore, the equation $\Omega(\omega; A_{1}, \dots, A_{m-1}, X) = X$ has a unique positive definite solution $X = \Omega(\hat{\omega}; A_{1}, \dots, A_{m-1})$ where $\hat{\omega} = \frac{1}{1-w_{m}} (w_{1}, \dots, w_{m-1}) \in \Delta_{m-1}$.
\end{proposition}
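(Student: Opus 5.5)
Every item reduces to the same mechanism. By Theorem \ref{wass_P}, $\Omega(\omega;\mathbb{A})$ is the unique $X\in\mathbb{P}$ with $\sum_i w_i (X^{1/2}A_iX^{1/2})^{1/2}=X$, equivalently $\sum_i w_i (A_i\# X^{-1})=I$. So for each item I will exhibit an explicit candidate, verify that it satisfies the defining equation for the new data, and conclude by uniqueness. Along the way I check that the new data still lie in some order interval $[\alpha' I,\beta' I]$, so that Theorem \ref{wass_P} applies.

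\textbf{Items (1), (2) and (3).}
\begin{itemize}
\item For (1), set $X=(\sum_i w_i A_i^{1/2})^2$. When the $A_i$ commute, $X$ lies in the commutative $C^*$-algebra they generate. Then $(X^{1/2}A_iX^{1/2})^{1/2}=X^{1/2}A_i^{1/2}$, and summing gives $X^{1/2}\sum_i w_iA_i^{1/2}=X^{1/2}X^{1/2}=X$. This $X$ is positive invertible because $\sum_i w_iA_i^{1/2}\ge \sqrt{\alpha}I$.
\item For (2), use joint homogeneity of $\#$: $(\alpha A_i)\#(\alpha X)^{-1}=A_i\#X^{-1}$. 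Hence $\alpha\Omega(\omega;\mathbb{A})$ solves the equation for $\alpha\mathbb{A}$, exactly as in Lemma \ref{homo_set}.
\item For (3), the equation for $(\omega_\sigma;\mathbb{A}_\sigma)$ is literally the same sum, reindexed.
\end{itemize}

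\textbf{Items (4) and (5).} For (4), the equation with $\omega^k,\mathbb{A}^k$ is $\sum_{l=1}^k\frac1k\sum_i w_i(\cdot)$, which coincides with the original equation. For (5), I use $f(UZU^*)=Uf(Z)U^*$ for continuous functional calculus, applied to the square root. This gives $((UXU^*)^{1/2}UA_iU^*(UXU^*)^{1/2})^{1/2}=U(X^{1/2}A_iX^{1/2})^{1/2}U^*$. Also $UA_iU^*\in[\alpha I,\beta I]$, so uniqueness applies.

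\textbf{The final statement.} Put $Y=\Omega(\hat\omega;A_1,\dots,A_{m-1})$, so that $\sum_{i<m}\hat w_i (A_i\#Y^{-1})=I$.
\begin{itemize}
\item \emph{Existence.} Take $X=Y$ in the $m$-tuple equation. Since $Y\#Y^{-1}=I$, we get
\begin{displaymath}
\sum_{i<m} w_i(A_i\#Y^{-1})+w_m\,Y\#Y^{-1}=(1-w_m)I+w_mI=I,
\end{displaymath}
so $Y=\Omega(\omega;A_1,\dots,A_{m-1},Y)$. The tuple $(A_1,\dots,A_{m-1},Y)$ is bounded, by Lemma \ref{ineq}, so uniqueness in Theorem \ref{wass_P} applies to it.
\item \emph{Uniqueness.} Suppose $X\in\mathbb{P}$ satisfies $\Omega(\omega;A_1,\dots,A_{m-1},X)=X$. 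Then $X$ itself solves the $m$-tuple equation whose last entry is $X$, which gives the same computation $w_m X\#X^{-1}=w_mI$. Hence $\sum_{i<m}w_i(A_i\#X^{-1})=(1-w_m)I$. Dividing by $1-w_m$ shows that $X$ solves the $(m-1)$-variable equation with weights $\hat\omega$, so $X=Y$ by uniqueness.
\end{itemize}

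The only delicate point is this last uniqueness step. It requires reading $\Omega(\omega;\dots,X)=X$ as "$X$ is a fixed point of the defining equation with last entry $X$". That reading is legitimate precisely because $\Omega$ is defined as the unique solution of \eqref{gradF}, with the order bounds on $X$ supplied by the tuple containing it. Everything else is routine verification.
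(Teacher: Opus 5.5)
Your proposal is correct and follows the paper's route: the paper only asserts that these properties follow from the uniqueness of the positive definite solution of \eqref{gradF} in Theorem \ref{wass_P}, and you carry out exactly those candidate-verification-plus-uniqueness checks, including $Y\#Y^{-1}=I$ for the final fixed-point statement. Two of your remarks are more cautious than needed: every element of $\mathbb{P}$, and hence every finite tuple, already lies in some interval $[\alpha' I,\beta' I]$, so the appeal to Lemma \ref{ineq} is superfluous; and the step you call ``delicate'' is immediate, because $\Omega(\omega;A_1,\dots,A_{m-1},X)=X$ says by definition that $X$ solves the defining equation for that tuple.
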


\begin{theorem}\label{T:bounded}
Let $\mathbb{A} = (A_1, \ldots, A_m) \in \mathbb{P}^m$ and $\omega = (w_{1}, \dots, w_{m}) \in \Delta_{m}$. The Wasserstein mean satisfies
\begin{displaymath}
2I - \sum_{i=1}^{m} w_{i} A_{i}^{-1} \leq \Omega (\omega, \mathbb{A}) \leq \sum_{i=1}^{m} w_{i} A_{i}.
\end{displaymath}
\end{theorem}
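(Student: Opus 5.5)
Let $X=\Omega(\omega,\mathbb{A})$, the unique solution in $\mathbb{P}$ of \eqref{gradF} given by Theorem \ref{wass_P}. The plan is to apply to each term of the equivalent fixed-point form
\begin{displaymath}
\sum_{i=1}^m w_i (A_i \# X^{-1}) = I
\end{displaymath}
the two-variable inequalities satisfied by the geometric mean $\#$, namely
\begin{displaymath}
2(A^{-1}+B^{-1})^{-1}\le A\#B\le \tfrac12(A+B).
\end{displaymath}
The arithmetic--geometric inequality on the right gives the lower bound for $X$. The harmonic--geometric inequality on the left gives the upper bound. The work is in converting these into Loewner bounds on $X$ itself.

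\textbf{Upper bound.} I will use the arithmetic--geometric step in its congruence form. For $A,X\in\mathbb{P}$, the operator $A\#X^{-1}$ is the unique positive solution $G$ of $GAG=X^{-1}$. Hence $X^{-1/2}(A\#X^{-1})^{-1}X^{-1/2}$ is the geometric mean of $X^{-1/2}A^{-1}X^{-1/2}$ and $X^{-1/2}AX^{-1/2}$. This route gets messy, so instead I will work directly with $(X^{1/2}A_iX^{1/2})^{1/2}$.

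By \eqref{gradF}, $X=\sum_i w_i T_i$ with $T_i=(X^{1/2}A_iX^{1/2})^{1/2}$. The key is the identity $T_i = X^{1/2}(A_i\#X^{-1})X^{1/2}$. Write $G_i=A_i\#X^{-1}$, so that $\sum_i w_iG_i=I$ and $G_iX G_i=A_i$.

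For the upper bound I expand
\begin{displaymath}
0\le (G_i-I)X(G_i-I) = A_i - G_iX - XG_i + X.
\end{displaymath}
Summing with weights and using $\sum_i w_iG_i=I$ gives
\begin{displaymath}
0\le \sum_i w_iA_i - X - X + X,
\end{displaymath}
that is, $X\le\sum_i w_iA_i$.

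\textbf{Lower bound.} I will use $0\le (I-G_i^{-1})X^{-1}\cdots$; the natural analogue works with inverses. From $G_iXG_i=A_i$ we get $G_i^{-1}A_iG_i^{-1}=X$, so $X^{-1}=G_iA_i^{-1}G_i$. Then
\begin{displaymath}
0\le (G_i - I)A_i^{-1}(G_i-I)=X^{-1}-G_iA_i^{-1}-A_i^{-1}G_i+A_i^{-1}.
\end{displaymath}
This does not telescope, because the terms $G_iA_i^{-1}$ remain. I will therefore instead compute
\begin{displaymath}
0\le (I-G_i)X(I-G_i)=X-XG_i-G_iX+A_i.
\end{displaymath}
This is the same identity as before and yields only the upper bound.

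To get the lower bound I will use the other identity, $0\le X^{1/2}(\ldots)$. Set $H_i=G_i^{-1}=A_i^{-1}\#X$, so that $H_iX^{-1}H_i=A_i^{-1}$. Then
\begin{displaymath}
0\le (H_i-I)X^{-1}(H_i-I)\quad\Longrightarrow\quad H_i+H_i\le A_i^{-1}+X^{-1}.
\end{displaymath}
What remains is to convert this into $2I-\sum_i w_iA_i^{-1}\le X$.

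\textbf{Main obstacle.} The main obstacle is this final conversion, because the constraint is on $\sum_i w_iG_i$ and not on $\sum_i w_iH_i$. My first attempt will be the scalar inequality $2-\lambda^{-1}\le\lambda$ in the form $G_i^{-1}\ge 2I-G_i$, which follows from $(G_i-I)^2\ge0$ via functional calculus. Averaging gives $\sum_i w_iH_i\ge I$. Combined with the estimate above, I will try to reach $2I-\sum_i w_iA_i^{-1}\le X$ through the congruence $X^{1/2}(\cdot)X^{1/2}$, and I expect to need care with the noncommutativity there.

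A cleaner fallback avoids $H_i$ altogether. The operator $2I-\sum_i w_iA_i^{-1}$ is at most $\sum_i w_iA_i\#\ldots$, and more usefully $2I-A^{-1}\le A$ holds, again from $(A-I)^2\ge0$. Any lower bound below $X$ that does not depend on the normalization is hard to come by, so if the direct route fails I will prove the statement first for the case where the $A_i$ commute with $X$, and then check the general case via the $G_i$-expansion
\begin{displaymath}
X=\sum_i w_iX^{1/2}G_iX^{1/2}.
\end{displaymath}
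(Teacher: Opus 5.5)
\textbf{Upper bound.} Your upper bound is correct, and it is the paper's argument made explicit. Since $T_i=X^{1/2}G_iX^{1/2}$ and $\sum_iw_iT_i=X$, your identity $\sum_iw_i(G_i-I)X(G_i-I)=\sum_iw_iA_i-X$ is exactly $X^{-1/2}\bigl(\sum_iw_iT_i^2-X^2\bigr)X^{-1/2}$. That is, it is the operator convexity of $t\mapsto t^2$ followed by congruence with $X^{-1/2}$.

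\textbf{Lower bound: a genuine gap.} The proposal ends in a plan (``I will try'', and a commuting-case fallback with no mechanism for removing commutativity), not a proof. The route you actually pursue does not close. Expanding $0\le(H_i-I)X^{-1}(H_i-I)$ with $H_iX^{-1}H_i=A_i^{-1}$ gives $H_iX^{-1}+X^{-1}H_i\le A_i^{-1}+X^{-1}$, not $2H_i\le A_i^{-1}+X^{-1}$ as you wrote. Averaging yields $SX^{-1}+X^{-1}S\le\sum_iw_iA_i^{-1}+X^{-1}$ with $S=\sum_iw_iH_i\ge I$. However, $S\ge I$ does not imply $SX^{-1}+X^{-1}S\ge 2X^{-1}$, because $P\ge 0$ does not force $PX^{-1}+X^{-1}P\ge 0$ when $P$ and $X$ do not commute. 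This is exactly the noncommutativity obstruction you anticipated, and congruence by $X^{1/2}$ does not remove it.

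\textbf{The missing step.} It is the one you announced at the start and then did not use: apply the arithmetic--geometric mean inequality to $H_i$ itself, not to its Riccati expansion. The bound $H_i=G_i^{-1}=X\#A_i^{-1}\le\frac12(X+A_i^{-1})$ holds without any commutativity. You already showed $\sum_iw_iH_i\ge I$ from $G_i^{-1}\ge 2I-G_i$ and $\sum_iw_iG_i=I$. Combining these gives $I\le\sum_iw_iH_i\le\frac12\bigl(X+\sum_iw_iA_i^{-1}\bigr)$, which is $X\ge 2I-\sum_iw_iA_i^{-1}$.

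This is the paper's proof with the two steps in the opposite order. The paper first uses the geometric--harmonic inequality $G_i=X^{-1}\#A_i\ge\bigl(\frac{X+A_i^{-1}}{2}\bigr)^{-1}$ and then the harmonic--arithmetic inequality. You would invert first, via your tangent-line bound $G_i^{-1}\ge 2I-G_i$ (enough here because $\sum_iw_iG_i=I$), and then apply AM--GM.

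A minor slip in the route you abandoned: $A\#X^{-1}$ solves $GA^{-1}G=X^{-1}$, not $GAG=X^{-1}$. Your later identity $G_iXG_i=A_i$ is correct.
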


\begin{proof}
Let $X = \Omega (\omega, \mathbb{A})$. Taking squares on both sides of \eqref{gradF} and using the convexity of square map yield
\begin{displaymath}
X^{2} = \left( \sum_{i=1}^{m} w_{i} (X^{1/2} A_{i} X^{1/2})^{1/2} \right)^{2} \leq \sum_{i=1}^{m} w_{i} X^{1/2} A_{i} X^{1/2}.
\end{displaymath}
Taking congruence transformation by $X^{-1/2}$ implies $\displaystyle X \leq \sum_{i=1}^{m} w_{i} A_{i}$.

The equation \eqref{gradF} is equivalent to
\begin{equation} \label{grad-Wass}
I = \sum_{i=1}^{m} w_{i} (X^{-1} \# A_{i}).
\end{equation}
By the geometric-harmonic mean inequality
\begin{displaymath}
I \geq \sum_{i=1}^{m} w_{i} \left( \frac{X + A_{i}^{-1}}{2} \right)^{-1}.
\end{displaymath}
Taking inverse on both sides and applying the arithmetic-harmonic mean inequality, we have
\begin{displaymath}
I \leq \left[ \sum_{i=1}^{m} w_{i} \left( \frac{X + A_{i}^{-1}}{2} \right)^{-1} \right]^{-1} \leq \sum_{i=1}^{m} w_{i} \left( \frac{X + A_{i}^{-1}}{2} \right).
\end{displaymath}
Solving for $X$, we obtain $\displaystyle X \geq 2I - \sum_{i=1}^{m} w_{i} A_{i}^{-1}$.
\end{proof}


\begin{corollary}
Let $\Phi$ be a unital positive linear map. Then
\begin{displaymath}
\Phi(\Omega (\omega, \mathbb{A})) \geq 2I - \sum_{i=1}^{m} w_{i} \Phi(A_{i}^{-1}),
\end{displaymath}
and furthermore,
\begin{displaymath}
\Phi(\Omega (\omega, \mathbb{A})^{-1}) \geq 2I - \sum_{i=1}^{m} w_{i} \Phi(A_{i}).
\end{displaymath}
\end{corollary}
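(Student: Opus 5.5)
The plan is to apply $\Phi$ to the lower bound of Theorem~\ref{T:bounded}. For the second inequality, I will transfer the same argument to the inverse by means of the equivalent form \eqref{grad-Wass}. Here $\Phi$ denotes the unital positive linear map of the statement, not the vector field of Section 4.

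For the first inequality, set $X=\Omega(\omega,\mathbb{A})$. Theorem~\ref{T:bounded} gives
\[
X \geq 2I-\sum_{i=1}^m w_i A_i^{-1}.
\]
A positive linear map preserves the Loewner order. Since $\Phi$ is also linear and unital, $\Phi(2I)=2I$, and applying $\Phi$ to both sides yields $\Phi(X)\geq 2I-\sum_i w_i\Phi(A_i^{-1})$ at once.

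For the second inequality, I need the analogous bound $X^{-1}\geq 2I-\sum_i w_i A_i$ before applying $\Phi$ in the same way. I will rerun the proof of Theorem~\ref{T:bounded} with the roles of $X$ and $X^{-1}$, and of $A_i$ and $A_i^{-1}$, swapped. Start from \eqref{grad-Wass}, $I=\sum_i w_i (X^{-1}\# A_i)$. The geometric--harmonic inequality gives
\[
X^{-1}\# A_i \geq \Big(\frac{X+A_i^{-1}}{2}\Big)^{-1}.
\]
Combining this with the arithmetic--harmonic inequality, exactly as in the proof of Theorem~\ref{T:bounded}, yields $I\leq \sum_i w_i \frac{X+A_i^{-1}}{2}$.

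That is the bound on $X$ again, not on $X^{-1}$. So instead I will use the arithmetic--geometric inequality directly on the $2\times 2$ means:
\[
X^{-1}\# A_i\leq \frac{X^{-1}+A_i}{2}.
\]
Summing with weights gives $I\leq \frac12\big(X^{-1}+\sum_i w_iA_i\big)$, that is, $X^{-1}\geq 2I-\sum_i w_iA_i$. Applying the positive unital map $\Phi$ then completes the proof.

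The main point to check is that the arithmetic--geometric inequality for the two-variable geometric mean holds for invertible positive operators on $\mathcal H$. It does, as the standard Kubo--Ando fact. Beyond that I expect no obstacle, since the rest is order preservation under $\Phi$ together with $\Phi(I)=I$.
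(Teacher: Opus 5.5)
Your proposal is correct and is essentially the paper's proof. The first inequality comes from applying the unital positive map to the lower bound in Theorem~\ref{T:bounded}, and the second from the termwise arithmetic--geometric mean inequality $X^{-1}\# A_i\leq \frac{1}{2}(X^{-1}+A_i)$ applied to \eqref{grad-Wass}, which gives $X^{-1}\geq 2I-\sum_{i} w_iA_i$ before $\Phi$ is applied; the initial detour through the geometric--harmonic inequality, which you rightly abandon, can simply be deleted.
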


\begin{proof}
The first assertion follows by taking a unital positive linear map $\Phi$ to the first inequality in Theorem \ref{T:bounded}.
For the second assertion, let $X = \Omega (\omega, \mathbb{A})$. By the arithmetic-geometric mean inequality from \eqref{grad-Wass}
\begin{displaymath}
I \leq \sum_{i=1}^{m} w_{i} \left( \frac{X^{-1} + A_{i}}{2} \right).
\end{displaymath}
Then $\displaystyle X^{-1} \geq 2I - \sum_{i=1}^{m} w_{i} A_{i}$ so we obtain the desired inequality by taking a unital positive linear map $\Phi$.
\end{proof}

Following the proof of \cite[Theorem 4.3]{HK19} with Theorem \ref{T:bounded} we obtain
\begin{corollary}
Let $\mathbb{A} = (A_1, \ldots, A_m) \in \mathbb{P}^m$ and $\omega = (w_{1}, \dots, w_{m}) \in \Delta_{m}$. Then
\begin{displaymath}
\lim_{s \to 0} \Omega(\omega; A_{1}^{s}, \dots, A_{m}^{s})^{1/s} = \exp \left( \sum_{i=1}^{m} w_{i} \log A_{i} \right),
\end{displaymath}
whose right-hand side is known as the log-Euclidean mean $\mathrm{LE}(\omega; \mathbb{A})$.
\end{corollary}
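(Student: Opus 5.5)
The plan is to follow the strategy of \cite[Theorem 4.3]{HK19}: sandwich $\Omega(\omega; A_1^s,\dots,A_m^s)$ between two expressions whose $1/s$-powers both converge to the log-Euclidean mean, and then apply a squeeze argument. Theorem \ref{T:bounded}, applied to the tuple $\mathbb{A}^s = (A_1^s,\dots,A_m^s)$, gives
\begin{displaymath}
2I - \sum_{i=1}^m w_i A_i^{-s} \leq \Omega(\omega; \mathbb{A}^s) \leq \sum_{i=1}^m w_i A_i^{s}.
\end{displaymath}
For small $s>0$ both bounds are positive definite; this follows from the uniform bounds $\alpha I\le A_i\le \beta I$, since $A_i^{-s}\to I$ in norm. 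The lower bound is then $\bigl(2I - \sum w_i A_i^{-s}\bigr)$.

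The first key step is the two scalar-type limits
\begin{displaymath}
\lim_{s\to 0}\Bigl(\sum_i w_i A_i^s\Bigr)^{1/s} = \exp\Bigl(\sum_i w_i\log A_i\Bigr) = \lim_{s\to 0}\Bigl(2I-\sum_i w_i A_i^{-s}\Bigr)^{1/s}.
\end{displaymath}
To prove them, I will write $A_i^{\pm s} = I \pm s\log A_i + O(s^2)$ in operator norm, which is uniform because the spectra lie in $[\alpha,\beta]$. Both inner operators are then $I + sL + O(s^2)$ with $L=\sum w_i \log A_i$. Taking the logarithm through the holomorphic functional calculus gives $\frac1s\log(I+sL+O(s^2)) = L + O(s)$, and exponentiating yields the limit.

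The second step is to transfer the two-sided Loewner bound to the $1/s$-powers. This is the main obstacle, because $t\mapsto t^{1/s}$ is not operator monotone for $1/s>1$. To get around it, I will avoid powers and work with logarithms, since $\log$ is operator monotone. That gives
\begin{displaymath}
\tfrac1s\log\bigl(2I-\textstyle\sum w_iA_i^{-s}\bigr) \le \tfrac1s \log \Omega(\omega;\mathbb{A}^s) \le \tfrac1s\log\bigl(\textstyle\sum w_iA_i^s\bigr).
\end{displaymath}
By the first step, both outer terms equal $L+O(s)$ in norm. If self-adjoint operators satisfy $L - c\,sI\le T_s\le L+c\,sI$, then $\|T_s - L\|\le cs$. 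Hence $\frac1s\log\Omega(\omega;\mathbb{A}^s)\to L$ in norm.

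Finally, continuity of $\exp$ in norm gives $\Omega(\omega;\mathbb{A}^s)^{1/s} = \exp\bigl(\frac1s\log\Omega(\omega;\mathbb{A}^s)\bigr)\to \exp(L)$, which is the statement for $s\to 0^+$. For $s\to 0^-$, I will handle the limit by replacing $A_i$ with $A_i^{-1}$; the sandwich from Theorem \ref{T:bounded} is symmetric under $s\mapsto -s$ in the relevant first-order expansion, so the same argument applies with $L$ replaced by $-L$ and $1/s$ replaced by $-1/|s|$, giving the same limit $\exp(L)$. If only $s\to0^+$ is intended, this last remark can be omitted.
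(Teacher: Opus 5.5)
Your proposal is correct and is essentially the paper's argument. The paper applies Theorem~\ref{T:bounded} to $(A_1^s,\dots,A_m^s)$ and follows \cite[Theorem 4.3]{HK19}, which is exactly this squeeze: $\frac1s\log\Omega(\omega;A_1^s,\dots,A_m^s)$ is trapped between $\frac1s\log\bigl(2I-\sum_i w_iA_i^{-s}\bigr)$ and $\frac1s\log\bigl(\sum_i w_iA_i^{s}\bigr)$ by operator monotonicity of $\log$, and both outer terms equal $L+O(s)$. For $s\to0^-$ your reduction via $A_i\mapsto A_i^{-1}$ and continuity of inversion is valid. It is simpler, though, to note that dividing the log-sandwich by $s<0$ only reverses the inequalities, while both outer terms still tend to $L$.
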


The quasi-arithmetic mean of order $p \neq 0$ for $\mathbb{A} = (A_1, \ldots, A_m) \in \mathbb{P}^m$ is defined by
\begin{displaymath}
Q_{p}(\omega; \mathbb{A}) = \left( \sum_{i=1}^{m} w_{i} A_{i}^{p} \right)^{1/p}.
\end{displaymath}
It is a generalized mean including the arithmetic mean $\mathcal{A}$ when $p=1$, log-Euclidean mean as $p \to 0$, and harmonic mean $\mathcal{H}$ when $p=-1$.
Note from \cite{HK25} that the following chain holds: for $1 \leq p \leq q$
\begin{equation} \label{E:chain}
Q_{-q} \leq Q_{-p} \leq \mathcal{H} \preceq Q_{-1/p} \preceq Q_{-1/q} \preceq \mathrm{LE} \preceq Q_{1/q} \preceq Q_{1/p} \leq \mathcal{A} \leq Q_{p} \leq Q_{q},
\end{equation}
where $\preceq$ denotes the near-order: $A \preceq B$ if and only if $A^{-1} \# B \geq I$.

We provide better bounds of Wasserstein mean with respect to an operator norm, rather than the Loewner order in Theorem \ref{T:bounded}.
\begin{theorem}
Let $\mathbb{A} = (A_1, \ldots, A_m) \in \mathbb{P}^m$ and $\omega = (w_{1}, \dots, w_{m}) \in \Delta_{m}$. Then for an operator norm $\Vert \cdot \Vert$
\begin{displaymath}
\left\| \sum_{i=1}^{m} w_{i} A_{i}^{1/2} \right\|^{2} \leq \Vert \Omega(\omega; \mathbb{A}) \Vert \leq \left( \sum_{i=1}^{m} w_{i} \Vert A_{i} \Vert^{1/2} \right)^{2}.
\end{displaymath}
\end{theorem}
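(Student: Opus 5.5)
The plan is to work directly with the two equivalent forms of the defining equation \eqref{gradF} of $X=\Omega(\omega;\mathbb{A})$, which exists and is unique by Theorem~\ref{wass_P}. The upper bound will come from the fixed-point form $X=\sum_i w_i (X^{1/2}A_iX^{1/2})^{1/2}$ together with norm estimates. The lower bound will come from the form $I=\sum_i w_i (A_i\# X^{-1})$ together with monotonicity of the geometric mean in the Loewner order. Throughout, $\|\cdot\|$ is the usual operator (spectral) norm on $B(\mathcal{H})$. I will use three of its properties: it is submultiplicative, it satisfies $\|P^{1/2}\|=\|P\|^{1/2}$ for $P\ge 0$, and for $0\le P\le cI$ one has $\|P\|\le c$.

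\emph{Upper bound.} Apply the triangle inequality to \eqref{gradF}:
\[
\|X\|\le \sum_{i=1}^m w_i \big\|(X^{1/2}A_iX^{1/2})^{1/2}\big\| = \sum_{i=1}^m w_i \|X^{1/2}A_iX^{1/2}\|^{1/2}.
\]
Submultiplicativity gives $\|X^{1/2}A_iX^{1/2}\|\le \|X^{1/2}\|^2\|A_i\| = \|X\|\,\|A_i\|$. Substituting, we get $\|X\|\le \|X\|^{1/2}\sum_i w_i\|A_i\|^{1/2}$. Since $X\in\mathbb{P}$, we have $\|X\|>0$, so we may divide by $\|X\|^{1/2}$ and square to obtain $\|X\|\le\big(\sum_i w_i\|A_i\|^{1/2}\big)^2$.

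\emph{Lower bound.} Since $X\le \|X\|I$, we have $X^{-1}\ge \|X\|^{-1}I$. The weighted geometric mean $\#$ is monotone in each argument, so
\[
A_i\# X^{-1}\ \ge\ A_i\#(\|X\|^{-1}I)=\|X\|^{-1/2}A_i^{1/2}.
\]
The last equality holds because $A\#(cI)=c^{1/2}A^{1/2}$ for scalars $c>0$. Multiplying by $w_i$, summing over $i$, and using $\sum_i w_i(A_i\#X^{-1})=I$ yields
\[
0\le \sum_{i=1}^m w_iA_i^{1/2}\le \|X\|^{1/2} I.
\]
Hence $\big\|\sum_i w_iA_i^{1/2}\big\|\le\|X\|^{1/2}$, and squaring gives the left inequality.

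No step here is a real obstacle. The only points needing care are the monotonicity of $\#$ for bounded positive invertible operators, which holds by the Kubo--Ando theory \cite{KA}, and the fact that each argument relies on specific properties of the operator norm. If the statement is meant for more general (e.g.\ unitarily invariant) operator norms, the lower-bound step, which passes from a Loewner inequality $0\le P\le cI$ to $\|P\|\le c$, is the one that would need re-examination. For the spectral norm, the argument above is complete.
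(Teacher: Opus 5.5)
Your proof is correct and is essentially the paper's own argument. The upper bound is the same triangle-inequality and submultiplicativity estimate applied to \eqref{gradF}, and the lower bound is the same monotonicity argument $A_i\# X^{-1}\ge A_i\# cI=c^{1/2}A_i^{1/2}$ inside $\sum_i w_i(A_i\# X^{-1})=I$. The only difference is cosmetic: the paper first uses homogeneity of $\Omega$ and $Q_{1/2}$ to reduce to the normalized case $\Omega(\omega;\mathbb{A})\le I$ (so $c=1$), whereas you insert $X^{-1}\ge\|X\|^{-1}I$ directly; your caveat about relying on spectral-norm properties applies equally to the paper's reduction.
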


\begin{proof}
Note that $\displaystyle \Vert Q_{1/2}(\omega; \mathbb{A}) \Vert = \left\| \sum_{i=1}^{m} w_{i} A_{i}^{1/2} \right\|^{2}$, and the quasi-arithmetic mean and Wasserstein mean are homogeneous. So it is enough for the first inequality to show
\begin{center}
$\Omega(\omega; \mathbb{A}) \leq I$ \quad implies \quad $Q_{1/2}(\omega; \mathbb{A}) \leq I$.
\end{center}
Assume that $X := \Omega(\omega; \mathbb{A}) \leq I$. Then $X^{-1} \geq I$, and by monotonicity of the geometric mean
\begin{displaymath}
\sum_{i=1}^{m} w_{i} A_{i}^{1/2} = \sum_{i=1}^{m} w_{i} (A_{i} \# I) \leq \sum_{i=1}^{m} w_{i} (A_{i} \# X^{-1}) = I.
\end{displaymath}
Hence, $Q_{1/2}(\omega; \mathbb{A}) \leq I$ by taking squares on both sides. Furthermore, by \eqref{gradF}
\begin{displaymath}
\Vert X \Vert = \left\| \sum_{i=1}^{m} w_{i} (X^{1/2} A_{i} X^{1/2})^{1/2} \right\| \leq \sum_{i=1}^{m} w_{i} \Vert X^{1/2} A_{i} X^{1/2} \Vert^{1/2} \leq \sum_{i=1}^{m} w_{i} \Vert X \Vert^{1/2} \Vert A_{i} \Vert^{1/2}.
\end{displaymath}
The first inequality follows from the triangle inequality, and the second follows from the sub-multiplicativity.
Solving for $\Vert X \Vert$, we obtain the desired upper bound for $\Vert X \Vert$.
\end{proof}

\begin{remark}
Note that $A \preceq B$ implies $\Vert A \Vert \leq \Vert B \Vert$ so we obtain from \eqref{E:chain}
\begin{center}
$\Vert \mathrm{LE}(\omega; \mathbb{A}) \Vert \leq \Vert Q_{p}(\omega; \mathbb{A}) \Vert \leq \Vert \Omega(\omega; \mathbb{A}) \Vert$ \quad for all $0 < p \leq 1/2$.
\end{center}
\end{remark}

\begin{theorem}
Let $\mathbb{A} = (A_1, \ldots, A_m) \in \mathbb{P}^m$ and $\omega = (w_{1}, \dots, w_{m}) \in \Delta_{m}$.
\begin{itemize}
  \item[(i)] $\Omega(\omega; \mathbb{A}) \leq I$ implies $\Omega(\omega; \mathbb{A}) \leq Q_{1/2}(\omega; \mathbb{A})^{-1/2}$,
  \item[(ii)] $\Omega(\omega; \mathbb{A}) \geq I$ implies $\Omega(\omega; \mathbb{A}) \geq Q_{1/2}(\omega; \mathbb{A})^{-1/2}$.
\end{itemize}
\end{theorem}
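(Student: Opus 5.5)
The plan is to reduce both statements to a comparison between $S := \sum_{i=1}^{m} w_i A_i^{1/2}$ and $X^{-1}$, where $X := \Omega(\omega; \mathbb{A})$. This works because $Q_{1/2}(\omega;\mathbb{A})^{-1/2} = (S^2)^{-1/2} = S^{-1}$. Here $S \in \mathbb{P}$: it is self-adjoint and $S \geq \sqrt{\alpha}\, I$ whenever $A_i \geq \alpha I$ for all $i$, so $S$ is invertible. Since inversion is order-reversing on $\mathbb{P}$, (i) is equivalent to $S \leq X^{-1}$, and (ii) is equivalent to $S \geq X^{-1}$. I will use the form \eqref{grad-Wass} of the defining equation, $\sum_{i} w_i (A_i \# X^{-1}) = I$, together with the identity $A_i \# I = A_i^{1/2}$.

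For (i), assume $X \leq I$, so $X^{-1} \geq I$. The geometric mean is monotone in its second argument, so $A_i \# I \leq A_i \# X^{-1}$ for each $i$. Summing with the weights gives
\[
S = \sum_{i=1}^{m} w_i (A_i \# I) \leq \sum_{i=1}^{m} w_i (A_i \# X^{-1}) = I \leq X^{-1}.
\]
This is exactly the argument already used in the proof of the preceding norm-bound theorem. Inverting then yields $X \leq S^{-1} = Q_{1/2}(\omega;\mathbb{A})^{-1/2}$.

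For (ii), assume $X \geq I$, so $X^{-1} \leq I$. Monotonicity now gives $A_i \# X^{-1} \leq A_i \# I$, hence
\[
X^{-1} \leq I = \sum_{i=1}^{m} w_i (A_i \# X^{-1}) \leq S.
\]
Inverting gives $X \geq S^{-1} = Q_{1/2}(\omega;\mathbb{A})^{-1/2}$.

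There is no real obstacle here. The only points to state explicitly are the identification $Q_{1/2}^{-1/2} = S^{-1}$, which uses uniqueness of the positive square root $(S^2)^{1/2} = S$, and the invertibility of $S$, which is needed to apply order-reversal of inversion.
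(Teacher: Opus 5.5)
Your proof is correct; it differs from the paper's only in which form of the defining equation you start from and which named inequality you use.

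The paper works with the fixed-point form \eqref{gradF}. For $X\le I$ it applies the Jensen-type (Hansen) inequality $(Z^*AZ)^{1/2}\ge Z^*A^{1/2}Z$ with $Z=X^{1/2}$ to get $I\ge X\ge X^{1/2}SX^{1/2}$, hence $X^{-1}\ge S$. It leaves (ii) to a ``similar argument''. You work instead with \eqref{grad-Wass} and use only monotonicity of $\#$ together with $A_i\# I=A_i^{1/2}$, as in the preceding norm-bound theorem.

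The two key inequalities are the same. Congruence by $X^{1/2}$ turns $A_i\# I\le A_i\# X^{-1}$ into $X^{1/2}A_i^{1/2}X^{1/2}\le (X^{1/2}A_iX^{1/2})^{1/2}$, which is Hansen's inequality for the positive contraction $X^{1/2}$.

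What your formulation buys is that (ii) follows by simply reversing the monotonicity step. You never need the reverse form of Hansen's inequality, which is the one valid when $Z^{-1}$ is a contraction. The paper's text attaches the hypothesis ``$Z^{-1}$ is a contraction'' to the inequality actually needed in (i), where it should read ``$Z$ is a contraction''; your version avoids that pitfall.

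Both arguments in fact prove $S\le I$ (resp.\ $S\ge I$). So the conclusion is really the sandwich $X\le I\le S^{-1}$ (resp.\ $X\ge I\ge S^{-1}$).
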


\begin{proof}
Let $X = \Omega(\omega; \mathbb{A})$. By applying Jensen-type inequality to \eqref{gradF}: $(Z^{*} A Z)^{p} \geq Z^{*} A^{p} Z$ for $p \in [0,1]$ and $A \in \mathbb{P}$ when $Z^{-1}$ is a contraction for $Z \in \mathrm{GL}$,
\begin{displaymath}
I \geq X = \sum_{i=1}^{m} w_{i} (X^{1/2} A_{i} X^{1/2})^{1/2} \geq \sum_{i=1}^{m} w_{i} (X^{1/2} A_{i}^{1/2} X^{1/2})
\end{displaymath}
so $\displaystyle X^{-1} \geq \sum_{i=1}^{m} w_{i} A_{i}^{1/2}$. Thus,
\begin{displaymath}
X \leq \left[ \sum_{i=1}^{m} w_{i} A_{i}^{1/2} \right]^{-1} = Q_{1/2}(\omega; \mathbb{A})^{-1/2}.
\end{displaymath}
By a similar argument in (i), we can prove (ii).
\end{proof}

Lim and P\'{a}lfia \cite{LP12} have introduced power means of positive definite matrices but their notion can be extended to the setting $\mathbb{P}$.
For $\mathbb{A} = (A_1, \ldots, A_m) \in \mathbb{P}^m$ and $\omega = (w_{1}, \dots, w_{m}) \in \Delta_{m}$ the power mean $P_{t}(\omega; \mathbb{A})$ with parameter $t \in (0,1]$ is defined by a unique positive definite solution $X$ to the equation
\begin{displaymath}
X = \sum_{i=1}^{m} w_{i} (X \#_{t} A_{i}).
\end{displaymath}
For $t \in [-1,0)$, we define $P_{t}(\omega; \mathbb{A}) := P_{-t}(\omega; \mathbb{A}^{-1})^{-1}$, where $\mathbb{A}^{-1} := (A_1^{-1}, \ldots, A_m^{-1})$.
The remarkable consequence is that power mean $P_{t}$ converges to the Karcher mean $\Lambda$ monotonically as $t \to 0$, in the sense that
\begin{equation*} \label{E:power-means}
P_{-1} = \mathcal{H} \leq P_{-t} \leq P_{-s} \leq \Lambda = P_{0} \leq P_{s} \leq P_{t} \leq \mathcal{A} = P_{1} \quad \textrm{for} \quad 0 < s \leq t \leq 1.
\end{equation*}
We refer \cite{LL14} for more information. Furthermore, the definition of power means can be extended to $t \in (1,2)$: see \cite{Seo}.

\begin{theorem}
Let $\mathbb{A} = (A_1, \ldots, A_m) \in \mathbb{P}^m$ and $\omega = (w_{1}, \dots, w_{m}) \in \Delta_{m}$. Then $\Omega(\omega; \mathbb{A}) \geq I$ implies
\begin{itemize}
  \item[(i)] $\Omega(\omega; \mathbb{A}) \geq P_{t}(\omega; \mathbb{A})^{-1}$ for any $1/2 \leq t <2$, and
  \item[(ii)] $\Omega(\omega; \mathbb{A}) \geq P_{t}(\omega; \mathbb{A}^{-1})$ for any $-2 < t \leq -1/2$.
\end{itemize}
\end{theorem}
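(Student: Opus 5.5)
The plan is to reduce both parts to the single case $t=1/2$. I will then use the monotonicity of the power means in the parameter $t$ to reach the remaining values. Part (ii) should follow from (i) by the definition of $P_t$ for negative $t$.

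\textbf{Step 1: the case $t=1/2$.} Put $X=\Omega(\omega;\mathbb{A})$ and $Y:=X^{-1}$. The hypothesis $X\geq I$ gives $Y\leq I$. By \eqref{grad-Wass},
\[
f(Y):=\sum_{i=1}^m w_i\,(Y\#_{1/2}A_i)=I\geq Y .
\]
I will now invoke the standard comparison principle for Lim--P\'alfia power means. For $t\in(0,1]$ the map $f_t(Z)=\sum_i w_i (Z\#_t A_i)$ is monotone in the Loewner order, and it is a strict contraction for the Thompson metric on $\mathbb{P}$. This is the argument of \cite{LP12}, which carries over to $\mathbb{P}$ as in \cite{LL14}. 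Monotonicity turns $Y\leq f_t(Y)$ into an increasing sequence $Y\leq f_t(Y)\leq f_t^2(Y)\leq\cdots$. The contraction property makes this sequence converge to the unique fixed point $P_t(\omega;\mathbb{A})$. Since the Loewner cone is closed in norm, we get $Y\leq P_t(\omega;\mathbb{A})$. With $t=1/2$ this gives $X^{-1}\leq P_{1/2}(\omega;\mathbb{A})$. Operator inversion is order reversing, so $X\geq P_{1/2}(\omega;\mathbb{A})^{-1}$.

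\textbf{Step 2: from $t=1/2$ to $1/2\leq t<2$.} I use that $t\mapsto P_t(\omega;\mathbb{A})$ is monotonically increasing. On $(0,1]$ this is the chain displayed before the statement, and on $(1,2)$ I use the extension in \cite{Seo}. This gives $P_t\geq P_{1/2}$, hence $P_t^{-1}\leq P_{1/2}^{-1}\leq X$, which proves (i).

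\textbf{Step 3: part (ii).} For $-2<t\leq -1/2$, the definition $P_t(\omega;\mathbb{B}):=P_{-t}(\omega;\mathbb{B}^{-1})^{-1}$ with $\mathbb{B}=\mathbb{A}^{-1}$ gives $P_t(\omega;\mathbb{A}^{-1})=P_{-t}(\omega;\mathbb{A})^{-1}$. Here $-t\in[1/2,2)$, so (ii) is exactly (i).

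The main obstacle is Step 2 on the range $t\in(1,2)$. There the maps $Z\mapsto Z\#_tA$ are no longer operator monotone in $Z$. So both the fixed-point comparison and the monotonicity in $t$ must be taken from the definition in \cite{Seo}, and I would check that this definition yields $P_t\geq P_1$ for $t\in(1,2)$.

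If that monotonicity is unavailable, the fallback is to compare directly. Write $Y\#_tA_i=(Y\#_{1/2}A_i)\#_{2t-1}A_i$, and try to show $f_t(Y)\geq Y$ from $\sum_i w_i(Y\#_{1/2}A_i)=I\geq Y$. I expect this direct comparison to be considerably more delicate than Step 2.
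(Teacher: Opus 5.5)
Your proposal is correct and follows essentially the same route as the paper. Both use the monotone, Thompson-contractive iteration started from $X:=\Omega(\omega;\mathbb{A})^{-1}\le \sum_i w_i(X\#A_i)=I$, then monotonicity in $t$, where the paper takes $P_1\le P_t$ on $(1,2)$ from \cite[Proposition 4.5 (v)]{Seo} and thereby settles the obstacle you flag, and both reduce (ii) to (i) via $P_t(\omega;\mathbb{A}^{-1})=P_{-t}(\omega;\mathbb{A})^{-1}$. The only cosmetic difference is that the paper also derives $X\le\sum_i w_i(X\#_tA_i)$ for each $t\in[1/2,1]$ from $X\#A_i=X\#_{1/(2t)}(X\#_tA_i)$ and the weighted AM--GM inequality, while you handle only $t=1/2$ and pass to larger $t$ by monotonicity, as the paper's final line in effect also does.
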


\begin{proof}
For $t \in (-2, -1/2]$, $P_{t}(\omega; \mathbb{A}^{-1}) = P_{-t}(\omega; \mathbb{A})^{-1}$ so (i) implies (ii). So it is enough to show (i).

Set $X = \Omega(\omega; \mathbb{A})^{-1}$. We first consider $t \in [1/2,1]$.
By the affine property of parameters for geometric mean and the arithmetic-geometric mean inequality with $\frac{1}{2t} \in [1/2, 1]$, we have
\begin{displaymath}
\begin{split}
I & = \sum_{i=1}^{n} w_{i} (X \# A_{i}) = \sum_{i=1}^{n} w_{i} \left[ X \#_{\frac{1}{2t}} (X \#_{t} A_{i}) \right] \\
& \leq \sum_{i=1}^{n} w_{i} \left[ \left( 1 - \frac{1}{2t} \right) X + \frac{1}{2t} (X \#_{t} A_{i}) \right] \\
& = \left( 1 - \frac{1}{2t} \right) X + \frac{1}{2t} \sum_{i=1}^{n} w_{i} (X \#_{t} A_{i}).
\end{split}
\end{displaymath}
Since $X \leq I$ by assumption, we have
\begin{displaymath}
X \leq 2t I - ( 2t - 1 ) X \leq \sum_{j=1}^{n} w_{j} (X \#_{t} A_{j}) =: g(X).
\end{displaymath}
Since the map $g$ is monotone increasing, we get $X \leq g(X) \leq g^{2}(X) \leq \cdots \leq g^{r}(X)$ for all $r \geq 1$. Note that $g$ is a strict contraction with respect to the Thompson metric, and by the Banach fixed point theorem $g^{r}(X)$ converges to a unique fixed point as $r \to \infty$, which is the power mean $P_{1/2}(\omega; \mathbb{A})$.
Therefore,
\begin{center}
$\Omega(\omega; \mathbb{A}) = X^{-1} \geq P_{1/2}(\omega; \mathbb{A})^{-1} \geq P_{t}(\omega; \mathbb{A})^{-1}$ \quad for all $t \in [1/2,1]$.
\end{center}
Since $P_{t=1}(\omega; \mathbb{A}) = \mathcal{A}(\omega; \mathbb{A}) \leq P_{t}(\omega; \mathbb{A})$ for $t \in (1,2)$
from \cite[Proposition 4.5 (v)]{Seo}, we obtain the conclusion.
\end{proof}


Hwang and Kim \cite{HK19} have defined the Wasserstein mean of $A, B \in \mathbb{P}$ for $t \in [0,1]$ as a unique positive definite solution to the equation
\begin{equation} \label{E:Wass-eq}
I = (1-t) (A \# X^{-1}) + t (B \# X^{-1}).
\end{equation}
Indeed, the unique solution $X \in \mathbb{P}$ is given by
\begin{displaymath}
A \diamond_{t} B = (1-t)^{2} A + t^{2} B + t(1-t) \left[ A (A^{-1} \# B) + (A^{-1} \# B) A \right].
\end{displaymath}
This can be written as
\begin{equation} \label{E:Wass-expression}
\begin{split}
A \diamond_{t} B & = A^{-1/2} [ (1-t)A + t(A^{1/2} B A^{1/2})^{1/2} ]^{2} A^{-1/2} \\
& = \left[ I \nabla_{t} (A^{-1} \# B) \right] A \left[ I \nabla_{t} (A^{-1} \# B)
\right],
\end{split}
\end{equation}
where $A \nabla_{t} B := (1-t) A + t B$ is the weighted arithmetic mean of $A$ and $B$.

\begin{theorem}
For $A, B \in \mathbb{P}$ and $t \in [0,1]$
\begin{displaymath}
A \diamond_{t} B = \min \left\{
X \in \mathbb{P}: (1-t)
\begin{bmatrix}
  A^{-1} & I \\
  I & X
\end{bmatrix} + t
\begin{bmatrix}
  A^{-1} & A^{-1} \# B \\
  A^{-1}  \# B & X
\end{bmatrix} \geq 0
\right\}.
\end{displaymath}
\end{theorem}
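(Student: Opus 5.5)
We will reduce the block-matrix condition to a single Loewner inequality by taking a Schur complement. Write $G := A^{-1} \# B$ and $C := (1-t) I + t G = I \nabla_t G$. The matrix in the statement is linear in its entries, so it equals
\begin{displaymath}
\begin{bmatrix}
  A^{-1} & C \\
  C & X
\end{bmatrix},
\end{displaymath}
using $(1-t)A^{-1} + tA^{-1} = A^{-1}$ and $(1-t)X + tX = X$. Since $A^{-1} \in \mathbb{P}$ is invertible, the standard Schur complement criterion for operator block matrices on $\mathcal{H} \oplus \mathcal{H}$ applies. It says this block operator is positive semidefinite if and only if $X - C^{*} (A^{-1})^{-1} C = X - C A C \geq 0$. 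Here $C$ is self-adjoint because $G$ is.

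Next we identify $C A C$ with $A \diamond_t B$. By the second expression in \eqref{E:Wass-expression},
\begin{displaymath}
A \diamond_t B = \left[ I \nabla_t (A^{-1} \# B) \right] A \left[ I \nabla_t (A^{-1} \# B) \right] = CAC.
\end{displaymath}
The feasible set is therefore exactly $\{ X \in \mathbb{P} : X \geq A \diamond_t B \}$. The lower bound $A \diamond_t B$ itself lies in $\mathbb{P}$: $G \in \mathbb{P}$ and $t \in [0,1]$ give $C \geq \min\{1, \lambda_{\min}(G)\} I > 0$, so $C$ is invertible and $CAC$ is positive invertible. Hence $A \diamond_t B$ belongs to the feasible set and is below every element of it. That makes it the minimum with respect to the Loewner order, which is what the statement asserts.

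The main point needing care is the Schur complement step in infinite dimensions. I would prove it directly via the congruence
\begin{displaymath}
\begin{bmatrix} I & 0 \\ -CA & I \end{bmatrix}
\begin{bmatrix} A^{-1} & C \\ C & X \end{bmatrix}
\begin{bmatrix} I & -AC \\ 0 & I \end{bmatrix}
= \begin{bmatrix} A^{-1} & 0 \\ 0 & X - CAC \end{bmatrix}.
\end{displaymath}
The outer factors are bounded, invertible, and adjoint to each other, so positivity is preserved in both directions. The rest of the argument is algebraic bookkeeping.
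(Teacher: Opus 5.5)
Your proof is correct and follows essentially the same route as the paper. Both merge the two block operators into one with off-diagonal entry $C = I \nabla_t (A^{-1} \# B)$, reduce positivity to a Loewner inequality via a Schur complement, and identify $CAC$ with $A \diamond_t B$ through \eqref{E:Wass-expression}. The only difference is the pivot: you use the $(1,1)$ block $A^{-1}$, justified by an explicit congruence valid in infinite dimensions, so you get the equivalence with $X \geq CAC$ in one step. The paper instead pivots on $X$, obtains $C X^{-1} C \leq A^{-1}$, inverts it (tacitly using the invertibility of $C$, which you verify explicitly), and checks attainment separately via the Riccati equation.
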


\begin{proof}
Note from \cite[Theorem 1.3.3]{Bh} that
\begin{center}
$\displaystyle \begin{bmatrix} A & X \\ X^{*} & B \end{bmatrix} \geq 0$ \quad if and only if \quad $X B^{-1} X^{*} \leq A$.
\end{center}
Assume that
\begin{displaymath}
\begin{bmatrix}
  A^{-1} & I \nabla_{t} (A^{-1} \# B) \\
  I \nabla_{t} (A^{-1} \# B) & X
\end{bmatrix}
= (1-t)
\begin{bmatrix}
  A^{-1} & I \\
  I & X
\end{bmatrix} + t
\begin{bmatrix}
  A^{-1} & A^{-1} \# B \\
  A^{-1} \# B & X
\end{bmatrix} \geq 0
\end{displaymath}
Then $[ I \nabla_{t} (A^{-1} \# B) ] X^{-1} [ I \nabla_{t} (A^{-1} \# B) ] \leq A^{-1}$ so
\begin{displaymath}
X \geq [ I \nabla_{t} (A^{-1} \# B) ] A [ I \nabla_{t} (A^{-1} \# B) ] = A \diamond_{t} B.
\end{displaymath}
If $X = A \diamond_{t} B$, then $[ I \nabla_{t} (A^{-1} \# B) ] X^{-1} [ I \nabla_{t} (A^{-1} \# B) ] = A^{-1}$ by the Riccati equation. So
\begin{displaymath}
(1-t)
\begin{bmatrix}
  A^{-1} & I \\
  I & A \diamond_{t} B
\end{bmatrix} + t
\begin{bmatrix}
  A^{-1} & A^{-1} \# B \\
  A^{-1} \# B & A \diamond_{t} B
\end{bmatrix} \geq 0.
\end{displaymath}
\end{proof}

The Wasserstein nean $A \diamond_{t} B$ can be defined for all $t \in \mathbb{R}$ by \eqref{E:Wass-expression}, and $A \diamond_{t} B \in \overline{\mathbb{P}}$ since $I \nabla_{t} (A^{-1} \# B) \in S(\mathcal{H})$. Moreover, $A \diamond_{t} B \in \mathbb{P}$ for some $t \in \mathbb{R}$ when $I \nabla_{t} (A^{-1} \# B)$ is invertible.

\begin{lemma} \label{L:Wass-symmetry}
Let $A, B \in \mathbb{P}$ and $t \in \mathbb{R}$. Then the Wasserstein mean satisfies the symmetry:
\begin{displaymath}
A \diamond_t B =  B \diamond_{1-t} A.
\end{displaymath}
\end{lemma}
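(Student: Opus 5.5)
The plan is to work directly with the second expression in \eqref{E:Wass-expression}, since for $t\in\mathbb{R}$ that formula is the definition of $A \diamond_t B$. Set $M := A^{-1} \# B$. The first step is to expand
\[
A \diamond_t B = [(1-t)I + tM]\, A\, [(1-t)I + tM] = (1-t)^2 A + t^2 MAM + t(1-t)(AM + MA).
\]
This is valid for every real $t$ because it is only a polynomial identity in $t$.

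Next I will invoke the Riccati property of the geometric mean: $M = A^{-1}\#B$ is the unique positive definite solution of $MAM = B$. One can also check this directly from $M = A^{-1/2}(A^{1/2}BA^{1/2})^{1/2}A^{-1/2}$, which gives $MAM = A^{-1/2}(A^{1/2}BA^{1/2})A^{-1/2} = B$. Hence
\[
A \diamond_t B = (1-t)^2 A + t^2 B + t(1-t)(AM+MA),
\]
which agrees with the closed formula preceding \eqref{E:Wass-expression}.

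For the other side, I will apply the same expansion with the roles of $A$ and $B$ swapped and $t$ replaced by $1-t$. With $N := B^{-1}\#A$ this gives
\[
B \diamond_{1-t} A = t^2 B + (1-t)^2 A + t(1-t)(BN + NB).
\]
The key observation is that $N = (A^{-1}\#B)^{-1} = M^{-1}$. This holds because the geometric mean satisfies $(X\#Y)^{-1} = X^{-1}\#Y^{-1}$ and is symmetric, so $N = B^{-1}\#A = A\#B^{-1} = (A^{-1}\#B)^{-1}$. Alternatively, $M^{-1}$ is positive and satisfies $M^{-1}BM^{-1} = M^{-1}(MAM)M^{-1} = A$, which is the Riccati equation characterizing $B^{-1}\#A$.

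It then remains to compare the cross terms. Using $B = MAM$, I will compute $BN = MAM\,M^{-1} = MA$ and $NB = M^{-1}MAM = AM$. Therefore $BN + NB = AM + MA$, and the two expressions coincide.

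The only point requiring care is the identification $N = M^{-1}$ together with the Riccati identity $MAM = B$ in the operator setting. Both follow from the explicit formula for $\#$ via functional calculus, and nothing else is delicate.
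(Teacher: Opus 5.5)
Your proposal is correct and is essentially the paper's argument. The paper defers to Hwang--Kim's Lemma 2.6, which likewise starts from the expression $[I\nabla_t(A^{-1}\#B)]\,A\,[I\nabla_t(A^{-1}\#B)]$ and uses the Riccati equation together with symmetry and self-duality of $\#$ (so that $B^{-1}\#A=(A^{-1}\#B)^{-1}$) to match the terms; your computation with $MAM=B$ and $N=M^{-1}$ spells this out and, being purely algebraic, holds verbatim for all real $t$.
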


\begin{proof}
The proof is the same as that of \cite[Lemma 2.6]{HK19}: the main idea is to use \eqref{E:Wass-expression} and to apply the symmetry, self-duality and Riccati equation of geometric mean so the proof of \cite[Lemma 2.6]{HK19} holds for $t \in \mathbb{R}$.
\end{proof}

\begin{theorem}
Let $A, B \in \mathbb{P}$. Then $A \diamond_{t} B$ for some $t \in \mathbb{R}$ such that $I \nabla_{t} (A^{-1} \# B) \in \mathbb{P}$ is the unique solution $X \in \mathbb{P}$ to the equation \eqref{E:Wass-eq}.
\end{theorem}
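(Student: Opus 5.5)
Write $C := A^{-1}\# B$ and $S := I \nabla_t C = (1-t)I + tC$. By hypothesis $S \in \mathbb{P}$, and by \eqref{E:Wass-expression} we have $A \diamond_t B = SAS$. The proof has two parts: first verify that $X = SAS$ solves \eqref{E:Wass-eq}, then show that any positive definite solution must equal $SAS$. Both parts rest on the Riccati characterization of the geometric mean: for $P, Q \in \mathbb{P}$, $P \# Q$ is the unique $Z \in \mathbb{P}$ with $Z P^{-1} Z = Q$.

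\emph{Existence.} Take $X = SAS$. Then $X^{-1} = S^{-1}A^{-1}S^{-1}$, and the Riccati equation gives $A \# X^{-1} = S^{-1}$, since $S^{-1}A^{-1}S^{-1} = X^{-1}$.

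For $B \# X^{-1}$, the guess suggested by the matrix case is $B \# X^{-1} = C S^{-1}$. Self-adjointness of this candidate holds because $C$ and $S$ commute. Positivity holds because $CS^{-1}$ is a continuous function of $C$, namely $c \mapsto c/((1-t)+tc)$, which is positive on $\sigma(C)$ since $S > 0$. Invertibility of $CS^{-1}$ is equally clear. For the Riccati identity, use $CAC = B$ (which is the Riccati equation defining $C = A^{-1}\# B$):
\begin{displaymath}
(CS^{-1}) X (CS^{-1}) = C S^{-1} S A S S^{-1} C = CAC = B .
\end{displaymath}
By uniqueness in the Riccati characterization, $B \# X^{-1} = CS^{-1}$. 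Therefore
\begin{displaymath}
(1-t)(A\# X^{-1}) + t(B\# X^{-1}) = \big((1-t)I + tC\big)S^{-1} = I ,
\end{displaymath}
so $SAS$ solves \eqref{E:Wass-eq}. Note that this step is purely algebraic and never uses $t \in [0,1]$.

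\emph{Uniqueness.} This is the main obstacle. For $t \notin [0,1]$ one weight is negative, so neither the convexity argument nor Theorem~\ref{wass_P} applies. Let $X \in \mathbb{P}$ solve \eqref{E:Wass-eq} and set $P := A \# X^{-1} \in \mathbb{P}$, $Q := B \# X^{-1} \in \mathbb{P}$. The Riccati equations give $X^{-1} = PA^{-1}P = QB^{-1}Q$, equivalently $X = P^{-1}AP^{-1} = Q^{-1}BQ^{-1}$.

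The first key step is to show $Q = CP$ (or equivalently express $Q$ through $P$). From $Q^{-1}BQ^{-1} = P^{-1}AP^{-1}$ we get
\begin{displaymath}
B = (QP^{-1}) A (P^{-1}Q) ,
\end{displaymath}
so $M := P^{-1}Q$ satisfies $M^* A M = B$. Together with $A$-selfadjointness of $M$ and $CAC = B$, this should identify $M = C$.

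I would try to obtain this via a polar-decomposition argument. Write
\begin{displaymath}
A^{1/2} M A^{-1/2}\,\big(A^{1/2} M A^{-1/2}\big)^{*} = A^{1/2}\big(A^{-1}\# B\big)^2 A^{1/2} = A^{1/2} C^2 A^{1/2},
\end{displaymath}
and show that $A^{1/2} M A^{-1/2}$ is positive, so that it equals the unique positive square root of the left side.

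Granted $M = C$, i.e.\ $Q = PC$, the equation $I = (1-t)P + tQ = P\big((1-t)I + tC\big) = PS$ yields $P = S^{-1}$. Hence $X = P^{-1}AP^{-1} = SAS = A\diamond_t B$.

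If the positivity of $A^{1/2} M A^{-1/2}$ proves hard to establish directly, I would fall back on the symmetry Lemma~\ref{L:Wass-symmetry} together with the two-variable uniqueness for $t\in[0,1]$ (Theorem~\ref{wass_P}), reducing $t>1$ or $t<0$ to the case $t\in[0,1]$ by exchanging the roles of $A$, $B$ and $X$ in \eqref{E:Wass-eq}. For example, when $t>1$, multiply \eqref{E:Wass-eq} appropriately and rewrite it as an equation of the same form for the pair $(X, B)$ with weight $1/t \in (0,1)$, with $A$ as the unknown, and then invoke uniqueness there.
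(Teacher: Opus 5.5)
Your existence half is correct, and it is more explicit than the paper, which for $t\notin[0,1]$ only derives the necessary form of a solution. With $S=(1-t)I+tC$ and $X=SAS$ one indeed has $A\#X^{-1}=S^{-1}$ and $B\#X^{-1}=CS^{-1}\in\mathbb{P}$ for every admissible $t$.

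The gap is in uniqueness, at exactly the step you flag.
\begin{itemize}
\item The claimed ``$A$-selfadjointness'' of $M=P^{-1}Q$ is false in general. For the true solution $M=C$, it would mean $AC=CA$.
\item The displayed identity is wrong: $(A^{1/2}MA^{-1/2})(A^{1/2}MA^{-1/2})^{*}=A^{1/2}MA^{-1}M^{*}A^{1/2}$, which is not $A^{1/2}C^{2}A^{1/2}$.
\item The positivity you plan to prove is false. Since $M=C$ for the actual solution, $A^{1/2}CA^{-1/2}$ is not even self-adjoint unless $A$ and $C$ commute. So this route cannot close.
\item The fallback does not work as stated. Theorem~\ref{wass_P} needs positive weights. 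Moreover, for $t>1$ it is $B$, not $A$, that lies between the other two points ($B=A\diamond_{1/t}X$). Uniqueness in such a rewritten equation would determine $B$ from $(A,X)$, not $X$ from $(A,B)$. You would still have to derive that equation from \eqref{E:Wass-eq} and prove injectivity of $X\mapsto A\diamond_{1/t}X$.
\end{itemize}

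The missing idea is that \eqref{E:Wass-eq} itself ties $Q$ to $P$. For $t\neq 0$ it gives $Q=t^{-1}\bigl(I-(1-t)P\bigr)$. So $Q$ is a function of $P$, the two commute, and $M=P^{-1}Q$ is a product of commuting elements of $\mathbb{P}$. Hence $M\in\mathbb{P}$ and $M^{*}=M$.

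Then $B=QP^{-1}AP^{-1}Q=MAM$, and Riccati uniqueness gives $M=A^{-1}\#B=C$ directly. Equivalently, $A^{1/2}MA^{1/2}$ is the positive square root of $A^{1/2}BA^{1/2}$; note the normalization $A^{1/2}(\cdot)A^{1/2}$, not $A^{1/2}(\cdot)A^{-1/2}$. From there your conclusion $PS=I$ and $X=P^{-1}AP^{-1}=SAS$ goes through, and $t=0$ is trivial.

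This is the paper's mechanism in disguise. Its operator $Y^{-1/2}-(1-t)A$ equals $A^{1/2}\bigl(P^{-1}-(1-t)I\bigr)A^{1/2}=t\,A^{1/2}MA^{1/2}$. The paper reads off its positivity from $t>1$ and then reduces $t<0$ via Lemma~\ref{L:Wass-symmetry}. The commutation observation gives positivity for all $t\neq 0$ at once and makes the symmetry reduction unnecessary.
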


\begin{proof}
It is known from \cite[Theorem 5.1]{HK19} that $A \diamond_{t} B$ is the unique solution $X \in \mathbb{P}$ to the equation \eqref{E:Wass-eq} for $t \in [0,1]$.

Let $t > 1$. Taking the congruence transformation by $A^{-1/2}$ in \eqref{E:Wass-eq}, we get
\begin{displaymath}
A^{-1} = (1-t)(A^{-1/2} X^{-1} A^{-1/2})^{1/2} + t(A^{-1/2} B A^{-1/2}) \# (A^{-1/2} X^{-1} A^{-1/2}).
\end{displaymath}
Set $Y := A^{-1/2} X^{-1} A^{-1/2}$ and $Z := A^{-1/2} B A^{-1/2}$. Then $A^{-1} = (1-t)Y^{1/2} + t(Z \# Y)$. Since $A^{-1} - (1-t) Y^{1/2} \in \mathbb{P}$, by the Riccati equation we have
\begin{displaymath}
[A^{-1} - (1-t)Y^{1/2}] Y^{-1} [A^{-1} - (1-t)Y^{1/2}] = t^{2} Z.
\end{displaymath}
Pre-multiplying and post-multiplying all terms by $A$ yield $\left[ Y^{-1/2} - (1-t) A \right]^{2} = t^{2} AZA$, so $Y^{-1/2} = (1-t)A + t(AZA)^{1/2}$. By assumption, we have
\begin{displaymath}
(A^{1/2}XA^{1/2})^{1/2}= (1-t)A + t(A^{1/2}BA^{1/2})^{1/2}.
\end{displaymath}
Taking square on both sides and applying the congruence transformation by $A^{-1/2}$, we obtain $X = A^{-1/2} [ (1-t)A + t(A^{1/2}BA^{1/2})^{1/2} ]^{2} A^{-1/2} = A \diamond_{t} B$.

Let $t < 0$. By substituting $s = 1-t > 1$, we get from the preceding argument that the equation \eqref{E:Wass-eq} is equivalent to
 \begin{displaymath}
I = s (A \# X^{-1}) + (1-s) (B \# X^{-1}),
\end{displaymath}
and has a unique solution $X = B \diamond_{s} A$. By Lemma \ref{L:Wass-symmetry} $X = B \diamond_{1-t} A = A \diamond_{t} B$.
\end{proof}

The Karcher mean $\Lambda$ satisfies the self-duality, meanwhile in general for the Wasserstein mean it does not hold: $\Omega(\omega; \mathbb{A}) \neq \Omega(\omega; \mathbb{A}^{-1})^{-1}$.
So we call $\Omega(\omega; \mathbb{A}^{-1})^{-1}$ a dual mean of the Wasserstein mean.
It is the unique positive definite solution to the equation
\begin{displaymath}
I = \mathcal{H}(\omega; A_{1} \sharp X^{-1}, \dots, A_{m} \sharp X^{-1}),
\end{displaymath}
where $\mathcal{H}$ denotes the harmonic mean. This is equivalent to
\begin{equation} \label{E:dual-1}
\sum_{j=1}^{m} w_{j} (X \sharp A_{j}^{-1}) = I.
\end{equation}
Applying congruence transformation by $X^{-1/2}$ into \eqref{E:dual-1} and taking inverses, we can see that $\Omega(\omega; \mathbb{A}^{-1})^{-1}$ is the unique positive definite solution of
\begin{displaymath}
X = \left[ \sum_{j=1}^{m} w_{j} (X^{1/2} A_{j} X^{1/2})^{-1/2} \right]^{-1}.
\end{displaymath}

We show the relationships between the dual of the Wasserstein mean and Karcher mean, and the quasi-arithmetic mean.
\begin{theorem}
Let $\mathbb{A} = (A_{1}, \dots, A_{m}) \in \mathbb{P}^{m}$ and $\omega = (w_{1}, \dots, w_{m}) \in \Delta_{m}$. Then
\begin{center}
$\Omega(\omega; \mathbb{A}^{-1})^{-1} \geq I$ \quad implies \quad $\Lambda(\omega; \mathbb{A}^{1/2})^{2} \geq I$.
\end{center}
\end{theorem}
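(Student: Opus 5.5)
The plan is to pass from the defining equation of the dual mean to a harmonic-mean inequality for the square roots $A_j^{1/2}$, and then use the fact that the harmonic mean lies below the Karcher mean.

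First, set $X := \Omega(\omega; \mathbb{A}^{-1})^{-1}$. By the discussion preceding the statement, $X$ is the unique positive definite solution of \eqref{E:dual-1}, namely
\begin{displaymath}
\sum_{j=1}^{m} w_{j} (X \sharp A_{j}^{-1}) = I .
\end{displaymath}
The hypothesis is $X \geq I$.

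Second, I use the monotonicity of the geometric mean in its first argument. From $X \geq I$ we get $X \sharp A_j^{-1} \geq I \sharp A_j^{-1} = A_j^{-1/2}$ for each $j$. Summing with the weights gives
\begin{displaymath}
\sum_{j=1}^{m} w_j A_j^{-1/2} \leq \sum_{j=1}^{m} w_j (X \sharp A_j^{-1}) = I .
\end{displaymath}
Inversion is order reversing, so
\begin{displaymath}
\mathcal{H}(\omega; \mathbb{A}^{1/2}) = \Bigl(\sum_{j=1}^{m} w_j A_j^{-1/2}\Bigr)^{-1} \geq I .
\end{displaymath}

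Third, I invoke the harmonic--Karcher inequality $\mathcal{H} = P_{-1} \leq \Lambda$. This is the left end of the monotone chain of power means recalled above, which holds on $\mathbb{P}$ by \cite{LL14}. Applying it to the tuple $\mathbb{A}^{1/2}$ gives
\begin{displaymath}
\Lambda(\omega; \mathbb{A}^{1/2}) \geq \mathcal{H}(\omega; \mathbb{A}^{1/2}) \geq I .
\end{displaymath}

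Finally, $\Lambda(\omega;\mathbb{A}^{1/2})$ is positive definite with spectrum contained in $[1,\infty)$, so by the functional calculus its square also has spectrum in $[1,\infty)$. Hence $\Lambda(\omega;\mathbb{A}^{1/2})^{2} \geq I$.

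This final squaring is legitimate only because the comparison is with the identity; squaring is not operator monotone in general, and I expect this to be the one point requiring care. The remaining ingredients are all standard: monotonicity of $\sharp$, the identity $I \sharp B = B^{1/2}$, and $\mathcal{H} \leq \Lambda$ in the infinite-dimensional setting.
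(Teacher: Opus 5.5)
Your proof is correct, and it takes a leaner route than the paper's.

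\textbf{The paper's route.} It starts from the inverted form $X=\bigl[\sum_j w_j (X^{1/2}A_jX^{1/2})^{-1/2}\bigr]^{-1}$, which says $X$ is the harmonic mean of the operators $(X^{1/2}A_jX^{1/2})^{1/2}$. It then proceeds in three steps:
\begin{itemize}
\item it applies the harmonic--Karcher inequality immediately, giving $X\le\Lambda\bigl(\omega;(X^{1/2}A_1X^{1/2})^{1/2},\dots,(X^{1/2}A_mX^{1/2})^{1/2}\bigr)$;
\item it uses the Hansen/Jensen-type inequality $(X^{1/2}A_jX^{1/2})^{1/2}\le X^{1/2}A_j^{1/2}X^{1/2}$, valid because $X^{-1/2}$ is a contraction;
\item it invokes monotonicity and congruence invariance of $\Lambda$ to reach $\Lambda(\omega;\mathbb{A}^{1/2})\ge I$.
\end{itemize}

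\textbf{How your route relates.} Your termwise inequality $X\sharp A_j^{-1}\ge A_j^{-1/2}$ is exactly that Hansen-type inequality, after inverting it and applying congruence by $X^{1/2}$. So the core estimate is the same. The difference is that you apply it before passing to the Karcher mean. Summing is then trivial, and the only property of $\Lambda$ you need is $\mathcal{H}\le\Lambda$, used once at the end. You do not need monotonicity or congruence invariance of the Karcher mean on $\mathbb{P}$.

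\textbf{What each route buys.} Your argument proves a stronger intermediate statement: $\mathcal{H}(\omega;\mathbb{A}^{1/2})\ge I$, equivalently $Q_{-1/2}(\omega;\mathbb{A})\ge I$. This is the reverse-direction counterpart of Theorem~\ref{T:Dual-Q}, and the stated result follows from it. The paper's ordering keeps the Karcher mean in view throughout and reuses the Jensen-type device from the neighbouring results, but it gives up this sharper $Q_{-1/2}$ bound. Your remark about the final squaring is also correct. The paper stops at $\Lambda(\omega;\mathbb{A}^{1/2})\ge I$ and leaves that spectral step implicit.
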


\begin{proof}
Assume that $X = \Omega(\omega; \mathbb{A}^{-1})^{-1} \geq I$. Then
\begin{displaymath}
X = \left[ \sum_{j=1}^{m} w_{j} (X^{1/2} A_{j} X^{1/2})^{-1/2} \right]^{-1}.
\end{displaymath}
By the Karcher-harmonic mean inequality,
\begin{displaymath}
X \leq \Lambda (\omega; (X^{1/2} A_{1} X^{1/2})^{1/2}, \dots, (X^{1/2} A_{m} X^{1/2})^{1/2}).
\end{displaymath}
Since $X \geq I$, $(X A X)^{p} \leq X A^{p} X$ for any $A \in \mathbb{P}$ and $0 \leq p \leq 1$ by Jensen-type inequality. So the preceding inequality yields
\begin{displaymath}
X \leq \Lambda (\omega; X^{1/2} A_{1}^{1/2} X^{1/2}, \dots, X^{1/2} A_{m}^{1/2} X^{1/2}),
\end{displaymath}
due to the monotonicity of Karcher mean. Taking congruence transformation by $X^{-1/2}$, we obtain $I \leq \Lambda(\omega; \mathbb{A}^{1/2})$, which completes the proof.
\end{proof}

\begin{theorem} \label{T:Dual-Q}
Let $\mathbb{A} = (A_{1}, \dots, A_{m}) \in \mathbb{P}^{m}$ and $\omega = (w_{1}, \dots, w_{m}) \in \Delta_{m}$. Then
\begin{center}
$\Omega(\omega; \mathbb{A}^{-1})^{-1} \leq I$ \quad implies \quad $Q_{-1/2}(\omega; \mathbb{A}) \leq I$.
\end{center}
\end{theorem}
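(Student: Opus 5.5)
The plan is to mirror the argument used for the lower bound in the operator norm estimate, working with the defining equation \eqref{E:dual-1} of the dual mean instead of \eqref{gradF}.

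First, reduce the conclusion to a statement about $\sum_{j} w_j A_j^{-1/2}$. By definition,
\[
Q_{-1/2}(\omega;\mathbb{A}) = \Bigl(\sum_{j=1}^m w_j A_j^{-1/2}\Bigr)^{-2}.
\]
Put $S := \sum_{j=1}^m w_j A_j^{-1/2} \in \mathbb{P}$. Then $Q_{-1/2}(\omega;\mathbb{A}) \le I$ is equivalent to $S^{2} \ge I$, since inversion reverses the Loewner order. If $S \ge I$, then $S$ and $I$ commute, so the functional calculus gives $S^2 \ge I$. It therefore suffices to prove $S \ge I$.

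Next, let $X = \Omega(\omega;\mathbb{A}^{-1})^{-1}$ and assume $X \le I$. By the discussion preceding the statement, $X$ satisfies \eqref{E:dual-1}:
\[
\sum_{j=1}^m w_j\,(X \sharp A_j^{-1}) = I.
\]
The geometric mean is monotone in each argument, so $X \le I$ gives $X \sharp A_j^{-1} \le I \sharp A_j^{-1} = A_j^{-1/2}$ for every $j$. Summing with the weights $w_j$ yields
\[
I = \sum_{j=1}^m w_j (X\sharp A_j^{-1}) \le \sum_{j=1}^m w_j A_j^{-1/2} = S,
\]
which combined with the first step completes the argument.

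There is no real obstacle. The only points needing care are the direction of the inequalities when passing from $S \ge I$ to $(S)^{-2} \le I$, and checking that \eqref{E:dual-1} is indeed the equation characterizing $\Omega(\omega;\mathbb{A}^{-1})^{-1}$. The latter follows from applying \eqref{gradF} to $\mathbb{A}^{-1}$, taking $X^{-1}$ as the unknown, and using the self-duality $(A\sharp B)^{-1} = A^{-1}\sharp B^{-1}$, as already indicated in the text.
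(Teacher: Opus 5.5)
Your proof is correct and is essentially the paper's argument. The paper rewrites the dual equation as $X=\bigl[\sum_j w_j (X^{1/2}A_jX^{1/2})^{-1/2}\bigr]^{-1}$ and applies the Jensen-type inequality $(X^{1/2}A_jX^{1/2})^{1/2}\ge X^{1/2}A_j^{1/2}X^{1/2}$ for $X\le I$; after a congruence by $X^{1/2}$ this is exactly your step $X\sharp A_j^{-1}\le I\sharp A_j^{-1}=A_j^{-1/2}$, so applying monotonicity of $\sharp$ directly to \eqref{E:dual-1} is just a more streamlined presentation of the same inequality, with the same final passage from $\sum_j w_jA_j^{-1/2}\ge I$ to $Q_{-1/2}(\omega;\mathbb{A})\le I$.
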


\begin{proof}
Assume that $X = \Omega(\omega; \mathbb{A}^{-1})^{-1} \leq I$.
Since $X \leq I$, $(X^{1/2} A X^{1/2})^{p} \geq X^{1/2} A^{p} X^{1/2}$ for any $A \in \mathbb{P}_{m}$ and $0 \leq p \leq 1$ by Jensen-type inequality.
So $(X^{1/2} A X^{1/2})^{-1/2} \leq X^{-1/2} A^{-1/2} X^{-1/2}$, and
\begin{displaymath}
\begin{split}
X & = \left[ \sum_{j=1}^{m} w_{j} (X^{1/2} A_{j} X^{1/2})^{-1/2} \right]^{-1} \\
& \geq \left[ \sum_{j=1}^{m} w_{j} (X^{-1/2} A_{j}^{-1/2} X^{-1/2}) \right]^{-1} = X^{1/2} \left[ \sum_{j=1}^{m} w_{j} A_{j}^{-1/2} \right]^{-1} X^{1/2}.
\end{split}
\end{displaymath}
Taking congruence transformation by $X^{-1/2}$ yields $\displaystyle \left[ \sum_{j=1}^{m} w_{j} A_{j}^{-1/2} \right]^{-1} \leq I$. It completes the proof by taking squares on both sides.
\end{proof}

Since the dual mean $\Omega(\omega; \mathbb{A}^{-1})^{-1}$ and quasi-arithmetic mean $Q_{-1/2}(\omega; \mathbb{A})$ are both homogeneous, Theorem \ref{T:Dual-Q} implies from \cite[Lemma 2.2]{JK23}
\begin{corollary}
Let $\mathbb{A} = (A_{1}, \dots, A_{m}) \in \mathbb{P}^{m}$ and $\omega = (w_{1}, \dots, w_{m}) \in \Delta_{m}$. Then
\begin{displaymath}
\Vert Q_{-1/2}(\omega; \mathbb{A}) \Vert \leq \Vert \Omega(\omega; \mathbb{A}^{-1})^{-1} \Vert.
\end{displaymath}
\end{corollary}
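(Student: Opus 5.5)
The plan is to apply \cite[Lemma 2.2]{JK23} in the form already used for the analogous norm bound for $\Omega(\omega;\mathbb{A})$ and $Q_{1/2}(\omega;\mathbb{A})$. That lemma says the following. Let $M_1, M_2$ be two homogeneous maps on $\mathbb{P}^m$, and suppose $M_2(\mathbb{A}) \leq I$ implies $M_1(\mathbb{A}) \leq I$. Then $\Vert M_1(\mathbb{A}) \Vert \leq \Vert M_2(\mathbb{A}) \Vert$ for the operator norm.

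We take $M_1 = Q_{-1/2}(\omega;\cdot)$ and $M_2 = \Omega(\omega;(\cdot)^{-1})^{-1}$. The implication between them is exactly Theorem \ref{T:Dual-Q}, so only two ingredients remain: homogeneity of both maps, and the norm argument.

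First, we verify homogeneity. For $Q_{-1/2}$ this is immediate:
\begin{displaymath}
Q_{-1/2}(\omega;\lambda\mathbb{A}) = \Bigl(\sum_j w_j \lambda^{-1/2} A_j^{-1/2}\Bigr)^{-2} = \lambda Q_{-1/2}(\omega;\mathbb{A}).
\end{displaymath}
For the dual mean we use the homogeneity of $\Omega$ (item (2) of the Proposition above): $\Omega(\omega;\lambda^{-1}\mathbb{A}^{-1})^{-1} = (\lambda^{-1}\Omega(\omega;\mathbb{A}^{-1}))^{-1} = \lambda\,\Omega(\omega;\mathbb{A}^{-1})^{-1}$.

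Next, the norm argument, written out in case one prefers not to cite the lemma. Set $\lambda := \Vert \Omega(\omega;\mathbb{A}^{-1})^{-1}\Vert > 0$. Since $\Omega(\omega;\mathbb{A}^{-1})^{-1}$ is positive, it satisfies $\Omega(\omega;\mathbb{A}^{-1})^{-1} \leq \lambda I$. By homogeneity this becomes $\Omega(\omega;(\lambda^{-1}\mathbb{A})^{-1})^{-1} \leq I$. Theorem \ref{T:Dual-Q}, applied to the tuple $\lambda^{-1}\mathbb{A} \in \mathbb{P}^m$, then gives $Q_{-1/2}(\omega;\lambda^{-1}\mathbb{A}) \leq I$, that is, $Q_{-1/2}(\omega;\mathbb{A}) \leq \lambda I$. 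Because $Q_{-1/2}(\omega;\mathbb{A})$ is positive, this yields $\Vert Q_{-1/2}(\omega;\mathbb{A})\Vert \leq \lambda$, which is the claim.

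There is no real obstacle here; the only point needing care is that Theorem \ref{T:Dual-Q} is applied to the rescaled tuple $\lambda^{-1}\mathbb{A}$ rather than to $\mathbb{A}$ itself.
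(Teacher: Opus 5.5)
Your proposal is correct and follows the paper's route: homogeneity of $Q_{-1/2}$ and of the dual mean $\Omega(\omega;\mathbb{A}^{-1})^{-1}$, combined with Theorem~\ref{T:Dual-Q} and \cite[Lemma 2.2]{JK23}. Your explicit rescaling by $\lambda = \Vert \Omega(\omega;\mathbb{A}^{-1})^{-1}\Vert$ is just that lemma written out for the operator norm, and it is sound.
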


\begin{proposition} \label{P:Wass-dual}
Let $A, B \in \mathbb{P}$ and $t \in [0,1]$. Then
\begin{displaymath}
(A^{-1} \diamond_{t} B^{-1})^{-1} \preceq A \diamond_{t} B.
\end{displaymath}
\end{proposition}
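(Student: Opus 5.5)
The plan is to compute the geometric mean $\bigl(A^{-1}\diamond_t B^{-1}\bigr)\#(A\diamond_t B)$ in closed form via the Riccati characterization, rather than to compare the two means in the Loewner order. By the definition of the near-order, the claim $(A^{-1}\diamond_t B^{-1})^{-1}\preceq A\diamond_t B$ is equivalent to
\[
\bigl(A^{-1}\diamond_t B^{-1}\bigr)\#\bigl(A\diamond_t B\bigr)\ \geq\ I .
\]

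First I write both operands using \eqref{E:Wass-expression}. Put $G:=A^{-1}\#B\in\mathbb{P}$ and $M:=I\nabla_t G=(1-t)I+tG$. Then $A\diamond_t B=MAM$. Applying \eqref{E:Wass-expression} to the pair $(A^{-1},B^{-1})$ gives $A^{-1}\diamond_t B^{-1}=N A^{-1} N$ with $N:=I\nabla_t(A\#B^{-1})$. By self-duality of the geometric mean, $A\#B^{-1}=(A^{-1}\#B)^{-1}=G^{-1}$, so $N=(1-t)I+tG^{-1}$. For $t\in[0,1]$ both $M$ and $N$ are positive definite and invertible. Since both are functions of $G$, they commute.

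Next I use the Riccati characterization: for $C,D\in\mathbb{P}$, $C\#D$ is the unique $Z\in\mathbb{P}$ with $ZC^{-1}Z=D$. Take $C=NA^{-1}N$, $D=MAM$, and try $Z:=MN=NM$. This $Z$ is self-adjoint and positive, being a positive function of $G$. A one-line check gives
\[
ZC^{-1}Z=MN\,N^{-1}AN^{-1}\,NM=MAM=D.
\]
By uniqueness, $C\#D=MN$.

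Finally I expand the product:
\[
MN=(1-t)^2I+t^2I+t(1-t)\,(G+G^{-1}).
\]
The scalar inequality $x+x^{-1}\ge 2$ for $x>0$, applied through the functional calculus, gives $G+G^{-1}\geq 2I$. Since $t(1-t)\ge0$, this yields $MN\geq\bigl((1-t)^2+2t(1-t)+t^2\bigr)I=I$, which is the claim.

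The only real obstacle is spotting the right candidate $Z$ for the Riccati equation. Trying instead to prove the stronger Loewner inequality $A\diamond_tB\ge (A^{-1}\diamond_tB^{-1})^{-1}$ runs into noncommutativity of $M,N$ with $A$, and that inequality need not hold. The commutativity of $M$ and $N$, both functions of $G$, is exactly what makes $Z=MN$ work. One should also note that $t\in[0,1]$ is used twice: for positivity and invertibility of $M$ and $N$, and for the sign $t(1-t)\ge0$ in the last step.
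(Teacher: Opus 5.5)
Your argument is correct and is essentially the paper's proof. The paper imports from Hwang--Kim the identity $A\diamond_t B = K\,(A^{-1}\diamond_t B^{-1})^{-1}K$ with $K = I\nabla_{2t(1-t)}(G\nabla G^{-1})$, $G=A^{-1}\#B$; this $K$ is exactly your $MN$, since $(1-t)^2+t^2=1-2t(1-t)$. It then concludes via the Riccati equation and $G\nabla G^{-1}\geq I$. The only difference is that you verify this identity directly from \eqref{E:Wass-expression} and self-duality rather than citing it, which makes your argument self-contained.
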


\begin{proof}
Note from \cite{HK} that
\begin{displaymath}
A \diamond_{t} B = \left[ I \nabla_{2t(1-t)} (X \nabla X^{-1}) \right] (A^{-1} \diamond_{t} B^{-1})^{-1} \left[ I \nabla_{2t(1-t)} (X \nabla X^{-1}) \right],
\end{displaymath}
where $X = A^{-1} \# B$. By the Riccati equation
\begin{displaymath}
(A^{-1} \diamond_{t} B^{-1}) \# (A \diamond_{t} B) = I \nabla_{2t(1-t)} (X \nabla X^{-1}) \geq I.
\end{displaymath}
Indeed, the inequality holds since $X \nabla X^{-1} \geq I$ and $2t(1-t) \in [0,1/2]$. By definition of near-order, we obtain the conclusion.
\end{proof}

From Proposition \ref{P:Wass-dual} one can naturally ask whether the following holds:
\begin{displaymath}
\Omega(\omega; \mathbb{A}^{-1})^{-1} \preceq \Omega(\omega; \mathbb{A}).
\end{displaymath}
Using the following lemma about a reverse inequality of arithmetic-geometric mean inequality,
we show the relationship between the Wasserstein mean and its dual with respect to the near order.

\begin{lemma} \label{L:converse}
Let $A, B \in \mathbb{P}$ such that $m_{1} I \leq A \leq M_{1} I$ and $m_{2} I \leq B \leq M_{2} I$ for some positive constants $m_{1}, m_{2}, M_{1}$ and $M_{2}$. Then
\begin{displaymath}
A \nabla B \leq \kappa (A \# B),
\end{displaymath}
where $\displaystyle \kappa := \max \left\{ K \left( \frac{M_{1}}{m_{2}} \right), K \left( \frac{M_{2}}{m_{1}} \right) \right\}$ for the Kantorovich constant $\displaystyle K(h) = \frac{1 + h}{2 \sqrt{h}}$.
\end{lemma}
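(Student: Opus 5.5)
We reduce Lemma \ref{L:converse} to a scalar inequality via the congruence $C := A^{-1/2} B A^{-1/2}$. Then
\[
A \nabla B = A^{1/2}\,\frac{I + C}{2}\,A^{1/2}, \qquad A \# B = A^{1/2} C^{1/2} A^{1/2}.
\]
The claimed inequality is therefore equivalent to
\[
\frac{I + C}{2} \leq \kappa\, C^{1/2}.
\]
Since $C$ is positive invertible, functional calculus reduces this further to the scalar statement
\[
\frac{1+\lambda}{2\sqrt{\lambda}} = K(\lambda) \leq \kappa \qquad \text{for all } \lambda \in \sigma(C).
\]

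Next we locate $\sigma(C)$. From $m_2 I \le B \le M_2 I$ and $M_1^{-1} I \le A^{-1} \le m_1^{-1} I$, congruence gives
\[
C = A^{-1/2} B A^{-1/2} \le M_2 A^{-1} \le \frac{M_2}{m_1} I .
\]
Similarly, $C \ge m_2 A^{-1} \ge (m_2/M_1) I$. Hence $\sigma(C) \subseteq [m_2/M_1,\, M_2/m_1]$.

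Now we use the properties of the Kantorovich function. It is symmetric, $K(h) = K(1/h)$, and it is decreasing on $(0,1]$ and increasing on $[1,\infty)$. A continuous function that is quasi-convex in this sense attains its maximum over an interval at an endpoint. So
\[
\sup_{\lambda \in \sigma(C)} K(\lambda) \le \max\bigl\{K(m_2/M_1),\, K(M_2/m_1)\bigr\} = \max\bigl\{K(M_1/m_2),\, K(M_2/m_1)\bigr\} = \kappa,
\]
where the middle equality uses $K(m_2/M_1) = K(M_1/m_2)$. This gives $\frac{I+C}{2} \le \kappa C^{1/2}$. Applying the congruence by $A^{1/2}$, which preserves the Loewner order, yields $A \nabla B \le \kappa (A \# B)$.

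No step is a serious obstacle. The one point needing care is the spectral localization of $C$, which requires congruence monotonicity together with $A^{-1} \le m_1^{-1} I$. We also need to check that the interval endpoints map under the symmetry $K(h) = K(1/h)$ to exactly the two arguments appearing in $\kappa$. Note that $1$ may or may not lie in the interval, but this does not matter because $K \geq 1$ and the maximum is still attained at an endpoint.
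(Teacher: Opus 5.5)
Your proof is correct and follows essentially the same route as the paper: reduce by the congruence $A^{-1/2}(\cdot)A^{-1/2}$, locate the spectrum of $A^{-1/2}BA^{-1/2}$ in $[m_2/M_1, M_2/m_1]$, use that $t\mapsto (1+t)/(2\sqrt{t})$ decreases on $(0,1]$ and increases on $[1,\infty)$ so its maximum on the interval is at an endpoint, and convert $K(m_2/M_1)$ to $K(M_1/m_2)$ via $K(h)=K(1/h)$. One small correction to your closing remark: the endpoint maximum comes from that decreasing-then-increasing shape, which you already invoked, and not from $K\geq 1$.
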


\begin{proof}
Set $X := A^{-1/2} B A^{-1/2}$. Then the inequality which we need to show is equivalent to
\begin{displaymath}
\frac{I + X}{2} \leq \kappa X^{1/2}.
\end{displaymath}
Let $\displaystyle f(t) = \frac{1+t}{2\sqrt{t}}$ for $t > 0$. Then $\displaystyle f'(t) = \frac{t-1}{4 t^{3/2}}$ so it is increasing on $(1,\infty)$ and decreasing on $(0,1)$. Assuming that $t \in [\alpha, \beta]$ we obtain
\begin{equation} \label{E:conv-ineq}
\frac{1+t}{2} \leq c \sqrt{t}, \quad c = \max \{ f(\alpha), f(\beta) \}.
\end{equation}
Since $\displaystyle \frac{m_{2}}{M_{1}} I \leq X \leq \frac{M_{2}}{m_{1}} I$, \eqref{E:conv-ineq} implies $I \nabla X \leq \kappa X^{1/2}$, where
\begin{displaymath}
\kappa = \max \left\{ f \left( \frac{m_{2}}{M_{1}} \right), f \left( \frac{M_{2}}{m_{1}} \right) \right\} = \max \left\{ K \left( \frac{M_{1}}{m_{2}} \right), K \left( \frac{M_{2}}{m_{1}} \right) \right\},
\end{displaymath}
since $K(h) = K(h^{-1})$ for $h > 0$.
\end{proof}

\begin{theorem}
Let $\omega = (w_{1}, \dots, w_{m}) \in \Delta_{m}$ and $\mathbb{A} = (A_1, \ldots, A_m) \in \mathbb{P}^m$ such that $m I \leq A_{i} \leq M I$ for some constants $0 < m \leq M$. Then
\begin{displaymath}
\kappa^{-4} \Omega(\omega; \mathbb{A}) \preceq \Omega(\omega; \mathbb{A}^{-1})^{-1} \preceq (2 \kappa - 1)^{2} \Omega(\omega; \mathbb{A}),
\end{displaymath}
where $\displaystyle \kappa = \max \left\{ \frac{M^{2} + 1}{2 M}, \frac{m^{2} + 1}{2 m} \right\}$.
\end{theorem}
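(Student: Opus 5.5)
The plan is to compare the defining equations of the two means and then convert the resulting Loewner-order information into near-order statements. Set $X := \Omega(\omega;\mathbb{A})$ and $Y := \Omega(\omega;\mathbb{A}^{-1})^{-1}$. Then
\begin{displaymath}
\sum_{i=1}^{m} w_i (A_i \# X^{-1}) = I, \qquad \sum_{i=1}^{m} w_i (Y \# A_i^{-1}) = I,
\end{displaymath}
and the second equation says $\mathcal{H}(\omega; A_1 \# Y^{-1}, \dots, A_m \# Y^{-1}) = I$. By Lemma~\ref{ineq}, applied to $\mathbb{A}$ and to $\mathbb{A}^{-1}$, we get $mI \le X \le MI$ and $mI \le Y \le MI$, so $M^{-1}I \le Y^{-1} \le m^{-1}I$.

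I would first check that the constant in the statement is exactly the one produced by Lemma~\ref{L:converse}. Take the pair $(A_i, Y^{-1})$ with $m_1 = m$, $M_1 = M$, $m_2 = M^{-1}$, $M_2 = m^{-1}$. Then $M_1/m_2 = M^2$ and $M_2/m_1 = m^{-2}$. Since $K(m^{-2}) = K(m^2)$, the lemma gives
\begin{displaymath}
A_i \nabla Y^{-1} \le \kappa\,(A_i \# Y^{-1}) \quad \text{with} \quad \kappa = \max\{K(M^2), K(m^2)\}.
\end{displaymath}
The same constant controls the pair $(A_i, X^{-1})$. Combining this reverse AGM inequality with the AGM and GHM inequalities gives two-sided Loewner bounds for $T_Y := \sum_i w_i (A_i \# Y^{-1})$. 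The lower bound $T_Y \ge I$ follows from the arithmetic--harmonic inequality applied to the harmonic-mean form of the equation for $Y$. For the upper bound, apply the reverse inequality to each $A_i \# Y^{-1}$ and its inverse $A_i^{-1} \# Y$, take the weighted sum, and use $\sum_i w_i (A_i^{-1}\# Y) = I$; this should give $T_Y \le (2\kappa-1) I$. The same manipulation for $X$ gives $S_X := \sum_i w_i (A_i^{-1} \# X)$ with $I \le S_X \le \kappa^{2} I$, possibly $\kappa^{4}$ after a second use of the reverse inequality.

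The second step turns these bounds into comparisons between $X$ and $Y$. The map $Z \mapsto \sum_i w_i (A_i \# Z^{-1})$ is operator-decreasing, and its value $I$ is attained only at $Z = X$ by Theorem~\ref{wass_P}. Using homogeneity (Lemma~\ref{homo_set}), I would scale $Y$ by $c^2$ with $c = 2\kappa-1$ to move $T_Y$ to the correct side of $I$, and then compare with $X$. This comparison in the Loewner order is not automatic, since the fixed-point equation alone does not give monotone comparison. I would settle it with the monotone flow $u' = -\Phi(u)$ and the invariance argument of Lemma~\ref{L:flow-invariance-S1}: start the flow at a point where $\Phi$ has a definite sign and use monotonicity of the vector field to keep the order along the trajectory up to the limit $X$. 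The lower inequality $\kappa^{-4} X \preceq Y$ would be obtained symmetrically, by running the argument for the dual equation with $S_X$ in place of $T_Y$.

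The main obstacle is that the target relation is the near order $A \preceq B \iff A^{-1} \# B \ge I$, not the Loewner order. I would first try to write $X$ and $Y$ in a congruence form $Y = C X C$ with $C$ built from $T_Y$, in the way Proposition~\ref{P:Wass-dual} uses the Riccati equation for $m = 2$. The Riccati equation then identifies $X^{-1} \# Y$ with $C$, and the scalar bounds $I \le C \le (2\kappa-1) I$ yield both inequalities, with the powers $\kappa^{-4}$ and $(2\kappa-1)^2$ appearing from squaring. If no exact factorization exists for $m > 2$, the fallback is to go through the Loewner bounds: scale $Y$ so that it is comparable with $X$ in the Loewner order, and use that Loewner comparison with a scalar factor passes to the near order. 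This route would likely cost the extra powers of $\kappa$ that appear in the stated constants.
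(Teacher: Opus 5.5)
\textbf{There is a genuine gap: no step of the proposal produces the cross term $X^{-1}\#Y$, and the bridge you propose is not valid.}

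Your Kantorovich constant is right. The bounds $I \le T_Y \le (2\kappa-1)I$ also hold, for instance from $P_i + P_i^{-1} \le 2\kappa I$ with $P_i = A_i \# Y^{-1}$ and $\sum_i w_i P_i^{-1} = I$. But these bounds never reach the quantity the theorem is about. In your notation the statement is equivalent to
\[
\kappa^{-2} I \le X^{-1} \# Y \le (2\kappa-1) I ,
\]
whereas $T_Y$ and $S_X$ each involve only one of the two means.

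The bridge fails for the following reasons.
\begin{itemize}
\item Set $F(Z) = \sum_i w_i (A_i \# Z^{-1})$, which is antitone. The relations $F(Y) \ge I = F(X) \ge F((2\kappa-1)^2 Y)$ do not imply $Y \le X \le (2\kappa-1)^2 Y$, because antitone operator maps are not order-reflecting. Already for $m=1$ and $A_1 = I$, the inequality $Z_1^{-1/2} \le Z_2^{-1/2}$ does not give $Z_1 \ge Z_2$, since squaring is not operator monotone.
\item Your step would yield $Y \le X$ in the Loewner order, hence $\Omega(\omega;\mathbb{A}^{-1})^{-1} \preceq \Omega(\omega;\mathbb{A})$ with constant $1$. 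That is exactly the question the paper raises, and leaves open, just before this theorem. This signals the step cannot be this cheap.
\item The flow does not rescue it. Lemma~\ref{L:flow-invariance-S1} only gives invariance of scalar order intervals, not comparison of trajectories. Concretely, take $m=1$ and $A_1 = \mathrm{diag}(a_1,a_2)$ with $a_1 \neq a_2$. The derivative of the time-$t$ flow map at the fixed point $A_1$ is the Schur multiplier $W \mapsto [e^{-t/(a_j+a_k)} W_{jk}]$. Its $2\times 2$ symbol has negative determinant, so the flow of $u' = -\Phi(u)$ does not preserve the Loewner order.
\item The Riccati factorization has no analogue for $m>2$, as you suspected.
\end{itemize}

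\textbf{The missing idea} is to combine the two defining equations with each other, rather than testing each mean against its own equation.

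For the lower bound, average the two equations and use joint concavity of $\#$:
\[
I = \sum_i w_i \frac{(Y\# A_i^{-1}) + (X^{-1}\# A_i)}{2} \le \sum_i w_i \, (Y \nabla X^{-1}) \# (A_i^{-1} \nabla A_i).
\]
Lemma~\ref{L:converse} gives $Y\nabla X^{-1} \le \kappa (Y \# X^{-1})$ and $A_i^{-1}\nabla A_i \le \kappa I$. Hence $I \le \kappa (Y\#X^{-1})^{1/2}$, that is, $Y \# X^{-1} \ge \kappa^{-2} I$.

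For the upper bound, proceed as follows.
\begin{enumerate}
\item Write $I = I \# I$ as the geometric mean of the two equations and apply joint concavity.
\item Apply $P \# Q \ge \kappa^{-1}(P \nabla Q)$ termwise.
\item Apply joint concavity once more, together with $A_i^{-1}\# A_i = I$.
\end{enumerate}
This gives $I \ge \frac{1}{2\kappa}\bigl(Y\#X^{-1} + I\bigr)$, that is, $Y\#X^{-1} \le (2\kappa-1)I$.

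Since $Y \# X^{-1} = X^{-1}\# Y$, these are exactly the two near-order inequalities. This is the paper's proof with the roles of $X$ and $Y$ swapped. The ranges of $X^{\pm1}, Y^{\pm1}, A_i^{\pm1}$ that you recorded are what make the single constant $\kappa$ valid in every application of Lemma~\ref{L:converse}.
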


\begin{proof}
Let $X = \Omega(\omega; \mathbb{A}^{-1})^{-1}$ and $Y = \Omega(\omega; \mathbb{A})$.
By \eqref{grad-Wass} and joint concavity of the geometric mean
\begin{equation} \label{E:ineq-1}
I = \sum_{i=1}^{m} w_{i} \left[ \frac{(X \# A_{i}^{-1}) + (Y^{-1} \# A_{i})}{2} \right]
\leq \sum_{i=1}^{m} w_{i} \left( \frac{X + Y^{-1}}{2} \right) \# \left( \frac{A_{i}^{-1} + A_{i}}{2} \right).
\end{equation}
Assuming that $m I \leq A_{i} \leq M I$ for some constants $0 < m \leq M$, we obtain $m I \leq X, Y \leq M I$. By Lemma \ref{L:converse}
\begin{center}
$X \nabla Y^{-1} \leq \kappa (X \# Y^{-1})$ \quad and \quad $A_{i}^{-1} \nabla A_{i} \leq \kappa (A_{i}^{-1} \# A_{i}) = \kappa I$,
\end{center}
where $\displaystyle \kappa = \max \left\{ K(M^{2}), K(m^{2}) \right\}$.
By monotonicity of the geometric mean, \eqref{E:ineq-1} implies
\begin{displaymath}
I \leq \sum_{i=1}^{m} \kappa w_{i} (X \# Y^{-1}) \# I = \kappa (X \# Y^{-1})^{1/2}.
\end{displaymath}
This is equivalent to $I \leq \kappa^{2} (X \# Y^{-1}) = X \# (\kappa^{-4} Y)^{-1}$, which gives us the first inequality.

By \eqref{grad-Wass} and joint concavity of the geometric mean
\begin{equation} \label{E:ineq-2}
I = \left[ \sum_{i=1}^{m} w_{i} (X \# A_{i}^{-1}) \right] \# \left[ \sum_{i=1}^{m} w_{i} (Y^{-1} \# A_{i}) \right]
\geq \sum_{i=1}^{m} w_{i} \left[ (X \# A_{i}^{-1}) \# (Y^{-1} \# A_{i}) \right].
\end{equation}
By Lemma \ref{L:converse}
\begin{center}
$X \# A_{i}^{-1} \geq \kappa^{-1} (X \nabla A_{i}^{-1})$ \quad and \quad $Y^{-1} \# A_{i} \geq \kappa^{-1} (Y^{-1} \nabla A_{i})$,
\end{center}
and by monotonicity of the geometric mean, \eqref{E:ineq-2} implies
\begin{displaymath}
\begin{split}
I & \geq \sum_{i=1}^{m} \kappa^{-1} w_{i} \left[ (X \nabla A_{i}^{-1}) \# (Y^{-1} \nabla A_{i}) \right] \\
& \geq \kappa^{-1} \sum_{i=1}^{m} w_{i} \left( \frac{X \# Y^{-1} + A_{i}^{-1} \# A_{i}}{2} \right) = \frac{\kappa^{-1} (X \# Y^{-1}) + \kappa^{-1} I}{2}.
\end{split}
\end{displaymath}
By simplification, $\displaystyle X \# Y^{-1} \leq (2 \kappa - 1)I$, at which we obtain the second assertion.
\end{proof}

\section*{Acknowledgments}
M.~P\'alfia and V. N.~Mer would like to thank Zolt\'an L\'eka for stimulating discussions about semigroups in Banach spaces.

The work of S. Kim was supported by the National Research Foundation of Korea grant funded by the Korea government (MSIT) (No. NRF-2022R1A2C4001306). The work of V. N. Mer  was supported by Basic Science Research Program through the National Research Foundation of Korea (NRF) funded by the Ministry of Education, Korea (No. RS-2024-00462498). The work of M. P\'alfia was supported by the Ministry of Innovation and Technology of Hungary from the National Research, Development and Innovation Fund and financed under the TKP2021-NVA funding scheme, Project no. TKP2021-NVA-09; the Hungarian Scientific Research fund NKFIH ADVANCED-150059 and by the János Bolyai Research Scholarship of the Hungarian Academy of Sciences, Grant No. BO/00998/23/3.

%

\end{document}